\pdfoutput=1

\documentclass[11pt]{article}

\usepackage{fullpage}
\usepackage{amsmath}
\usepackage{accents}
\usepackage{amsthm}
\usepackage{amssymb}
\usepackage[hidelinks]{hyperref}
\usepackage{cleveref}
\usepackage{textcomp}
\usepackage{enumerate}
\usepackage{graphicx}
\usepackage{caption}
\usepackage{subcaption}
\usepackage{float}

\newcommand{\mbb}[1]{\mathbb{#1}}
\newcommand{\mbf}[1]{\mathbf{#1}}
\newcommand{\bs}{\boldsymbol}
\newcommand{\tr}{\textup{tr}}
\newcommand{\wt}{\widetilde}
\newcommand{\wh}{\widehat}
\newcommand{\mc}{\mathcal}

\newcommand{\Tr}[1]{\left\langle#1\right\rangle}

\renewcommand{\det}{\textup{det}}
\renewcommand{\Re}{\textup{Re}}
\renewcommand{\Im}{\textup{Im}}

\numberwithin{equation}{section}

\newtheorem{theorem}{Theorem}
\newtheorem{lemma}{Lemma}[section]
\newtheorem{proposition}{Proposition}[section]

\numberwithin{theorem}{section}
\numberwithin{proposition}{section}
\numberwithin{corollary}{section}

\theoremstyle{remark}

\theoremstyle{definition}
\newtheorem{definition}{Definition}[section]

\title{Universality for Diagonal Eigenvector Overlaps of non-Hermitian Random Matrices}
\date{}
\author{Mohammed Osman\footnote{mohammed.osman@qmul.ac.uk}\\~\\\small Queen Mary, University of London}

\begin{document}

\maketitle

\abstract{We prove the universality of the joint distribution of an eigenvalue and the corresponding diagonal eigenvector overlap, in the bulk and at the edge, for eigenvalues of complex matrices and real eigenvalues of real matrices. As part of the proof we obtain a bound for the least non-zero singular value of $X-z$ when $z$ is an edge eigenvalue and a bound for the inner product between left and right singular vectors of $X-z$ when $|z|=1+O(N^{-1/2})$.} 

\section{Introduction}
\subsection{Overview}
Let $X$ be an $N\times N$ non-Hermitian matrix. The left and right eigenvectors $\mbf{l}_{n},\,\mbf{r}_{n}$ form a bi-orthogonal set, and by scaling the eigenvectors appropriately, we can express the bi-orthogonality by the equation
\begin{align*}
    \mbf{l}_{n}^{*}\mbf{r}_{m}&=\delta_{nm}.
\end{align*}
The remaining inner products between eigenvectors can be collected in the overlap matrix $O_{nm}$:
\begin{align}
    O_{nm}&:=\mbf{l}_{n}^{*}\mbf{l}_{m}\mbf{r}_{m}^{*}\mbf{r}_{n}.
\end{align}
The diagonal elements $O_{nn}=\|\mbf{l}_{n}\|^{2}\cdot\|\mbf{r}_{n}\|^{2}$ have received particular attention, due in part to their relevance to perturbation theory. Indeed, one can show that the maximal rate of change of an eigenvalue $z_{n}(t)$ under all possible norm 1 perturbations of the form $X+tY$ is directly related to $O_{nn}$:
\begin{align*}
    \sup_{\|Y\|=1}\frac{dz_{n}(t)}{dt}\Big|_{t=0}&=\sqrt{O_{nn}}.
\end{align*}
In view of this relation, $O_{nn}$ is sometimes called the eigenvalue condition number. By Cauchy-Schwarz we have the lower bound
\begin{align*}
    O_{nn}&\geq1,
\end{align*}
which is achieved if $X$ is normal. An immediate question that arises is: what is the typical size of $O_{nn}$ when $X$ is a random matrix? For random matrices from the complex Ginibre ensemble ($N\times N$ matrices whose entries are i.i.d. complex Gaussians with mean 0 and variance $1/N$), the first work studying the overlaps was that of Chalker--Mehlig \cite{chalker_eigenvector_1998}, who computed the following expectation values
\begin{align*}
    O(z)&=\frac{1}{N}\mbb{E}\left[\sum_{n=1}^{N}O_{nn}\delta(z-z_{n})\right],\\
    O(z_{1},z_{2})&=\frac{1}{N}\mbb{E}\left[\sum_{n\neq m}^{N}O_{nm}\delta(z_{1}-z_{n})\delta(z_{2}-z_{m})\right],
\end{align*}
where $\delta(z)=\delta_{0}(\mathrm{d}^{2}z)$ is the Dirac delta mass at 0. In the large $N$ limit they found
\begin{align*}
    N^{-1}O(z)&=\frac{1-|z|^{2}}{\pi},\\
    N^{-2}O(z_{1},z_{2})&=-\frac{1-|z|^{2}}{\pi^{2}|w|^{4}}\left[1-(1+|w|^{2})e^{-|w|^{2}}\right],
\end{align*}
where in the second line $z=\frac{1}{2}(z_{1}+z_{2})$ and $w=\sqrt{N}(z_{1}-z_{2})$. A rigourous proof of the first result for any $z$ in the bulk was later provided by Walters and Starr \cite{walters_note_2015}. We observe that the typical sizes of the overlaps correpsonding to bulk eigenvalues is $N$ and increases in the distance from the unit circle.

The full distribution of $O_{nn}$ when the associated eigenvalue $z_{n}$ is near a bulk point $z_{0}$ (i.e. $|z_{0}|<1-\delta$ for $\delta>N^{-1/2+\epsilon}$) was calculated by Bourgade--Dubach \cite{bourgade_distribution_2020} and Fyodorov \cite{fyodorov_statistics_2018}. Defining the rescaled overlap $S_{n}=\frac{O_{nn}}{N(1-|z_{n}|^{2})}$, the limiting density of $(z_{n},S_{n})$ is given by
\begin{align}
    \rho_{2,bulk}(z,s)&=\frac{1}{\pi s^{3}}e^{-\frac{1}{s}},
\end{align}
in the sense that
\begin{align*}
    \lim_{N\to\infty}\left[\mbb{E}\sum_{n=1}^{N}f(N^{1/2}(z_{n}-z_{0}),S_{n})-\int_{\mbb{C}\times[0,\infty]}Nf(N^{1/2}(z-z_{0}),s)\rho_{2,bulk}(z,s)\,\mathrm{d}m(z,s)\right]&=0,
\end{align*}
for any compactly supported $f$, where $\mathrm{d}m(z,s)$ is the Lebesgue measure on $\mbb{C}\times[0,\infty]$. The characteristic feature of this density is the $s^{-3}$ tail, implying that the variance of diverges. The same distribution has been obtained by Dubach \cite{dubach_eigenvector_2021} for the spherical (i.e. $X_{1}X^{-1}_{2}$ where $X_{1}$ and $X_{2}$ are independent complex Ginibre matrices) and truncated unitary ensembles of random matrices.

Beyond the complex Ginibre ensemble, Fyodorov \cite{fyodorov_statistics_2018} also obtained the distribution for overlaps associated to real eigenvalues of real Ginibre matrices, as well as the distribution at the edge (where $|z_{n}|=1+O(N^{-1/2})$). These results have been extended to the real elliptic ensemble (defined by $X+i\sqrt{\tau_{N}}Y$ for independent real Ginibre matrices $X$ and $Y$) by Fyodorov--Tarnowski \cite{fyodorov_condition_2021}, where the limiting distribution in the bulk of the weakly asymmetric regime (for which $\tau_{N}=O(N^{-1})$) was also derived. The corresponding result at the edge of the weakly asymmetric regime, for which $\tau_{N}=O(N^{-1/3})$, was obtained by Tarnowski \cite{tarnowski_condition_2024}. The distribution for complex eigenvalues of real matrices is still unknown, although the mean $O(z)$ has been calculated by Crumpton--Fyodorov--W\"{u}rfel \cite{wurfel_mean_2024} (see also \cite{crumpton_mean_2024}, where the same authors calculate $O(z)$ in the elliptic Ginibre ensembles). Dubach \cite{dubach_symmetries_2020} obtains the distribution at the origin for symplectic Ginibre matrices. Belinschi--Nowak--Speicher--Tarnowski \cite{belinschi_squared_2017} show that $O(z)$ can be calculated using the methods of free probability, which greatly simplifies the calculation of $O(z)$ in ensembles of matrix products. A determinantal structure for the mean overlap conditioned on $k\geq1$ eigenvalues was uncovered in the complex Ginibre ensemble by Akemann--Tribe--Tsareas--Zaboronski \cite{akemann_determinantal_2020} (see also \cite{akemann_determinantal_2020-1} for the result away from the origin), and extended to a Pfaffian structure in the symplectic Ginibre ensemble by Akemann--Byun--Noda \cite{akemann_pfaffian_2024}, building on earlier work of Akemann--F\"{o}rster--Kieburg \cite{akemann_universal_2020}.

Beyond the integrable models (i.e. Ginibre, spherical, truncated unitary), the only result of the kind described above is the recent calculation of the mean $O(z)$ for finite rank perturbations of the complex Ginibre ensemble when $z$ is near an eigenvalue of the perturbation by Zhang \cite{zhang_mean_2024}. Cipolloni--Erd\H{o}s--Henheik--Schr\"{o}der \cite[Theorem 2.4]{cipolloni_optimal_2024} prove a lower bound for overlaps corresponding to bulk eigenvalues $z_{n}$ of the form
\begin{align*}
    O_{nn}\geq N^{1-\xi},
\end{align*}
with probability $1-N^{-D}$ for any $\xi,D>0$, which holds under the assumption of real or complex matrices with i.i.d. entries $x_{ij}$ such that $\mbb{E}|N^{1/2}x_{ij}|^{p}\leq C_{p}$ for any $p>0$. The analogous lower bound for edge eigenvalues follows from \cite[Theorem 3.5]{cipolloni_universality_2024} as mentioned in a footnote. Erd\H{o}s--Ji \cite[Theorem 2.9]{erdos_wegner_2024} prove a corresponding upper bound for the expectation of the sums of overlaps corresponding to eigenvalues in any Borel set $\mc{D}\subset\mbb{C}$ of the form
\begin{align}
    \mbb{E}\sum_{z_{n}\in\mc{D}}O_{nn}&\leq CN^{1+\delta}(N|\mc{D}|)\label{eq:boundedDensity}
\end{align}
for any $\delta>0$ and complex matrices with i.i.d. entries having bounded densities, with an analogous result for real matrices. In view of Chalker--Mehlig's result, this bound is optimal up to the factor $N^{\delta}$. The results in \cite{erdos_wegner_2024} are in fact more general and allow for a deterministic shift $X+A$. Previous, slightly weaker, results of a similar nature had been obtained by Banks--Garza-Vargas--Kulkarni--Srivastava \cite{banks_overlaps_2020}, \cite{banks_gaussian_2021} and Jain--Sah--Sawhney \cite{jain_real_2021}. The motivation for results of this kind comes from the problem of finding a perturbation of a given matrix whose eigenvalue condition numbers satisfy a universal (dimension dependent) upper bound, and originates in Davies' idea of ``approximate diagonalization" \cite{davies_approximate_2008}. 

\subsection{Main Results}
\paragraph{Notation} $\mbb{M}_{n}(\mbb{F})$ and $\mbb{M}^{H}_{n}(\mbb{F})$ denote the spaces of general and Hermitian matrices with entries in $\mbb{F}$. For $M\in\mbb{M}_{n}(\mbb{F})$, $|M|=\sqrt{M^{*}M}$, $\|M\|$ denotes the operator norm, $\|M\|_{2}$ the Frobenius norm and $\Tr{M}:=n^{-1}\tr M$ the normalised trace. The real and imaginary parts of $M$ are defined by $\Re M=\frac{1}{2}(M+M^{*})$ and $\Im M=\frac{1}{2i}(M-M^{*})$ respectively. $U(n)$ and $O(n)$ denote the unitary and orthogonal groups respectively. For $N\in\mbb{N}$, we write $[N]:=[1,...,N]$ and use the shorthand $|n|\in[N]$ to mean $n=\pm1,...,\pm N$. We denote by $\mbb{C}_{+}$ the open upper half-plane, $\mbb{D}\subset\mbb{C}$ the open unit disk and $\mbb{T}\subset\mbb{C}$ the unit circle. When $x$ belongs to a coset space of a compact Lie group (e.g. $U(n),\,O(n)/O(n-m)$), we denote by $\,\mathrm{d}_{H}x$ the Haar measure with total mass equal to the volume of the corresponding space. We denote generic positive constants by $C,c$ and use the notation
\begin{align*}
    x&\lesssim y\quad\text{if } x\leq Cy,\\
    x&=O(y)\quad\text{if }|x|\lesssim |y|,\\
    x&\simeq y\quad\text{if } cy\leq x\leq Cy.
\end{align*}
If $\mc{D}$ is a probability distribution, we write $X\sim\mc{D}$ to mean that $X$ has distribution $\mc{D}$. We use $\prec$ to denote stochastic domination: $X\prec Y$ or $X=O_{\prec}(Y)$ if, for any $\xi,D>0$, $|X|\leq N^{\xi}|Y|$ with probability at least $1-N^{-D}$ for sufficiently large $N>N_{0}(D,\xi)$. If $X$ and $Y$ are deterministic then $X\prec Y$ means that $|X|\leq N^{\xi}|Y|$ for any fixed $\xi>0$. By ``very high probability" we mean probability at least $1-N^{-D}$ for any $D>0$ and sufficiently large $N>N_{0}(D)$.

\begin{definition}\label{def1}
We say that $A=(a_{jk})_{j,k=1}^{N}$ is a \textbf{non-Hermitian Wigner matrix} if $a_{jk}$ are independent complex random variables such that $\Re a_{jk}$ is independent of $\Im a_{jk}$ and
\begin{align}
    \mbb{E}\left[a_{jk}\right]&=0,\label{cond1}\\
    \mbb{E}\left[N|a_{jk}|^{2}\right]&=1,\label{cond2}\\
    \mbb{E}\left[N^{p/2}(\Re a_{jk})^{p-q}(\Im a_{jk})^{q}\right]&\lesssim C_{p},\qquad p>2\text{ and }0\leq q\leq p.\label{cond3}
\end{align}
This includes the case when $a_{jk}$ is real, i.e. $\Im a_{jk}$ is identically zero.
\end{definition}

To unify the presentation of real and complex matrices, we define the symmetry parameter $\beta$ which is equal to 1 for real matrices and 2 for complex matrices. $Gin_{\beta}(N)$ denotes the ensemble of matrices whose entries are real ($\beta=1$) or complex ($\beta=2$) i.i.d. Gaussian random variables with mean zero and variance $1/N$. We define $\mbb{F}_{1}=\mbb{R},\,\mbb{F}_{2}=\mbb{C},\,\mbb{D}_{\beta}=\mbb{D}\cap\mbb{F}_{\beta}$, $\mbb{T}_{\beta}=\mbb{T}\cap\mbb{F}_{\beta}$ and $S^{n}_{\beta}=S^{n}\cap\mbb{F}_{\beta}^{n+1}$, where $S^{n}$ is the complex $n$-sphere. Given a matrix $X\in\mbb{M}_{N}(\mbb{F}_{\beta})$, we set $N_{1}=N_{\mbb{R}}$ and $N_{2}=N$, where $N_{\mbb{R}}$ is the number of real eigenvalues and we order the eigenvalues of real matrices such that $z_{n}\in\mbb{R},\,n=1,...,N_{\mbb{R}}$.

We define the constant
\begin{align}
    v_{\beta}&=\frac{1}{\beta(2\pi)^{\beta/2-1}}\times\begin{cases}
    2&\quad\beta=1\\
    2\pi&\quad\beta=2
    \end{cases}.
\end{align}
In the bulk limit we consider the rescaled overlap 
\begin{align}
    S_{n}&:=\frac{O_{nn}}{N(1-|z_{n}|^{2})},\label{eq:Sbulk}
\end{align}
in order to obtain a universal form for the limiting density that is independent of $z$. In the Gaussian case, the following density of $(z_{n},S_{n})$
\begin{align}
    \rho_{\beta,bulk}(z,s)&:=\frac{1}{v_{\beta}s^{\beta+1}}e^{-\frac{\beta}{2s}},\label{eq:rhoBulk}
\end{align}
has been obtained by Bourgade--Dubach \cite[Theorem 1.1]{bourgade_distribution_2020} for $\beta=2$ and Fyodorov \cite[Eq. (2.7) and (2.24)]{fyodorov_statistics_2018} for $\beta=1,2$.

At the edge, we cannot remove the $z$-dependence by rescaling, so instead we consider 
\begin{align}
    S_{n}&:=\frac{O_{nn}}{N^{1/2}}.\label{eq:Sedge}
\end{align}
The following limit has been obtained by Fyodorov \cite[Eq. (2.10) and (2.25)]{fyodorov_statistics_2018} for Gaussian matrices:
\begin{align}
    \rho_{\beta,edge}(\delta_{z},s)&:=\frac{1}{v_{\beta}s^{2\beta+1}}e^{-\frac{\beta}{4s^{2}}(1-2\delta_{z}s)}\nonumber\\
    &\times\frac{1}{(2\pi)^{\beta/2}}\det\left[\int_{\delta_{z}}^{\infty}x^{j+k-2}e^{-\frac{1}{2}x^{2}}\left[1+s(x-\delta_{z})\right]\,\mathrm{d}x\right]_{j,k=1}^{\beta},\label{eq:rhoEdge}
\end{align}
where 
\begin{align}
    \delta_{z}&:=\lim_{N\to\infty}N^{1/2}(|z|^{2}-1).
\end{align}
The exact expression in \cite{fyodorov_statistics_2018} can be recovered after evaluating the integral and determinant, but we prefer to write it in this form to unify the cases $\beta=1$ and $\beta=2$. Note that when $\delta_{z}\to-\infty$, the edge density tends to the bulk density after rescaling $s\to -s/\delta_{z}$.

Our main result is the universality of these limits for non-Hermitian Wigner matrices.
\begin{theorem}\label{thm1}
Let $X$ be a non-Hermitian Wigner matrix with real ($\beta=1$) or complex ($\beta=2$) entries. Let $f\in C^{2}(\mbb{F}_{\beta})$ and $g\in C^{5}([0,\infty])$ have compact support and, for $z_{0}\in\mbb{C}$, define 
\begin{align}
    f_{\beta,z_{0}}(z)&:=N^{\beta/2}f(N^{1/2}(z-z_{0})).
\end{align}
Let $\mathrm{d}m(z,s)$ be the Lebesgue measure on $\mbb{F}_{\beta}\times[0,\infty]$. Then there is a $\tau>0$ such that:
\begin{enumerate}[i)]
\item for any fixed $z_{0}\in\mbb{D}_{\beta}$ we have
\begin{align}
    \mbb{E}\left[\frac{1}{N^{\beta/2}}\sum_{n=1}^{N_{\beta}}f_{\beta,z_{0}}(z_{n})g\left(S_{n}\right)\right]&=\int_{\mbb{F}_{\beta}\times[0,\infty]}f_{\beta,z_{0}}(z)g(s)\rho_{\beta,bulk}(z,s)\,\mathrm{d}m(z,s)+O(N^{-\tau}),
\end{align}
where $S_{n}$ is given by \eqref{eq:Sbulk};
\item for any fixed $z_{0}\in\mbb{T}_{\beta}$ we have
\begin{align}
    \mbb{E}\left[\frac{1}{N^{\beta/2}}\sum_{n=1}^{N_{\beta}}f_{\beta,z_{0}}(z_{n})g\left(S_{n}\right)\right]&=\int_{\mbb{F}_{\beta}\times[0,\infty]}f_{\beta,z_{0}}(z)g(s)\rho_{\beta,edge}(\delta_{z},s)\,\mathrm{d}m(z,s)+O(N^{-\tau}),
\end{align}
where $S_{n}$ is given by \eqref{eq:Sedge}.
\end{enumerate}
\end{theorem}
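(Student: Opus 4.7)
The plan is to reduce to the Gaussian case by a Green's function (Lindeberg) comparison: the density $\rho_{\beta,bulk}$ (resp.\ $\rho_{\beta,edge}$) has already been computed in the Ginibre ensembles in \cite{bourgade_distribution_2020,fyodorov_statistics_2018}, so it suffices to show that the linear statistic on the left-hand side is (to leading order) insensitive to the distribution of the entries of $X$. The starting point is the identity
\begin{align*}
    O_{nn} &= \frac{1}{|\mbf{u}_{1}^{*}\mbf{v}_{1}|^{2}},
\end{align*}
where $\mbf{u}_{1},\mbf{v}_{1}$ are unit vectors spanning $\ker(X-z_{n})^{*}$ and $\ker(X-z_{n})$; this is immediate from $\mbf{l}_{n}^{*}\mbf{r}_{n}=1$ together with $\mbf{u}_{1}\propto\mbf{l}_{n}$ and $\mbf{v}_{1}\propto\mbf{r}_{n}$. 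Via Girko's Hermitization
\begin{align*}
    H_{z} &:= \begin{pmatrix}0 & X-z\\ (X-z)^{*} & 0\end{pmatrix},
\end{align*}
the eigenvalue locations $z_{n}$ are precisely the $z$'s at which $H_{z}$ admits a zero mode, and $\mbf{u}_{1},\mbf{v}_{1}$ appear as the top and bottom blocks of the corresponding eigenvector. Both $z_{n}$ and $O_{nn}$ are therefore extractable from the Hermitian resolvent $G(z,\eta):=(H_{z}-i\eta)^{-1}$ at small but non-zero $\eta$, which is the object to which standard Hermitian techniques apply.

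\textbf{Smoothed representation and comparison.} I would next combine Girko's formula
\begin{align*}
    \sum_{n}F(z_{n}) &= -\frac{1}{4\pi}\int_{\mbb{C}}\Delta F(z)\log\det(H_{z}^{2}+\eta^{2})\,\mathrm{d}m(z)+o(1)
\end{align*}
with a smoothed realization of $O_{nn}^{-1}=|\mbf{u}_{1}^{*}\mbf{v}_{1}|^{2}$ obtained by integrating off-diagonal entries of $G(z,\eta)$ against a window of width $\eta$, to rewrite the left-hand side as a smooth polynomial functional of entries of $G(z,\eta)$ at $\eta\simeq N^{-1-\epsilon}$, integrated over $z$ in an $N^{-1/2}$-neighbourhood of $z_{0}$. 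A standard Lindeberg swap -- replace the entries of $X$ one at a time by Ginibre entries matching low moments (three for $\beta=2$, two for $\beta=1$), bounding each swap by Taylor-expanding the resolvent and invoking the optimal local law for $H_{z}$ -- then produces the desired comparison, after which the Gaussian computation of \cite{bourgade_distribution_2020,fyodorov_statistics_2018} identifies the limit.

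\textbf{Technical inputs and main obstacle.} The comparison requires three ingredients: (i) an optimal isotropic local law for $G(z,\eta)$ down to $\eta\simeq N^{-1}$, already available for non-Hermitian Wigner matrices; (ii) a quantitative gap above the zero singular value of $X-z_{n}$ when $z_{n}$ is an edge eigenvalue, so that $\mbf{u}_{1},\mbf{v}_{1}$ depend Lipschitz-continuously on $X$ throughout the swap -- this is the first new bound advertised in the abstract; and (iii) a lower bound on $|\mbf{u}_{1}^{*}\mbf{v}_{1}|$, equivalently a polynomial upper bound on $O_{nn}$, for $|z|=1+O(N^{-1/2})$ -- the second new ingredient, preventing the smoothed representation from being destabilized by atypically large overlaps at the edge. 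Bulk analogues of (ii) and (iii) already follow from \cite{cipolloni_optimal_2024,erdos_wegner_2024}, so the genuine novelty is at the edge. The step I expect to be hardest is (iii): the spectral density of $H_{z}$ vanishes like a square root on the critical edge scale, so $|\mbf{u}_{1}^{*}\mbf{v}_{1}|$ is already delicately small, and a sharp polynomial lower bound with very high probability is needed if the swap error is to dominate over the $s^{-(2\beta+1)}$ part of $\rho_{\beta,edge}$ that $g$ sees. A secondary obstacle is the $\beta=1$ edge, where $\rho_{1,edge}$ carries a non-trivial $2\times 2$ determinantal structure arising from the coexistence of real and complex-conjugate-pair eigenvalues near $\mbb{T}$; the Lindeberg swap and the proof of (iii) must then be executed separately for the two eigenvalue types and must exploit the real structure of $X$.
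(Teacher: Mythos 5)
Your proposal captures the high-level shape of the argument and correctly identifies the two new a priori inputs at the edge (Theorem \ref{thm2} and Theorem \ref{thm3}), but it contains a structural gap that the paper's proof is specifically built to avoid.

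The difficulty with a \emph{direct} Lindeberg swap from a general non-Hermitian Wigner matrix to the Ginibre ensemble is that Definition \ref{def1} imposes no constraint on the third moment of $a_{jk}$, whereas Ginibre entries have all odd moments zero. You cannot ``replace the entries of $X$ by Ginibre entries matching low moments'': if you need three matching moments for the swap to close, and $\mbb{E}[a_{jk}^{3}]\neq0$, the target of the swap is simply not Ginibre. The paper circumvents this by a three-step scheme: (a) construct an auxiliary Wigner matrix $\wt{X}$ such that $X$ and the Gauss-divisible matrix $M=\frac{1}{\sqrt{1+t}}(\wt{X}+\sqrt{t}Y)$ with $t=N^{-1/3+\epsilon}$ are $t$-matching in the sense of Definition \ref{def2} (three moments exactly, the fourth to within $t/N^{2}$); (b) prove the limit density \emph{directly for the Gauss-divisible matrix $M$} (Proposition \ref{prop:gaussDivisible}); (c) run the Lindeberg comparison between $X$ and $M$ (Lemma \ref{lem:overlapComparison}). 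Your proposal collapses steps (a)--(b) into ``the Ginibre computation of \cite{bourgade_distribution_2020,fyodorov_statistics_2018} identifies the limit,'' but those computations rely crucially on exact rotational invariance, which $M$ does not have for the small $t$ required by the moment-matching constraint. Step (b) is a major and genuinely new portion of the paper (all of Section \ref{sec:thm1Gauss}): one must adapt Fyodorov's partial-Schur-plus-supersymmetry strategy to a fixed shift $X$, and the replacement for the integrable structure is the measure $\nu^{(\mbf{v})}_{\beta,z,s}$ in \eqref{eq:nu} together with concentration of quadratic forms against it (Lemmas \ref{lem:nuDuality}--\ref{lem:nuConc}). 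Nothing in your proposal accounts for this.

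A secondary inaccuracy concerns your ingredient (iii). Theorem \ref{thm3} is an \emph{upper} bound on $|\mbf{w}_{n}^{*}F\mbf{w}_{m}|$ for a wide range of index pairs $(n,m)$, and its role in the proof is not to furnish ``a sharp polynomial lower bound on $|\mbf{u}_{1}^{*}\mbf{v}_{1}|$ with very high probability.'' Rather, it (together with the gap bound of Theorem \ref{thm2} and rigidity) underpins Lemma \ref{lem:approximateOverlap}, showing that the resolvent-based surrogate $O_{\eta,\zeta}(z_{n})$ of \eqref{eq:Ohat} is close to $O_{nn}$, and feeds the a priori bounds of Lemma \ref{lem:aPrioriOverlap} on resolvent chains entering the Lindeberg expansion. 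An upper bound $O_{nn}\prec N^{1+\epsilon}$ is indeed needed (and is established in the course of the proof by an argument using the comparison lemma with a suitable $g$), but that is a distinct output, not the content of Theorem \ref{thm3}. You should also check your moment-matching count: the paper's Definition \ref{def2} is the same for $\beta=1$ and $\beta=2$ (three exact plus an approximate fourth), whereas you state two for $\beta=1$ and three for $\beta=2$, which appears to be a conflation with a different comparison setup.
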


Part of the proof of Theorem \ref{thm1} uses the fact that, when $z_{n}$ is a bulk eigenvalue, the least non-zero singular value $s_{2}(z_{n})$ of $X-z_{n}$ satisfies $s_{2}(z_{n})>N^{-1-\epsilon}$ with probability $1-N^{-\epsilon}$ for sufficiently small $\epsilon$ \cite[Theorem 2.1]{osman_least_2024}, \cite[Lemma 2.4]{dubova_gaussian_2024}. Combined with an upper bound on the inner product between left and right singular vectors of $X-z$, this allows for the construction of an approximate overlap as a function of resolvents of the Hermitisation, to which we can apply Girko's formula and the moment matching method of Tao and Vu \cite{tao_random_2015}. At the edge we need the following bound.
\begin{theorem}\label{thm2}
Let $X$ be a non-Hermitian Wigner matrix with real ($\beta=1$) or complex ($\beta=2$) entries and $z_{0}\in\mbb{T}_{\beta}$. Let $s_{1}(z)\leq\cdots\leq s_{N}(z)$ be the singular values of $X-z$. Then for any fixed $r>0$ and $\epsilon\in(0,1/24)$ we have
\begin{align}
    P\left(\min_{N^{1/2}|z_{n}-z_{0}|<r}s_{2}(z_{n})<N^{-3/4-\epsilon}\right)&\prec N^{-\epsilon},
\end{align}
uniformly in $z_{0}\in\mbb{T}_{\beta}$.
\end{theorem}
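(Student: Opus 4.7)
The plan is to reduce the event concerning random eigenvalues to a deterministic event uniform over $z$ in a neighbourhood of $z_{0}$, and then to establish a Wegner-type bound on the singular value counting function of $X-z$ near the Hermitised edge. Set $\eta = N^{-3/4-\epsilon}$ and $B = \{z : |z-z_{0}| < rN^{-1/2}\}$. Since $s_{1}(X-z_{n})=0$ for every eigenvalue $z_{n}$, it suffices to show that, with probability at least $1-N^{-\epsilon}$,
\[\#\{i : s_{i}(X-z) \le \eta\} \le 1 \qquad \text{for all } z \in B,\]
since this immediately forces $s_{2}(z_{n})>\eta$ at any eigenvalue $z_{n}\in B$.

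Because $(X-z)-(X-z')=(z'-z)I$, the maps $z\mapsto s_{i}(X-z)$ are $1$-Lipschitz in $z$. Choosing a net $\mc{N}\subset B$ at spacing $\eta$ gives $|\mc{N}|\lesssim N^{1/2+2\epsilon}$, and by Lipschitz continuity the problem reduces to showing, for each deterministic net point $w\in\mc{N}$ (with $|w|=1+O(N^{-1/2})$), the Wegner-type estimate
\[\mbb{P}\bigl(\#\{i : s_{i}(X-w)\le 2\eta\}\ge 2\bigr)\prec (N\eta^{2})^{2}=N^{-1-4\epsilon},\]
because the union bound over $\mc{N}$ then yields total failure probability $\prec N^{-1/2-2\epsilon}\prec N^{-\epsilon}$. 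Heuristically the factor $(N\eta^{2})^{2}$ is sharp: the density of singular values of $X-w$ near the origin vanishes linearly, $\rho_{w}(s)\simeq s$, because $|w|=1$ is precisely where the two Hermitised spectral edges meet at $0$; the expected number of singular values in $[0,2\eta]$ is therefore $\simeq N\eta^{2}$, and the square governs the probability of seeing two.

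\textbf{Main obstacle.} The scale $\eta=N^{-3/4-\epsilon}$ is just below the natural threshold $\sim N^{-3/4}$ of the optimal edge local law for the Hermitisation of $X-w$, so the quadratic Wegner bound cannot be read off the local law directly (which yields only $\mbb{E}[\#\{s_{i}\le 2\eta\}^{2}]=O(1)$). I would deduce it in two stages: first, invoke the edge local law at scale $\eta_{*}=N^{-3/4+\delta}$ to obtain $\#\{s_{i}\le \eta_{*}\}=O_{\prec}(1)$, thereby confining the small singular values of $X-w$ to a random subspace of dimension $\prec 1$; second, upgrade to the second-moment estimate at the scale $2\eta<\eta_{*}$ through a Green's function comparison (Tao--Vu moment matching) at the level of the Hermitisation, which reduces the statement to the Ginibre case $X\sim Gin_{\beta}(N)$. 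There the joint density of the smallest singular values of $X-w$ admits an explicit Laguerre-type determinantal ($\beta=2$) or Pfaffian ($\beta=1$) form, from which the factor $(N\eta^{2})^{2}$ is read off directly. The main technical difficulty is carrying the quadratic decay through the comparison to general non-Hermitian Wigner matrices; I expect the constraint $\epsilon<1/24$ to arise from balancing the shift $\delta$ above against the number of moments matched.
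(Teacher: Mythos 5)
The overall skeleton (Gaussian/Ginibre input plus a Green's function comparison to remove the Gaussian component, with the constraint $\epsilon<1/24$ coming from the comparison step) is the right shape and does echo the paper's strategy.  However the specific reduction you propose does not work, for a quantitative reason.

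Your first step replaces the sum over the eigenvalues $z_{n}$ of $X$ in the disk $B$ by a uniform statement over all of $B$, implemented by a net $\mc{N}\subset B$ of mesh $\eta$ and a union bound over $|\mc{N}|\asymp N^{1/2+2\epsilon}$ points.  For this to close you need a Wegner-type estimate at a fixed $w$ strong enough to beat $N^{1/2+2\epsilon}$.  You claim $\mbb{P}(\#\{s_{i}(X-w)\le 2\eta\}\ge 2)\prec (N\eta^{2})^{2}$, based on a linear vanishing $\rho_{w}(s)\simeq s$.  But at the circular-law edge the symmetrised density of the Hermitisation has a cubic cusp, not a linear zero: from \eqref{eq:rhoAsymp1} (and the quantile asymptotics $|\gamma_{n}|\simeq(|n|/N)^{3/4}$), one has $\rho_{w}(E)\simeq |E|^{1/3}$ for $|w|=1$, so the expected number of singular values below $\eta$ is $\asymp N\eta^{4/3}$, not $N\eta^{2}$.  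Thus the best Wegner-square you can hope for is $(N\eta^{4/3})^{2}=N^{-8\epsilon/3}$, and the union bound yields $N^{1/2+2\epsilon}\cdot N^{-8\epsilon/3}=N^{1/2-2\epsilon/3}$, which \emph{diverges}.  The reduction to a deterministic net is therefore not tenable; you must exploit that there are only $O_{\prec}(1)$ actual eigenvalues $z_{n}$ of $X$ in $B$ and estimate the observable only at those random points.  This is precisely why the paper works with the eigenvalue sum $\sum_{n}f_{z_{0}}(z_{n})g_{\eta}(z_{n})$ directly and uses Girko's formula to turn it into a two-dimensional $z$-integral rather than a one-dimensional Wegner estimate at a single $w$.

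Two further points where the proposal would break down even after fixing the exponent.  First, the singular values of a \emph{shifted} Ginibre matrix $X-w$ with $w\neq0$ do not form a Laguerre (Wishart) or Pfaffian ensemble; the explicit determinantal/Pfaffian structure holds only at $w=0$.  The paper handles the Gaussian case via the partial Schur decomposition and a supersymmetric evaluation of $\mbb{E}_{N-1}|\det(M_{N-1}-z)|^{\beta}$ (Proposition~\ref{prop:leastSVGauss}), which works for Gauss-divisible matrices $X+\sqrt{t}Y$ and has no finite-$N$ closed form to read off.  Second, the Lindeberg comparison in the paper (Lemma~\ref{lem:svComparison}) requires the a~priori resolvent bounds of Lemma~\ref{lem:leastSVaPriori} down to the scale $\eta=N^{-3/4-\epsilon}$, and these rest on the singular-vector overlap bound of Theorem~\ref{thm3} (proved via the characteristic flow), which your outline does not invoke.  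Without that input the resolvent-chain entries that arise in the cumulant expansion cannot be controlled at the required accuracy, and the constraint $\epsilon<1/24$ (which in the paper comes from \eqref{eq:pBound}) has no derivation.
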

Gauss-divisible matrices can be shown to satisfy the following much stronger bound:
\begin{align*}
    P\left(\min_{N^{1/2}|z_{n}-z_{0}|<r}s_{2}(z_{n})<\eta\right)&\lesssim N^{3/2}\eta^{2}|\log N^{3/4}\eta|^{2-\beta}+N^{-D},
\end{align*}
for any $\eta>0$ and $D>0$. Our use of Girko's formula and the Lindeberg method leads to the constraint $\epsilon<1/24$.

Given the above bound, we can also construct an approximate overlap for edge eigenvalues. To carry out the moment matching argument we need some a priori bounds on traces of resolvents. In the bulk these can be deduced from a suitable bound on the inner product between left and right singular vectors of $X-z$, or equivalently a bound on the quantity $\mbf{w}_{n}^{*}F\mbf{w}_{m}$, where
\begin{align*}
    F&=\begin{pmatrix}0&1_{N}\\0&0\end{pmatrix},
\end{align*}
and  $\{\mbf{w}_{n}\}_{|n|\in[N]}$ are the eigenvectors of 
\begin{align*}
    W_{z}&=\begin{pmatrix}0&X-z\\X^{*}-\bar{z}&0\end{pmatrix}.
\end{align*}
At the edge the analogous bound needs to decay in $||n|-|m||$. A large part of this paper is devoted to stengthening the bound
\begin{align}
    |\mbf{w}_{n}^{*}F\mbf{w}_{m}|&\prec N^{-1/4},\quad |n|,|m|<N^{\xi},
\end{align}
which is derived in \cite[Corollary 3.6]{cipolloni_universality_2024} as a corollary to a certain two-resolvent local law. The bound we obtain is the following.
\begin{theorem}\label{thm3}
Let $\big||z|-1\big|\lesssim N^{-1/2}$. Then there are (small) constants $c_{1},c_{2}>0$ such that
\begin{align}
    |\mbf{w}_{n}^{*}F\mbf{w}_{m}|&\prec\begin{cases}
    \left(\frac{|n|\wedge |m|}{N}\right)^{1/4}& \quad |n|\wedge|m|\geq c_{1}(|n|\vee |m|)\\
    \frac{1}{(N(|n|\vee |m|))^{1/4}}&\quad |n|\wedge|m|\leq c_{1}(|n|\vee|m|)
    \end{cases},\label{eq:svOverlapEdge}
\end{align}
for all $|n|,|m|<c_{2}N$.
\end{theorem}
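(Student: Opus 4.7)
The plan is to reduce the bound on $\mbf{w}_{n}^{*}F\mbf{w}_{m}$ to an overlap estimate for left and right singular vectors of $X-z$ and to control the latter via a two-resolvent local law for the Hermitisation $W_{z}$. The chiral symmetry $\Gamma W_{z}\Gamma=-W_{z}$ with $\Gamma=\diag(1_{N},-1_{N})$ forces $\mbf{w}_{n}=\frac{1}{\sqrt{2}}(\mbf{u}_{|n|},\sigma_{n}\mbf{v}_{|n|})$ for a singular-vector pair $(\mbf{u}_{|n|},\mbf{v}_{|n|})$ of $X-z$ and a sign $\sigma_{n}\in\{\pm1\}$, whence $\mbf{w}_{n}^{*}F\mbf{w}_{m}=\frac{\sigma_{m}}{2}\mbf{u}_{|n|}^{*}\mbf{v}_{|m|}$. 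The main spectral inequality, valid when $E_{1}=\lambda_{n}$ and $E_{2}=\lambda_{m}$ are eigenvalues of $W_{z}$, is
\begin{equation*}
    |\mbf{w}_{n}^{*}F\mbf{w}_{m}|^{2}\leq\eta_{1}\eta_{2}\tr\!\left[\Im G(E_{1}+i\eta_{1})\,F^{*}\,\Im G(E_{2}+i\eta_{2})\,F\right],
\end{equation*}
where $G(w)=(W_{z}-w)^{-1}$, obtained by retaining a single non-negative term in the spectral decomposition of the trace.

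The optimal scale is the local eigenvalue spacing at $E_{i}$. Near the cubic cusp of $W_{z}$ (where $|z|=1+O(N^{-1/2})$), rigidity for the singular values of $X-z$ yields $s_{k}\simeq(k/N)^{3/4}$ and local spacing $\eta_{k}\simeq N^{-3/4}k^{-1/4}$ for $k<c_{2}N$. Setting $E_{i}=s_{|n_{i}|}$ and $\eta_{i}=N^{\xi}\eta_{|n_{i}|}$, I would invoke a two-resolvent local law of the form
\begin{equation*}
    \tr[G(w_{1})F^{*}G(w_{2})F]=M(w_{1},w_{2})+O_{\prec}(\textup{err}),
\end{equation*}
for a deterministic $M$ solving the matrix Dyson equation for $W_{z}$ with sources $F^{*}$ and $F$. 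Because $FF^{*}=\diag(1_{N},0)$ and $F^{*}F=\diag(0,1_{N})$ are complementary block projections, $M$ simplifies to a closed-form expression inheriting the cube-root singularity of the Stieltjes transform at the cusp, and a direct computation should give $\eta_{1}\eta_{2}\cdot|\Im M|\simeq(|n|\wedge|m|/N)^{1/2}$ in the comparable regime $|n|\wedge|m|\geq c_{1}(|n|\vee|m|)$, yielding the first case of \eqref{eq:svOverlapEdge}. In the separated regime the spacing $|E_{1}-E_{2}|\gtrsim(|n|\vee|m|/N)^{3/4}\gg\eta_{1}+\eta_{2}$ produces a decoupling factor $|E_{1}-E_{2}|^{-2}$ in $M$, improving the bound to the $(N(|n|\vee|m|))^{-1/2}$ factor in the second case.

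To pass from a single pair of eigenvalues to uniform control over $|n|,|m|<c_{2}N$, I would decompose dyadically in the indices and apply a union bound, using the $N^{-1/4}$ estimate from \cite{cipolloni_universality_2024} as input at the smallest scale $|n|,|m|\lesssim N^{\xi}$ and the trivial $|\mbf{w}_{n}^{*}F\mbf{w}_{m}|\leq1$ for macroscopic indices. The principal obstacle is establishing the two-resolvent local law uniformly across the cubic cusp at the sharpest scale $\eta\simeq\eta_{k}$, with a fluctuation bound dominated by the deterministic part: the $G$-loop / master-inequality framework of Cipolloni--Erd\H{o}s--Schr\"{o}der must be adapted to the non-Hermitian, non-traceless source $F$, whose block off-diagonal structure forces a genuinely matrix-valued Dyson equation whose stability degrades precisely at the cusp where $\Im m(w)\to 0$. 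A secondary difficulty is propagating uniform rigidity and eigenvector delocalisation for $W_{z}$ beyond the microscopic edge regime, which is needed to place the $E_{i}$ correctly and to iterate the bound for generic indices.
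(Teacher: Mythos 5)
Your high-level reduction is correct and agrees with the paper's: bound $|\mbf{w}_{n}^{*}F\mbf{w}_{m}|^{2}$ by a trace of $\Im G_{z}\,F\,\Im G_{z}\,F^{*}$ evaluated at $w_{j}\simeq\gamma_{n_{j}}+i\eta_{n_{j}}$ (using rigidity to place the real parts), then appeal to a two-resolvent local law whose deterministic part encodes the decay. However, your claimed form of the deterministic approximation in the separated regime is wrong, and this is not a cosmetic error. You assert a decoupling factor $|E_{1}-E_{2}|^{-2}$ and claim this recovers $(N(|n|\vee|m|))^{-1/2}$; plugging $\eta_{k}\simeq N^{-3/4}k^{-1/4}$ and $|E_{1}-E_{2}|\simeq(|n|\vee|m|/N)^{3/4}$ into $\eta_{1}\eta_{2}|E_{1}-E_{2}|^{-2}$ gives $(|n||m|)^{-1/4}(|n|\vee|m|)^{-3/2}$, which does not match $(N(|n|\vee|m|))^{-1/2}$ except on a lower-dimensional curve. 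The correct mechanism (Lemma~\ref{lem:MBounds}) is that $|\Tr{M_{z}(w_{1},F,w_{2})F^{*}}|\lesssim|m_{1}m_{2}|/\phi(w_{1},w_{2})$ with $\phi\gtrsim\eta_{1}/\rho_{1}+\eta_{2}/\rho_{2}+(|m_{1}|-|m_{2}|)^{2}$, and since $|m_{j}|\simeq|w_{j}|^{1/3}$ near the cusp the separated-regime decoupling is of cube-root type, $(|m_{1}|-|m_{2}|)^{2}\simeq(\rho_{1}\vee\rho_{2})^{2}\simeq(|n|\vee|m|/N)^{1/2}$, not the quadratic $|E_{1}-E_{2}|^{-2}$ familiar from the regular bulk.

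The second and more serious gap is that you defer the entire substance of the theorem to a ``principal obstacle'' with no concrete route. Proving the two-resolvent law with the correct $\phi$-dependence at scales $N\eta\rho\gtrsim N^{\xi}$, uniformly across the cubic cusp, is not a lemma one cites but is Sections~\ref{sec:svOverlap} of the paper in their entirety. The specific difficulty you do not address: naive use of Cauchy--Schwarz or the Ward identity at several points in the SDE/cumulant expansions introduces factors of $1/|\Im w|$ that wash out precisely the $(|m_{1}|-|m_{2}|)^{2}$ term in $\phi$ and hence the second case of \eqref{eq:svOverlapEdge}. The paper circumvents this by first deriving entrywise bounds and a fluctuation-averaging lemma (Lemma~\ref{lem:fluctuationAveraging}, bounding $|\sum_{j}G_{\mu,j}|\prec1/(\eta\rho)$), then running the characteristic flow with Gr\"{o}nwall to obtain the law for Gauss-divisible matrices (Lemmas~\ref{lem:av1Gaussian}--\ref{lem:iso2Gaussian}), and finally removing the Gaussian component via a GFT that again invokes fluctuation averaging rather than Ward (Lemmas~\ref{lem:isoGFT1}--\ref{lem:av2GFT}). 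Without this structure your plan cannot produce a deterministic upper bound that is actually sensitive to $|E_{1}-E_{2}|$, and the dichotomy in \eqref{eq:svOverlapEdge} is lost.
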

The main point is that there are two regimes depending on whether $|n|$ and $|m|$ are comparable. Numerical evidence (see Section \ref{sec:numerics}) suggests that the decay of $|\mbf{w}_{n}^{*}F\mbf{w}_{m}|$ in $||n|-|m||$ becomes increasingly sharp as $|n|\vee|m|$ increases, but the bound we obtain can only capture this decay accurately for $|n|\vee|m|\lesssim1$. The lack of optimality stems from the use of the quantity $\tr \Im G_{z}(w_{1})F\Im G_{z}(w_{2})F^{*}$, where $G_{z}(w)=(W_{z}-w)^{-1}$, as an upper bound for $|\mbf{w}_{n}^{*}F\mbf{w}_{m}|^{2}$. This approach cannot give the optimal bound, since the former quantity decays smoothly in $||n|-|m||$. Nevertheless, the bound in \eqref{eq:svOverlapEdge} is strong enough for our purpose of proving comparison lemmas for certain functions of resolvents.

The outline of the rest of the paper is as follows. In Section \ref{sec:gaussDivisible} we recall the partial Schur decomposition and define the set of matrices for which, after adding a small Gaussian matrix, we can prove universality. The results in this section are the extension to edge eigenvalues of some results in \cite{maltsev_bulk_2024,osman_bulk_2024}. In Sections \ref{sec:thm1Gauss} and \ref{sec:thm2Gauss} we prove versions of Theorems \ref{thm1} and \ref{thm2} for Gauss-divisible matrices. The main idea in Section \ref{sec:thm1Gauss} is to adapt Fyodorov's calculation \cite{fyodorov_statistics_2018} to Gauss-divisible matrices. In the Gauss-divisible setting we do not have rotational invariance and our approach centres on the introduction of a certain measure on the unit sphere (see \eqref{eq:nu}) with respect to which we can prove concentration of quadratic forms. In Section \ref{sec:svOverlap} we prove Theorem \ref{thm3} using the dynamical method of Cipolloni--Erd\H{o}s--Xu \cite{cipolloni_universality_2024}. Since we want to capture the dependence of traces of resolvents $G_{z}(w)$ on the real parts of the arguments $w$, there are various points at which we cannot afford to use Cauchy--Schwarz and the resolvent identity, since this necessarily entails additional factors of $1/\Im w$. To circumvent this we need to first obtain bounds on the matrix entries of resolvent expressions and combine this with fluctuation averaging. In Section \ref{sec:comparison} we prove comparison lemmas for the least non-zero singular value and the diagonal eigenvector overlap via Girko's formula, which allow us to remove the Gaussian component and complete the proofs of Theorems \ref{thm1} and \ref{thm2}. In the appendix we prove some auxiliary results relating to local laws that we need for our arguments, and present some numerical simulations of the singular vector overlap.

\section{Partial Schur Decomposition}\label{sec:gaussDivisible}
We begin by recalling the partial Schur decomposition. For the purposes of this paper we only need the one-step decomposition. We define the space 
\begin{align}
    \Omega_{\beta}&=\mbb{F}_{\beta}\times S^{N-1}_{\beta}\times\mbb{F}_{\beta}^{N-1}\times\mbb{M}_{N-1}(\mbb{F}_{\beta}),
\end{align}
where we recall that $\mbb{F}_{1}=\mbb{R}$ and $\mbb{F}_{2}=\mbb{C}$, and consider the map
\begin{align}
    T\colon\Omega_{\beta}\ni(z,\mbf{v},\mbf{w},M)&\mapsto R(\mbf{v})\begin{pmatrix}z&\mbf{w}^{*}\\0&M\end{pmatrix}R(\mbf{v})\in\mbb{M}_{N}(\mbb{F}_{\beta}),
\end{align}
where $R(\mbf{v})$ is the Householder than exchanges $\mbf{v}$ with the first coordinate vector. We write $R(\mbf{v})=(\mbf{v},V)$, where $V\in\mbb{F}_{\beta}^{N\times(N-1)}$ is a partial isometry, i.e. $V^{*}V=1_{N-1}$. For $X\in\mbb{M}_{N}(\mbb{F}_{\beta})$ and $z\in\mbb{C}$, define the probability measure $\mu_{\beta,z}^{X}$ on $S^{N-1}_{\beta}$ by
\begin{align}
    d\mu^{X}_{\beta,z}(\mbf{v})&=\frac{1}{K_{\beta}(z)}\left(\frac{\beta N}{2 \pi t}\right)^{\beta N/2-1}e^{-\frac{\beta N}{2t}\|X_{z}\mbf{v}\|^{2}}\,\mathrm{d}_{H}\mbf{v},\label{eq:mu}
\end{align}
with normalisation $K_{\beta}(z)$. Let $\mbb{E}_{\beta,z}$ denote the expectation with respect to $\mu_{\beta,z}^{X}$. Let $\mbb{E}_{\mbf{w}}$ denote the expectation with respect to $\mbf{w}\in\mbb{F}^{N-1}_{\beta}$ distributed according to the $\mbf{v}$-dependent measure
\begin{align}
    \left(\frac{\beta N}{2\pi t}\right)^{\beta(N-1)/2}\exp\left\{-\frac{\beta N}{2t}\|\mbf{w}-\mbf{b}\|^{2}\right\}\,\mathrm{d}^{\beta}\mbf{w},
\end{align}
where $\mbf{b}=V^{*}X^{*}\mbf{v}$ and $\,\mathrm{d}^{\beta}\mbf{w}$ is the Lebesgue measure on $\mbb{F}_{\beta}^{N-1}$. Let $M_{N}=X+\sqrt{t}Y_{N}$, where $Y_{N}\sim Gin_{\beta}(N)$, and denote by $\mbb{E}_{N}$ the expectation with respect to $Gin_{\beta}(N)$. For a bounded and measurable $f:\Omega_{\beta}\to\mbb{C}$, we define the linear statistic
\begin{align*}
    \mc{L}_{\beta}(f)&:=\sum_{n=1}^{N_{\beta}}f(z_{n},\mbf{r}_{n},\mbf{w}_{n},M_{n}),
\end{align*}
where $N_{1}=N_{\mbb{R}},\,N_{2}=N$ and
\begin{align*}
    \left\{z_{n},\mbf{r}_{n},\mbf{w}_{n},M_{n}\right\}_{n\in[N]}=T^{-1}(M_{N}).
\end{align*}
Let $M_{N-1}:=X^{(\mbf{v})}+\sqrt{\frac{Nt}{N-1}}Y_{N-1}$, where $Y_{N-1}\sim Gin_{\beta}(N-1)$. Then we have
\begin{align}
    \mbb{E}_{N}\left[\mc{L}_{\beta}(f)\right]&=\frac{\beta N}{4\pi^{\beta}t}\int_{\mbb{F}_{\beta}}K_{\beta}(z)\mbb{E}_{\beta,z}\left[\mbb{E}_{\mbf{w}}\left[\mbb{E}_{N-1}\left[f\left(z,\mbf{v},\mbf{w},M_{N-1}\right)|\det(M_{N-1}-z)|^{\beta}\right]\right]\right]\,\mathrm{d}^{\beta}z,\label{eq:partialSchur}
\end{align}

To streamline the asymptotic analysis of this formula, we state some preparatory definitions and lemmas. Recall that $X_{z}=X-z$,
\begin{align*}
    W_{z}&=\begin{pmatrix}0&X-z\\X^{*}-\bar{z}&0\end{pmatrix},
\end{align*}
and $G_{z}(w)=(W_{z}-w)^{-1}$. We define the matrices $H_{z}(\eta)$ and $\wt{H}_{z}(\eta)$ by
\begin{align}
    G_{z}(i\eta)&=:\begin{pmatrix}i\eta \wt{H}_{z}(\eta)&X_{z}H_{z}(\eta)\\H_{z}(\eta)X^{*}_{z}&i\eta H_{z}(\eta)\end{pmatrix},
\end{align}
which is equivalent to
\begin{align*}
    H_{z}(\eta)&:=(\eta^{2}+|X_{z}|^{2})^{-1},\\
    \wt{H}_{z}(\eta)&:=(\eta^{2}+|X^{*}_{z}|^{2})^{-1}.
\end{align*}

\begin{definition}\label{def:Ebulk}
We say that $X\in\mc{E}_{bulk}(t)$ if there is an $\epsilon>0$ such that
\begin{itemize}
\item[] \begin{equation}\tag{A1}\|X\|\leq e^{\log^{2}N};\label{A1}\end{equation}
\item[] \begin{equation}\tag{A2}\eta\Tr{H_{z}(\eta)}\simeq1;\label{A2}\end{equation}
\item[] \begin{equation}\tag{A3}\eta^{3}\Tr{H^{2}_{z}(\eta)}\simeq1;\label{A3}\end{equation}
\item[] \begin{equation}\tag{A4}\eta^{2}\Tr{H_{z}(\eta)\wt{H}_{z}(\eta)}\lesssim1;\label{A4}\end{equation}
\item[] \begin{equation}\tag{A5}\eta\Tr{H_{z}(\eta)X_{z}H_{z}(\eta)}\lesssim1,\label{A5}\end{equation}
\end{itemize}
for $N^{-\epsilon}t\leq\eta\lesssim1$ (compare with assumption A1 in \cite{maltsev_bulk_2024}).
\end{definition}

\begin{definition}\label{def:Eedge}
We say that $X\in\mc{E}_{edge}(t)$ if, for any $z_{0}\in\mbb{C}$ with $\delta_{0}:=|z_{0}|^{2}-1\simeq t$, we have
\begin{itemize}
\item[] \begin{equation}\tag{B1}\|X\|\leq e^{\log^{2}N};\label{B1}\end{equation}
\item[] \begin{equation}\tag{B2}s_{1}(z)\gtrsim\delta_{0}^{3/2};\label{B2}\end{equation}
\item[] \begin{equation}\tag{B3}|\delta_{0}\Tr{H_{z}(0)}-1|\lesssim|z-z_{0}|/\delta_{0};\label{B3}\end{equation}
\item[] \begin{equation}\tag{B4}\Tr{H^{2}_{z}(0)}\simeq \delta_{0}^{-4};\label{B4}\end{equation}
\item[] \begin{equation}\tag{B5}\Tr{H_{z}(0)\wt{H}_{z}(0)}\simeq\delta_{0}^{-3};\label{B5}\end{equation}
\item[] \begin{equation}\tag{B6}|\Tr{H_{z}(\eta)X_{z}H_{z}(\eta)}|\simeq \delta_{0}^{-2}\quad\text{for}\quad |\eta^{2}|<\delta_{0}^{2}|z-z_{0}|;\label{B6}\end{equation}
\end{itemize}
for $|z-z_{0}|\lesssim N^{-1/2}$.
\end{definition}

The fact that a non-Hermitian Wigner matrix $X$ belongs to $\mc{E}_{bulk}(t)$ for $t\geq N^{-1/2+\epsilon}$ with very high probability is proven in \cite[Section 8]{maltsev_bulk_2024} and follows from the two-resolvent local law in \cite[Theorem 3.3]{cipolloni_mesoscopic_2024} for complex matrices and \cite[Corollary B.4]{cipolloni_maximum_2024} for real matrices. In Section \ref{sec:gap} we show that $X\in\mc{E}_{edge}(t)$ for $t\geq N^{-2/5+\epsilon}$, also with very high probability.

In each lemma of this section we assume the following:
\begin{itemize}
\item $|z-z_{0}|\lesssim N^{-1/2}$;
\item $t\geq N^{-2/5+\epsilon}$;
\item $X\in\mc{E}_{bulk}(t)$ or $X\in\mc{E}_{edge}(t)$ if $z_{0}\in\mbb{D}_{\beta}$ or $z_{0}\in\sqrt{1+t}\mbb{T}_{\beta}$ respectively;
\end{itemize}
Note that the variance of the entries of $X+\sqrt{t}Y$ is $1+t$ so the edge is at $\sqrt{1+t}\mbb{T}_{\beta}$ rather than $\mbb{T}_{\beta}$. We will only give proofs for $z_{0}\in\sqrt{1+t}\mbb{T}_{\beta}$ since the bulk case is dealt with in \cite{maltsev_bulk_2024,osman_bulk_2024}.

We start with the equation 
\begin{align}
    t\Tr{H_{z}(\eta)}&=1,\qquad\eta^{2}>-s^{2}_{1}(z).\label{eq:critical}
\end{align}
\begin{lemma}\label{lem:eta}
The equation \eqref{eq:critical} satisfies the following:
\begin{enumerate}[i)]
\item for $z_{0}\in\mbb{D}_{\beta}$, there is unique solution $\eta_{z,t}>0$ such that
\begin{align}
    \eta_{z,t}&\simeq t;\label{eq:eta_zBulk}
\end{align}
\item for $z_{0}\in\sqrt{1+t}\mbb{T}_{\beta}$, there is a unique solution $\eta_{z,t}$ such that
\begin{align}
    |\theta_{z,t}|&\lesssim\frac{t^{2}}{N^{1/2}},\label{eq:eta_zEdge}
\end{align}
where $\theta_{z,t}:=\eta^{2}_{z,t}$.
\end{enumerate}
\end{lemma}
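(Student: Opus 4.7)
The plan is to exploit the fact that $H_{z}(\eta) = (\eta^{2} + |X_{z}|^{2})^{-1}$ depends on $\eta$ only through $\theta = \eta^{2}$, so that $\phi(\theta) := t\Tr{(\theta + |X_{z}|^{2})^{-1}}$ is a smooth, strictly decreasing function on $(-s_{1}^{2}(z), \infty)$, with limits $+\infty$ at the left endpoint and $0$ at $+\infty$. Hence the equation $\phi(\theta) = 1$ admits at most one solution $\theta_{z,t}$, and existence plus the claimed asymptotics reduces to locating the unique crossing in each regime.

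For part (i), the bound \eqref{A2} gives $\eta\Tr{H_{z}(\eta)} \simeq 1$ on $N^{-\epsilon} t \leq \eta \lesssim 1$, and therefore $\phi(\eta^{2}) \simeq t/\eta$ throughout this range. Since the left endpoint $\eta \simeq N^{-\epsilon}t$ yields $\phi \simeq N^{\epsilon} \gg 1$ and the right endpoint $\eta \simeq 1$ yields $\phi \simeq t \ll 1$, the IVT furnishes a crossing which, by the two-sided bound, must occur at $\eta_{z,t} \simeq t$.

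For part (ii), the strategy is to evaluate $\phi$ at the natural base point $\theta = 0$ and linearise. Using \eqref{B3} with $\delta_{0} = t$ gives $\phi(0) = 1 + O(|z - z_{0}|/t) = 1 + O(N^{-1/2}/t)$, while $\phi'(0) = -t\Tr{H_{z}^{2}(0)} \simeq -t^{-3}$ by \eqref{B4}. The mean value theorem then gives $\theta_{z,t} = (\phi(0) - 1)/\phi'(\xi)$ for some $\xi$ between $0$ and $\theta_{z,t}$; provided $\phi'$ remains of order $-t^{-3}$ on this segment, this yields $|\theta_{z,t}| \lesssim t^{3} \cdot N^{-1/2}/t = t^{2}/N^{1/2}$, as claimed.

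The main technical point to verify is that $\phi'(\xi) \simeq -t^{-3}$ uniformly on the segment joining $0$ and $\theta_{z,t}$. Writing $\phi'(\theta) = -t\Tr{(\theta + |X_{z}|^{2})^{-2}}$, the relative variation of $\phi'$ on $|\theta| \lesssim t^{2}/N^{1/2}$ is controlled by $|\theta|/s_{1}^{2}(z)$; by \eqref{B2} we have $s_{1}^{2}(z) \gtrsim t^{3}$, and since $t \geq N^{-2/5 + \epsilon} \gg N^{-1/2}$ this ratio is bounded by $1/(tN^{1/2}) \ll 1$. This simultaneously justifies the linearisation and confirms that the candidate root lies safely inside the domain of definition of $\phi$, completing the argument.
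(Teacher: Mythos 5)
Your argument for part (ii) carries out essentially the same computation as the paper's own proof: both linearise $\phi(\theta) = t\Tr{(\theta + |X_z|^2)^{-1}}$ around $\theta = 0$, using \eqref{B3} for the value $\phi(0) = 1 + O(N^{-1/2}/t)$, \eqref{B4} for the derivative $\phi'(0) = -t\Tr{H_z^2(0)} \simeq -t^{-3}$, and \eqref{B2} to ensure the derivative is stable on the relevant scale. The paper phrases this by directly substituting $\theta = \pm\omega N^{-1/2}t^2$ and using the exact resolvent identity $\Tr{H_z(\eta)} = \Tr{H_z(0)} - \eta^2\Tr{H_z(\eta)H_z(0)}$ together with monotonicity; you phrase it via the mean value theorem. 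Part (i) is not proved in the paper (it is cited from earlier work), but your IVT argument via \eqref{A2} is valid.

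There is, however, a small logical circularity in your MVT step for part (ii). You write $\theta_{z,t} = (\phi(0)-1)/\phi'(\xi)$ with $\xi$ between $0$ and $\theta_{z,t}$, and then verify $\phi'(\xi) \simeq -t^{-3}$ by controlling the relative variation of $\phi'$ \emph{on the set $|\theta| \lesssim t^2/N^{1/2}$}. But the segment $[0,\theta_{z,t}]$ is not yet known to lie in that set — that is precisely the conclusion you are after — so the derivative bound at $\xi$ is not a priori available. The standard repair is the one the paper uses: fix a large constant $C$, verify $\phi'(\theta) \simeq -t^{-3}$ on the \emph{deterministic} interval $|\theta| \leq C t^2/N^{1/2}$ (your argument does exactly this, and works since $|\theta|/s_1^2(z) \lesssim 1/(N^{1/2}t) \ll 1$ when $t \geq N^{-2/5+\epsilon}$), then deduce $\phi(\pm Ct^2/N^{1/2}) \lessgtr 1$ for $C$ large, and invoke strict monotonicity of $\phi$ to locate the unique root between these two test points. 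With that re-ordering of quantifiers the argument is complete and matches the paper's.
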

In the edge regime the solution $\eta_{z,t}$ can be real or imaginary depending on the sign of $t\Tr{H_{z}(0)}-1$, so we define $\theta_{z,t}=\eta^{2}_{z,t}$ in order to work with a real-valued quantity.
\begin{proof}
We rewrite \eqref{eq:critical} in the following form:
\begin{align*}
    1&=t\Tr{H_{z}(0)}-t\eta^{2}\Tr{H_{z}(\eta)H_{z}(0)}.
\end{align*}
Let $\omega>0$ and $\eta^{2}=\omega N^{-1/2}t^{2}$. By \eqref{B2} and \eqref{B4} we have
\begin{align*}
    \Tr{H_{z}(\eta)H_{z}(0)}&=\Tr{\left(1+\eta^{2}H_{z}(0)\right)^{-1}H^{2}_{z}(0)}\\
    &=\left[1+O\left(\frac{\eta^{2}}{t^{3}}\right)\right]\Tr{H^{2}_{z}(0)}\\
    &\simeq\frac{1}{t^{4}}.
\end{align*}
Thus we have
\begin{align*}
    t\Tr{H_{z}(0)}-t\eta^{2}\Tr{H_{z}(\eta)H_{z}(0)}&\leq1+\frac{C_{1}|z-z_{0}|}{t}-\frac{C_{2}\eta^{2}}{t^{3}}\\
    &\leq1+\frac{C_{1}}{N^{1/2}t}-\frac{C_{2}\omega}{N^{1/2}t}.
\end{align*}
We can choose $\omega$ large enough so that this is strictly less that 1. Repeating the same steps with $-\omega$ we conclude that the solution satisfies \eqref{eq:eta_zEdge}.
\end{proof}

We use the shorthand $G_{z,t}:=G_{z}(i\eta_{z,t}),\,H_{z,t}:=H_{z}(\eta_{z,t})$ and $\wt{H}_{z,t}:=\wt{H}_{z}(\eta_{z,t})$ for resolvents evaluated at $\eta_{z,t}$. Now we define $\phi_{z}:[0,\infty]\to\mbb{R}$ by
\begin{align}
    \phi_{z}(\eta)&=\frac{\eta^{2}}{t}-\Tr{\log(\eta^{2}+|X_{z}|^{2})}.\label{eq:phi_z}
\end{align}
We also use the shorthand $\phi_{z,t}:=\phi_{z}(\eta_{z,t})$. The relevant properties of $\phi_{z}$ are summarised in 
\begin{lemma}\label{lem:phi}
The function $\phi_{z}(\eta)$ defined in \eqref{eq:phi_z} satisfies the following:
\begin{enumerate}[i)]
\item for $z_{0}\in\mbb{D}_{\beta}$ we have
\begin{align}
    \phi_{z}(\eta)-\phi_{z,t}&\geq\frac{C(\eta-\eta_{z,t})^{2}}{t},\label{eq:phiBoundBulk}
\end{align}
uniformly in $\eta\geq0$, and
\begin{align}
    \phi_{z}(\eta)-\phi_{z,t}&=2\eta^{2}_{z,t}\Tr{H^{2}_{z,t}}(\eta-\eta_{z,t})^{2}+O\left(\frac{\log^{3}N}{\sqrt{Nt}}\right),\label{eq:phiEstimateBulk}
\end{align}
uniformly in $|\eta-\eta_{z,t}|<\sqrt{\frac{t}{N}}\log N$.
\item for $z_{0}\in\sqrt{1+t}\mbb{T}_{\beta}$ we have
\begin{align}
    \phi_{z}(\eta)-\phi_{z,t}&\geq Ct^{2}\cdot\frac{(\eta^{2}-\theta_{z,t})^{2}/t^{6}}{1+|\eta^{2}-\theta_{z,t}|/t^{3}},\label{eq:phiBoundEdge}
\end{align}
uniformly in $\eta\geq0$, and
\begin{align}
    \phi_{z}(\eta)-\phi_{z,t}&=\frac{1}{2}\Tr{H^{2}_{z,t}}(\eta^{2}-\theta_{z,t})^{2}+O\left(\frac{\log^{3}N}{\sqrt{Nt^{2}}}\right),\label{eq:phiEstimateEdge}
\end{align}
uniformly in $0\leq\eta\leq\frac{t\log N}{N^{1/4}}$.
\end{enumerate}
\end{lemma}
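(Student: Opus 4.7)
The plan is to change variables to $u=\eta^{2}$ and work with $\psi_{z}(u):=u/t-\Tr{\log(u+|X_{z}|^{2})}$, so that $\phi_{z}(\eta)=\psi_{z}(\eta^{2})$. The derivatives are
\begin{align*}
\psi_{z}'(u)=\tfrac{1}{t}-\Tr{H_{z}(\sqrt{u})},\qquad \psi_{z}''(u)=\Tr{H_{z}^{2}(\sqrt{u})}>0,
\end{align*}
so $\psi_{z}$ is strictly convex on its domain of definition $u>-s_{1}(z)^{2}$, and the critical point equation \eqref{eq:critical} is exactly $\psi_{z}'(\theta_{z,t})=0$. All estimates on $\phi_{z}$ will be obtained by integrating $\psi_{z}''$ twice from $\theta_{z,t}$ and translating back using $(\eta^{2}-\theta_{z,t})=(\eta-\eta_{z,t})(\eta+\eta_{z,t})$.

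For part (i), Lemma \ref{lem:eta} gives $\eta_{z,t}\simeq t$, so \eqref{A3} yields $\psi_{z}''(\theta_{z,t})=\Tr{H_{z,t}^{2}}\simeq t^{-3}$. Since $\phi_{z}'(\eta_{z,t})=2\eta_{z,t}\psi_{z}'(\theta_{z,t})=0$ and $\phi_{z}''(\eta_{z,t})=4\eta_{z,t}^{2}\psi_{z}''(\theta_{z,t})$, the Taylor expansion in $\eta$ around $\eta_{z,t}$ immediately gives the main term $2\eta_{z,t}^{2}\Tr{H_{z,t}^{2}}(\eta-\eta_{z,t})^{2}$ in \eqref{eq:phiEstimateBulk}. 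The remainder is $\tfrac{1}{6}\phi_{z}'''(\xi)(\eta-\eta_{z,t})^{3}$ for some $\xi$ in between; bounding $\phi_{z}'''$ by $\Tr{H_{z}^{2}}+\eta^{2}\Tr{H_{z}^{3}}\lesssim t^{-3}$ (again using \eqref{A3}) and the prescribed range $|\eta-\eta_{z,t}|<\sqrt{t/N}\log N$ produces the $O(\log^{3}N/\sqrt{Nt})$ error. For the global bound \eqref{eq:phiBoundBulk}, I would use strict convexity of $\psi_{z}$ in $u$ together with \eqref{A3}, which via $\eta\mapsto\sqrt{u}$ gives $\psi_{z}''(u)\gtrsim u^{-3/2}$ on the relevant range; integrating twice yields $\psi_{z}(u)-\psi_{z}(\theta_{z,t})\gtrsim (u-\theta_{z,t})^{2}/((u\vee\theta_{z,t})^{3/2})$, and using $(\eta^{2}-\eta_{z,t}^{2})^{2}=(\eta-\eta_{z,t})^{2}(\eta+\eta_{z,t})^{2}\geq \eta_{z,t}^{2}(\eta-\eta_{z,t})^{2}\simeq t^{2}(\eta-\eta_{z,t})^{2}$ converts this into \eqref{eq:phiBoundBulk}.

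For part (ii), Lemma \ref{lem:eta} gives $|\theta_{z,t}|\lesssim t^{2}/\sqrt{N}\ll t^{3}\lesssim s_{1}(z)^{2}$ from \eqref{B2}, so $\psi_{z}''$ is regular in a neighborhood of $\theta_{z,t}$ of size much bigger than $|\theta_{z,t}|$, and \eqref{B4} gives $\psi_{z}''(\theta_{z,t})\simeq t^{-4}$. Expanding $\psi_{z}(u)$ around $\theta_{z,t}$ and controlling the cubic remainder by $\|H_{z,t}\|^{3}\lesssim t^{-6}$ yields the Taylor estimate \eqref{eq:phiEstimateEdge} in the specified range $0\leq\eta\leq t\log N/N^{1/4}$. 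For the uniform bound \eqref{eq:phiBoundEdge}, the key point is to show the interpolation
\begin{align*}
\psi_{z}''(v)\;\gtrsim\;\frac{1}{t^{4}(1+v/t^{3})^{2}}
\end{align*}
for all $v>-s_{1}(z)^{2}$. Here, for $|v|\lesssim t^{3}$ this reduces to \eqref{B4} together with continuity of $\psi_{z}''$, while for $v\gtrsim t^{3}$ one estimates $\psi_{z}''(v)=\int(v+x)^{-2}d\mu_{|X_{z}|^{2}}(x)$ using the assumption \eqref{B1} on $\|X\|$ and the square-root edge behavior of the Hermitian spectrum encoded by \eqref{B3}--\eqref{B5}. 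Integrating this lower bound twice from $\theta_{z,t}$ produces $\psi_{z}(u)-\psi_{z,t}\gtrsim (u-\theta_{z,t})^{2}/(t^{4}+t|u-\theta_{z,t}|)$, which is exactly \eqref{eq:phiBoundEdge}.

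The main obstacle is the lower bound on $\psi_{z}''(v)$ away from the critical point in the edge regime: the hypotheses \eqref{B2}--\eqref{B5} only give information at $\eta=0$, so extending them to a quantitative bound on $\Tr{H_{z}^{2}(\sqrt{v})}$ that is uniform in $v$ and captures the correct crossover at $v\simeq t^{3}$ requires one to exploit the shape of the spectrum of $|X_{z}|^{2}$ near the hard edge. Everything else reduces to a convexity argument and careful Taylor expansion once this two-sided control on $\psi_{z}''$ is in hand.
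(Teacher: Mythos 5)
The acknowledged obstacle in your plan is a genuine gap, not a detail to be filled in. You propose to lower bound $\psi_z''(v)=\Tr{H_z^2(\sqrt{v})}$ for every $v>-s_1^2(z)$ and then integrate twice. But the hypotheses \eqref{B2}--\eqref{B6} of $\mc{E}_{edge}(t)$ only constrain traces of $H_z$ and $\wt H_z$ at $\eta=0$ (or within the tiny window around the critical point); they say nothing quantitative about the spectral density of $|X_z|^2$ at scale $v\simeq t^{3}$ or beyond. Trying to recover a square-root edge profile from \eqref{B3}--\eqref{B5} plus the crude norm bound \eqref{B1} cannot work: one can cook up measures matching those finitely many moments/traces while having $\Tr{H_z^2(\sqrt{v})}$ essentially arbitrary away from $v=0$. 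So the crossover bound $\psi_z''(v)\gtrsim t^{-4}(1+v/t^3)^{-2}$ that your argument hinges on does not follow from the stated assumptions, and without it the uniform bound \eqref{eq:phiBoundEdge} is not established. (Your bulk argument has a milder version of the same issue: \eqref{A3} only gives $\Tr{H_z^2(\eta)}\simeq\eta^{-3}$ for $\eta\in[N^{-\epsilon}t,O(1)]$, so you would still need to handle $\eta$ outside that window; this is less serious because a trivial bound suffices there, but it should be said.)

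The route the paper actually takes avoids the problem entirely. Using the critical equation $t\Tr{H_{z,t}}=1$, one rewrites $\phi_z(\eta)-\phi_{z,t}=\Tr{\Delta H_{z,t}-\log\left(1+\Delta H_{z,t}\right)}$ with $\Delta:=\eta^2-\theta_{z,t}$, so that only the single fixed matrix $H_{z,t}$ appears, not $H_z(\eta)$ at a varying argument. Then one applies the scalar inequality $x-\log(1+x)\geq\frac{3x^2}{2(3+2x)}$ (equivalently $\log(1+x)\leq\frac{x(6+x)}{6+4x}$) pointwise in the spectrum of $\Delta H_{z,t}$. Since \eqref{B2} controls $\|H_{z,t}\|\lesssim t^{-3}$, the denominator $3+2\Delta H_{z,t}$ is $\lesssim1+|\Delta|/t^3$, and \eqref{B4} gives $\Tr{H_{z,t}^2}\simeq t^{-4}$, which together produce exactly \eqref{eq:phiBoundEdge}. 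The local estimate \eqref{eq:phiEstimateEdge} is then just the Taylor expansion of $x-\log(1+x)=x^2/2+O(x^3)$ in the small-$\Delta H_{z,t}$ regime, again requiring only \eqref{B2} and \eqref{B4}. Your change of variables to $u=\eta^2$ and the convexity observations are fine and match the spirit of this computation, but the crucial missing idea is the re-centering that trades a $v$-dependent resolvent for the fixed $H_{z,t}$, after which the scalar inequality does all the work.
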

\begin{proof}
Using the inequality $\log(1+x)\leq\frac{x(6+x)}{6+4x}$, \eqref{B2} and \eqref{B4} we obtain \eqref{eq:phiBoundEdge}. When $|\eta^{2}-\theta_{z,t}|<\frac{t^{2}\log N}{N^{1/2}}$, \eqref{eq:phiEstimateEdge} follows by Taylor expansion.
\end{proof}

For the measure $\mu^{X}_{\beta,z}$, the asymptotics of the normalisation $K_{\beta}(z)$ have the same form in the bulk and the edge. The proof at the edge is exactly the same as in the bulk and is therefore omitted.
\begin{lemma}\label{lem:K}
Let $K_{\beta}(z)$ be the normalisation of the measure $\mu^{X}_{\beta,z}$ defined in \eqref{eq:mu}. Then for $z_{0}\in\mbb{D}_{\beta}$ or $z_{0}\in\sqrt{1+t}\mbb{T}_{\beta}$ we have
\begin{align}
    K_{\beta}(z)&=\left[1+O\left(\frac{\log^{3}N}{\sqrt{Nt}}\right)\right]\sqrt{\frac{4\pi}{\beta N\Tr{H^{2}_{z,t}}}}e^{\frac{\beta N}{2}\phi_{z,t}}.\label{eq:K}
\end{align}
\end{lemma}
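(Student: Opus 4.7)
The plan is to evaluate $K_\beta(z)$ by Laplace-transform inversion followed by a saddle-point argument, following the same template as in the bulk case treated in \cite{maltsev_bulk_2024,osman_bulk_2024}. Introducing an auxiliary Gaussian with parameter $\eta$, one has
\begin{align*}
\int_{\mbb{F}_{\beta}^{N}} e^{-\frac{\beta N}{2t}\mbf{v}^{*}(\eta^{2}+|X_{z}|^{2})\mbf{v}}\,\mathrm{d}^{\beta}\mbf{v} &= \left(\frac{2\pi t}{\beta N}\right)^{\beta N/2}\det(\eta^{2}+|X_{z}|^{2})^{-\beta/2}.
\end{align*}
Passing to polar coordinates $\mbf{v}=r\mbf{u}$ with $\mbf{u}\in S^{N-1}_{\beta}$ and substituting $s=r^{2}$, $\theta=\eta^{2}$, the left-hand side becomes $\tfrac{1}{2}\int_{0}^{\infty} s^{\beta N/2-1} e^{-\frac{\beta N\theta}{2t}s}F(s)\,\mathrm{d}s$, where $F(s)=\int_{S^{N-1}_{\beta}} e^{-\frac{\beta Ns}{2t}\|X_{z}\mbf{u}\|^{2}}\,\mathrm{d}_{H}\mbf{u}$. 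Since $K_{\beta}(z)=\left(\frac{\beta N}{2\pi t}\right)^{\beta N/2-1}F(1)$, it suffices to recover $F(1)$ from this Laplace transform in the variable $p=\beta N\theta/(2t)$.

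Laplace inversion along a Bromwich contour, combined with the identity $\det(\theta+|X_{z}|^{2})^{-\beta/2}e^{\beta N\theta/(2t)}=e^{\frac{\beta N}{2}\phi_{z}(\eta)}$, gives
\begin{align*}
F(1) &= \frac{\beta N}{2\pi it}\left(\frac{2\pi t}{\beta N}\right)^{\beta N/2}\int_{c-i\infty}^{c+i\infty} e^{\frac{\beta N}{2}\phi_{z}(\eta)}\,\mathrm{d}\theta,
\end{align*}
with $c$ chosen to the right of all poles $\theta=-s_{j}^{2}(z)$; decay at $|\Im\theta|\to\infty$ follows from \eqref{B1}. The saddle point of $\phi_{z}$ as a function of $\theta$ is determined by $\partial_{\theta}\phi_{z}=\tfrac{1}{t}-\Tr{H_{z}(\eta)}=0$, which is exactly \eqref{eq:critical}, solved uniquely by $\theta=\theta_{z,t}$ from Lemma \ref{lem:eta}. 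Since $\partial_{\theta}^{2}\phi_{z}|_{\theta_{z,t}}=\Tr{H^{2}_{z,t}}>0$, the line of steepest descent is the vertical line $\Re\theta=\theta_{z,t}$, and I would deform the contour onto it. In the edge regime this deformation is legitimate because \eqref{B2} gives $s_{1}(z)^{2}\gtrsim t^{3}$, which is much larger than $|\theta_{z,t}|\lesssim t^{2}/\sqrt{N}$, so the contour stays clear of the poles.

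I would then split the deformed contour into a central window $|\theta-\theta_{z,t}|<t^{2}\log N/\sqrt{N}$ (with the analogous $|\eta-\eta_{z,t}|<\sqrt{t/N}\log N$ in the bulk) and its complement. On the window, Lemma \ref{lem:phi} (either \eqref{eq:phiEstimateBulk} or \eqref{eq:phiEstimateEdge}) replaces $\phi_{z}$ by its quadratic approximation $\phi_{z,t}+\tfrac{1}{2}\Tr{H^{2}_{z,t}}(\theta-\theta_{z,t})^{2}$ up to an error which contributes the multiplicative $O(\log^{3}N/\sqrt{Nt})$; writing $\theta=\theta_{z,t}+is$ reduces the central contribution to a real Gaussian integral which evaluates to $i\sqrt{4\pi/(\beta N\Tr{H^{2}_{z,t}})}\,e^{\frac{\beta N}{2}\phi_{z,t}}$. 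On the tail, the lower bounds \eqref{eq:phiBoundBulk}/\eqref{eq:phiBoundEdge} give Gaussian suppression by at least $e^{-c\log^{2}N}$, so the tail is absorbed into the stated error. Reassembling with the $\frac{\beta N}{2\pi it}$ prefactor and the $\left(\frac{\beta N}{2\pi t}\right)^{\beta N/2-1}$ factor from the definition of $K_{\beta}(z)$, the powers of $\left(\frac{2\pi t}{\beta N}\right)$ cancel and the factors of $i$ combine correctly to produce \eqref{eq:K}.

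The main obstacle is the contour deformation at the edge, where $\theta_{z,t}$ may be slightly negative so that $\eta_{z,t}$ is purely imaginary; one must verify that the vertical line through $\theta_{z,t}$ is genuinely a steepest-descent path (not just through second order), which requires using the monotonicity of $\Re\phi_{z}(\theta_{z,t}+is)$ along $s$ that is embedded in \eqref{eq:phiBoundEdge}, and that the separation from the poles afforded by \eqref{B2} persists uniformly along the contour. A secondary bookkeeping issue is tracking the error in Lemma \ref{lem:phi} through its interaction with the Gaussian integration so that the exponent of the error is indeed sub-leading on the central window.
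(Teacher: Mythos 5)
Your proof is correct and follows exactly the template the paper defers to: the Gaussian-auxiliary and Laplace-inversion steps you describe are a first-principles derivation of the duality formula for spherical integrals from \cite[Lemma 3.4]{maltsev_bulk_2024} (the same formula the paper applies to the shifted analogue in Lemma~\ref{lem:nuDuality} and Lemma~\ref{lem:Kr}), and the deformation to the vertical line through $\theta_{z,t}$, followed by the central-window/tail split controlled by Lemma~\ref{lem:phi} and conditions \eqref{A2}--\eqref{A3} or \eqref{B2}--\eqref{B4}, is the saddle-point argument the paper invokes without repeating. One small bookkeeping point: the tail suppression along the deformed contour should be derived from the monotonicity of $\Re\phi_{z}(\theta_{z,t}+is)$ in $|s|$ (which you correctly flag), rather than from \eqref{eq:phiBoundBulk}/\eqref{eq:phiBoundEdge} as stated, since those bounds concern real $\eta$; the resulting estimate along the vertical line has the same strength and the rest of the argument goes through unchanged.
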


We also have the following result on concentration of quadratic forms, whose proof we again omit since it is exactly the same as the proof of \cite[Lemma 7.2]{maltsev_bulk_2024} (see also the proof of Lemma \ref{lem:nuConc} below).
\begin{lemma}\label{lem:conc}
Let $z\in\mbb{C}$ and $\mbf{v}$ be distributed according to $\mu^{X}_{\beta,z}$ defined in \eqref{eq:mu}. Then for any $c_{1}>0$ there is a $c_{2}>0$ such that:
\begin{enumerate}[i)]
\item for $z_{0}\in\mbb{D}_{\beta}$ we have
\begin{align}
    \left|\eta\mbf{v}^{*}H_{z}(\eta)\mbf{v}-t\eta\Tr{H_{z,t}H_{z}(\eta)}\right|&\leq\frac{\log N}{\sqrt{Nt^{3}}},\\
    \left|\eta\mbf{v}^{*}\wt{H}_{z}(\eta)\mbf{v}-t\eta\Tr{H_{z,t}\wt{H}_{z}(\eta)}\right|&\leq\frac{\log N}{\sqrt{Nt^{2}}},\\
    \left|\mbf{v}^{*}X_{z}H_{z}(\eta)\mbf{v}-t\Tr{H_{z,t}X_{z}H_{z}(\eta)}\right|&\leq\frac{\log N}{\sqrt{Nt^{2}}},
\end{align}
uniformly in $\eta\geq c_{1}t$ with probability $1-e^{-c_{2}\log^{2}N}$;
\item for $z_{0}\in\sqrt{1+t}\mbb{T}_{\beta}$ we have
\begin{align}
    t^{3}\left|\mbf{v}^{*}H_{z}(\eta)\mbf{v}-t\Tr{H_{z,t}H_{z}(\eta)}\right|&\leq\frac{\log N}{\sqrt{Nt^{2}}},\\
    t^{2}\left|\mbf{v}^{*}\wt{H}_{z}(\eta)\mbf{v}-t\Tr{H_{z,t}\wt{H}_{z}(\eta)}\right|&\leq\frac{\log N}{\sqrt{Nt^{3}}},\\
    t\left|\mbf{v}^{*}X_{z}H_{z}(\eta)\mbf{v}-t\Tr{H_{z,t}X_{z}H_{z}(\eta)}\right|&\leq\frac{\log N}{\sqrt{Nt^{2}}},
\end{align}
uniformly in $|\eta^{2}|\leq c_{1}t^{3}$ with probability $1-e^{-c_{2}\log^{2}N}$.
\end{enumerate}
\end{lemma}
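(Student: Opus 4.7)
My plan is to follow the Gaussian-lifting argument of \cite[Lemma 7.2]{maltsev_bulk_2024}. On the sphere $\|\mbf{v}\|^{2}=1$ one has $\|X_{z}\mbf{v}\|^{2}=\mbf{v}^{*}H_{z,t}^{-1}\mbf{v}-\eta_{z,t}^{2}$, so $\mu^{X}_{\beta,z}$ is proportional to $e^{-\frac{\beta N}{2t}\mbf{v}^{*}H_{z,t}^{-1}\mbf{v}}\,\mathrm{d}_{H}\mbf{v}$. The saddle-point relation $t\Tr{H_{z,t}}=1$ from Lemma \ref{lem:eta} says precisely that a centred Gaussian $\mbf{g}\in\mbb{F}_{\beta}^{N}$ with covariance $\frac{t}{\beta N}H_{z,t}$ has $\mbb{E}\|\mbf{g}\|^{2}=1$. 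Hence $\mbf{g}$ is already concentrated on the unit sphere, and the same Laplace asymptotics used to prove Lemma \ref{lem:K} show that the angular projection $\mbf{g}/\|\mbf{g}\|$ and $\mu^{X}_{\beta,z}$ agree up to a multiplicative density factor of $1+O(\log^{3}N/\sqrt{Nt^{2}})$ on events of very high probability.

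Granted this reduction, concentration of $\mbf{v}^{*}A\mbf{v}$ follows from Hanson--Wright applied to $\mbf{g}^{*}A\mbf{g}$: the expectation is $t\Tr{AH_{z,t}}$ and the sub-Gaussian scale is $(t/N)\|H_{z,t}^{1/2}AH_{z,t}^{1/2}\|_{2}$ up to logs, with $(t/N)\|H_{z,t}^{1/2}AH_{z,t}^{1/2}\|$ controlling the sub-exponential tail. The non-Hermitian choice $A=X_{z}H_{z}(\eta)$ is handled by polarisation (splitting into $\Re$ and $\Im$ parts). The transfer from $\mbf{g}$ to $\mbf{v}=\mbf{g}/\|\mbf{g}\|$ costs negligibly once the same inequality with $A=1_{N}$ is used to show $\|\mbf{g}\|^{2}=1+O_{\prec}(1/\sqrt{Nt^{2}})$, which follows from $\|H_{z,t}\|_{2}^{2}=N\Tr{H^{2}_{z,t}}$ together with \eqref{A3} or \eqref{B4} and the operator-norm bound $\|H_{z,t}\|\lesssim1/t$ or $\lesssim1/t^{3}$ provided by \eqref{B2}.

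What remains is to match each stated rate to the corresponding Frobenius norm. In the bulk, \eqref{A2}--\eqref{A5} yield $\|H_{z,t}^{1/2}H_{z}(\eta)H_{z,t}^{1/2}\|_{2}^{2}\lesssim N/(\eta^{3}t^{3})$ and analogous estimates for $\wt{H}_{z}(\eta)$ and $X_{z}H_{z}(\eta)$, giving exactly the three bulk rates after the stated multiplication by $\eta$ and $\eta$ respectively. At the edge I would first use the resolvent identity $H_{z}(\eta)=H_{z,t}-(\eta^{2}-\theta_{z,t})H_{z}(\eta)H_{z,t}$ to expand each form in powers of $|\eta^{2}-\theta_{z,t}|/t^{3}\leq c_{1}$, which converges in operator norm thanks to \eqref{B2}, then absorb the resulting $H_{z,t}$ factors using \eqref{B4}--\eqref{B6} to bound the Frobenius norms $\|H_{z,t}AH_{z,t}\|_{2}^{2}$; the prefactors $t^{3},t^{2},t$ in the three edge bounds arise precisely to compensate for the $\delta_{0}^{-4},\delta_{0}^{-3},\delta_{0}^{-2}$ scalings of \eqref{B4}--\eqref{B6}. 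The main obstacle is this last bookkeeping step at the edge: unlike the bulk, where the single parameter $\eta$ controls each resolvent uniformly, the edge assumptions carry different $\delta_{0}$-scalings and the Hanson--Wright variance terms must be expanded carefully to avoid losing powers of $t$, and the permissible range $|\eta^{2}|\leq c_{1}t^{3}$ is dictated exactly by the convergence of this resolvent expansion.
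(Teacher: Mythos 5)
The paper's actual proof (imported verbatim from \cite[Lemma 7.2]{maltsev_bulk_2024}; see also the proof of Lemma~\ref{lem:nuConc}) computes the spherical moment generating function $\mbb{E}_{\beta,z}[e^{\lambda\mbf{v}^{*}A\mbf{v}}]$ \emph{exactly} via the duality formula of Lemma~\ref{lem:nuDuality}, which writes the spherical integral as a one-dimensional contour integral over a Lagrange multiplier enforcing $\|\mbf{v}\|^{2}=1$. The tilt by $\lambda$ shifts both the saddle and the determinant, and this shift is tracked exactly before Markov's inequality is applied. Your Gaussian-lifting plus Hanson--Wright plan is related in spirit but replaces this exact contour-integral step by a heuristic surrogate, and it is precisely there that the argument has a gap.

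The gap is in the sentence claiming that the angular projection $\mbf{g}/\|\mbf{g}\|$ of $\mbf{g}\sim N(0,\Sigma)$ with $\Sigma\propto H_{z,t}$ and $\mu^{X}_{\beta,z}$ ``agree up to a multiplicative density factor of $1+O(\log^{3}N/\sqrt{Nt^{2}})$ on events of very high probability.'' This is not correct. The measure $\mu^{X}_{\beta,z}$ is the \emph{conditional} law of $\mbf{g}/\|\mbf{g}\|$ given $\|\mbf{g}\|=1$, whose density in $\mbf{v}$ is $\propto e^{-\frac{\beta N}{2t}q}$ with $q:=\mbf{v}^{*}H_{z,t}^{-1}\mbf{v}$, whereas the \emph{marginal} law of $\mbf{g}/\|\mbf{g}\|$ has density $\propto q^{-\beta N/2}$. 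Their ratio is $\propto\exp\{-\frac{\beta N}{2t}q+\frac{\beta N}{2}\log q\}$; Taylor-expanding around the common saddle $q=t$ gives $\exp\{-\frac{\beta N}{4t^{2}}(q-t)^{2}(1+o(1))\}$. Since $q$ fluctuates on a scale $\gtrsim t/\sqrt{N}$, this factor is $e^{-\Theta(1)}$ on the bulk of the typical event and $e^{-\Theta(\log^{2}N)}$ at the $\log N$--standard-deviation range where you need the tail bound; it is emphatically not $1+o(1)$. Lemma~\ref{lem:K} only identifies the two \emph{normalisations} up to $1+o(1)$, not the densities pointwise. Because $\|\mbf{g}\|$ and $\mbf{g}/\|\mbf{g}\|$ are not independent for an anisotropic $\Sigma$, conditioning genuinely changes the law of quadratic forms, and to repair the argument one would either have to prove asymptotic independence of $\mbf{v}^{*}A\mbf{v}$ from $q$ for the specific $A$ of interest, or derive a conditional Hanson--Wright bound on the sphere --- and either of these amounts in practice to re-deriving the duality formula that the paper already uses.

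Your final paragraph --- the Frobenius-norm bookkeeping from \eqref{A2}--\eqref{A5} in the bulk and \eqref{B2}--\eqref{B6} at the edge, the polarisation trick for $A=X_{z}H_{z}(\eta)$, and the observation that the admissible range $|\eta^{2}|\leq c_{1}t^{3}$ is dictated by convergence of the Neumann series $H_{z}(\eta)=\sum_{k\geq0}(\theta_{z,t}-\eta^{2})^{k}H_{z,t}^{k+1}$ given $\|H_{z,t}\|\lesssim t^{-3}$ --- is correct and matches the stated rates. The issue is solely that the transfer from Gaussian to sphere, which you treat as a routine reduction, is the technical core, and the paper resolves it with the exact spherical MGF formula rather than a Gaussian surrogate.
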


We also need to extend Lemmas \ref{lem:eta} and \ref{lem:phi} to projections onto subspaces of codimension 1. For a partial isometry $U\in\mbb{C}^{N\times n}$, we denote by $X^{(U)}$ the projection of $X$ onto the cokernel of $U$. We denote by a superscript $(U)$ any quantity obtained by replacing $X$ with $X^{(U)}$ in its definition, e.g. $H^{(U)}_{z}(\eta):=(\eta^{2}+|X^{(U)}_{z}|^{2})^{-1}$.
\begin{lemma}\label{lem:minor}
Let $\mbf{v}\in\mbb{S}^{N-1}_{\beta}$. Then: 
\begin{enumerate}[i)]
\item for $z_{0}\in\mbb{D}_{\beta}$ we have
\begin{align}
    |\eta^{(\mbf{v})}_{z,t}-\eta_{z,t}|&\lesssim\frac{1}{N},\label{eq:etaMinorBulk}\\
    e^{N\left(\phi_{z,t}-\phi^{(\mbf{v})}_{z,t}\right)}&=\left[1+O\left(\frac{1}{\sqrt{Nt^{2}}}\right)\right]|\det\bigl((1_{2}\otimes \mbf{v}^{*})G_{z,t}(1_{2}\otimes \mbf{v})\bigr)|;\label{eq:minorBulk}
\end{align}
\item for $z_{0}\in\sqrt{1+t}\mbb{T}_{\beta}$ we have
\begin{align}
    |\theta^{(\mbf{v})}_{z,t}-\theta_{z,t}|&\lesssim\frac{t}{N},\label{eq:etaMinorEdge}
\end{align}
uniformly in $\mbf{v}\in\mc{E}_{conc}$, and
\begin{align}
    e^{N\left(\phi_{z,t}-\phi^{(\mbf{v})}_{z,t}\right)}&=\left[1+O\left(\frac{1}{\sqrt{Nt^{2}}}\right)\right]|\det\bigl((1_{2}\otimes \mbf{v}^{*})G_{z,t}(1_{2}\otimes \mbf{v})\bigr)|.\label{eq:minorEdge}
\end{align}
\end{enumerate}
\begin{proof}
First we claim that
\begin{align}
    s^{(\mbf{v})}_{1}(z)\gtrsim\delta_{0}^{3/2},\label{eq:svMinor}
\end{align}
uniformly in $\mbf{v}\in\mc{E}_{conc}$, where $s^{(\mbf{v})}_{n}(z),\,n\in[N]$ are the singular values of $X^{(\mbf{v})}_{z}$. If this is true then
\begin{align}
    \Tr{(H^{(\mbf{v})}_{z}(\eta))^{l}}&=\Tr{H^{l}_{z}(\eta)}+O\left(\frac{1}{N(s_{1}^{2}(z)+\eta^{2})^{l}}\right),\label{eq:minorTrace}
\end{align}
and we can repeat the estimates in Lemmas \ref{lem:eta} and \ref{lem:phi} for $\eta^{(\mbf{v})}_{z}$ and $\phi^{(\mbf{v})}_{z}(\eta)$. The bound in \eqref{eq:etaMinorEdge} follows from \eqref{eq:minorTrace} and the estimate
\begin{align*}
    \theta^{(\mbf{v})}_{z,t}&=\frac{t\Tr{H^{(\mbf{v})}_{z}(0)}-1}{t\Tr{(H^{(\mbf{v})}_{z}(0))^{2}}}+O\left(\frac{t}{N}\right).
\end{align*}

To prove \eqref{eq:svMinor}, we note that by \cite[Lemma 3.1]{maltsev_bulk_2024} we have
\begin{align*}
    \left\|G^{(\mbf{v})}_{z}(i\eta)\right\|&\leq\|G_{z}(i\eta)\|+\left\|G_{z}(i\eta)(1_{2}\otimes\mbf{v})(1_{2}\otimes\mbf{v}^{*}G_{z}(i\eta)1_{2}\otimes\mbf{v})^{-1}(1_{2}\otimes\mbf{v}^{*})G_{z}(i\eta)\right\|.
\end{align*}    
Using $\|A\|^{2}=\|AA^{*}\|\leq\tr AA^{*}$ we can bound the second term by
\begin{align*}
    \frac{\sqrt{2\mbf{v}^{*}H_{z}(\eta)\mbf{v}\mbf{v}^{*}\wt{H}_{z}(\eta)\mbf{v}}}{|\mbf{v}^{*}X_{z}H_{z}(\eta)\mbf{v}|}.
\end{align*}
Setting $\eta=\eta_{z,t}$, on $\mc{E}_{conc}$ the above term is bounded by
\begin{align*}
    \frac{\sqrt{\Tr{H^{2}_{z,t}}}\sqrt{\Tr{H_{z,t}\wt{H}_{z,t}}}}{|\Tr{H_{z,t}X_{z}H_{z,t}}|}\lesssim\frac{1}{t^{3/2}},
\end{align*}
where we have used \eqref{B4}, \eqref{B5} and \eqref{B4}. Since we also have $s_{1}(z)\gtrsim t^{3/2}$, we obtain \eqref{eq:minorTrace}.

Using Cramer's rule we have
\begin{align*}
    e^{N(\phi_{z,t}-\phi^{(\mbf{v})}_{z,t})}&=e^{N(\phi_{z,t}-\phi_{z}(\eta^{(\mbf{v})}_{z,t}))}\left|\det1_{2}\otimes U^{*}G_{z}(\eta^{(\mbf{v})}_{z,t})1_{2}\otimes U\right|.
\end{align*}
By \eqref{eq:phiEstimateEdge} and \eqref{eq:etaMinorEdge} we have
\begin{align*}
    |\phi_{z,t}-\phi_{z}(\eta^{(\mbf{v})}_{z,t})|&\leq\frac{C}{N^{2}t^{2}}.
\end{align*}
By the resolvent identity, \eqref{eq:etaMinorEdge} and \eqref{B2} we have
\begin{align*}
    \left|\frac{\det1_{2}\otimes \mbf{v}^{*}G_{z}(\eta^{(\mbf{v})}_{z,t})1_{2}\otimes \mbf{v}}{\det1_{2}\otimes \mbf{v}^{*}G_{z,t}1_{2}\otimes \mbf{v}}\right|&=1+O\left(\frac{1}{\sqrt{Nt^{2}}}\right).
\end{align*}
Combining the previous two bounds we obtain \eqref{eq:minorEdge}.
\end{proof}
\end{lemma}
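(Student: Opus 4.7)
The plan is to focus on the edge case $z_0\in\sqrt{1+t}\mbb{T}_\beta$, the bulk case being parallel to arguments in \cite{maltsev_bulk_2024,osman_bulk_2024}. Passing from $X_z$ to $X^{(\mbf{v})}_z$ amounts to a rank-two perturbation of the Hermitisation $W_z$, so the Schur complement identity relates $G^{(\mbf{v})}_z$ and $G_z$ through a correction involving the $2\times 2$ block $(1_2\otimes\mbf{v}^*)G_z(i\eta)(1_2\otimes\mbf{v})$. I would organise the argument in three steps: (a) prove $s^{(\mbf{v})}_1(z)\gtrsim t^{3/2}$ for $\mbf{v}\in\mc{E}_{conc}$; (b) derive a trace-comparison bound $\Tr{(H^{(\mbf{v})}_z(\eta))^l}=\Tr{H^l_z(\eta)}+O(1/(N(s_1^2(z)+\eta^2)^l))$ and use it to reproduce the analysis of Lemmas \ref{lem:eta} and \ref{lem:phi} for $X^{(\mbf{v})}_z$, giving \eqref{eq:etaMinorEdge}; (c) extract the determinantal factor via Cramer's rule.

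For (a), I would use the resolvent identity from \cite[Lemma 3.1]{maltsev_bulk_2024} to express $G^{(\mbf{v})}_z(i\eta)$ as $G_z(i\eta)$ plus a rank-two correction whose operator norm is bounded (via $\|A\|^2\leq\tr AA^*$) by the ratio of $\sqrt{2(\mbf{v}^*H_z\mbf{v})(\mbf{v}^*\wt H_z\mbf{v})}$ to $|\mbf{v}^*X_zH_z\mbf{v}|$. Evaluating at $\eta=\eta_{z,t}$, Lemma \ref{lem:conc} replaces each quadratic form by the corresponding trace on $\mc{E}_{conc}$, and the assumptions \eqref{B4}--\eqref{B6} then yield a ratio of size $t^{-3/2}$; combined with $\|G_z(0)\|\lesssim t^{-3/2}$ from \eqref{B2}, this gives the singular-value bound. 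This is the delicate step: \eqref{B6} is critical because it keeps $|\mbf{v}^*X_zH_z\mbf{v}|$ from collapsing, and without restricting to $\mc{E}_{conc}$ one could not pass from quadratic forms to traces.

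For (b), the singular-value bound of (a) gives the trace-comparison estimate above. Inserting it into the critical equation $t\Tr{H^{(\mbf{v})}_z(\eta)}=1$, the $O(1/(Nt^3))$ error combined with the fact that the map $\eta^2\mapsto t\Tr{H_z(\eta)}$ has derivative of order $t^{-3}$ at $\theta_{z,t}$ (by \eqref{B4}) shows $|\theta^{(\mbf{v})}_{z,t}-\theta_{z,t}|\lesssim t/N$. For (c), Cramer's rule applied to the Schur complement in the doubled Hermitisation gives
\begin{align*}
e^{N(\phi_{z,t}-\phi^{(\mbf{v})}_{z,t})}&=e^{N(\phi_{z,t}-\phi_z(\eta^{(\mbf{v})}_{z,t}))}\cdot|\det((1_2\otimes\mbf{v}^*)G_z(i\eta^{(\mbf{v})}_{z,t})(1_2\otimes\mbf{v}))|.
\end{align*}
The prefactor equals $1+O(1/(N^2t^2))$ by \eqref{eq:phiEstimateEdge}, and the resolvent identity together with \eqref{B2} and \eqref{eq:etaMinorEdge} allow replacement of $\eta^{(\mbf{v})}_{z,t}$ by $\eta_{z,t}$ inside the determinant at cost $1+O(1/\sqrt{Nt^2})$, completing the proof.
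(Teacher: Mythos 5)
Your proposal matches the paper's proof step for step: the rank-two Schur complement bound from \cite[Lemma 3.1]{maltsev_bulk_2024} together with the quadratic-form concentration on $\mc{E}_{conc}$ (via \eqref{B4}--\eqref{B6}) to establish $s^{(\mbf{v})}_{1}(z)\gtrsim t^{3/2}$, the trace comparison \eqref{eq:minorTrace} feeding an implicit-function-theorem argument for the shifted critical equation, and Cramer's rule with \eqref{eq:phiEstimateEdge} and \eqref{B2} for the determinant factor. One bookkeeping remark: dividing your stated error $O(1/(Nt^{3}))$ by the derivative of order $t^{-3}$ of $\eta^{2}\mapsto t\Tr{H_{z}(\eta)}$ would give $O(1/N)$, not $O(t/N)$; to reach $O(t/N)$ you must first multiply the trace error by $t$ to obtain the $O(1/(Nt^{2}))$ perturbation of the critical equation and then divide by $t^{-3}$, so a factor of $t$ is dropped in the explanation although the stated conclusion is correct.
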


The global eigenvalue density of Gauss-divisible matrices is expressed in terms of 
\begin{align}
    \sigma_{z,t}&=\eta^{2}_{z,t}\Tr{H_{z,t}\wt{H}_{z,t}}+\frac{|\Tr{H_{z,t}X_{z}H_{z,t}}|^{2}}{\Tr{H^{2}_{z,t}}}.\label{eq:sigma}
\end{align}
\begin{lemma}\label{lem:sigma}
The density $\sigma_{z,t}$ satisfies
\begin{align}
    \sigma_{z,t}\simeq1.
\end{align}
\end{lemma}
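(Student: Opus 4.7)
The plan is to prove the lemma only in the edge case $z_0\in\sqrt{1+t}\mbb{T}_\beta$ (so $\delta_0=t$), as stipulated; the bulk case is dealt with in the references. The main idea is that $\eta_{z,t}^2=\theta_{z,t}$ is negligible compared to the least non-zero singular value squared of $X_z$, so that $H_{z,t}$ may be replaced by $H_z(0)$ in all the trace quantities appearing in $\sigma_{z,t}$, after which the orders are read off from \eqref{B4}, \eqref{B5}, \eqref{B6}.

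By Lemma~\ref{lem:eta}(ii), $|\theta_{z,t}|\lesssim t^2/N^{1/2}$, while \eqref{B2} yields $s_1^2(z)\gtrsim t^3$; hence $|\theta_{z,t}|/s_j^2(z)\lesssim (N^{1/2}t)^{-1}=o(1)$ under $t\geq N^{-2/5+\epsilon}$. Diagonalising $|X_z|^2=V\Sigma^2V^*$ and $|X_z^*|^2=U\Sigma^2U^*$, one sees that $\Tr{H_{z,t}^2}$ and $\Tr{H_{z,t}\wt{H}_{z,t}}$ are positive-weight sums whose weights $(\theta_{z,t}+s_i^2)^{-a}(\theta_{z,t}+s_j^2)^{-b}$ are termwise equivalent to those of the $\theta=0$ analogues up to factors $1+O(|\theta_{z,t}|/s_1^2)=1+o(1)$. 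Combined with \eqref{B4} and \eqref{B5} this gives $\Tr{H_{z,t}^2}\simeq t^{-4}$ and $\Tr{H_{z,t}\wt{H}_{z,t}}\simeq t^{-3}$. The first term of $\sigma_{z,t}$ is then bounded by $\eta_{z,t}^2\cdot t^{-3}\lesssim(N^{1/2}t)^{-1}=o(1)$.

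For the second term we apply \eqref{B6} at $\eta=\eta_{z,t}$ to obtain $|\Tr{H_{z,t}X_zH_{z,t}}|\simeq t^{-2}$, so that $|\Tr{H_{z,t}X_zH_{z,t}}|^2/\Tr{H_{z,t}^2}\simeq t^{-4}/t^{-4}\simeq 1$. The only subtlety, and the main mild obstacle, is verifying the range condition $|\theta_{z,t}|<\delta_0^2|z-z_0|$ in \eqref{B6}: rerunning the proof of Lemma~\ref{lem:eta}(ii) while tracking the dependence on $|z-z_0|$ yields the sharper estimate $|\theta_{z,t}|\lesssim|z-z_0|t^2$, which (up to constants that may be absorbed into the $\simeq$ in \eqref{B6}) places $\theta_{z,t}$ in the admissible range. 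For $z$ so close to $z_0$ that this still fails, one invokes \eqref{B6} with respect to a shifted reference point $z_0'$ satisfying $\delta_0'\simeq t$ and $|z-z_0'|\simeq N^{-1/2}$; the admissible range then has order $t^2/N^{1/2}$ and comfortably contains $\theta_{z,t}$. Summing the two contributions gives $\sigma_{z,t}=o(1)+\Theta(1)\simeq 1$.
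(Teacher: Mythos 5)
Your proposal is correct and takes the same route as the paper's one-line proof ("At the edge this follows directly from \eqref{B4}, \eqref{B5} and \eqref{B6}"), but you supply substantially more detail that the paper leaves implicit. In particular, you correctly identify the two things that need checking: first, that passing from $H_{z}(0)$ to $H_{z,t}$ (i.e.\ from $\eta=0$ to $\eta=\eta_{z,t}$) changes the traces in \eqref{B4} and \eqref{B5} only by a factor $1+O(|\theta_{z,t}|/s_{1}^{2}(z))=1+o(1)$; and second, that the range condition $|\eta^{2}|<\delta_{0}^{2}|z-z_{0}|$ in \eqref{B6} must actually contain $\theta_{z,t}$. The paper does not address the second point at all, and as you note it is genuinely delicate: the crude bound $|\theta_{z,t}|\lesssim t^{2}/N^{1/2}$ from Lemma~\ref{lem:eta}(ii) only gives $|\theta_{z,t}|\lesssim\delta_{0}^{2}N^{-1/2}$, which need not be smaller than $\delta_{0}^{2}|z-z_{0}|$ when $|z-z_{0}|\ll N^{-1/2}$. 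Your two-pronged fix (a sharper $|z-z_{0}|$-dependent bound on $\theta_{z,t}$ obtained by rerunning the proof of Lemma~\ref{lem:eta}(ii), plus relocating the reference point $z_{0}'$ when $z$ is too close to $z_{0}$) is sound, and the second part correctly exploits that Definition~\ref{def:Eedge} quantifies over all reference points with $\delta_{0}'\simeq t$. One small remark: since $\theta_{z,t}$ may be negative, the first term $\theta_{z,t}\Tr{H_{z,t}\wt{H}_{z,t}}$ may be negative as well, but it is still $O(1/(N^{1/2}t))=o(1)$ in absolute value, so the conclusion $\sigma_{z,t}\simeq1$ still follows from the second term being $\simeq1$.
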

\begin{proof}
At the edge this follows directly from \eqref{B4}, \eqref{B5} and \eqref{B6}.
\end{proof}

Finally, we note the following linear algebra identity.
\begin{lemma}\label{lem:b}
Let $U=(U_{1},U_{2})\in U(N)$ where $U^{*}_{1}U_{1}=1_{m}$ and $B=U_{2}^{*}X^{*}U_{1}$. Then
\begin{align}
    &1_{m}+B^{*}H^{(U_{1})}_{z}(\eta)B\nonumber\\&=\left(\eta^{2}(U^{*}_{1}\wt{H}_{z}(\eta)U_{1})+U^{*}_{1}X_{z}H_{z,t}U_{1}(U^{*}_{1}H_{z}(\eta)U_{1})^{-1}(U^{*}_{1}H_{z}(\eta)X_{z}^{*}U_{1})\right)^{-1}.
\end{align}
\end{lemma}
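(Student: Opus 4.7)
The identity is purely linear-algebraic and the plan is to derive it by applying the block Schur complement formula twice, to two different blockings of the Hermitised resolvent $G_z(i\eta)=(W_z-i\eta)^{-1}$. Write $A=U_1^*X_zU_1$, $B^*=U_1^*X_zU_2$, $C=U_2^*X_zU_1$, $D=U_2^*X_zU_2=X_z^{(U_1)}$, and note that, because $U_1^*U_2=0$, the matrix $B=U_2^*X^*U_1$ in the statement coincides with $U_2^*X_z^*U_1$. Conjugating $W_z-i\eta$ by $1_2\otimes U$ and reordering coordinates so that the two copies of $U_1$ and the two copies of $U_2$ are grouped together, one obtains the $2\times 2$ block form $\begin{pmatrix}P&Q\\R&S\end{pmatrix}$ with
\[
P=\begin{pmatrix}-i\eta&A\\A^*&-i\eta\end{pmatrix},\quad Q=\begin{pmatrix}0&B^*\\C^*&0\end{pmatrix},\quad R=\begin{pmatrix}0&C\\B&0\end{pmatrix},\quad S=\begin{pmatrix}-i\eta&D\\D^*&-i\eta\end{pmatrix}.
\]
Here $S^{-1}=G^{(U_1)}_z(i\eta)$ has the usual four blocks involving $H_z^{(U_1)}(\eta)$ and $\wt H_z^{(U_1)}(\eta)$.

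I would then compute the outer top-left block $\mc{G}_{11}:=(1_2\otimes U_1)^*G_z(i\eta)(1_2\otimes U_1)$ in two ways. On one hand, by the Schur complement, $\mc{G}_{11}=(P-QS^{-1}R)^{-1}$, and a direct multiplication using the block forms of $Q,\,S^{-1},\,R$ shows that the $m\times m$ top-left entry of $P-QS^{-1}R$ is exactly $-i\eta(1_m+B^*H_z^{(U_1)}(\eta)B)$, i.e.\ $-i\eta$ times the left-hand side of the lemma. On the other hand, from the explicit form of $G_z(i\eta)$,
\[
\mc{G}_{11}=\begin{pmatrix}i\eta\,U_1^*\wt H_zU_1&U_1^*X_zH_zU_1\\U_1^*H_zX_z^*U_1&i\eta\,U_1^*H_zU_1\end{pmatrix},
\]
and a second Schur complement applied inside $\mc{G}_{11}$ gives its top-left $m\times m$ block as
\[
\Bigl(i\eta\,U_1^*\wt H_zU_1-(i\eta)^{-1}(U_1^*X_zH_zU_1)(U_1^*H_zU_1)^{-1}(U_1^*H_zX_z^*U_1)\Bigr)^{-1};
\]
factoring $(i\eta)^{-1}$ out of the bracket and inverting turns this into $-i\eta$ times the right-hand side of the lemma. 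Equating the two expressions for the same block and cancelling $-i\eta$ gives the claimed identity.

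The argument involves no analytic input, so the only obstacle is bookkeeping across the two Schur complement computations: one must keep track of the ordering of the four quadrants, make sure that $B$ (rather than $C$) lands in the top-left corner of the first computation, and monitor the compensating $\pm i\eta$ prefactors that appear in both computations before they cancel. I would also flag that the $H_{z,t}$ in the central position of the displayed right-hand side looks like a typographical slip for $H_z(\eta)$: the identity holds for every $\eta>0$, and specialising to $\eta=\eta_{z,t}$ (the case used in the rest of the paper) makes the two notations coincide, so no change to the scope of the statement is needed.
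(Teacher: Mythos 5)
Your proof is correct and takes essentially the same route as the paper's: the two identities the paper invokes are precisely a Schur-complement/Woodbury relation between $G_z$ and $G^{(U_1)}_z$ together with the block structure of the Hermitised resolvent, which is exactly what you encode by reordering into the $2\times 2$ form $\bigl(\begin{smallmatrix}P&Q\\R&S\end{smallmatrix}\bigr)$ and applying the block-inversion formula twice. Your observation that the $H_{z,t}$ in the displayed right-hand side should read $H_z(\eta)$ is also correct; the identity is purely algebraic in $\eta$, and the paper only ever invokes it at $\eta=\eta_{z,t}$, where the two coincide.
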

\begin{proof}
This follows by simple algebraic manipulations using the identities
\begin{align*}
    B^{*}H^{(U_{1})}_{z}B&=\begin{pmatrix}0&U^{*}_{1}X_{z}\end{pmatrix}(1_{2}\otimes U)G^{(U_{1})}_{z}(1_{2}\otimes U^{*})\begin{pmatrix}0\\X^{*}_{z}U_{1}\end{pmatrix},
\end{align*}
and
\begin{align*}
    &(1_{2}\otimes U)\begin{pmatrix}0&0\\0&G^{(U_{1})}_{z}\end{pmatrix}(1_{2}\otimes U)\\&=G_{z}-G_{z}(1_{2}\otimes U_{1})((1_{2}\otimes U^{*}_{1})G_{z}(1_{2}\otimes U_{1}))^{-1}(1_{2}\otimes U^{*}_{1})G_{z}.
\end{align*}
\end{proof}

After this preparatory work, we are in a position to prove Theorems \ref{thm1} and \ref{thm2} for Gauss-divisible matrices in the next two sections.

\section{Proof of Theorem \ref{thm1} for Gauss-divisible Matrices}\label{sec:thm1Gauss}
The proof of Theorem \ref{thm1} for Gauss-divisible matrices will be done by an adaptation of Fyodorov's method \cite{fyodorov_statistics_2018}. In \cite{fyodorov_statistics_2018}, the problem of computing the Laplace transform of the overlap is reduced to the evaluation of the expectation value of a ratio of characteristic polynomials. This expectation value can be computed exactly at finite $N$ using the supersymmetry method and the rotational invariance of the Gaussian distribution. The large $N$ limit can then be extracted directly from the resulting formula.

In our case the rotational invariance is broken by the deterministic shift $X$ and so some changes are required. Instead of computing the Laplace transform, we directly compute the expectation of an arbitrary bounded function of the overlap. We avoid the ratio of characteristic polynomials by a change of variables and the introduction of a measure on $S^{N-2}_{\beta}$ which is closely related to the measure $\mu^{X}_{\beta,z}$ defined in \eqref{eq:mu}. The analysis of this measure can be done following a similar approach to the analysis of $\mu^{X}_{\beta,z}$

The starting point in \cite{fyodorov_statistics_2018} is the observation that the overlaps are invariant under unitary conjugation, and so we can compute instead the overlap of the partially upper triangular matrix in the partial Schur decomposition. Let $M_{N}=X+\sqrt{t}Y_{N}$ have an eigenvalue $z=z_{n}$ and compute the partial Schur decomposition
\begin{align}
    M_{N}&=R(\mbf{v})\begin{pmatrix}z&\mbf{w}^{*}\\0&M_{N-1}\end{pmatrix}R(\mbf{v}).
\end{align}
Then the overlap $O_{nn}$ corresponding to $z$ is given by
\begin{align}
    O_{nn}&:=O_{nn}(z,\mbf{w},M_{N-1})=1+\|(M^{*}_{N-1}-\bar{z})^{-1}\mbf{w}\|^{2}.\label{eq:partialSchurOverlap}
\end{align}

We need to scale the overlap differently according to whether we are in the bulk or edge regime. Thus we define
\begin{align}
    S_{n}&=\frac{t^{2}\sigma_{z,t}}{N\eta^{2}_{z,t}}O_{nn},\label{eq:SBulkGauss}
\end{align}
when $|z|<1-\omega$, and
\begin{align}
    S_{n}&=\frac{t^{2}\Tr{H^{2}_{z,t}}^{1/2}\sigma_{z,t}}{N^{1/2}}O_{nn},\label{eq:SEdgeGauss}
\end{align}
when $\big||z|-1\big|<CN^{-1/2}$.
\begin{proposition}\label{prop:gaussDivisible}
Let $\epsilon>0$,\,$t=N^{-1/3+\epsilon}$ and $M_{N}=X+\sqrt{t}Y_{N}$ for $Y_{N}\sim Gin_{\beta}(N)$. For any $f:\mbb{F}_{\beta}\to\mbb{C}$ with compact support and bounded $g:[0,\infty]\to\mbb{C}$ there is a $\tau>0$ such that the following holds for sufficiently large $N$:
\begin{enumerate}[i)]
\item if $z_{0}\in\mbb{D}_{\beta}$ and $X\in\mc{E}_{bulk}(t)$ then
\begin{align}
    \mbb{E}_{N}\left[\frac{1}{N^{\beta/2}}\sum_{n=1}^{N_{\beta}}f_{\beta,z_{0}}(z_{n})g\left(S_{n}\right)\right]&=\int_{\mbb{F}_{\beta}\times[0,\infty]}f_{\beta,z_{0}}(z)g(s)\rho_{\beta,bulk}(z,s)\,\mathrm{d}m(z,s)+O(N^{-\tau}),\label{eq:gaussDivisibleBulk}
\end{align}
where $S_{n}$ is given by \eqref{eq:SBulkGauss};
\item if $z_{0}\in\sqrt{1+t}\mbb{T}_{\beta}$ and $X\in\mc{E}_{edge}(t)$ then
\begin{align}
    \mbb{E}_{N}\left[\frac{1}{N^{\beta/2}}\sum_{n=1}^{N_{\beta}}f_{z_{0}}(z_{n})g\left(S_{n}\right)\right]&=\int_{\mbb{F}_{\beta}\times[0,\infty]}f_{\beta,z_{0}}(z)g\left(s\right)\rho_{\beta,edge}(\delta_{z,t},s)\,\mathrm{d}m(z,s)+O(N^{-\tau}),\label{eq:gaussDivisibleEdge}
\end{align}
where $S_{n}$ is given by \eqref{eq:SEdgeGauss} and
\begin{align}
    \delta_{z,t}&:=\sqrt{N\Tr{H_{z,t}^{2}}}\theta_{z,t}.\label{eq:delta_zt}
\end{align}
\end{enumerate}
\end{proposition}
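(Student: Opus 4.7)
The plan is to adapt Fyodorov's calculation \cite{fyodorov_statistics_2018} for the Gaussian case to the Gauss-divisible setting by combining the partial Schur decomposition \eqref{eq:partialSchur} with the analytic tools developed in Section \ref{sec:gaussDivisible}. Starting from \eqref{eq:partialSchur}, the expected linear statistic becomes a fourfold integral over $z \in \mbb{F}_\beta$, $\mbf{v} \in S^{N-1}_\beta$ distributed according to $\mu^X_{\beta,z}$, a Gaussian $\mbf{w} \in \mbb{F}_\beta^{N-1}$ with mean $V^* X^* \mbf{v}$ and variance $t/N$, and the minor $M_{N-1} = X^{(\mbf{v})} + \sqrt{Nt/(N-1)}\, Y_{N-1}$, weighted by $|\det(M_{N-1} - z)|^\beta$. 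By \eqref{eq:partialSchurOverlap}, the overlap has the explicit form $O_{nn} = 1 + \mbf{w}^* B \mbf{w}$ with $B = (M_{N-1}-z)^{-1}(M^*_{N-1}-\bar{z})^{-1}$, so that $g(S_n)$ is a function of $(z, \mbf{w}, M_{N-1})$ alone.

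The key step circumventing the loss of rotational invariance is a change of variables on $\mbf{w}$ that separates the magnitude of the overlap from its direction. Writing $\mbf{w}$ in coordinates adapted to $B$, I would split off a radial variable $\rho$ (capturing $\rho^2 \sim \mbf{w}^* B \mbf{w}$) and a direction $\mbf{u} \in S^{N-2}_\beta$, under a new tilted probability measure $\nu$ on the sphere whose density takes the same shape as that of $\mu^X_{\beta,z}$ from \eqref{eq:mu} (see \eqref{eq:nu}). The analysis of $\nu$ proceeds in exact parallel to the one developed in Section \ref{sec:gaussDivisible} for $\mu^X_{\beta,z}$: determination of a saddle analogous to $\eta_{z,t}$, asymptotics of the normalising constant, and concentration of quadratic forms $\mbf{u}^* H_{z,t} \mbf{u}$ against deterministic traces of $G_{z,t}$. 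Using Lemmas \ref{lem:K}, \ref{lem:conc}, \ref{lem:minor} for $\mu^X_{\beta,z}$ and their analogues for $\nu$, all random quadratic forms appearing in the integrand reduce to the trace quantities controlled in \eqref{A2}--\eqref{A5} (bulk) or \eqref{B2}--\eqref{B6} (edge).

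Assembling the result, the prefactor $K_\beta(z)$ from Lemma \ref{lem:K}, the factor $|\det(M_{N-1}-z)|^\beta$ (handled via the Hermitisation $|\det(M_{N-1}-z)|^2 = \det|M_{N-1}-z|^2$ and Lemma \ref{lem:minor}), the Jacobian of the $\mbf{w}$ change of variables, and the residual angular integration against $\nu$ combine to produce a global eigenvalue density proportional to $\sigma_{z,t}$ (Lemma \ref{lem:sigma}) multiplied by a limiting overlap density in the rescaled variable $S_n$. In the bulk the identities $\eta_{z,t} \simeq t$ and the trace asymptotics \eqref{A2}--\eqref{A5} collapse the expression to \eqref{eq:rhoBulk} in the scaling \eqref{eq:SBulkGauss}. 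At the edge the scaling \eqref{eq:SEdgeGauss}, together with \eqref{eq:delta_zt} and the edge estimates \eqref{B4}--\eqref{B6}, reproduces Fyodorov's density \eqref{eq:rhoEdge}, the $\beta \times \beta$ determinant arising naturally from the $\beta$ real directions along which $\mbf{w}$ couples to the near-zero mode of $M_{N-1}-z$. The error $O(N^{-\tau})$ is tracked through the explicit remainders $\log^3 N/\sqrt{Nt}$ in \eqref{eq:K}, $e^{-c_2 \log^2 N}$ in Lemma \ref{lem:conc}, and $1/\sqrt{Nt^2}$ in Lemma \ref{lem:minor}, all of which are simultaneously $N^{-\tau}$ for some $\tau > 0$ when $t = N^{-1/3+\epsilon}$.

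The main obstacle I expect is the analysis of $\nu$ at the edge: the effective saddle scale is $\theta_{z,t} = O(t^2/N^{1/2})$, which is very small, and the required concentration estimate for $\mbf{u}^* H^{(\mbf{v})}_{z,t} \mbf{u}$ rests on a refined lower bound for the smallest singular value of the doubly-projected matrix $X^{(\mbf{v},\mbf{u})}_z$, i.e.\ an edge analogue of \eqref{eq:svMinor} for two levels of projection in which the tilt of $\nu$ along $\mbf{u}$ interacts nontrivially with the near-zero mode of $X_z$. A secondary difficulty is the matching of the $\beta \times \beta$ determinantal factor in \eqref{eq:rhoEdge}, which for $\beta = 1$ requires a careful separation of the single real perturbation direction that can drive a real eigenvalue off $\mbb{R}$ from the overlap distribution itself.
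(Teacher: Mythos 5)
Your proposal follows the paper's overall strategy closely — partial Schur decomposition, a change of variables on $\mbf{w}$ turning $O_{nn}$ into $1+\|\mbf{w}\|^{2}$, introduction of a tilted spherical measure $\nu$ mirroring $\mu^{X}_{\beta,z}$, and concentration of quadratic forms under $\nu$ — so the broad outline is correct. However, there is a genuine gap in the single step you leave most vague: the evaluation of $\mbb{E}_{N-1}$. You write that the factor $|\det(M_{N-1}-z)|^{\beta}$ is ``handled via the Hermitisation,'' and you attribute the $\beta\times\beta$ determinant in \eqref{eq:rhoEdge} to ``$\beta$ real perturbation directions along which $\mbf{w}$ couples to the near-zero mode.'' This is not how that structure arises. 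After interchanging $\mbb{E}_{\mbf{w}}$ with $\mbb{E}_{N-1}$ and changing variables, one must evaluate a Gaussian expectation of a power of $|\det(M_{N-1}-z)|$ weighted by the $Y_{N-1}$-dependent exponential coming from the density of $\mbf{w}$. The paper does this by the supersymmetry (duality) method: the determinant expectation is replaced by an integral over an auxiliary $\beta\times\beta$ matrix $Q$, and passing to the singular value decomposition $Q=U\bs\eta V^{*}$ produces the Vandermonde $\Delta^{2}_{\beta}(\bs\eta^{2})$, which is exactly where the $\beta\times\beta$ determinant (via Andreif) comes from — see the proof of Lemma \ref{lem:formula} and the definition of $I^{(\mbf{v})}_{\beta}$ in \eqref{eq:I}. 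Without identifying this mechanism, the density $\rho_{\beta,edge}$ cannot be derived, and your proposed plan for ``assembling the result'' does not close.

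A secondary issue: your anticipated obstacle — a concentration estimate for $\mbf{u}^{*}H^{(\mbf{v})}_{z,t}\mbf{u}$ requiring a lower bound on singular values of the doubly-projected matrix $X^{(\mbf{v},\mbf{u})}_{z}$ — does not actually arise. The concentration Lemma \ref{lem:nuConc} for the $\nu$-measure is proved via the duality formula of Lemma \ref{lem:nuDuality} applied directly to the shifted vector $\hat{\mbf{u}}$, and the only spectral input required is the single-projection bound $s^{(\mbf{v})}_{1}(z)\gtrsim\delta_{0}^{3/2}$ from \eqref{eq:svMinor}. The genuine difficulty at the edge is instead the analysis of the shifted saddle equation \eqref{eq:theta_s} and the function $\chi^{(\mbf{v})}_{z}$ (Lemmas \ref{lem:theta_s} and \ref{lem:chiTilde}), which you do not flag; in particular one must show $\chi^{(\mbf{v})}_{z}$ is decreasing and track its behaviour for $s\gtrsim s^{(\mbf{v})}_{0}/\log N$ to extract the Gaussian prefactor $e^{-\beta s_{0}^{2}(1-2\delta_{z,t}s/s_{0})/(4s^{2})}$ appearing in $\rho_{\beta,edge}$.
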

Since the proofs of \eqref{eq:gaussDivisibleBulk} and \eqref{eq:gaussDivisibleEdge} are very similar we give the details for the latter and sketch the requisite adjustments for the former at the end.

The proof begins with a formula for the expectation value on the left hand side. To prepare for its statement we make some definitions. For $\mbf{v}\in S^{N-1}_{\beta}$ and $s\in(0,\infty)$, we define a probability measure $\nu^{(\mbf{v})}_{\beta,z,s}$ on $S^{N-2}_{\beta}$ by
\begin{align}
    d\nu^{(\mbf{v})}_{\beta,z,s}(\mbf{u})&=\frac{1}{K_{\beta,z}(s)}\left(\frac{\beta Ns}{2\pi t(1+s)}\right)^{\beta (N-1)/2-1}e^{-\frac{\beta Ns}{2t(1+s)}\left\|X_{z}^{(\mbf{v})*}\mbf{u}-\frac{1}{\sqrt{s}}\mbf{b}\right\|^{2}}\,\mathrm{d}_{H}\mbf{u},\label{eq:nu}
\end{align}
where $K_{\beta,z}(s)$ is the normalisation. This is similar to the measure $\mu^{X}_{\beta,z}$ defined in \eqref{eq:mu}, except the Gaussian weight is shifted by the vector $\frac{1}{\sqrt{s}}\mbf{b}$. Denoting by $\mbb{E}_{\beta,z,s}$ the expectation with respect to $\nu^{(\mbf{v})}_{\beta,z,s}$, we define
\begin{align}
    P^{(\mbf{v})}(\sigma,s)&:=\mbb{E}_{\beta,z,s}\left[|1+\sqrt{s}\mbf{u}^{*}\wt{H}^{(\mbf{v})}_{z}(\sigma)X^{(\mbf{v})}_{z}\mbf{b}|^{2}+s\sigma^{2}(1+\mbf{b}^{*}H^{(\mbf{v})}_{z}(\sigma)\mbf{b})(\mbf{u}^{*}\wt{H}^{(\mbf{v})}_{z}(\sigma)\mbf{u})\right],\label{eq:P}
\end{align}
and the integral
\begin{align}
    I^{(\mbf{v})}_{\beta}(s)&=\frac{2^{\beta}N^{\beta^{2}}}{\beta!t^{\beta^{2}}}\int_{[0,\infty]^{\beta}}\Delta^{2}_{\beta}(\bs\eta^{2})\prod_{j=1}^{\beta}e^{-N\left[\phi^{(\mbf{v})}_{z}(\eta_{j})-\phi^{(\mbf{v})}_{z,t}\right]}P^{(\mbf{v})}(\eta_{j},s)\eta_{j}\,\mathrm{d}\eta_{j},\label{eq:I}
\end{align}
where $\Delta_{1}=1,\,\Delta_{2}(\bs\eta^{2})=\eta_{1}^{2}-\eta_{2}^{2}$. We then define
\begin{align}
    \psi^{(\mbf{v})}_{\beta}(s)&=e^{-\frac{\beta N}{2}\phi^{(\mbf{v})}_{z,t}}K_{\beta,z}(s)I^{(\mbf{v})}_{\beta}(s),\label{eq:psi}
\end{align}
and
\begin{align}
    \rho_{\beta,N}(z,s)&=\frac{1}{4\pi^{\beta-1}N^{\beta/2}}\left(\frac{\beta N}{2\pi t}\right)^{2}e^{-\frac{\beta N}{2}\phi_{z,t}}K_{\beta}(z)\mbb{E}_{\beta,z}\left[1_{\mc{E}_{conc}}e^{\frac{\beta N}{2}(\phi_{z,t}-\phi^{(\mbf{v})}_{z,t})}\psi^{(\mbf{v})}_{\beta}(s)\right].\label{eq:rhoN}
\end{align}
We recall that $\mc{E}_{conc}$ is the event that the estimates in Lemma \ref{lem:conc} hold. The function $\rho_{\beta,N}$ is an approximation of the joint density of $(z_{n},S_{n})$. In fact, without the factor $1_{\mc{E}_{conc}}$, it is the exact density, but we find it more convenient to insert this factor at this point.
\begin{lemma}\label{lem:formula}
We have
\begin{align}
    \mbb{E}_{N}\left[\frac{1}{N^{\beta/2}}\sum_{n=1}^{N_{\beta}}f_{\beta,z_{0}}(z_{n})g\left(S_{n}\right)\right]&=\int_{\mbb{F}_{\beta}\times[0,\infty]}f_{\beta,z_{0}}(z)g\left(\frac{1+s}{s_{0}}\right)\rho_{\beta,N}(z,s)\,\mathrm{d}m(z,s)\nonumber\\
    &+O(e^{-C\log^{2}N}),
\end{align}
where
\begin{align}
    s_{0}&=\begin{cases}
    \frac{N\eta_{z,t}^{2}}{t^{2}\sigma_{z,t}}&\quad z_{0}\in\mbb{D}_{\beta}\\
    \frac{N^{1/2}\sigma_{z,t}}{t^{2}\Tr{H^{2}_{z,t}}^{1/2}}&\quad z_{0}\in\sqrt{1+t}\mbb{T}_{\beta}
    \end{cases}.\label{eq:s0}
\end{align}
\end{lemma}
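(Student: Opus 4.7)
The overall strategy is to apply the partial Schur formula \eqref{eq:partialSchur} to the statistic $\mc{L}_{\beta}\bigl(N^{-\beta/2}f_{\beta,z_{0}}(z)g(S_{n})\bigr)$, with $S_{n}$ expressed via the overlap identity \eqref{eq:partialSchurOverlap}, and then evaluate the resulting triple expectation by a sequence of changes of variables, producing the density $\rho_{\beta,N}(z,s)$ on the nose (after inserting the indicator $1_{\mc{E}_{conc}}$ at negligible cost).

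First, \eqref{eq:partialSchur} rewrites the LHS as
\begin{align*}
\frac{\beta N}{4\pi^{\beta}tN^{\beta/2}}\int_{\mbb{F}_{\beta}}f_{\beta,z_{0}}(z)K_{\beta}(z)\,\mbb{E}_{\beta,z}\mbb{E}_{\mbf{w}}\mbb{E}_{N-1}\!\bigl[g(S_{n})\,|\det(M_{N-1}-z)|^{\beta}\bigr]\,d^{\beta}z.
\end{align*}
I would next inject the scalar $s$ via the identity $g(S_{n})=\int_{0}^{\infty}g\bigl(\tfrac{1+s}{s_{0}}\bigr)\delta\bigl(s-\|(M_{N-1}^{*}-\bar z)^{-1}\mbf{w}\|^{2}\bigr)\,ds$ and change variables from $\mbf{w}\in\mbb{F}_{\beta}^{N-1}$ to $(s,\mbf{u})\in(0,\infty)\times S^{N-2}_{\beta}$ via $\mbf{w}=\sqrt{s}(M_{N-1}^{*}-\bar z)\mbf{u}$. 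The Jacobian contributes $\tfrac{1}{2}s^{\beta(N-1)/2-1}|\det(M_{N-1}-z)|^{\beta}$, which combines with the $|\det(M_{N-1}-z)|^{\beta}$ from partial Schur to produce the weight $|\det(M_{N-1}-z)|^{2\beta}$ on $M_{N-1}$, while the $\mbf{w}$-Gaussian becomes the $\mbf{u}$-dependent factor $\exp\bigl\{-\tfrac{\beta N}{2t}\|\sqrt{s}(M_{N-1}^{*}-\bar z)\mbf{u}-\mbf{b}\|^{2}\bigr\}$ coupled to $M_{N-1}$.

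The main technical step is the joint $(M_{N-1},\mbf{u})$-integration. Completing the square in $M_{N-1}$ turns the row-covariance of the Gaussian weight into $(I_{N-1}+s\mbf{u}\mbf{u}^{*})^{-1}$ and shifts its mean by a vector depending on $(\mbf{b},\mbf{u})$. The residual quadratic form in $\mbf{u}$ alone coincides by construction with the exponent of $\nu^{(\mbf{v})}_{\beta,z,s}$ from \eqref{eq:nu}, so the $\mbf{u}$-integration produces the normalisation $K_{\beta,z}(s)$. The rank-one perturbation $s\mbf{u}\mbf{u}^{*}$ breaks Gaussian rotational invariance in a real-$\beta$-dimensional subspace; performing a further partial Schur step on $M_{N-1}-z$ aligned with this subspace isolates a $\beta\times\beta$ ``special block'' whose singular values $\eta_{1},\dots,\eta_{\beta}$ supply the Vandermonde factor $\Delta_{\beta}^{2}(\bs\eta^{2})$ together with the one-particle density $\eta_{j}e^{-N\phi^{(\mbf{v})}_{z}(\eta_{j})}$. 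Integrating out the orthogonal $(N-1-\beta)$-dimensional ``bulk block'' against the uncoupled Gaussian contributes $e^{-\beta N\phi^{(\mbf{v})}_{z,t}/2}$, while the cross-couplings between the special block, $\mbf{b}$, and $\mbf{u}$ assemble into $P^{(\mbf{v})}(\eta_{j},s)$ in the form \eqref{eq:P}, giving $I^{(\mbf{v})}_{\beta}(s)$ and hence $\psi^{(\mbf{v})}_{\beta}(s)$ defined in \eqref{eq:psi}.

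Finally, the indicator $1_{\mc{E}_{conc}}$ can be inserted at a cost of $O(e^{-C\log^{2}N})$, since Lemma \ref{lem:conc} gives $\mbb{P}(\mc{E}_{conc}^{c})\leq e^{-c\log^{2}N}$ while the remaining integrand is bounded by a polynomial in $N$ times $K_{\beta}(z)$, whose asymptotics are controlled by Lemma \ref{lem:K}. Using Lemma \ref{lem:minor} (specifically \eqref{eq:minorEdge} or its bulk counterpart \eqref{eq:minorBulk}) to express the ratio $K_{\beta}(z)/K^{(\mbf{v})}_{\beta}(z)$ through $e^{\beta N(\phi_{z,t}-\phi^{(\mbf{v})}_{z,t})/2}$ then allows me to match the prefactors with the definition \eqref{eq:rhoN} of $\rho_{\beta,N}$ and recover the claimed identity. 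The hardest part of the argument is the third step: isolating the $\beta$-real-dimensional ``special subspace'' of $M_{N-1}$ cleanly from the bulk Gaussian and tracking all cross-couplings with $(\mbf{b},\mbf{u})$ through the resolvents $H^{(\mbf{v})}_{z}(\eta_{j}),\,\wt{H}^{(\mbf{v})}_{z}(\eta_{j})$, so as to reproduce the precise form \eqref{eq:P} of $P^{(\mbf{v})}(\sigma,s)$, is the most delicate bookkeeping in the proof.
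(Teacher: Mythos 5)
The first steps of your proposal (applying \eqref{eq:partialSchur} and \eqref{eq:partialSchurOverlap}, changing variables from $\mbf{w}$ to $(s,\mbf{u})$ via $\mbf{w}=\sqrt{s}(M_{N-1}^{*}-\bar z)\mbf{u}$, and controlling the bad event $\mc{E}_{conc}^{c}$ via Lemmas~\ref{lem:conc}, \ref{lem:K}, \ref{lem:minor}) correctly reproduce what the paper does. However, your third step contains a genuine gap: you propose to obtain the Vandermonde factor $\Delta_{\beta}^{2}(\bs\eta^{2})$ and the one-particle weight $\eta_{j}e^{-N\phi_{z}^{(\mbf{v})}(\eta_{j})}$ by performing a ``further partial Schur step'' on $M_{N-1}-z$ aligned with the $\beta$-real-dimensional subspace spanned by $\mbf{u}$. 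This cannot work as stated. After the change of variables, the integrand over $M_{N-1}=X^{(\mbf{v})}+\sqrt{t}\,Y_{N-1}$ carries the factor $|\det(M_{N-1}-z)|^{2\beta}$ together with a $Y_{N-1}$-dependent Gaussian perturbation arising from the density of $\mbf{w}$. A partial Schur step would reparametrise $M_{N-1}$, but it would neither integrate out $Y_{N-1}$ nor dispose of the determinant power; moreover, the $\eta_{j}$ in the integral $I^{(\mbf{v})}_{\beta}(s)$ of \eqref{eq:I} are deterministic integration variables ranging over $[0,\infty]^{\beta}$, not singular values of a random submatrix of $M_{N-1}$.

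What the paper actually does at this point is evaluate $\mbb{E}_{N-1}$ by the \emph{supersymmetry / duality method}: $|\det(M_{N-1}-z)|^{2\beta}$ is rewritten so that the Gaussian expectation over $Y_{N-1}$ can be performed explicitly, producing a dual integral over a $\beta\times\beta$ matrix $Q$ of the form $\int_{\mbb{M}_{\beta}(\mbb{C})}e^{-\frac{N}{t}\tr|Q|^{2}}\det(\cdots)\,\mathrm{d}Q$, where the large determinant contains $iQ\otimes(1+\mbf{w}\mbf{w}^{*})$ and $1_{\beta}\otimes(X^{(\mbf{v})}_{z}+\mbf{w}\mbf{b}^{*})$ blocks. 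Passing to the singular value decomposition $Q=U\bs\eta V^{*}$ and integrating over $U$ and $V$ then yields the Vandermonde $\Delta_{\beta}^{2}(\bs\eta^{2})$ and reduces the large determinant to the rank-two perturbation formula which, combined with the remaining Gaussian in $\mbf{w}$, produces $P^{(\mbf{v})}(\eta_{j},s)$ and the measure $\nu^{(\mbf{v})}_{\beta,z,s}$. This duality step is the crux of the proof and is not replaceable by a further Schur decomposition of $M_{N-1}$; you need to introduce the dual $Q$-integral in order to both absorb the determinant power and make $\mbb{E}_{N-1}$ tractable.
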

\begin{proof}
Using \eqref{eq:partialSchur} and the expression in \eqref{eq:partialSchurOverlap} for the overlap, we obtain
\begin{align*}
    \mbb{E}_{N}\left[\sum_{n=1}^{N_{\beta}}f_{z_{0}}(z_{n})g(S_{n})\right]&=\int_{\mbb{F}_{\beta}}f_{\beta,z_{0}}(z)F_{\beta,z_{0}}(z)\,\mathrm{d}^{\beta}z,
\end{align*}
where
\begin{align*}
    F_{\beta,z_{0}}(z)&:=\frac{\beta NK_{\beta}(z)}{4\pi^{\beta}N^{\beta/2}t}\mbb{E}_{\beta,z}\left[\mbb{E}_{\mbf{w}}\left[\mbb{E}_{N-1}\left[g\left(\frac{1+\|(M^{*}_{N-1}-\bar{z})^{-1}\mbf{w}\|^{2}}{s_{0}}\right)|\det(M_{N-1}-z)|^{\beta}\right]\right]\right].
\end{align*}
By the supersymmetry method and the estimates on $\phi_{z}$ in Lemma \ref{lem:phi}, we can obtain the bound
\begin{align*}
    \mbb{E}_{N-1}\left[|\det(M_{N-1}-z)|^{\beta}\right]&\lesssim\frac{e^{-\frac{\beta N}{2}\phi^{(\mbf{v})}_{z,t}}}{\sqrt{N\Tr{(H^{(\mbf{v})}_{z,t})^{2}}}}.
\end{align*}
We omit the details since they are very similar to the computations in Section \ref{sec:thm2Gauss}. Using Lemma \ref{lem:minor}, the asymptotics for $K_{\beta}(z)$ in Lemma \ref{lem:K} and the fact that $g$ is bounded, we conclude that the contribution from $\mc{E}^{c}_{conc}$ is $O(e^{-C\log^{2}N})$.

Now we interchange $\mbb{E}_{\mbf{w}}$ and $\mbb{E}_{N-1}$ and change variable $\mbf{w}\mapsto (M^{*}_{N-1}-\bar{z})\mbf{w}$, so that $\mbf{w}$ is now distributed according to
\begin{align*}
    \left(\frac{\beta N}{2\pi t}\right)^{\beta(N-1)/2}\exp\left\{-\frac{\beta N}{2t}\left\|(M^{*}_{N-1}-\bar{z})\mbf{w}-\mbf{b}\right\|^{2}\right\}|\det(M_{N-1}-\bar{z})|^{\beta}\,\mathrm{d}^{\beta}\mbf{w}.
\end{align*}

Interchanging once more $\mbb{E}_{N-1}$ and $\mbb{E}_{\mbf{w}}$, we consider first the expectation $\mbb{E}_{N-1}$, which we recall is the expectation with respect to $M_{N-1}=X^{(\mbf{v})}+\sqrt{\frac{Nt}{N-1}}Y_{N-1}$ for fixed $\mbf{v}$, where $Y_{N-1}\sim Gin_{\beta}(N-1)$. As well as the term $|\det(M_{N-1}-z)|^{2\beta}$, there are $Y_{N-1}$-dependent terms coming from the density of $\mbf{w}$ given by
\begin{align*}
    \exp\left\{-\frac{\beta N}{2}\mbf{w}^{*}Y_{N-1}Y_{N-1}^{*}\mbf{w}-\frac{\beta N}{\sqrt{t}}\Re\tr Y_{N-1}(X_{z}^{(\mbf{v})*}\mbf{w}-\mbf{b})\mbf{w}^{*}\right\}.
\end{align*}
We can evaluate the expectation over $Y_{N-1}$ by the supersymmetry method to obtain
\begin{align*}
    F_{\beta}(z,\lambda)&=\frac{1}{2\pi^{\beta-1}N^{\beta/2}}\left(\frac{\beta N}{2\pi t}\right)^{\beta (N-1)/2+1}K_{\beta}(z)\mbb{E}_{\beta,z}\Big[\\
    &\times\int_{\mbb{F}_{\beta}^{N-1}}\frac{g_{z_{0}}\left((1+\|\mbf{w}\|^{2})/s_{0}\right)}{(1+\|\mbf{w}\|^{2})^{\beta (N+3)/2}}e^{-\frac{\beta N}{2t(1+\|\mbf{w}\|^{2})}\|X^{(\mbf{v})*}_{z}\mbf{w}-\mbf{b}\|^{2}}\nonumber\\
    &\times\left(\frac{N}{\pi t}\right)^{\beta^{2}}\int_{\mbb{M}_{\beta}(\mbb{C})}e^{-\frac{N}{t}\tr|Q|^{2}}\det\begin{pmatrix}iQ\otimes(1+\mbf{w}\mbf{w}^{*})&1_{\beta}\otimes(X^{(\mbf{v})}_{z}+\mbf{w}\mbf{b}^{*})\\1_{\beta}\otimes(X^{(\mbf{v})*}_{z}+\mbf{b}\mbf{w}^{*})&iQ^{*}\otimes1_{N}\end{pmatrix}\,\mathrm{d}Q\,\mathrm{d}^{\beta}\mbf{w}\Big].
\end{align*}
Note that when $\beta=1$ the integral in the last line is over $Q=q\in\mbb{C}$. Changing variables to the singular value decomposition $Q=U\bs\eta V^{*}$ and integrating over $U$ and $V$ we find
\begin{align*}
    F_{\beta}(z,\lambda)&=\frac{1}{2\pi^{\beta-1}N^{\beta/2}}\left(\frac{\beta N}{2\pi t}\right)^{\beta (N-1)/2+1}K_{\beta}(z)\mbb{E}_{\beta,z}\Big[\\
    &\times\int_{\mbb{F}_{\beta}^{N-1}}\frac{g_{z_{0}}\left(1+\|\mbf{w}\|^{2}\right)}{(1+\|\mbf{w}\|^{2})^{\beta (N+3)/2}}e^{-\frac{\beta N}{2t(1+\|\mbf{w}\|^{2})}\|X_{z}^{(\mbf{v})*}\mbf{w}-\mbf{b}\|^{2}}\nonumber\\
    &\times\frac{2^{\beta}N^{\beta^{2}}}{\beta!t^{\beta^{2}}}\int_{[0,\infty]^{\beta}}\Delta^{2}_{\beta}(\bs\eta^{2})\prod_{j=1}^{\beta}e^{-\frac{N}{t}\eta_{j}^{2}}\det\left[\eta_{j}^{2}(1+\mbf{w}\mbf{w}^{*})+|X_{z}^{(\mbf{v})*}+\mbf{b}\mbf{w}^{*}|^{2}\right]\eta_{j}\,\mathrm{d}\eta_{j}\,\mathrm{d}^{\beta}\mbf{w}\Big],
\end{align*}
where $\Delta_{1}=1$ and $\Delta_{2}(\bs\eta^{2})=(\eta_{1}^{2}-\eta_{2}^{2})$. The matrix in the determinant is a rank 2 perturbation of $\eta^{2}+|X_{z}^{(\mbf{v})*}|^{2}$:
\begin{align*}
    &\frac{\det\left[\eta^{2}(1+\mbf{w}\mbf{w}^{*})+|X^{(\mbf{v})*}_{z}+\mbf{b}\mbf{w}^{*}|^{2}\right]}{\det\left(\eta^{2}+|X^{(\mbf{v})*}_{z}|^{2}\right)}\\
    &=|1+\mbf{w}^{*}\wt{H}^{(\mbf{v})}_{z}(\sigma)X^{(\mbf{v})}_{z}\mbf{b}|^{2}+\sigma^{2}(1+\mbf{b}^{*}H^{(\mbf{v})}_{z}(\sigma)\mbf{b})(\mbf{w}^{*}\wt{H}^{(\mbf{v})}_{z}(\sigma)\mbf{w}).
\end{align*}
Using spherical coordinates for the integral over $\mbf{w}\in\mbb{C}^{N-1}$ and rewriting the ensuing integral over the sphere in terms of the measure $\nu_{\beta,z,s}$ defined in \eqref{eq:nu} we obtain \eqref{eq:rhoN}.
\end{proof}

The rest of this section is devoted to the asymptotic analysis of $\rho_{\beta,N}$. We need to carry out an analysis of the measure $\nu^{(\mbf{v})}_{\beta,z,s}$ similar to the one carried out for $\mu^{X}_{\beta,z}$. Comparing \eqref{eq:mu} and \eqref{eq:nu}, we see that the Gaussian weight is non-centred, being shifted by the vector $\frac{1}{\sqrt{s}}\mbf{b}$. Thus we anticipate that in place of the equation \eqref{eq:critical} we should study a shifted equation. The appropriate equation turns out to be
\begin{align}
    t\Tr{H^{(\mbf{v})}_{z}(\eta)}+\frac{1+\mbf{b}^{*}H^{(\mbf{v})}_{z}(\eta)\mbf{b}}{1+s}&=1,\qquad\eta^{2}>-(s^{(\mbf{v})}_{1}(z))^{2},\label{eq:theta_s}
\end{align}
whose (unique) solution we denote by $\eta^{(\mbf{v})}_{z,t}(s)$. Since this equation is a deformation of \eqref{eq:critical}, we expect that $\eta^{(\mbf{v})}_{z,t}(s)$ will be close to the solution $\eta^{(\mbf{v})}_{z,t}$ of \eqref{eq:critical}. Indeed, if $s\to\infty$ then \eqref{eq:theta_s} reduces to \eqref{eq:critical} (with $X^{(\mbf{v})}$ in place of $X$) and so $\eta^{(\mbf{v})}_{z,t}(s)\to\eta^{(\mbf{v})}_{z,t}$.
\begin{lemma}\label{lem:theta_s}
The equation \eqref{eq:theta_s} has a unique solution $\eta^{(\mbf{v})}_{z,t}(s)$ such that:
\begin{enumerate}[i)]
\item $\theta^{(\mbf{v})}_{z,t}(s):=(\eta^{(\mbf{v})}_{z,t}(s))^{2}$ is decreasing in $s\in[0,\infty]$ and positive in $s\in[0,cs^{(\mbf{v})}_{0}]$ for some $c>0$, where
\begin{align}
    s^{(\mbf{v})}_{0}=\frac{N^{1/2}(1+\mbf{b}^{*}H^{(\mbf{v})}_{z,t}\mbf{b})}{t\sqrt{\Tr{(H^{(\mbf{v})}_{z,t})^{2}}}};\label{eq:s0v}
\end{align}
\item there is a sufficiently small $c>0$ such that for $(\theta^{(\mbf{v})}_{z,t}+|X^{(\mbf{v})}_{z}|^{2})s<ct$ we have 
\begin{align}
    \left(\frac{s-\mbf{b}^{*}H^{(\mbf{v})}_{z}(\eta^{(\mbf{v})}_{z,t}(s))\mbf{b}}{1+s}\right)\frac{\theta^{(\mbf{v})}_{z,t}(s)}{t}&\simeq1;\label{eq:theta_sSmall}
\end{align}
\item we have
\begin{align}
    \theta^{(\mbf{v})}_{z,t}\left(s\right)&=\theta^{(\mbf{v})}_{z,t}+\frac{1}{\sqrt{N\Tr{(H^{(\mbf{v})}_{z,t})^{2}}}}\cdot\frac{s^{(\mbf{v})}_{0}}{s}+O\left(\frac{t}{N}\log N\right),\label{eq:theta_sEdge}
\end{align}
uniformly in $s\geq s^{(\mbf{v})}_{0}/\log N$.
\end{enumerate}
\end{lemma}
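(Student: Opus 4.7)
The plan is to treat \eqref{eq:theta_s} as the vanishing locus of
\begin{align*}
F(\theta, s) := t\Tr{H^{(\mbf{v})}_{z}(\eta)} + \frac{1+\mbf{b}^{*}H^{(\mbf{v})}_{z}(\eta)\mbf{b}}{1+s} - 1, \qquad \theta = \eta^{2} > -(s^{(\mbf{v})}_{1}(z))^{2},
\end{align*}
and combine implicit function analysis with a perturbation expansion around the $s\to\infty$ limit (where \eqref{eq:theta_s} reduces to \eqref{eq:critical} for the minor $X^{(\mbf{v})}$). For each fixed $s$, $H^{(\mbf{v})}_{z}(\eta) > 0$ on the valid range gives $\partial_{\theta}F < 0$ strictly, so solutions are unique; existence follows from $F \to +\infty$ as $\theta \to -(s^{(\mbf{v})}_{1})^{2}$ (where $\Tr{H^{(\mbf{v})}_{z}} \to +\infty$) and $F \to -1$ as $\theta \to +\infty$. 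Monotonicity in $s$ comes from implicit differentiation:
\begin{align*}
\theta'(s) = -\frac{(1+\mbf{b}^{*}H^{(\mbf{v})}_{z}\mbf{b})/(1+s)^{2}}{t\Tr{(H^{(\mbf{v})}_{z})^{2}} + \mbf{b}^{*}(H^{(\mbf{v})}_{z})^{2}\mbf{b}/(1+s)} < 0,
\end{align*}
since numerator and denominator are both positive on $\theta > -(s^{(\mbf{v})}_{1})^{2}$.

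For part (iii) I write $\theta(s) = \theta^{(\mbf{v})}_{z,t} + \delta$ and Taylor expand $\Tr{H^{(\mbf{v})}_{z}(\eta)}$ and $\mbf{b}^{*}H^{(\mbf{v})}_{z}(\eta)\mbf{b}$ around $\theta^{(\mbf{v})}_{z,t}$. Using $t\Tr{H^{(\mbf{v})}_{z,t}} = 1$, \eqref{eq:theta_s} becomes
\begin{align*}
-\delta\,t\,\Tr{(H^{(\mbf{v})}_{z,t})^{2}} = -\frac{1+\mbf{b}^{*}H^{(\mbf{v})}_{z,t}\mbf{b}}{1+s} + O\Bigl(\delta^{2}\,t\,\Tr{(H^{(\mbf{v})}_{z,t})^{3}} + \frac{\delta\,\mbf{b}^{*}(H^{(\mbf{v})}_{z,t})^{2}\mbf{b}}{1+s}\Bigr).
\end{align*}
Solving at leading order and substituting the definition \eqref{eq:s0v} of $s^{(\mbf{v})}_{0}$ produces the claimed leading term $s^{(\mbf{v})}_{0}/(s\sqrt{N\Tr{(H^{(\mbf{v})}_{z,t})^{2}}})$. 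The remainder is controlled using $\Tr{(H^{(\mbf{v})}_{z,t})^{2}} \simeq t^{-4}$ via \eqref{B4}, \eqref{eq:minorTrace}, and the norm bound $\|H^{(\mbf{v})}_{z,t}\| \lesssim t^{-3}$ coming from \eqref{eq:svMinor}; the Taylor remainder $\delta^{2}\Tr{(H^{(\mbf{v})}_{z,t})^{3}}/\Tr{(H^{(\mbf{v})}_{z,t})^{2}} \lesssim \delta^{2}\|H^{(\mbf{v})}_{z,t}\|$ absorbs into the stated $O(t\log N/N)$ error uniformly in $s \geq s^{(\mbf{v})}_{0}/\log N$. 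The positivity claim in part (i) then follows by applying (iii) at $s = cs^{(\mbf{v})}_{0}$: the correction term is $\simeq t^{2}/(c\sqrt{N})$, which dominates $|\theta^{(\mbf{v})}_{z,t}| \lesssim t^{2}/\sqrt{N}$ for $c$ sufficiently small, and the monotonicity established above propagates positivity to all smaller $s$.

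For part (ii), the key observation is the algebraic identity obtained by rearranging \eqref{eq:theta_s}: multiplying through by $1+s$ yields $(s - \mbf{b}^{*}H^{(\mbf{v})}_{z}(\eta)\mbf{b})/(1+s) = t\Tr{H^{(\mbf{v})}_{z}(\eta)}$, so \eqref{eq:theta_sSmall} reduces to the spectral statement $\theta(s)\Tr{H^{(\mbf{v})}_{z}(\eta^{(\mbf{v})}_{z,t}(s))} \simeq 1$. Writing this as $\frac{1}{N}\sum_{i}(1 + \sigma_{i}^{2}/\theta)^{-1}$ with $\sigma_{i}$ the singular values of $X^{(\mbf{v})}_{z}$, the smallness hypothesis $(\theta + |X^{(\mbf{v})}_{z}|^{2})s < ct$ combined with the equation itself forces $\theta$ to be comparable to $t/s$ (obtained by balancing $t/\theta$ against $s/(1+s)$ for small $s$), so that $\sigma_{i}^{2}/\theta$ is uniformly small and the claim follows by direct computation. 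The main technical obstacle is the uniform control of the Taylor remainders in part (iii): bounding the higher-order traces and quadratic forms using only the input estimates \eqref{B2}--\eqref{B6} and \eqref{eq:svMinor}, and in particular handling $\mbf{b}^{*}(H^{(\mbf{v})}_{z,t})^{k}\mbf{b}$ for $k \geq 2$, to which Lemma \ref{lem:conc} does not directly apply since $\mbf{b}$ is not drawn from $\mu^{X}_{\beta,z}$ but is a deterministic function of $\mbf{v}$.
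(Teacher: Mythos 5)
Your overall structure tracks the paper's: uniqueness and existence via monotonicity in $\theta$, monotonicity in $s$ via implicit differentiation, an asymptotic for $\theta(s)$ around $\theta^{(\mbf{v})}_{z,t}$ to prove (iii), and the algebraic rearrangement $t\Tr{H^{(\mbf{v})}_{z}(\eta)} = (s-\mbf{b}^{*}H^{(\mbf{v})}_{z}(\eta)\mbf{b})/(1+s)$ for (ii). Your explicit formula for $\theta'(s)$ matches the one the paper records in the proof of the subsequent Lemma \ref{lem:chiTilde}. Two points where you diverge from the paper's route, one harmless and one a genuine gap.

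The harmless divergence is your derivation of positivity: you apply (iii) at $s = cs^{(\mbf{v})}_{0}$ and propagate by monotonicity. The paper instead computes the positivity threshold directly from the sign of $F(0,s)$, giving $1+s < (1+\mbf{b}^{*}H^{(\mbf{v})}_{z}(0)\mbf{b})/(1-t\Tr{H^{(\mbf{v})}_{z}(0)})$ when $t\Tr{H^{(\mbf{v})}_{z}(0)} < 1$, and then compares this threshold to $s^{(\mbf{v})}_{0}$. Your route is logically fine once (iii) is established (and one notes $cs^{(\mbf{v})}_{0}\geq s^{(\mbf{v})}_{0}/\log N$ for $N$ large), but it makes positivity dependent on (iii) where the paper keeps it independent.

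The genuine gap is in (iii). You set $\theta(s) = \theta^{(\mbf{v})}_{z,t}+\delta$, Taylor expand, solve at leading order, and then control the remainder by $\delta^{2}\|H^{(\mbf{v})}_{z,t}\|\lesssim\delta^{2}t^{-3}$, claiming this is $O(t\log N/N)$. But this bound is only useful once you already know $\delta\lesssim t^{2}\log N/N^{1/2}$, which is precisely what you are trying to prove. You cannot substitute the leading-order formula for $\delta$ into the remainder bound without first establishing, by a separate argument, that the actual solution $\theta(s)$ lies in a window of width $\lesssim t^{2}\log N/N^{1/2}$ about $\theta^{(\mbf{v})}_{z,t}$. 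The paper does exactly this as its first step for (iii): it introduces the window $\Omega_{\omega} = \theta^{(\mbf{v})}_{z,t}+[0,\omega t^{2}N^{-1/2}\log N]$, shows the left-hand side of \eqref{eq:theta_s} drops below $1$ at the right endpoint of $\Omega_{\omega}$ for $\omega$ large (using the monotone comparison bounds on $1+\mbf{b}^{*}H^{(\mbf{v})}_{z}(\eta)\mbf{b}$ and $\Tr{H^{(\mbf{v})}_{z,t}H^{(\mbf{v})}_{z}(\eta)}$ on this window), and concludes by monotonicity in $\eta$ that $\theta(s)\in\Omega_{\omega}$ whenever $s>s^{(\mbf{v})}_{0}/\log N$. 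Only then does it rearrange \eqref{eq:theta_s} into the exact identity $\theta(s) = \theta^{(\mbf{v})}_{z,t} + (1+\mbf{b}^{*}H^{(\mbf{v})}_{z}(\eta)\mbf{b})/\bigl(t\Tr{H^{(\mbf{v})}_{z,t}H^{(\mbf{v})}_{z}(\eta)}(1+s)\bigr)$ (which is a cleaner packaging of your Taylor expansion, with the resolvent identity absorbing all orders) and reads off the asymptotic. You flag the ``uniform control of Taylor remainders'' as the main obstacle, which is the right instinct, but the missing ingredient is not the bounds on $\Tr{(H^{(\mbf{v})}_{z,t})^{k}}$ or $\mbf{b}^{*}(H^{(\mbf{v})}_{z,t})^{k}\mbf{b}$ — those follow from $\|H^{(\mbf{v})}_{z,t}\|\lesssim t^{-3}$ as you say — but the a priori localization of $\theta(s)$ itself.
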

\begin{proof}
For fixed $s>0$, the left hand side of \eqref{eq:theta_s} is decreasing in $\eta$ and from the definition of $\eta_{z,t}^{(\mbf{v})}$ we have
\begin{align*}
    t\Tr{H^{(\mbf{v})}_{z,t}}+\frac{1+\mbf{b}^{*}H^{(\mbf{v})}_{z,t}\mbf{b}}{1+s}&=1+\frac{1+\mbf{b}^{*}H^{(\mbf{v})}_{z,t}\mbf{b}}{1+s}>1,
\end{align*}
so there is a unique solution such that $\theta^{(\mbf{v})}_{z,t}(s)>\theta^{(\mbf{v})}_{z,t}$. For fixed $\eta>0$, the left hand side of \eqref{eq:theta_s} is decreasing in $s>0$ and so $\theta^{(\mbf{v})}_{z,t}(s)$ is decreasing in $s$. 

If $t\Tr{H^{(\mbf{v})}_{z}(0)}>1$ then $\theta^{(\mbf{v})}_{z,t}(s)$ is always positive, otherwise it is positive if
\begin{align*}
    1+s&<\frac{1+\mbf{b}^{*}H^{(\mbf{v})}_{z}(0)\mbf{b}}{1-t\Tr{H^{(\mbf{v})}_{z}(0)}}.
\end{align*}

To prove \eqref{eq:theta_sSmall}, we rewrite \eqref{eq:theta_s} as
\begin{align*}
    t\Tr{H^{(\mbf{v})}_{z}(\eta^{(\mbf{v})}_{z,t}(s))}&=\frac{s-\mbf{b}^{*}H^{(\mbf{v})}_{z}(\eta^{(\mbf{v})}_{z,t}(s))\mbf{b}}{1+s}.
\end{align*}
As $s\downarrow0$, $\theta^{(\mbf{v})}_{z,t}(s)\to\infty$ and \eqref{eq:theta_s} follows from the asymptotic $H_{z}(\eta)\simeq\eta^{-2}$ as $\eta\to\infty$.

Now let $s>s^{(\mbf{v})}_{0}/\log N$ and, for $\omega>0$, define the interval 
\begin{align*}
    \Omega_{\omega}&=\theta^{(\mbf{v})}_{z,t}+\left[0,\frac{\omega t^{2}}{N^{1/2}}\log N\right].
\end{align*}
We will show that $\theta^{(\mbf{v})}_{z,t}(s)\in\Omega_{\omega}$ for sufficiently large $\omega$. Let $\eta^{2}\in\Omega_{\omega}$; then since $\|H^{(\mbf{v})}_{z,t}\|\lesssim t^{-3}$, we have
\begin{align*}
    1+\mbf{b}^{*}H^{(\mbf{v})}_{z}(\eta)\mbf{b}&\lesssim1+\mbf{b}^{*}H^{(\mbf{v})}_{z,t}\mbf{b},
\end{align*}
and
\begin{align*}
    \Tr{H^{(\mbf{v})}_{z,t}H^{(\mbf{v})}_{z}(\eta)}&\gtrsim\Tr{(H^{(\mbf{v})}_{z,t})^{2}}.
\end{align*}
Thus there are constants $C_{1},C_{2}>0$ such that
\begin{align*}
    t\Tr{H^{(\mbf{v})}_{z}(\eta)}+\frac{1+\mbf{b}^{*}H^{(\mbf{v})}_{z}(\eta)\mbf{b}}{1+s}&\leq1-C_{1}t\Tr{(H^{(\mbf{v})}_{z,t})^{2}}(\eta^{2}-\theta^{(\mbf{v})}_{z,t})+\frac{C_{2}(1+\mbf{b}^{*}H^{(\mbf{v})}_{z,t}\mbf{b})}{s}.
\end{align*}
When $s>s^{(\mbf{v})}_{0}/\log N$, we can find an $\omega>0$ such that the right hand side is less than 1 for some $\eta\in\Omega_{\omega}$ and so $\theta^{(\mbf{v})}_{z,t}(s)\in\Omega_{\omega}$. Now we observe that \eqref{eq:theta_s} is equivalent to
\begin{align*}
    \theta^{(\mbf{v})}_{z,t}(s)&=\theta^{(\mbf{v})}_{z,t}+\frac{1+\mbf{b}^{*}H^{(\mbf{v})}_{z}(\eta^{(\mbf{v})}_{z,t}(s))\mbf{b}}{t\Tr{H^{(\mbf{v})}_{z,t}H^{(\mbf{v})}_{z}(\eta^{(\mbf{v})}_{z,t}(s))}(1+s)},
\end{align*}
from which \eqref{eq:theta_sEdge} follows.
\end{proof}

Henceforth we use the shorthand $H^{(\mbf{v})}_{z,t,s}:=H^{(\mbf{v})}_{z}(\eta^{(\mbf{v})}_{z,t}(s))$. Note that $\eta^{(\mbf{v})}_{z,t}(s)$ is the maximum at any fixed $s$ of the function $\phi^{(\mbf{v})}_{z,t}:[0,\infty]^{2}\to\mbb{R}$ given by
\begin{align}
    \phi^{(\mbf{v})}_{z,t}(\eta,s)&:=\left(1-\frac{1+\mbf{b}^{*}H^{(\mbf{v})}_{z}(\eta)\mbf{b}}{1+s}\right)\frac{\eta^{2}}{t}-\Tr{\log\left(\eta^{2}+|X^{(\mbf{v})}_{z}|^{2}\right)},\label{eq:phiTilde}
\end{align}
which should be compared with $\phi_{z}$ from \eqref{eq:phi_z}. With the shorthand $\phi^{(\mbf{v})}_{z,t}(s):=\phi^{(\mbf{v})}_{z,t}(\eta^{(\mbf{v})}_{z,t}(s),s)$, we define the function $\chi^{(\mbf{v})}_{z}:[0,\infty]\to\mbb{R}$
\begin{align}
    \chi^{(\mbf{v})}_{z}(s)&:=\phi^{(\mbf{v})}_{z,t}-\phi^{(\mbf{v})}_{z,t}(s).\label{eq:chi}
\end{align}
The extra term in \eqref{eq:phiTilde} as compared with \eqref{eq:phi_z} goes to zero as $s\to\infty$, so we immediately see that $\chi^{(\mbf{v})}_{z}(s)\to0$ as $s\to\infty$. The reason for introducing $\chi^{(\mbf{v})}_{z}$ is because the normalisation $K_{\beta,z}(s)$ of the measure $\nu^{(\mbf{v})}_{\beta,z,s}$ and thus $\psi^{(\mbf{v})}_{\beta}(s)$ can be expressed in terms of $\chi^{(\mbf{v})}_{z}$ (see Lemma \ref{lem:nuDuality} below).
\begin{lemma}\label{lem:chiTilde}
The function $\chi^{(\mbf{v})}_{z}(s)$ has the following properties:
\begin{enumerate}[i)]
\item for sufficiently small $c>0$ there is a constant $C>0$ such that
\begin{align}
    \chi^{(\mbf{v})}_{z}(s)&\geq-C+\log\frac{t}{(\theta^{(\mbf{v})}_{z,t}+|X^{(\mbf{v})}_{z}|^{2})s},\quad s\leq\frac{ct}{\theta^{(\mbf{v})}_{z,t}+\|X^{(\mbf{v})}_{z}\|^{2}}.\label{eq:chiTildeSmall}
\end{align}
\item there is a constant $C>0$ such that $\chi^{(\mbf{v})}_{z}(s)$ is decreasing in $\left[0,CN^{1/2}\right]$ 
\item we have
\begin{align}
    \chi^{(\mbf{v})}_{z}\left(s\right)&=\frac{(s^{(\mbf{v})}_{0})^{2}}{2Ns^{2}}\left(1-\frac{2\delta^{(\mbf{v})}_{z,t}s}{s^{(\mbf{v})}_{0}}\right)+O\left(\frac{\log^{3}N}{N^{3/2}t}\right),\label{eq:chiTildeEdge}
\end{align}
uniformly in $s>s^{(\mbf{v})}_{0}/\log N$.
\end{enumerate}
\end{lemma}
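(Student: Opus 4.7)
All three parts rest on a single identity for $\phi^{(\mbf{v})}_{z,t}(s)$ obtained by substituting $\eta=\eta^{(\mbf{v})}_{z,t}(s)$ into \eqref{eq:phiTilde} and using the defining relation \eqref{eq:theta_s} in the form $1-(1+\mbf{b}^{*}H^{(\mbf{v})}_{z,t,s}\mbf{b})/(1+s)=t\Tr{H^{(\mbf{v})}_{z,t,s}}$: one obtains
\begin{equation*}
\phi^{(\mbf{v})}_{z,t}(s)=\theta^{(\mbf{v})}_{z,t}(s)\Tr{H^{(\mbf{v})}_{z,t,s}}-\Tr{\log(\theta^{(\mbf{v})}_{z,t}(s)+|X^{(\mbf{v})}_{z}|^{2})}.
\end{equation*}
Since $t\Tr{H^{(\mbf{v})}_{z,t}}=1$, the $s=\infty$ limit gives $\phi^{(\mbf{v})}_{z,t}=\theta^{(\mbf{v})}_{z,t}/t-\Tr{\log(\theta^{(\mbf{v})}_{z,t}+|X^{(\mbf{v})}_{z}|^{2})}$, and subtracting yields the master expression
\begin{equation*}
\chi^{(\mbf{v})}_{z}(s)=\theta^{(\mbf{v})}_{z,t}/t-\theta^{(\mbf{v})}_{z,t}(s)\Tr{H^{(\mbf{v})}_{z,t,s}}+\Tr{\log(1+\Delta H^{(\mbf{v})}_{z,t})},
\end{equation*}
where $\Delta:=\theta^{(\mbf{v})}_{z,t}(s)-\theta^{(\mbf{v})}_{z,t}$.

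For part (ii), I differentiate $\phi^{(\mbf{v})}_{z,t}(s)=\theta(s)\Tr{H(s)}-\Tr{\log(\theta(s)+|X^{(\mbf{v})}_{z}|^{2})}$ in $s$; using $\partial_{\theta}\Tr{\log(\theta+|X|^{2})}=\Tr{H}$ and $\partial_{\theta}\Tr{H}=-\Tr{H^{2}}$ the two $\theta'(s)\Tr{H(s)}$ terms cancel, leaving $\frac{d}{ds}\phi^{(\mbf{v})}_{z,t}(s)=-\theta^{(\mbf{v})}_{z,t}(s)\Tr{(H^{(\mbf{v})}_{z,t,s})^{2}}\cdot\frac{d\theta^{(\mbf{v})}_{z,t}(s)}{ds}$. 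Since $\theta^{(\mbf{v})}_{z,t}(s)$ is strictly decreasing in $s$ by Lemma \ref{lem:theta_s}(i) and positive on $[0,cs^{(\mbf{v})}_{0}]$, the derivative is positive throughout that range, giving monotone decrease of $\chi^{(\mbf{v})}_{z}(s)$; the constant $C$ in $[0,CN^{1/2}]$ is chosen using the edge scaling of $s^{(\mbf{v})}_{0}$. For part (i), in the small-$s$ regime $s(\theta^{(\mbf{v})}_{z,t}+\|X^{(\mbf{v})}_{z}\|^{2})<ct$, Lemma \ref{lem:theta_s}(ii) gives $\theta^{(\mbf{v})}_{z,t}(s)\Tr{H^{(\mbf{v})}_{z,t,s}}\lesssim1$ together with $\theta^{(\mbf{v})}_{z,t}(s)\gtrsim t/((\theta^{(\mbf{v})}_{z,t}+\|X^{(\mbf{v})}_{z}\|^{2})s)$; concavity of $\log$ then yields $\Tr{\log(\theta^{(\mbf{v})}_{z,t}(s)+|X^{(\mbf{v})}_{z}|^{2})}\leq\log\theta^{(\mbf{v})}_{z,t}(s)+\Tr{|X^{(\mbf{v})}_{z}|^{2}}/\theta^{(\mbf{v})}_{z,t}(s)+O(1)$, so that $-\phi^{(\mbf{v})}_{z,t}(s)\geq\log\theta^{(\mbf{v})}_{z,t}(s)-C$. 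Combining with the $O(1)$ size of $\phi^{(\mbf{v})}_{z,t}$ from \eqref{eq:phiEstimateEdge} produces the claimed logarithmic lower bound.

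For part (iii), in the regime $s>s^{(\mbf{v})}_{0}/\log N$ the correction $\Delta$ is small by \eqref{eq:theta_sEdge}, so I Taylor-expand the master expression in $\Delta$: using $\Tr{H^{(\mbf{v})}_{z,t,s}}=1/t-\Delta\Tr{(H^{(\mbf{v})}_{z,t})^{2}}+O(\Delta^{2}\Tr{(H^{(\mbf{v})}_{z,t})^{3}})$ and $\Tr{\log(1+\Delta H^{(\mbf{v})}_{z,t})}=\Delta/t-\frac{\Delta^{2}}{2}\Tr{(H^{(\mbf{v})}_{z,t})^{2}}+O(\Delta^{3}\Tr{(H^{(\mbf{v})}_{z,t})^{3}})$, the $O(\Delta)$ contributions cancel exactly (reflecting the fact that $\theta^{(\mbf{v})}_{z,t}$ is the minimum of $\phi^{(\mbf{v})}_{z}$), leaving
\begin{equation*}
\chi^{(\mbf{v})}_{z}(s)=\Tr{(H^{(\mbf{v})}_{z,t})^{2}}\bigl(\theta^{(\mbf{v})}_{z,t}\Delta+\tfrac{1}{2}\Delta^{2}\bigr)+O(\Delta^{3}\Tr{(H^{(\mbf{v})}_{z,t})^{3}}).
\end{equation*}
Substituting $\Delta=s^{(\mbf{v})}_{0}/(s\sqrt{N\Tr{(H^{(\mbf{v})}_{z,t})^{2}}})+O(t\log N/N)$ and the definition $\delta^{(\mbf{v})}_{z,t}=\sqrt{N\Tr{(H^{(\mbf{v})}_{z,t})^{2}}}\,\theta^{(\mbf{v})}_{z,t}$ produces exactly the advertised leading terms $\frac{(s^{(\mbf{v})}_{0})^{2}}{2Ns^{2}}$ and $\frac{\delta^{(\mbf{v})}_{z,t}s^{(\mbf{v})}_{0}}{Ns}$. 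The $O(\log^{3}N/(N^{3/2}t))$ remainder matches $\Delta^{3}\Tr{(H^{(\mbf{v})}_{z,t})^{3}}$, controlled using \eqref{B4} together with $\|H^{(\mbf{v})}_{z,t}\|\lesssim t^{-3}$ to bound $\Tr{H^{3}}$ in terms of $\Tr{H^{2}}$. The main obstacle is the careful bookkeeping of the first-order cancellations in (iii) and of the Taylor remainders, uniformly in $\mbf{v}\in\mc{E}_{conc}$, together with verifying that the $[0,cs^{(\mbf{v})}_{0}]$ range on which (ii) is obtained really contains the stated $[0,CN^{1/2}]$ after inserting the edge-regime sizes of $1+\mbf{b}^{*}H^{(\mbf{v})}_{z,t}\mbf{b}$ and $\Tr{(H^{(\mbf{v})}_{z,t})^{2}}$ into the definition of $s^{(\mbf{v})}_{0}$.
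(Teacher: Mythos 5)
Your overall strategy is sound and lines up closely with the paper's: you arrive at exactly the paper's master identity
\begin{equation*}
\chi^{(\mbf{v})}_{z}(s)=\frac{\theta^{(\mbf{v})}_{z,t}}{t}-\theta^{(\mbf{v})}_{z,t}(s)\Tr{H^{(\mbf{v})}_{z,t,s}}+\Tr{\log\bigl(1+\Delta H^{(\mbf{v})}_{z,t}\bigr)},\quad\Delta:=\theta^{(\mbf{v})}_{z,t}(s)-\theta^{(\mbf{v})}_{z,t},
\end{equation*}
since your $\theta(s)\Tr{H(s)}$ equals the paper's $\bigl(\tfrac{s-\mbf{b}^{*}H(s)\mbf{b}}{1+s}\bigr)\tfrac{\theta(s)}{t}$ by \eqref{eq:theta_s}. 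The derivative formula you derive in (ii), $\partial_s\phi^{(\mbf{v})}_{z,t}(s)=-\theta^{(\mbf{v})}_{z,t}(s)\Tr{(H^{(\mbf{v})}_{z,t,s})^{2}}\partial_s\theta^{(\mbf{v})}_{z,t}(s)$, is precisely what the paper uses. The genuinely different step is (iii): you Taylor-expand the master expression in $\Delta$, while the paper approximates $\partial_s\chi^{(\mbf{v})}_{z}$ using Lemma \ref{lem:theta_s} and integrates from $\infty$. Both routes lead to $\chi^{(\mbf{v})}_{z}(s)=\Tr{(H^{(\mbf{v})}_{z,t})^{2}}\bigl(\theta^{(\mbf{v})}_{z,t}\Delta+\tfrac{1}{2}\Delta^{2}\bigr)+O(\cdot)$, and your direct expansion is arguably cleaner since the first-order cancellation is transparent via $t\Tr{H^{(\mbf{v})}_{z,t}}=1$.

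Two issues deserve flagging. \textbf{Part (i).} You invoke concavity of $\log$ to get an \emph{upper} bound on $\Tr{\log(\theta^{(\mbf{v})}_{z,t}(s)+|X^{(\mbf{v})}_{z}|^{2})}$, but you then use it as if it were a lower bound on $-\phi^{(\mbf{v})}_{z,t}(s)$. What you actually need is the elementary \emph{lower} bound $\Tr{\log(\theta^{(\mbf{v})}_{z,t}(s)+|X^{(\mbf{v})}_{z}|^{2})}\geq\log\theta^{(\mbf{v})}_{z,t}(s)$, which holds because $|X^{(\mbf{v})}_{z}|^{2}\geq0$; combined with $\theta^{(\mbf{v})}_{z,t}(s)\Tr{H^{(\mbf{v})}_{z,t,s}}\lesssim1$ and $\theta^{(\mbf{v})}_{z,t}(s)\gtrsim t/s$ this gives the stated log lower bound directly, matching the paper's bound $\Tr{\log(1+\Delta H)}\gtrsim\log\bigl(t/((\theta^{(\mbf{v})}_{z,t}+\|X^{(\mbf{v})}_{z}\|^{2})s)\bigr)$. \textbf{Part (iii) sign.} Your expansion gives the linear term $\Tr{(H^{(\mbf{v})}_{z,t})^{2}}\theta^{(\mbf{v})}_{z,t}\Delta=+\delta^{(\mbf{v})}_{z,t}s^{(\mbf{v})}_{0}/(Ns)$, yet \eqref{eq:chiTildeEdge} expands to $\frac{(s^{(\mbf{v})}_{0})^{2}}{2Ns^{2}}-\frac{\delta^{(\mbf{v})}_{z,t}s^{(\mbf{v})}_{0}}{Ns}$. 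You assert these agree, but the sign of the cross term is opposite. This is not an error you introduced: the paper's own route (integrating $\chi'(s)=\theta(s)\Tr{H(s)^{2}}\theta'(s)$) also gives $\frac{1}{2}\Tr{H^{2}}(\theta(s)^{2}-\theta^{2})=\Tr{H^{2}}(\theta\Delta+\tfrac{1}{2}\Delta^{2})$ with the plus sign, and a direct check in the toy case $|X^{(\mbf{v})}_{z}|^{2}=1$, $t=1+\epsilon$ (so $\theta=\epsilon>0$) confirms $\chi^{(\mbf{v})}_{z}(s)\approx\tfrac{\epsilon}{ts}+\tfrac{1}{2s^{2}}>0$, consistent with $+\delta s_0/(Ns)$ and inconsistent with $-$. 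So the discrepancy points to a sign convention mismatch in the paper (between $\theta_{z,t}$, which is positive for $|z|^2<1+t$, and the Fyodorov variable $\delta_{z}$, which is negative there, see \eqref{eq:delta_zt}); a careful proof should note and resolve this rather than gloss over it.

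Your remark that the $[0,cs^{(\mbf{v})}_{0}]$ monotonicity interval contains $[0,CN^{1/2}]$ after inserting the edge scalings of $1+\mbf{b}^{*}H^{(\mbf{v})}_{z,t}\mbf{b}\simeq t^{-1}$ and $\Tr{(H^{(\mbf{v})}_{z,t})^{2}}\simeq t^{-4}$ into \eqref{eq:s0v} is correct and is the step the paper implicitly performs. The bookkeeping of the Taylor remainders (including the $\theta\Delta^{2}\Tr{H^{3}}$ term you did not display) is fine given $|\theta^{(\mbf{v})}_{z,t}|,\,\Delta\lesssim t^{2}\log N/\sqrt{N}$ and $\Tr{(H^{(\mbf{v})}_{z,t})^{3}}\lesssim t^{-6}$.
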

\begin{proof}
The explicit expression for $\chi^{(\mbf{v})}_{z}(s)$ is
\begin{align*}
    &\frac{\theta^{(\mbf{v})}_{z,t}}{t}-\left(\frac{s-\mbf{b}^{*}H^{(\mbf{v})}_{z}(\eta^{(\mbf{v})}_{z,t}(s))\mbf{b}}{1+s}\right)\frac{\theta^{(\mbf{v})}_{z,t}(s)}{t}+\Tr{\log\left(1+(\theta^{(\mbf{v})}_{z,t}(s)-\theta^{(\mbf{v})}_{z,t})H^{(\mbf{v})}_{z,t}\right)}
\end{align*}
When $s\leq ct(\theta^{(\mbf{v})}_{z,t}+\|X^{(\mbf{v})}_{z}\|^{2})^{-1}$ for small $c>0$, by \eqref{eq:theta_sSmall} we have $\theta^{(\mbf{v})}_{z,t}(s)\gtrsim t/s$. Hence the sum of the first two terms above is bounded below by $-C$, while for the third we have
\begin{align*}
    \Tr{\log\left(1+(\theta^{(\mbf{v})}_{z,t}(s)-\theta^{(\mbf{v})}_{z,t})H^{(\mbf{v})}_{z,t}\right)}&\gtrsim\Tr{\log\left(\theta^{(\mbf{v})}_{z,t}(s)H^{(\mbf{v})}_{z,t}\right)}\\
    &\gtrsim\Tr{\log\frac{\theta^{(\mbf{v})}_{z,t}(s)}{\theta^{(\mbf{v})}_{z,t}+\|X^{(\mbf{v})}_{z}\|^{2}}}\\
    &\gtrsim\log\frac{t}{(\theta^{(\mbf{v})}_{z,t}+\|X^{(\mbf{v})}_{z}\|^{2})s},
\end{align*}
from which \eqref{eq:chiTildeSmall} follows.

Differentiating \eqref{eq:theta_s} we find
\begin{align}
    \partial_{s}\theta^{(\mbf{v})}_{z,t}(s)&=-\left[t\Tr{(H^{(\mbf{v})}_{z,t,s})^{2}}+\frac{\mbf{b}^{*}(H^{(\mbf{v})}_{z,t,s})^{2}\mbf{b}}{1+s}\right]^{-1}\frac{1+\mbf{b}^{*}H^{(\mbf{v})}_{z,t,s}\mbf{b}}{(1+s)^{2}}.
\end{align}
Using this we find the derivative of $\chi^{(\mbf{v})}_{z}$:
\begin{align*}
    \partial_{s}\chi^{(\mbf{v})}_{z}(s)&=\Tr{(H^{(\mbf{v})}_{z,t,s})^{2}}\theta^{(\mbf{v})}_{z,t}(s)\partial_{s}\theta^{(\mbf{v})}_{z,t}(s)\nonumber\\
    &=-\left[t\Tr{(H^{(\mbf{v})}_{z,t,s})^{2}}+\frac{\mbf{b}^{*}(H^{(\mbf{v})}_{z,t,s})^{2}\mbf{b}}{1+s}\right]^{-1}\\
    &\times\frac{\Tr{(H^{(\mbf{v})}_{z,t,s})^{2}}(1+\mbf{b}^{*}H^{(\mbf{v})}_{z,t,s}\mbf{b})\theta^{(\mbf{v})}_{z,t}(s)}{(1+s)^{2}}.
\end{align*}
By Lemma \ref{lem:theta_s}, there is a $C>0$ such that $\theta^{(\mbf{v})}_{z,t}(s)$ is positive and decreasing, and hence the derivative of $\chi^{(\mbf{v})}_{z}(s)$ is negative, in $s\in[0,CN^{1/2}]$. When $s>s^{(\mbf{v})}_{0}/\log N$, we can approximate the derivative of $\chi^{(\mbf{v})}_{z}(s)$ using the estimates in Lemma \ref{lem:theta_s}, from which \eqref{eq:chiTildeEdge} follows.
\end{proof}

We now study quadratic forms in $\mbf{u}\sim\nu_{\beta,z,s}$, which we do via the moment generating function. For $F\in\mbb{M}^{H}_{N-1}(\mbb{F}_{\beta})$, define the shifted resolvent
\begin{align}
    H^{(\mbf{v}),F}_{z}(\eta)&=\left(\eta^{2}+|X^{(\mbf{v})}_{z}|^{2}-\frac{2t(1+s)}{\beta Ns}F\right)^{-1},
\end{align}
and set $H^{(\mbf{v}),F}_{z,t,s}:=H^{(\mbf{v}),F}_{z,t,s}(\eta^{(\mbf{v})}_{z,t}(s))$. We will consider quadratic forms in the shifted vector
\begin{align}
    \hat{\mbf{u}}&=\mbf{u}-\frac{1}{\sqrt{s}}\wt{H}^{(\mbf{v})}_{z,t,s}X^{(\mbf{v})}_{z}\mbf{b}.\label{eq:uHat}
\end{align}
\begin{lemma}\label{lem:nuDuality}
Let $F\in\mbb{M}^{H}_{N}(\mbb{F}_{\beta})$ and
\begin{align}
    m_{F}(s)&=\frac{t(1+s)}{s}\Tr{H^{(\mbf{v})}_{z,t,s}F},\\
    \hat{m}_{F}(s)&=e^{-\frac{t(1+s)}{s}\Tr{H^{(\mbf{v})}_{z,t,s}F}}\det^{-\beta/2}\left(1-\frac{2t(1+s)}{\beta Ns}\sqrt{H^{(\mbf{v})}_{z,t,s}}F\sqrt{H^{(\mbf{v})}_{z,t,s}}\right).
\end{align}
Then for any $\lambda\in\mbb{R}$ such that $\frac{2|\lambda|t(1+s)}{\beta N s}\|\sqrt{\wt{H}^{(\mbf{v})}_{z,t,s}}F\sqrt{\wt{H}^{(\mbf{v})}_{z,t,s}}\|<1$ we have
\begin{align}
    \mbb{E}_{\beta,z,s}\left[e^{\lambda (\hat{\mbf{u}}^{*}F\hat{\mbf{u}}-m_{F}(s))}\right]&=\frac{e^{-\frac{\beta N}{2}\chi^{(\mbf{v})}_{z}(s)}h_{\lambda F}(s)\hat{m}_{\lambda F}(s)}{e^{-\frac{\beta N}{2}\phi^{(\mbf{v})}_{z,t}}K_{\beta,z}(s)},\label{eq:nuDuality}
\end{align}
where
\begin{align}
    h_{F}(s)&=\int_{-\infty}^{\infty}e^{\frac{i\beta Np}{2t(1+s)}\left[s-\mbf{b}^{*}X^{(\mbf{v})*}_{z}\wt{H}^{(\mbf{v})}_{z,t,s}(1+ip\wt{H}^{(\mbf{v}),F}_{z,t,s})^{-1}\wt{H}^{(\mbf{v})}_{z,t,s}X^{(\mbf{v})}_{z}\mbf{b}\right]}\det^{-\beta/2}\left[1+ipH^{(\mbf{v}),F}_{z,t,s}\right]\,\mathrm{d}p.
\end{align}
\end{lemma}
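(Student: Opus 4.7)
The plan is to carry out the Gaussian integral over $\mbf{u}\in S^{N-2}_{\beta}$ exactly, by (i) lifting the sphere constraint via the Fourier representation $\delta(\|\mbf{u}\|^{2}-1)=(2\pi)^{-1}\int e^{ip(\|\mbf{u}\|^{2}-1)}\,\mathrm{d}p$, and (ii) evaluating the resulting unconstrained Gaussian integral in $\mbf{u}\in\mbb{F}_{\beta}^{N-1}$. The auxiliary Fourier variable $p$ then becomes precisely the variable of integration in $h_{\lambda F}(s)$, which is why that quantity appears unevaluated in the final formula.

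Once the delta is Fourier-lifted, I would translate $\mbf{u}\to\mbf{u}+\mbf{c}$ with $\mbf{c}=\frac{1}{\sqrt{s}}\wt{H}^{(\mbf{v})}_{z,t,s}X^{(\mbf{v})}_{z}\mbf{b}$, so that the new variable coincides with $\hat{\mbf{u}}$ from \eqref{eq:uHat}. The vector $\mbf{c}$ is, by construction, the minimizer of the \emph{regularized} quadratic form $\|X^{(\mbf{v})*}_{z}\mbf{u}-\mbf{b}/\sqrt{s}\|^{2}+\theta^{(\mbf{v})}_{z,t}(s)\|\mbf{u}\|^{2}$, and the saddle equation \eqref{eq:theta_s} is exactly what forces the effective regularizer generated by the $ip\|\mbf{u}\|^{2}$ piece to align with $\theta^{(\mbf{v})}_{z,t}(s)$, making the cross-terms collapse via the pushthrough identities $X^{(\mbf{v})*}_{z}\wt{H}^{(\mbf{v})}_{z,t,s}=H^{(\mbf{v})}_{z,t,s}X^{(\mbf{v})*}_{z}$ and $X^{(\mbf{v})*}_{z}\wt{H}^{(\mbf{v})}_{z,t,s}X^{(\mbf{v})}_{z}=1-\theta^{(\mbf{v})}_{z,t}(s)H^{(\mbf{v})}_{z,t,s}$. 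After these manipulations the exponent becomes $-\frac{L}{2}\mbf{u}^{*}A_{p}\mbf{u}+\Re\mbf{u}^{*}\mbf{d}+(\text{const})$ with $L=\beta Ns/(t(1+s))$, $A_{p}:=X^{(\mbf{v})}_{z}X^{(\mbf{v})*}_{z}-\frac{2}{L}(ip+\lambda F)$, and $\mbf{d}=\frac{1}{\sqrt{s}}(L\theta^{(\mbf{v})}_{z,t}(s)+2ip)\wt{H}^{(\mbf{v})}_{z,t,s}X^{(\mbf{v})}_{z}\mbf{b}$.

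The Gaussian integration in $\mbf{u}$ then yields $(2\pi/L)^{\beta(N-1)/2}\det^{-\beta/2}(A_{p})e^{\mbf{d}^{*}A_{p}^{-1}\mbf{d}/(2L)}$. Factoring $A_{p}=\wt{H}^{(\mbf{v}),\lambda F,-1}_{z,t,s}-\theta^{(\mbf{v})}_{z,t}(s)-2ip/L$ and using the resolvent identity $A_{p}^{-1}=\wt{H}^{(\mbf{v}),\lambda F}_{z,t,s}(1-(\theta^{(\mbf{v})}_{z,t}(s)+2ip/L)\wt{H}^{(\mbf{v}),\lambda F}_{z,t,s})^{-1}$ converts this output into the integrand structure of $h_{\lambda F}(s)$; the Sylvester-type identity $\det(1+AB)=\det(1+BA)$ together with the equality of nonzero spectra of $X X^{*}$ and $X^{*}X$ rewrites the determinantal factor in the $H^{(\mbf{v}),\lambda F}_{z,t,s}$ form used in the statement. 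The $p$-independent residuals — the measure's prefactor $(L/(2\pi))^{\beta(N-1)/2-1}$, the Gaussian normalization $(2\pi/L)^{\beta(N-1)/2}$, the factors $e^{-L\theta_{z,t}(s)^{2}\mbf{b}^{*}H^{2}_{z,t,s}\mbf{b}/(2s)}$ from the shift, $e^{\lambda\mbf{c}^{*}F\mbf{c}-\lambda m_{\lambda F}(s)}$, and $\det^{-\beta/2}\wt{H}^{(\mbf{v}),\lambda F,-1}_{z,t,s}$ — then assemble, via the identity $\phi^{(\mbf{v})}_{z,t}(s)=\phi^{(\mbf{v})}_{z,t}-\chi^{(\mbf{v})}_{z}(s)$ from \eqref{eq:phiTilde}--\eqref{eq:chi} and the definition of $\hat{m}_{\lambda F}(s)$, into exactly the factor $e^{-\beta N\chi^{(\mbf{v})}_{z}(s)/2}\hat{m}_{\lambda F}(s)/(e^{-\beta N\phi^{(\mbf{v})}_{z,t}/2}K_{\beta,z}(s))$.

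The main obstacle is the final algebraic bookkeeping: matching the Gaussian bilinear form $\mbf{d}^{*}A_{p}^{-1}\mbf{d}/(2L)$, taken together with the $p$-linear contribution coming from the shift of the Fourier-lifted constraint $\|\mbf{u}+\mbf{c}\|^{2}=1$, to the precise exponent $\frac{i\beta Np}{2t(1+s)}[s-\mbf{b}^{*}X^{(\mbf{v})*}_{z}\wt{H}^{(\mbf{v})}_{z,t,s}(1+ip\wt{H}^{(\mbf{v}),\lambda F}_{z,t,s})^{-1}\wt{H}^{(\mbf{v})}_{z,t,s}X^{(\mbf{v})}_{z}\mbf{b}]$ appearing in $h_{\lambda F}(s)$. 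One must carefully separate $p$-linear from $p$-quadratic parts of $(L\theta+2ip)\overline{(L\theta+2ip)}$ and verify, again using \eqref{eq:theta_s}, that the would-be $p$-independent piece $\propto L^{2}\theta^{2}$ combines with $\mbf{c}^{*}F\mbf{c}$, $m_{\lambda F}(s)$, and the residual determinant to reproduce $\hat{m}_{\lambda F}(s)$. The hypothesis on $\lambda$ is precisely what ensures invertibility of $A_{p}$ and absolute convergence of the Gaussian integral, making the formal manipulations rigorous.
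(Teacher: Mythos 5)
Your plan is the same approach as the paper's: the paper simply cites the ``duality formula for integrals on the sphere'' from \cite[Lemma 3.4]{maltsev_bulk_2024} and \cite[Lemma 3.4]{osman_bulk_2024}, which is precisely the Fourier lift of the sphere constraint followed by an unconstrained Gaussian integration that you describe, with the shift by $\mbf{c}$ chosen so that the cross-terms collapse via the pushthrough identities and the $p$-integral survives as $h_{\lambda F}(s)$.

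One step in your outline that I would not accept as written is the assertion that ``the Sylvester-type identity $\det(1+AB)=\det(1+BA)$ together with the equality of nonzero spectra of $XX^{*}$ and $X^{*}X$ rewrites the determinantal factor in the $H^{(\mbf{v}),\lambda F}_{z,t,s}$ form used in the statement.'' The Gaussian integration in $\mbf{u}$ produces $\det^{-\beta/2}(A_{p})$ with $A_{p}=X^{(\mbf{v})}_{z}X^{(\mbf{v})*}_{z}-\tfrac{2}{L}(ip+\lambda F)$, an $XX^{*}$-type (i.e.\ $\wt{H}$-type) quantity; the statement's determinant involves $H^{(\mbf{v}),\lambda F}_{z,t,s}=(\theta+X^{(\mbf{v})*}_{z}X^{(\mbf{v})}_{z}-\tfrac{2\lambda}{L}F)^{-1}$, an $X^{*}X$-type quantity. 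The spectrum equality of $XX^{*}$ and $X^{*}X$ is destroyed once the same $F$-perturbation is added to both: $XX^{*}-\tfrac{2\lambda}{L}F$ and $X^{*}X-\tfrac{2\lambda}{L}F$ are not isospectral in general, and Sylvester's identity does not rescue this because $F$ does not intertwine the two sides. At $\lambda F=0$ the two coincide, which is why the $F=0$ specialization in Lemma \ref{lem:Kr} is consistent, but for $F\neq0$ you need either a more careful argument or the observation that the cited duality formula produces the $\wt{H}$-type determinant directly (in which case the $H^{(\mbf{v}),F}_{z,t,s}$ appearing in the statement of the lemma would have to be read as the $\wt{H}$-version). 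I would recommend you carry out the determinant bookkeeping explicitly rather than appealing to the spectrum argument; the rest of your sketch is sound.
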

The proof is a straightforward application of the duality formula for integrals on the sphere in \cite[Lemma 3.4]{maltsev_bulk_2024} and \cite[Lemma 3.4]{osman_bulk_2024}. Specialising to the case $F=0$ we can study the normalisation $K_{\beta,z}$.
\begin{lemma}\label{lem:Kr}
We have
\begin{align}
    e^{-\frac{\beta N}{2}\phi^{(\mbf{v})}_{z,t}}K_{\beta,z}(s)&\lesssim\frac{\theta^{(\mbf{v})}_{z,t}(s)+\|X^{(\mbf{v})}_{z}\|^{2}}{N^{1/2}}e^{\frac{-\beta N}{2}\chi^{(\mbf{v})}_{z}(s)},\label{eq:KUniform}
\end{align}
uniformly in $s\geq0$, and
\begin{align}
    e^{-\frac{\beta N}{2}\phi^{(\mbf{v})}_{z,t}}K_{\beta,z}(s)&=\left[1+O\left(\frac{\log^{3}N}{N^{1/2}t}\right)\right]\sqrt{\frac{4\pi}{\beta N\Tr{(H^{(\mbf{v})}_{z,t})^{2}}}}e^{\frac{-\beta N}{2}\chi^{(\mbf{v})}_{z}(s)},\label{eq:KLarge}
\end{align}
uniformly in $s\geq s^{(\mbf{v})}_{0}/\log N$.
\end{lemma}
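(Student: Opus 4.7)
The starting point is to specialise Lemma \ref{lem:nuDuality} to $F = 0$. Since $\mbb{E}_{\beta,z,s}[1] = 1$ and both $m_{0}(s) \equiv 0$ and $\hat{m}_{0}(s) \equiv 1$, the duality formula collapses to the identity
\begin{align*}
e^{-\frac{\beta N}{2}\phi^{(\mbf{v})}_{z,t}} K_{\beta,z}(s) = e^{-\frac{\beta N}{2}\chi^{(\mbf{v})}_{z}(s)}\, h_{0}(s),
\end{align*}
so both claims reduce to estimating the contour integral $h_{0}(s)$. A preliminary step is to use the intertwining $X^{(\mbf{v})*}_{z} \wt{H}^{(\mbf{v})}_{z,t,s} = H^{(\mbf{v})}_{z,t,s} X^{(\mbf{v})*}_{z}$ together with $|X^{(\mbf{v})}_{z}|^{2} = (H^{(\mbf{v})}_{z,t,s})^{-1} - \theta^{(\mbf{v})}_{z,t}(s)$ to rewrite the quadratic form appearing in the phase as
\begin{align*}
a(p) := \mbf{b}^{*} X^{(\mbf{v})*}_{z} \wt{H}^{(\mbf{v})}_{z,t,s}(1+ip\wt{H}^{(\mbf{v})}_{z,t,s})^{-1}\wt{H}^{(\mbf{v})}_{z,t,s} X^{(\mbf{v})}_{z} \mbf{b} = \mbf{b}^{*}(1+ip H^{(\mbf{v})}_{z,t,s})^{-1}|X^{(\mbf{v})}_{z}|^{2} (H^{(\mbf{v})}_{z,t,s})^{2} \mbf{b},
\end{align*}
in which $|X^{(\mbf{v})}_{z}|^{2} (H^{(\mbf{v})}_{z,t,s})^{2} \geq 0$ commutes with $H^{(\mbf{v})}_{z,t,s}$.

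For the uniform bound \eqref{eq:KUniform} I would control the integrand of $h_{0}(s)$ pointwise. Since the imaginary part of $a(p)$ has sign opposite to $p$, the real part of the phase $\frac{i\beta N p}{2t(1+s)}(s - a(p))$ is non-positive, so the oscillatory factor has modulus at most one. Consequently $|h_{0}(s)| \leq \int_{\mbb{R}}\prod_{k}(1+p^{2}\lambda_{k}^{2})^{-\beta/4}\,\mathrm{d}p$, where $\lambda_{k}$ are the eigenvalues of $H^{(\mbf{v})}_{z,t,s}$. Bounding every factor below by $(1+p^{2}\lambda_{\min}^{2})$ with $\lambda_{\min}^{-1} = \theta^{(\mbf{v})}_{z,t}(s) + \|X^{(\mbf{v})}_{z}\|^{2}$ and substituting $u = p\lambda_{\min}\sqrt{N}$ yields $|h_{0}(s)| \lesssim (\theta^{(\mbf{v})}_{z,t}(s) + \|X^{(\mbf{v})}_{z}\|^{2})/\sqrt{N}$, which is \eqref{eq:KUniform}.

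For the refined asymptotic \eqref{eq:KLarge} I would apply Laplace's method. Denoting by $E(p)$ the phase of the integrand, a direct computation combined with the defining equation \eqref{eq:theta_s} for $\eta^{(\mbf{v})}_{z,t}(s)$ shows that the leading piece of $E'(0)$ cancels, leaving
\begin{align*}
E'(0) = \frac{i\beta N\theta^{(\mbf{v})}_{z,t}(s)}{2t(1+s)}\mbf{b}^{*}(H^{(\mbf{v})}_{z,t,s})^{2}\mbf{b}, \qquad E''(0) = -\frac{\beta}{2}\tr(H^{(\mbf{v})}_{z,t,s})^{2} + \text{lower order}.
\end{align*}
By Lemmas \ref{lem:conc} and \ref{lem:theta_s}, in the range $s \geq s^{(\mbf{v})}_{0}/\log N$ the quantity $|E'(0)|$ is much smaller than $|E''(0)|^{1/2} \simeq \sqrt{N\Tr{(H^{(\mbf{v})}_{z,t})^{2}}}$. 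Restricting the contour to $|p| \leq \log N/\sqrt{N\Tr{(H^{(\mbf{v})}_{z,t})^{2}}}$ and Taylor expanding $E(p)$ to second order produces the Gaussian integral $\sqrt{4\pi/(\beta N \Tr{(H^{(\mbf{v})}_{z,t})^{2}})}(1 + O(\log^{3}N/(N^{1/2}t)))$. Outside this window, the modulus bound from the previous paragraph combined with the uniform concavity $\tfrac{1}{2}\sum_{k}\log(1+p^{2}\lambda_{k}^{2}) \geq c p^{2}\tr(H^{(\mbf{v})}_{z,t,s})^{2}$ for $|p| \lesssim 1/\|H^{(\mbf{v})}_{z,t,s}\|$ yields exponential smallness of order $e^{-c\log^{2}N}$.

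The main obstacle is verifying the relative error rate $O(\log^{3}N/(N^{1/2}t))$ in \eqref{eq:KLarge}. The cubic and higher Taylor coefficients of $E(p)$ involve random quadratic forms $\mbf{b}^{*}(H^{(\mbf{v})}_{z,t,s})^{l}|X^{(\mbf{v})}_{z}|^{k}\mbf{b}$, which must be reduced to their deterministic counterparts via a combination of the identity in Lemma \ref{lem:b} and the concentration estimates of Lemma \ref{lem:conc}, and then compared to $\Tr{(H^{(\mbf{v})}_{z,t})^{2}}$ by exploiting the closeness of $\theta^{(\mbf{v})}_{z,t}(s)$ to $\theta^{(\mbf{v})}_{z,t}$ from Lemma \ref{lem:theta_s}. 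A secondary correction from the true saddle of $E(p)$ lying at $p^{*} = -E'(0)/E''(0) + \ldots$ rather than at $p = 0$ contributes an error of order $E'(0)^{2}/|E''(0)|$, which must likewise be shown to fit within the stated tolerance.
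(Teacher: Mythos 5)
Your proof follows the paper's argument essentially step by step: specialise the duality formula to $F=0$, verify that the phase has non-positive real part so $|h_{0}(s)|$ is controlled by the determinant factor alone, change variables for the uniform bound, and then perform Laplace/steepest-descent on the small-$p$ window for the refined asymptotic. The intertwining identity you use to rewrite the quadratic form $a(p)$ is an equivalent but slightly different algebraic route to the paper's direct substitution of the critical-point equation \eqref{eq:theta_s}; both lead to the same non-positivity observation. The "main obstacle" you flag at the end — controlling the Taylor remainder and the off-centre saddle to the claimed $O(\log^{3}N/(N^{1/2}t))$ rate — is precisely the point the paper compresses into "the asymptotic then follows by Taylor expansion of the integrand", so your honest identification of it is appropriate, but it is not a gap in your proposal relative to what the paper supplies.
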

\begin{proof}
Setting $F=0$ in \eqref{eq:nuDuality} we obtain
\begin{align*}
    e^{-\frac{\beta N}{2}\phi^{(\mbf{v})}_{z,t}}K_{\beta,z}(s)&=e^{-\frac{\beta N}{2}\chi^{(\mbf{v})}_{z}(s)}h_{0}(s).
\end{align*}
Using the definition of $\eta^{(\mbf{v})}_{z,t}(s)$ we can rewrite $h_{0}$ as follows:
\begin{align*}
    h_{0}(s)&=\int_{-\infty}^{\infty}e^{\frac{i\beta Np}{2t(1+s)}(\theta^{(\mbf{v})}_{z,t}(s)+ip)\mbf{b}^{*}H^{(\mbf{v})}_{z,t,s}H^{(\mbf{v})}_{z}(w_{p})\mbf{b}+\frac{i\beta Np}{2}\Tr{H^{(\mbf{v})}_{z,t,s}}}\det^{-\beta/2}\left(1+ipH^{(\mbf{v})}_{z,t,s}\right)\,\mathrm{d}p.
\end{align*}
Now note that 
\begin{align*}
    &\Re\left(\frac{i\beta Np}{2t(1+s)}(\theta^{(\mbf{v})}_{z,t}(s)+ip)\mbf{b}^{*}H^{(\mbf{v})}_{z,t,s}H^{(\mbf{v})}_{z}(w_{p})\mbf{b}\right)\\
    &=-\frac{\beta Np^{2}}{2t(1+s)}\mbf{b}^{*}\frac{(H^{(\mbf{v})}_{z,t,s})^{2}(1-\theta^{(\mbf{v})}_{z,t}(s)H^{(\mbf{v})}_{z,t,s})}{1+p^{2}(H^{(\mbf{v})}_{z,t,s})^{2}}\mbf{b}\\
    &\leq 0.
\end{align*}
Therefore the integral over $|p|>p_{0}$ is bounded by
\begin{align*}
    \int_{|p|>p_{0}}\det^{-\beta/4}\left[1+p^{2}(H^{(\mbf{v})}_{z,t,s})^{2}\right]\,\mathrm{d}p.
\end{align*}
For the uniform bound in \eqref{eq:KUniform}, we observe that
\begin{align*}
    \det^{-\beta/4}\left[1+p^{2}(H^{(\mbf{v})}_{z,t,s})^{2}\right]&\leq\left[1+\left(\frac{p}{\theta^{(\mbf{v})}_{z,t}(s)+|X^{(\mbf{v})}_{z}|^{2}}\right)^{2}\right]^{-\beta N/4}
\end{align*}
and change variable $p\mapsto (\theta^{(\mbf{v})}_{z,t}(s)+|X^{(\mbf{v})}_{z}|^{2})p$.

When $s>s^{(\mbf{v})}_{0}/\log N$, we have $\eta^{(\mbf{v})}_{z,t}(s)\leq\frac{Ct^{2}\log N}{N^{1/2}}$ and so $\Tr{(H^{(\mbf{v})}_{z,t,s})^{2}}\geq ct^{-4}$. Now we use the bounds
\begin{align*}
    \det^{-\beta/2}\left[1+p^{2}(H^{(\mbf{v})}_{z,t,s})^{2}\right]&\leq\exp\left\{-CNt^{2}\cdot\frac{p^{2}/t^{6}}{1+p^{2}/t^{6}}\right\}
\end{align*}
when $|p|<C\|X_{z}\|^{2}$ and
\begin{align*}
    \det^{-\beta/2}\left[1+p^{2}(H^{(\mbf{v})}_{z,t,s})^{2}\right]&\leq e^{-CN\log|p|}
\end{align*}
when $|p|>C\|X_{z}\|^{2}$ to restrict $p$ to the region $|p|<p_{0}:=\frac{t^{2}\log N}{N^{1/2}}$. The asymptotic in \eqref{eq:KLarge} then follows by Taylor expansion of the integrand.
\end{proof}

Besides the normalisation, we also need to consider expectation values of quadratic forms in $\mbf{u}$. Rather than computing these expectation values explicitly, we use the formula in Lemma \ref{lem:nuDuality} to prove concentration.
\begin{lemma}\label{lem:nuConc}
Let $\mbf{v}\in\mc{E}_{conc}$. Then there is a $c>0$ such that
\begin{align}
    \left|\hat{\mbf{u}}^{*}\wt{H}^{(\mbf{v})}_{z}(\eta)X^{(\mbf{v})}_{z}\mbf{b}\right|&\leq\sqrt{\frac{\mbf{b}^{*}H^{(\mbf{v})}_{z,t}\mbf{b}}{Nt^{2}}}\log N,\label{eq:nuConc1}\\
    \left|\hat{\mbf{u}}^{*}\wt{H}^{(\mbf{v})}_{z}(\eta)\wt{H}^{(\mbf{v})}_{z,t,s}X^{(\mbf{v})}_{z}\mbf{b}\right|&\leq\sqrt{\frac{\mbf{b}^{*}H^{(\mbf{v})}_{z,t}\mbf{b}}{Nt^{8}}}\log N,\label{eq:nuConc2}\\
    \left|\hat{\mbf{u}}^{*}\wt{H}^{(\mbf{v})}_{z}(\eta)\hat{\mbf{u}}-t\Tr{\wt{H}^{(\mbf{v})}_{z,t,s}\wt{H}^{(\mbf{v})}_{z}(\sigma)}\right|&\leq\frac{\log N}{N^{1/2}t^{4}},\label{eq:nuConc3}
\end{align}
with probability at least $1-e^{-c\log^{2}N}$, uniformly in $0\leq\eta\leq\frac{t}{N^{1/4}}\log N$ and $s>s^{(\mbf{v})}_{0}/\log N$. 
\end{lemma}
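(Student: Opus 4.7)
The plan is to adapt the moment-generating-function argument from \cite[Lemma 7.2]{maltsev_bulk_2024}, using the duality formula of Lemma \ref{lem:nuDuality} in place of the one for $\mu^{X}_{\beta,z}$. The shift by $\frac{1}{\sqrt{s}}\wt{H}^{(\mbf{v})}_{z,t,s}X^{(\mbf{v})}_{z}\mbf{b}$ in the definition \eqref{eq:uHat} of $\hat{\mbf{u}}$ is precisely designed so that $\hat{\mbf{u}}$ behaves like a mean-zero vector under $\nu^{(\mbf{v})}_{\beta,z,s}$; concentration then reduces to a standard Laplace-transform / exponential-Markov argument.

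First I would treat the quadratic estimate \eqref{eq:nuConc3}. Setting $F=\wt{H}^{(\mbf{v})}_{z}(\eta)$ in Lemma \ref{lem:nuDuality}, the MGF of $\hat{\mbf{u}}^{*}F\hat{\mbf{u}}-m_{F}(s)$ is written as
\[
\frac{e^{-\beta N\chi^{(\mbf{v})}_{z}(s)/2}\,h_{\lambda F}(s)\,\hat{m}_{\lambda F}(s)}{e^{-\beta N\phi^{(\mbf{v})}_{z,t}/2}K_{\beta,z}(s)}.
\]
Using the asymptotic \eqref{eq:KLarge} for the denominator and Taylor expanding $h_{\lambda F}(s)$ and $\hat{m}_{\lambda F}(s)$ around $\lambda=0$, the leading correction is quadratic in $\lambda$ with variance controlled by $N^{-1}\Tr((H^{(\mbf{v})}_{z,t,s})^{2}\wt{H}^{(\mbf{v})}_{z}(\eta)^{2})\lesssim N^{-1}t^{-8}$. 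This gives a sub-Gaussian MGF bound of the form $e^{C\lambda^{2}/(Nt^{8})}$, and Markov's inequality with $\lambda\sim\sqrt{N t^{8}}\,\log N$ yields \eqref{eq:nuConc3}.

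For the linear estimates \eqref{eq:nuConc1} and \eqref{eq:nuConc2}, the cleanest route is to re-derive the duality formula with an additional linear shift: replacing $\mbf{b}$ by $\mbf{b}+\mu\,\mbf{c}$ in the definition of $\nu^{(\mbf{v})}_{\beta,z,s}$ produces a linear functional in $\hat{\mbf{u}}$ in the exponent, and differentiating the identity in $\mu$ at $\mu=0$ (equivalently, setting a rank-one perturbation in $F$) gives the MGF of $\Re(\mbf{c}^{*}\hat{\mbf{u}})$. With $\mbf{c}=X^{(\mbf{v})*}_{z}\wt{H}^{(\mbf{v})}_{z}(\eta)\mbf{b}$ or $\mbf{c}=X^{(\mbf{v})*}_{z}\wt{H}^{(\mbf{v})}_{z}(\eta)\wt{H}^{(\mbf{v})}_{z,t,s}\mbf{b}$, the resulting variance is controlled by $\|\mbf{c}\|^{2}/N$, which by \eqref{B4}--\eqref{B6} is at most $\mbf{b}^{*}H^{(\mbf{v})}_{z,t}\mbf{b}/(Nt^{2})$ and $\mbf{b}^{*}H^{(\mbf{v})}_{z,t}\mbf{b}/(Nt^{8})$ respectively. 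Exponential Markov again delivers the stated bounds.

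The main obstacle is controlling the oscillatory integral $h_{\lambda F}(s)$ uniformly in $\eta\in[0,(t/N^{1/4})\log N]$ and $s\geq s^{(\mbf{v})}_{0}/\log N$. The argument used in Lemma \ref{lem:Kr} exploits the fact that the phase in $h_{0}(s)$ has a non-positive real part, so the contour can be restricted to $|p|<p_{0}=t^{2}N^{-1/2}\log N$ via the determinant bound $\det^{-\beta/2}(1+p^{2}(H^{(\mbf{v})}_{z,t,s})^{2})\leq e^{-cNt^{2}p^{2}/(t^{6}+p^{2})}$. For the perturbed version $h_{\lambda F}(s)$ the extra factor involving $\lambda F$ must be shown not to spoil this decay: this is where the constraints $\|F\|\lesssim t^{-2}$ and $\lambda\lesssim\sqrt{Nt^{8}}\log N$ enter, ensuring $\lambda F$ is a subleading perturbation so that the saddle-point expansion around $p=0$ is valid. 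Finally, uniformity in $\eta$ is obtained via a union bound over an $N^{-C}$-net together with the deterministic Lipschitz bound $\|\partial_{\eta}\wt{H}^{(\mbf{v})}_{z}(\eta)\|\lesssim t^{-6}$.
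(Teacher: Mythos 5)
The overall strategy — bound the moment generating function of $\hat{\mbf{u}}^{*}F\hat{\mbf{u}}$ via Lemma \ref{lem:nuDuality}, show the oscillatory factor $h_{\lambda F}(s)$ keeps its decay (since the phase has non-positive real part and $\lambda F$ only perturbs the quadratic form slightly when $\rho_{F}(\lambda)<1$), normalise by the asymptotics of $K_{\beta,z}(s)$, and finish with exponential Markov — is exactly the paper's route, and your treatment of \eqref{eq:nuConc3} with $F=\wt{H}^{(\mbf{v})}_{z}(\eta)$ is on target.

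For the linear estimates \eqref{eq:nuConc1}--\eqref{eq:nuConc2}, however, there is a genuine gap. You propose re-deriving the duality formula with $\mbf{b}\to\mbf{b}+\mu\mbf{c}$ and differentiating in $\mu$, calling this ``equivalently, setting a rank-one perturbation in $F$.'' This is both unnecessary and not quite what the paper does: Lemma \ref{lem:nuDuality} already applies to $F=\mbf{c}\mbf{c}^{*}$ as a \emph{choice} of $F$ (not a perturbation), and since $\hat{\mbf{u}}^{*}\mbf{c}\mbf{c}^{*}\hat{\mbf{u}}=|\hat{\mbf{u}}^{*}\mbf{c}|^{2}$, concentration of this quadratic form around $m_{F}(s)$ delivers \eqref{eq:nuConc1}--\eqref{eq:nuConc2} directly with no new lemma. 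More seriously, your variance computation is wrong in a way that would not produce the stated bounds. You write that the variance of $\hat{\mbf{u}}^{*}\mbf{c}$ is ``controlled by $\|\mbf{c}\|^{2}/N$.'' Since $\|\mbf{c}\|^{2}=\mbf{b}^{*}X^{(\mbf{v})*}_{z}\wt{H}^{(\mbf{v})}_{z}(\eta)^{2}X^{(\mbf{v})}_{z}\mbf{b}\leq\mbf{b}^{*}H^{(\mbf{v})}_{z}(\eta)\mbf{b}\lesssim\mbf{b}^{*}H^{(\mbf{v})}_{z,t}\mbf{b}$, the bound $\|\mbf{c}\|^{2}/N$ is $\mbf{b}^{*}H^{(\mbf{v})}_{z,t}\mbf{b}/N$, missing the crucial $t^{-2}$. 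The covariance of $\hat{\mbf{u}}$ under $\nu^{(\mbf{v})}_{\beta,z,s}$ is not isotropic; reading off $m_{F}(s)$ and $v^{2}_{F}(s)$ in the paper, it scales like $\frac{t(1+s)}{Ns}\wt{H}^{(\mbf{v})}_{z,t,s}$, so
\begin{align*}
    \mathrm{Var}\left(\hat{\mbf{u}}^{*}\mbf{c}\right)&\simeq\frac{t(1+s)}{Ns}\,\mbf{c}^{*}\wt{H}^{(\mbf{v})}_{z,t,s}\mbf{c}\lesssim\frac{t}{N}\cdot\frac{\mbf{b}^{*}H^{(\mbf{v})}_{z,t}\mbf{b}}{t^{3}}=\frac{\mbf{b}^{*}H^{(\mbf{v})}_{z,t}\mbf{b}}{Nt^{2}},
\end{align*}
which is where the $t^{-2}$ (and, for \eqref{eq:nuConc2}, the $t^{-8}$) comes from. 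Your final formulas coincide with the lemma, but your stated mechanism would not yield them; you should replace the claim ``variance $\lesssim\|\mbf{c}\|^{2}/N$'' by the computation of $v^{2}_{F}(s)$ and $\rho_{F}(\lambda)$ for the rank-one $F$, using $\|\wt{H}^{(\mbf{v})}_{z,t,s}\|\lesssim t^{-3}$ on $\mc{E}_{conc}$. The net argument for uniformity in $\eta$ is a reasonable finishing touch.
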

\begin{proof}
In all the estimates below we use the assumption $s>s^{(\mbf{v})}_{0}/\log N$ to ensure that $|\theta^{(\mbf{v})}_{z,t}(s)|<\frac{t^{2}\log N}{N^{1/2}}$, as follows from Lemma \ref{lem:theta_s}. Let $\lambda\in\mbb{R}$, $F\in\mbb{M}_{N}^{H}(\mbb{F}_{\beta})$ and define
\begin{align*}
    v^{2}_{F}(s)&=\frac{t^{2}(1+s)^{2}}{Ns^{2}}\Tr{\left(\wt{H}^{(\mbf{v})}_{z,t,s}F\right)^{2}}.
\end{align*}
Assume that
\begin{align*}
    \rho_{F}(\lambda)&:=\frac{2|\lambda|t(1+s)}{\beta Ns}\left\|\sqrt{\wt{H}^{(\mbf{v})}_{z,t,s}}F\sqrt{\wt{H}^{(\mbf{v})}_{z,t,s}}\right\|<1;
\end{align*}
Using Lemma \ref{lem:nuDuality} and the bound $-x-\log(1-x)\leq \frac{x^{2}}{1-x}$ for $x<1$ we find for the moment generating function
\begin{align*}
    e^{-\lambda m_{F}(s)}\mbb{E}_{\beta,z,s}\left[e^{\lambda\hat{\mbf{u}}^{*}F\hat{\mbf{u}}}\right]&\leq\frac{e^{-\frac{\beta N}{2}\chi^{(\mbf{v})}_{z}(s)}h_{\lambda F}(s)}{e^{-\frac{\beta N}{2}\phi^{(\mbf{v})}_{z}}K_{\beta,z}(s)}\cdot \exp\left\{\frac{\lambda^{2}v^{2}_{F}(s)}{1-\rho_{F}(\lambda)}\right\}.
\end{align*}
To bound $h_{\lambda F}$ we note that
\begin{align*}
    \Re\left(-ip\mbf{b}^{*}X^{(\mbf{v})*}_{z}\wt{H}^{(\mbf{v})}_{z,t,s}(1+ip\wt{H}^{(\mbf{v}),\lambda F}_{z,t,s})^{-1}\wt{H}^{(\mbf{v})}_{z,t,s}X^{(\mbf{v})}_{z}\mbf{b}\right)&\leq0,
\end{align*}
so that
\begin{align*}
    h_{\lambda F}(s)&\leq\int_{-\infty}^{\infty}\det^{-\beta/4}\left(1+p^{2}(H^{(\mbf{v}),F}_{z,t,s})^{2}\right)\,\mathrm{d}p.
\end{align*}
Since $\rho_{F}(\lambda)<1$, we have
\begin{align*}
    \Tr{(H^{(\mbf{v}),F}_{z,t,s})^{2}}&\geq\frac{1}{(1+\rho_{F}(\lambda))^{2}}\Tr{(H^{(\mbf{v})}_{z,t,s})^{2}},
\end{align*}
which leads to the bound
\begin{align*}
    h_{\lambda F}(s)&\lesssim\frac{1}{\sqrt{N\Tr{(H^{(\mbf{v})}_{z,t,s})^{2}}}}.
\end{align*}
From the asymptotics of $K_{\beta,z}$ in \eqref{eq:KLarge}, we find
\begin{align*}
    e^{-\lambda m_{F}(s)}\mbb{E}_{\beta,z,s}\left[e^{\lambda\hat{\mbf{u}}^{*}F\hat{\mbf{u}}}\right]&\lesssim \exp\left\{\frac{\lambda^{2}v^{2}_{F}(s)}{1-\rho_{F}(\lambda)}\right\}.
\end{align*}
It remains to choose $F\in\{\wt{H}^{(\mbf{v})}_{z}(\eta),\,\wt{H}^{(\mbf{v})}_{z}(\eta)X^{(\mbf{v})}_{z}\mbf{b}\}$ and estimate $m_{F}(s)$ and $v^{2}_{F}(s)$.

Set $F=\wt{H}^{(\mbf{v})}_{z}(\eta)X^{(\mbf{v})}_{z}\mbf{b}\mbf{b}^{*}X^{(\mbf{v})*}_{z}\wt{H}^{(\mbf{v})}_{z}(\eta)$; then
\begin{align*}
    \rho_{F}(\lambda)&\lesssim\frac{|\lambda|t}{N}\mbf{b}^{*}X^{(\mbf{v})*}_{z}\wt{H}^{(\mbf{v})}_{z}(\eta)\wt{H}^{(\mbf{v})}_{z,t,s}\wt{H}^{(\mbf{v})}_{z}(\eta)X^{(\mbf{v})}_{z}\mbf{b}\\
    &\lesssim\frac{|\lambda|t}{N}\mbf{b}^{*}H^{(\mbf{v})}_{z,t}\mbf{b}\cdot\|H^{(\mbf{v})}_{z,t}\|\\
    &\lesssim\frac{|\lambda|}{Nt^{2}}\mbf{b}^{*}H^{(\mbf{v})}_{z,t}\mbf{b},
\end{align*}
where we have used the fact that $\|H^{(\mbf{v})}_{z,t}\|\gtrsim t^{3}$ for $\mbf{v}\in\mc{E}_{conc}$ (see the proof of Lemma \ref{lem:minor}). By similar computations we find
\begin{align*}
    v^{2}_{F}(s)&=\frac{t^{2}(1+s)^{2}}{N^{2}s^{2}}\left(\mbf{b}^{*}X^{(\mbf{v})*}_{z}\wt{H}^{(\mbf{v})}_{z}(\eta)\wt{H}^{(\mbf{v})}_{z,t,s}\wt{H}^{(\mbf{v})}_{z}(\eta)X^{(\mbf{v})}_{z}\mbf{b}\right)^{2}\\
    &\lesssim\left(\frac{\mbf{b}^{*}H^{(\mbf{v})}_{z,t}\mbf{b}}{Nt^{2}}\right)^{2},
\end{align*}
and
\begin{align*}
    m_{F}(s)&=\frac{t(1+s)}{Ns}\mbf{b}^{*}X^{(\mbf{v})*}_{z}\wt{H}^{(\mbf{v})}_{z}(\eta)\wt{H}^{(\mbf{v})}_{z,t,s}\wt{H}^{(\mbf{v})}_{z}(\eta)X^{(\mbf{v})}_{z}\mbf{b}\\
    &\lesssim\frac{\mbf{b}^{*}H^{(\mbf{v})}_{z,t}\mbf{b}}{Nt^{2}}.
\end{align*}
Choosing $|\lambda|=cNt^{2}/\mbf{b}^{*}H^{(\mbf{v})}_{z,t}\mbf{b}$ in Markov's inequality for sufficiently small $c>0$ we obtain \eqref{eq:nuConc1}.

Now set $F=\wt{H}^{(\mbf{v})}_{z}(\eta)\wt{H}^{(\mbf{v})}_{z,t,s}X^{(\mbf{v})}_{z}\mbf{b}\mbf{b}^{*}X^{(\mbf{v})*}_{z}\wt{H}^{(\mbf{v})}_{z,t,s}\wt{H}^{(\mbf{v})}_{z}(\eta)$; in this case we find 
\begin{align*}
    \rho_{F}(\lambda)&\lesssim\frac{|\lambda|}{Nt^{8}}\mbf{b}^{*}H^{(\mbf{v})}_{z,t}\mbf{b},\\
    v^{2}_{F}(s)&\lesssim\left(\frac{\mbf{b}^{*}H^{(\mbf{v})}_{z,t}\mbf{b}}{Nt^{8}}\right)^{2},
\end{align*}
and
\begin{align*}
    m_{F}(s)&\lesssim\frac{\mbf{b}^{*}H^{(\mbf{v})}_{z,t}\mbf{b}}{Nt^{8}}.
\end{align*}
Choosing $|\lambda|=cNt^{8}/\mbf{b}^{*}H^{(\mbf{v})}_{z,t}\mbf{b}$ in Markov's inequality for sufficiently small $c>0$ we obtain \eqref{eq:nuConc2}.

Finally, set $F=\wt{H}^{(\mbf{v})}_{z}(\eta)$; then
\begin{align*}
    \rho_{F}(\lambda)&\lesssim\frac{|\lambda|}{Nt^{5}},\\
    v^{2}_{F}(s)&\lesssim\frac{t^{2}}{N}\Tr{(\wt{H}^{(\mbf{v})}_{z,t,s})^{4}}\lesssim\frac{1}{Nt^{8}},
\end{align*}
and
\begin{align*}
    m_{F}(s)&=\frac{t(1+s)}{s}\Tr{\wt{H}^{(\mbf{v})}_{z,t,s}\wt{H}^{(\mbf{v})}_{z}(\eta)}\\
    &=\left[1+O\left(\frac{\log N}{N^{1/2}t}\right)\right]t\Tr{(\wt{H}^{(\mbf{v})}_{z,t,s})^{2}}.
\end{align*}
Using Markov's inequality with $|\lambda|=\sqrt{Nt^{8}}\log N$ we obtain \eqref{eq:nuConc3}.
\end{proof}

With this concentration of quadratic forms we can evaluate the expectation over $\nu^{(\mbf{v})}_{\beta,z,s}$ in $P^{(\mbf{v})}$.
\begin{lemma}
We have
\begin{align}
    P^{(\mbf{v})}(\eta,s)&\leq 3\left[1+(1+\mbf{b}^{*}H^{(\mbf{v})}_{z}(\eta)\mbf{b})s\right],\label{eq:PBound}
\end{align}
uniformly in $s\geq0$, and
\begin{align}
    P^{(\mbf{v})}\left(\eta,s\right)&=\left[1+O\left(\sqrt{\frac{s}{Nt}}\log N\right)\right]^{2}\cdot(1+\mbf{b}^{*}H^{(\mbf{v})}_{z,t}\mbf{b})^{2}\left(1+\frac{N^{1/2}s\eta^{2}}{t^{2}s^{(\mbf{v})}_{0}}\right),\label{eq:PEstimate}
\end{align}
uniformly in $s>s^{(\mbf{v})}_{0}/\log N$.
\end{lemma}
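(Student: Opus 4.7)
The uniform bound \eqref{eq:PBound} is a pointwise estimate that survives the expectation. The plan is to apply $|1+x|^{2}\leq 2+2|x|^{2}$ and Cauchy--Schwarz to get $|1+\sqrt{s}\mbf{u}^{*}\wt{H}^{(\mbf{v})}_{z}(\eta)X^{(\mbf{v})}_{z}\mbf{b}|^{2}\leq 2+2s\|\wt{H}^{(\mbf{v})}_{z}(\eta)X^{(\mbf{v})}_{z}\mbf{b}\|^{2}$, and then use the algebraic identity $X^{(\mbf{v})*}_{z}(\wt{H}^{(\mbf{v})}_{z}(\eta))^{2}X^{(\mbf{v})}_{z}=H^{(\mbf{v})}_{z}(\eta)-\eta^{2}(H^{(\mbf{v})}_{z}(\eta))^{2}$ to bound this quadratic form by $\mbf{b}^{*}H^{(\mbf{v})}_{z}(\eta)\mbf{b}$. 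For the second summand of $P^{(\mbf{v})}$, the operator inequality $\eta^{2}\wt{H}^{(\mbf{v})}_{z}(\eta)\leq 1$ yields $\eta^{2}\mbf{u}^{*}\wt{H}^{(\mbf{v})}_{z}(\eta)\mbf{u}\leq 1$ on the unit sphere, so this summand is at most $s(1+\mbf{b}^{*}H^{(\mbf{v})}_{z}(\eta)\mbf{b})$. Summing and using $2+s+3s\mbf{b}^{*}H^{(\mbf{v})}_{z}(\eta)\mbf{b}\leq 3+3s(1+\mbf{b}^{*}H^{(\mbf{v})}_{z}(\eta)\mbf{b})$ gives \eqref{eq:PBound} after taking expectations.

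For the asymptotic \eqref{eq:PEstimate}, I would substitute the affine decomposition $\mbf{u}=\hat{\mbf{u}}+\frac{1}{\sqrt{s}}\wt{H}^{(\mbf{v})}_{z,t,s}X^{(\mbf{v})}_{z}\mbf{b}$ from \eqref{eq:uHat} into both summands and expand. Using the commutations $\wt{H}^{(\mbf{v})}_{z,t,s}X^{(\mbf{v})}_{z}=X^{(\mbf{v})}_{z}H^{(\mbf{v})}_{z,t,s}$ and $X^{(\mbf{v})*}_{z}X^{(\mbf{v})}_{z}H^{(\mbf{v})}_{z}(\eta)=1-\eta^{2}H^{(\mbf{v})}_{z}(\eta)$, the deterministic part of the linear form is $1+\mbf{b}^{*}H^{(\mbf{v})}_{z,t,s}\mbf{b}-\eta^{2}\mbf{b}^{*}H^{(\mbf{v})}_{z,t,s}H^{(\mbf{v})}_{z}(\eta)\mbf{b}$; Lemma \ref{lem:theta_s} and a resolvent expansion reduce this to $1+\mbf{b}^{*}H^{(\mbf{v})}_{z,t}\mbf{b}$ up to lower-order corrections, while the stochastic fluctuation $\sqrt{s}\hat{\mbf{u}}^{*}\wt{H}^{(\mbf{v})}_{z}(\eta)X^{(\mbf{v})}_{z}\mbf{b}$ is bounded by \eqref{eq:nuConc1} on the concentration event to contribute the multiplicative error $1+O(\sqrt{s/(Nt)}\log N)$. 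For the second summand, the analogous expansion yields a leading term $s\eta^{2}(1+\mbf{b}^{*}H^{(\mbf{v})}_{z}(\eta)\mbf{b})\hat{\mbf{u}}^{*}\wt{H}^{(\mbf{v})}_{z}(\eta)\hat{\mbf{u}}$; applying \eqref{eq:nuConc3} replaces $\hat{\mbf{u}}^{*}\wt{H}^{(\mbf{v})}_{z}(\eta)\hat{\mbf{u}}$ by $t\Tr{\wt{H}^{(\mbf{v})}_{z,t,s}\wt{H}^{(\mbf{v})}_{z}(\eta)}\simeq t\Tr{(H^{(\mbf{v})}_{z,t})^{2}}$, while \eqref{eq:nuConc2} handles the residual cross-term $\hat{\mbf{u}}^{*}\wt{H}^{(\mbf{v})}_{z}(\eta)\wt{H}^{(\mbf{v})}_{z,t,s}X^{(\mbf{v})}_{z}\mbf{b}$. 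Unpacking \eqref{eq:s0v} rewrites $s\eta^{2}(1+\mbf{b}^{*}H^{(\mbf{v})}_{z,t}\mbf{b})\cdot t\Tr{(H^{(\mbf{v})}_{z,t})^{2}}$ as $(1+\mbf{b}^{*}H^{(\mbf{v})}_{z,t}\mbf{b})^{2}\cdot N^{1/2}s\eta^{2}/(t^{2}s^{(\mbf{v})}_{0})$, matching the second factor in the target. The complement of the concentration event has $\nu^{(\mbf{v})}_{\beta,z,s}$-probability at most $e^{-c\log^{2}N}$, and bounding the integrand there by \eqref{eq:PBound} makes this contribution negligible.

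The main obstacle is organizing the errors so that they combine into the single multiplicative prefactor $[1+O(\sqrt{s/(Nt)}\log N)]^{2}$. The cross term $2\sqrt{s}\Re[\hat{\mbf{u}}^{*}\wt{H}^{(\mbf{v})}_{z}(\eta)X^{(\mbf{v})}_{z}\mbf{b}]\cdot(1+\mbf{b}^{*}H^{(\mbf{v})}_{z,t,s}\mbf{b})$ in the first summand is the most delicate: \eqref{eq:nuConc1} gives it size $\sqrt{s\mbf{b}^{*}H^{(\mbf{v})}_{z,t}\mbf{b}/(Nt^{2})}\log N\cdot(1+\mbf{b}^{*}H^{(\mbf{v})}_{z,t}\mbf{b})$, and absorbing it into the stated relative error forces one to use $\sqrt{\mbf{b}^{*}H^{(\mbf{v})}_{z,t}\mbf{b}}\leq 1+\mbf{b}^{*}H^{(\mbf{v})}_{z,t}\mbf{b}$ together with careful handling of the $\eta^{2}$-corrections coming from $X^{(\mbf{v})*}_{z}X^{(\mbf{v})}_{z}H^{(\mbf{v})}_{z}(\eta)=1-\eta^{2}H^{(\mbf{v})}_{z}(\eta)$, which are controlled under the assumption $s>s^{(\mbf{v})}_{0}/\log N$ via the smallness of $\theta^{(\mbf{v})}_{z,t}(s)$ from Lemma \ref{lem:theta_s}.
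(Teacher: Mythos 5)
Your proposal is correct and follows essentially the same route as the paper: the uniform bound uses Cauchy--Schwarz together with the identity $X^{(\mbf{v})*}_{z}(\wt{H}^{(\mbf{v})}_{z}(\eta))^{2}X^{(\mbf{v})}_{z}=H^{(\mbf{v})}_{z}(\eta)(1-\eta^{2}H^{(\mbf{v})}_{z}(\eta))\leq H^{(\mbf{v})}_{z}(\eta)$ and $\eta^{2}\wt{H}^{(\mbf{v})}_{z}(\eta)\leq 1$, while the asymptotic uses the shift $\hat{\mbf{u}}$, the concentration bounds \eqref{eq:nuConc1}--\eqref{eq:nuConc3}, and the definition \eqref{eq:s0v} of $s^{(\mbf{v})}_{0}$. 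The only stylistic difference is that the paper factors $|1+\sqrt{s}\mbf{u}^{*}\wt{H}^{(\mbf{v})}_{z}(\eta)X^{(\mbf{v})}_{z}\mbf{b}|^{2}$ directly as $|1+\text{small}|^{2}\cdot(1+\mbf{b}^{*}X^{(\mbf{v})*}_{z}\wt{H}^{(\mbf{v})}_{z,t,s}\wt{H}^{(\mbf{v})}_{z}(\eta)X^{(\mbf{v})}_{z}\mbf{b})^{2}$, which makes the multiplicative error structure immediate, whereas you expand additively and then recollect; these are equivalent.
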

\begin{proof}
Recall the definition of $P^{(\mbf{v})}(\eta,s)$:
\begin{align*}
    P^{(\mbf{v})}(\eta,s)&=\mbb{E}_{\beta,z,s}\left[\left|1+\sqrt{s}\mbf{u}^{*}\wt{H}^{(\mbf{v})}_{z}(\eta)X^{(\mbf{v})}_{z}\mbf{b}\right|^{2}+s\eta^{2}(1+\mbf{b}^{*}H^{(\mbf{v})}_{z}(\eta)\mbf{b})\mbf{u}^{*}\wt{H}^{(\mbf{v})}_{z}(\eta)\mbf{u}\right].
\end{align*}
To prove the upper bound in \eqref{eq:PBound}, we use Cauchy-Schwarz in the form
\begin{align*}
    \left|\mbf{u}^{*}\wt{H}^{(\mbf{v})}_{z}(\eta)X^{(\mbf{v})}_{z}\mbf{b}\right|&\leq\sqrt{\mbf{b}^{*}X^{(\mbf{v})*}_{z}(\wt{H}^{(\mbf{v})}_{z}(\eta))^{2}X^{(\mbf{v})}_{z}\mbf{b}}\\
    &=\sqrt{\mbf{b}^{*}H^{(\mbf{v})}_{z}(\eta)(1-\eta^{2}H^{(\mbf{v})}_{z}(\eta))\mbf{b}}\\
    &\leq\sqrt{\mbf{b}^{*}H^{(\mbf{v})}_{z}(\eta)\mbf{b}}.
\end{align*}
This and the trivial bound $\eta^{2}\mbf{u}^{*}H^{(\mbf{v})}_{z}(\eta)\mbf{u}\leq1$ imply \eqref{eq:PBound}.

For the estimate in \eqref{eq:PEstimate}, we use the concentration of quadratic forms in Lemma \ref{lem:nuConc}. Using the assumption $\eta^{2}<\frac{t^{2}\log N}{N^{1/2}}$ we have
\begin{align*}
    \mbf{u}^{*}\wt{H}^{(\mbf{v})}_{z}(\eta)\mbf{u}&=\left[1+O\left(\frac{\log N}{N^{1/2}t}\right)\right]\mbf{u}^{*}\wt{H}^{(\mbf{v})}_{z,t}\mbf{u},\\
    \mbf{b}^{*}H^{(\mbf{v})}_{z}(\eta)\mbf{b}&=\left[1+O\left(\frac{\log N}{N^{1/2}t}\right)\right]\mbf{b}^{*}H^{(\mbf{v})}_{z,t}\mbf{b},
\end{align*}
and
\begin{align*}
    \mbf{b}^{*}X^{(\mbf{v})*}\wt{H}^{(\mbf{v})}_{z,t,s}\wt{H}^{(\mbf{v})}_{z}(\eta)X^{(\mbf{v})}_{z}\mbf{b}&=\left[1+O\left(\frac{\log N}{N^{1/2}t}\right)\right]\mbf{b}^{*}X^{(\mbf{v})*}_{z}(\wt{H}^{(\mbf{v})}_{z,t})^{2}X^{(\mbf{v})}_{z}\mbf{b}\\
    &=\left[1+O\left(\frac{\log N}{N^{1/2}t}\right)\right]\mbf{b}^{*}H^{(\mbf{v})}_{z,t}\left(1-\theta^{(\mbf{v})}_{z,t}H^{(\mbf{v})}_{z,t}\right)\mbf{b}\\
    &=\left[1+O\left(\frac{\log N}{N^{1/2}t}\right)\right]\mbf{b}^{*}H^{(\mbf{v})}_{z,t}\mbf{b}.
\end{align*}
On the event that \eqref{eq:nuConc1} holds we have
\begin{align*}
    \left|1+\sqrt{s}\mbf{u}^{*}\wt{H}^{(\mbf{v})}_{z}(\eta)X^{(\mbf{v})}_{z}\mbf{b}\right|^{2}&=\left|1+\frac{\sqrt{s}\hat{\mbf{u}}^{*}\wt{H}^{(\mbf{v})}_{z}(\eta)X^{(\mbf{v})}_{z}\mbf{b}}{1+\mbf{b}^{*}X^{(\mbf{v})*}_{z}\wt{H}^{(\mbf{v})}_{z,t,s}\wt{H}^{(\mbf{v})}_{z}(\eta)X^{(\mbf{v})}_{z}\mbf{b}}\right|^{2}\\
    &\times\left(1+\mbf{b}^{*}X^{(\mbf{v})*}_{z}\wt{H}^{(\mbf{v})}_{z,t,s}\wt{H}^{(\mbf{v})}_{z}(\eta)X^{(\mbf{v})}_{z}\mbf{b}\right)^{2}\\
    &=\left[1+O\left(\sqrt{\frac{s}{N^{1/2}ts^{(\mbf{v})}_{0}}}\log N\right)\right]^{2}\left(1+\mbf{b}^{*}H^{(\mbf{v})}_{z,t}\mbf{b}\right)^{2},
\end{align*}
and on the event that \eqref{eq:nuConc2} and \eqref{eq:nuConc3} hold we have
\begin{align*}
    s\mbf{u}^{*}\wt{H}^{(\mbf{v})}_{z,t}\mbf{u}&=\left[1+O\left(\frac{\log N}{N^{1/2}t}\right)\right]st\Tr{(H^{(\mbf{v})}_{z,t})^{2}},
\end{align*}
from which \eqref{eq:PEstimate} follows.
\end{proof}

Combining the previous lemma and the estimates on $\phi_{z}$ in Lemma \ref{lem:phi} we can study $I^{(\mbf{v})}_{\beta}$.
\begin{lemma}
Let $I^{(\mbf{v})}_{\beta}$ be defined as in \eqref{eq:I}. Then we have
\begin{align}
    I^{(\mbf{v})}_{\beta}(s)&\lesssim \left(\frac{N}{t^{2}\Tr{(H^{(\mbf{v})}_{z,t})^{2}}}\right)^{\beta^{2}/2}\frac{1+s}{t},\label{eq:IUniform}
\end{align}
uniformly in $s\geq0$, and
\begin{align}
    I^{(\mbf{v})}_{\beta}(s)&=\left[1+O\left(\frac{\log^{3}N}{N^{1/2}t}\right)\right](1+\mbf{b}^{*}H^{(\mbf{v})}_{z,t}\mbf{b})^{2\beta}\left(\frac{N}{t^{2}\Tr{(H^{(\mbf{v})}_{z,t})^{2}}}\right)^{\beta^{2}/2}\nonumber\\
    &\times\det\left[\int_{\delta^{(\mbf{v})}_{z,t}}^{\infty}x^{j+k-2}e^{-\frac{1}{2}x^{2}}\left(1+\frac{s}{s^{(\mbf{v})}_{0}}\left(x-\delta^{(\mbf{v})}_{z,t}\right)\right)\,\mathrm{d}x\right]_{j,k=1}^{\beta},\label{eq:ILarge}
\end{align}
uniformly in $s>s^{(\mbf{v})}_{0}/\log N$, where
\begin{align}
    \delta^{(\mbf{v})}_{z,t}&:=\sqrt{N\Tr{(H^{(\mbf{v})}_{z,t})^{2}}}\theta^{(\mbf{v})}_{z,t}.
\end{align}
\end{lemma}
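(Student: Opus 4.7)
The approach is Laplace's method combined with the Andréief identity. After the substitution $y_{j}=\eta_{j}^{2}$ (so that $\eta_{j}\,d\eta_{j}=dy_{j}/2$), Andréief's identity $\int\Delta^{2}(\bs y)\prod_{j}h(y_{j})\,dy_{j}=\beta!\det[\int y^{j+k-2}h(y)\,dy]_{j,k=1}^{\beta}$ converts the $\beta$-fold integral into a $\beta\times\beta$ determinant of one-dimensional Laplace integrals:
\begin{align*}
I^{(\mbf{v})}_{\beta}(s)=\frac{N^{\beta^{2}}}{t^{\beta^{2}}}\det\left[\int_{0}^{\infty}y^{j+k-2}e^{-N[\phi^{(\mbf{v})}_{z}(\sqrt{y})-\phi^{(\mbf{v})}_{z,t}]}P^{(\mbf{v})}(\sqrt{y},s)\,dy\right]_{j,k=1}^{\beta}.
\end{align*}
The problem is thereby reduced to analysing a family of scalar Laplace integrals indexed by $(j,k)$. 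The needed properties of $\phi^{(\mbf{v})}_{z}$ and $P^{(\mbf{v})}$ come from the analogues of \eqref{eq:phiBoundEdge}--\eqref{eq:phiEstimateEdge} for $X^{(\mbf{v})}$ (valid on $\mc{E}_{conc}$ by Lemma \ref{lem:minor}) and from \eqref{eq:PBound}--\eqref{eq:PEstimate}.

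For \eqref{eq:IUniform}, I would pair the uniform lower bound on $\phi^{(\mbf{v})}_{z}-\phi^{(\mbf{v})}_{z,t}$ with the uniform bound $P^{(\mbf{v})}\leq 3[1+(1+\mbf{b}^{*}H^{(\mbf{v})}_{z}(\eta)\mbf{b})s]$. The Gaussian-type decay of the exponent localises each scalar integral to a window of width $\sim 1/\sqrt{N\Tr{(H^{(\mbf{v})}_{z,t})^{2}}}$, and using $\mbf{b}^{*}H^{(\mbf{v})}_{z,t}\mbf{b}=O(1)$ on $\mc{E}_{conc}$ bounds each entry of the determinant by $\lesssim(1+s)/(t\sqrt{N\Tr{(H^{(\mbf{v})}_{z,t})^{2}}})$. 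Multilinearity of the determinant then extracts an overall scaling $(N\Tr{(H^{(\mbf{v})}_{z,t})^{2}})^{-\beta^{2}/2}$ from the Jacobians and Vandermonde structure, which combined with the prefactor $N^{\beta^{2}}/t^{\beta^{2}}$ yields the $(N/(t^{2}\Tr{(H^{(\mbf{v})}_{z,t})^{2}}))^{\beta^{2}/2}$ factor in \eqref{eq:IUniform}, with a residual $(1+s)/t$ from the $P$-bound.

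For the asymptotic \eqref{eq:ILarge} in the regime $s>s^{(\mbf{v})}_{0}/\log N$, I would restrict each scalar integral to the window $|y-\theta^{(\mbf{v})}_{z,t}|\leq t^{2}\log N/N^{1/2}$, where both the Taylor expansion \eqref{eq:phiEstimateEdge} and the asymptotic \eqref{eq:PEstimate} apply; the complement contributes $O(e^{-c\log^{2}N})$ by the uniform bound argument. Inside the window, the substitution $\tilde{y}=\sqrt{N\Tr{(H^{(\mbf{v})}_{z,t})^{2}}}\,y$ standardises the Gaussian weight to $e^{-(\tilde{y}-\delta^{(\mbf{v})}_{z,t})^{2}/2}$, and a further shift centres the Gaussian in standard form $e^{-x^{2}/2}$ with the integration limits naturally expressed in terms of $\delta^{(\mbf{v})}_{z,t}$. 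The $(1+\mbf{b}^{*}H^{(\mbf{v})}_{z,t}\mbf{b})^{2}$ prefactor in \eqref{eq:PEstimate} pulls out of each column of the determinant, producing the power $(1+\mbf{b}^{*}H^{(\mbf{v})}_{z,t}\mbf{b})^{2\beta}$, and the identity $t^{2}s^{(\mbf{v})}_{0}\sqrt{N\Tr{(H^{(\mbf{v})}_{z,t})^{2}}}=tN(1+\mbf{b}^{*}H^{(\mbf{v})}_{z,t}\mbf{b})$ identifies the residual linear factor $1+N^{1/2}sy/(t^{2}s^{(\mbf{v})}_{0})$ with $1+(s/s^{(\mbf{v})}_{0})(x-\delta^{(\mbf{v})}_{z,t})$ after the substitution.

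The main obstacle is the careful bookkeeping of scaling factors and signs across the nested substitutions, and verifying that the errors from \eqref{eq:phiEstimateEdge} and \eqref{eq:PEstimate} combine into the stated $O(\log^{3}N/\sqrt{Nt})$ remainder. In particular, within the window the cubic correction to the Taylor expansion of $\phi^{(\mbf{v})}_{z}$ contributes $N\cdot\Tr{(H^{(\mbf{v})}_{z,t})^{3}}\cdot|\eta^{2}-\theta^{(\mbf{v})}_{z,t}|^{3}\lesssim N\cdot t^{-6}\cdot(t^{2}\log N/N^{1/2})^{3}=\log^{3}N/\sqrt{N}$ in the exponent, and the $O(\sqrt{s/(Nt)}\log N)$ error in \eqref{eq:PEstimate} contributes comparably; both fit inside the claimed error budget. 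A subtler consistency check is that \eqref{eq:PEstimate} is expanded around the reference $H^{(\mbf{v})}_{z,t}$ rather than the $s$-dependent maximum $\eta^{(\mbf{v})}_{z,t}(s)$ of the full integrand, but by \eqref{eq:theta_sEdge} the two reference points differ by at most $t^{2}\log N/N^{1/2}$, which lies inside the localisation window, so the expansions around them agree to the required order.
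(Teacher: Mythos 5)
Your approach is essentially the same as the paper's: both write $I^{(\mbf{v})}_{\beta}(s)$ via the Andr\'eief identity as a $\beta\times\beta$ determinant of scalar Laplace integrals, discard the tail $\eta\geq t\log^{1/2}N/N^{1/4}$ using the lower bound \eqref{eq:phiBoundEdge} (in its $(\mbf{v})$-superscripted form, from Lemma \ref{lem:minor}) together with the uniform $P$-bound \eqref{eq:PBound}, and in the bulk window use the Taylor expansion \eqref{eq:phiEstimateEdge}, the change of variable $x=\sqrt{N\Tr{(H^{(\mbf{v})}_{z,t})^{2}}}(\eta^{2}-\theta^{(\mbf{v})}_{z,t})$, and the asymptotic \eqref{eq:PEstimate}. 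Your error-budget check (cubic Taylor remainder $\lesssim\log^{3}N/\sqrt{N}$, plus the $O(\sqrt{s/(Nt)}\log N)$ error from \eqref{eq:PEstimate}) and the observation that both references to $\theta^{(\mbf{v})}_{z,t}(s)$ vs.\ $\theta^{(\mbf{v})}_{z,t}$ lie within the localisation window by \eqref{eq:theta_sEdge} are consistent with, though slightly more explicit than, what the paper records.

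One concrete slip: you state that $\mbf{b}^{*}H^{(\mbf{v})}_{z,t}\mbf{b}=O(1)$ on $\mc{E}_{conc}$, but by Lemma \ref{lem:vRemoval} (and Lemma \ref{lem:sigma}) one has $1+\mbf{b}^{*}H^{(\mbf{v})}_{z,t}\mbf{b}\simeq 1/(t\sigma_{z,t})\simeq 1/t$, so this quantity is of order $1/t$, not $O(1)$. That $1/t$ is precisely where the $1/t$ in your stated entry bound $\lesssim(1+s)/\bigl(t\sqrt{N\Tr{(H^{(\mbf{v})}_{z,t})^{2}}}\bigr)$ and in the final factor $(1+s)/t$ of \eqref{eq:IUniform} comes from; with $\mbf{b}^{*}H^{(\mbf{v})}_{z,t}\mbf{b}=O(1)$ you would lose that factor. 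The rest of your bookkeeping appears to assume the correct order, so I read this as a misstatement rather than a structural error, but it should be fixed.
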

\begin{proof}
Expressing the Vandermonde factor as the square of a determinant and using the Andreif identity we have
\begin{align*}
    I^{(\mbf{v})}_{\beta}(s)&=\frac{2^{\beta}N^{\beta^{2}}}{t^{\beta^{2}}}\det\left[I^{(\mbf{v})}_{\beta,jk}(s,\mbf{u})\right]_{j,k=1}^{\beta},
\end{align*}
where
\begin{align}
    I^{(\mbf{v})}_{\beta,jk}(s)&=\int_{[0,\infty]}\eta^{2(j+k)-3}e^{-N\left[\phi^{(\mbf{v})}_{z}(\eta)-\phi^{(\mbf{v})}_{z,t}\right]}P^{(\mbf{v})}(\eta,s)\,\mathrm{d}\eta.
\end{align}
By \eqref{eq:phiBoundEdge} and Lemma \ref{lem:minor} we have
\begin{align*}
    \phi^{(\mbf{v})}_{z}(\eta)-\phi^{(\mbf{v})}_{z,t}&\geq\frac{C\log^{2}N}{N},\quad\eta\geq\frac{t\log^{1/2}N}{N^{1/4}},
\end{align*}
Using this and the bound for $P^{(\mbf{v})}$ in \eqref{eq:PBound} we can bound the contribution from the tail:
\begin{align*}
    \int_{\frac{t\log^{1/2} N}{N^{1/4}}}^{\infty}e^{-N\phi^{(\mbf{v})}_{z}(\eta)}P^{(\mbf{v})}(\eta,s)\eta^{j+k-1}\,\mathrm{d}\eta&\leq (1+s)e^{-C\log^{2}N}.
\end{align*}
When $\eta<\frac{t\log^{1/2}N}{N^{1/4}}$, we use \eqref{eq:phiEstimateEdge} to approximate $\phi^{(\mbf{v})}_{z}(\eta)$:
\begin{align*}
    \phi^{(\mbf{v})}_{z}(\eta)-\phi^{(\mbf{v})}_{z,t}&=\frac{1}{2}\Tr{(H^{(\mbf{v})}_{z,t})^{2}}(\eta^{2}-\theta^{(\mbf{v})}_{z,t})^{2}+O\left(\frac{\log^{3}N}{\sqrt{Nt^{2}}}\right).
\end{align*}
Changing variable to $x=\sqrt{N\Tr{(H^{(\mbf{v})}_{z,t})^{2}}}(\eta^{2}-\theta^{(\mbf{v})}_{z,t})$ and using the bound \eqref{eq:PBound} or the estimate \eqref{eq:PEstimate} for $P^{(\mbf{v})}$, we obtain \eqref{eq:ILarge}.
\end{proof}

To finish the proof we need to remove the $\mbf{v}$ dependence from various quantities.
\begin{lemma}\label{lem:vRemoval}
Let $\mbf{v}\in\mc{E}_{conc}$. Then
\begin{align}
    1+\mbf{b}^{*}H^{(\mbf{v})}_{z,t}\mbf{b}&=\left[1+O\left(\frac{1}{Nt^{3}}\right)\right]\frac{1}{t\sigma_{z,t}}\label{eq:b*Hb},\\
    \Tr{(H^{(\mbf{v})}_{z,t})^{n}}&=\left[1+O\left(\frac{1}{N^{1/2}t}\right)\right]\Tr{H^{n}_{z,t}},\label{eq:H^n}\\
    \delta^{(\mbf{v})}_{z,t}&=\delta_{z,t}+O\left(\frac{1}{N^{1/2}t}\right).\label{eq:delta^v}
\end{align}
\end{lemma}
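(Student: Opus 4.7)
My plan is to prove the three estimates in the order \eqref{eq:H^n}, then \eqref{eq:delta^v}, then \eqref{eq:b*Hb}, since each builds on the previous.

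For \eqref{eq:H^n}, I would first use the minor trace estimate \eqref{eq:minorTrace} from the proof of Lemma \ref{lem:minor}, which gives
\begin{align*}
    \Tr{(H^{(\mbf{v})}_{z}(\eta))^{n}}&=\Tr{H^{n}_{z}(\eta)}+O\left(\frac{1}{N(s^{2}_{1}(z)+\eta^{2})^{n}}\right),
\end{align*}
and combine this with \eqref{B2} to control the $s_{1}^{2}(z)^{-n}$ factor. Then I would Taylor expand $\Tr{H_{z}^{n}(\eta)}$ about $\eta=\eta_{z,t}$ in the variable $\eta^{2}$. The derivative gives a factor $n\Tr{H^{n+1}_{z,t}}$, which is controlled by \eqref{B4}, and the step size $|\theta^{(\mbf{v})}_{z,t}-\theta_{z,t}|\lesssim t/N$ comes from \eqref{eq:etaMinorEdge}. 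Combining the two pieces yields the claimed $O(1/(N^{1/2}t))$ relative error (with room to spare).

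For \eqref{eq:delta^v}, I would substitute \eqref{eq:H^n} with $n=2$ and \eqref{eq:etaMinorEdge} into the definition $\delta^{(\mbf{v})}_{z,t}=\sqrt{N\Tr{(H^{(\mbf{v})}_{z,t})^{2}}}\theta^{(\mbf{v})}_{z,t}$. Using \eqref{B4} to get $\sqrt{N\Tr{H^{2}_{z,t}}}\simeq N^{1/2}/t^{2}$ and the Lemma \ref{lem:eta}(ii) bound $|\theta_{z,t}|\lesssim t^{2}/N^{1/2}$, the two error contributions combine to $O(1/(N^{1/2}t))$.

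The main work is \eqref{eq:b*Hb}, where I would apply Lemma \ref{lem:b} with $m=1$, $U_{1}=\mbf{v}$ (so $U_{2}=V$ and $B=\mbf{b}$) and $\eta=\eta^{(\mbf{v})}_{z,t}$, yielding the exact identity
\begin{align*}
    1+\mbf{b}^{*}H^{(\mbf{v})}_{z,t}\mbf{b}&=\left[\theta^{(\mbf{v})}_{z,t}(\mbf{v}^{*}\wt{H}_{z}(\eta^{(\mbf{v})}_{z,t})\mbf{v})+\frac{(\mbf{v}^{*}X_{z}H_{z,t}\mbf{v})(\mbf{v}^{*}H_{z}(\eta^{(\mbf{v})}_{z,t})X_{z}^{*}\mbf{v})}{\mbf{v}^{*}H_{z}(\eta^{(\mbf{v})}_{z,t})\mbf{v}}\right]^{-1}.
\end{align*}
I would then invoke Lemma \ref{lem:conc}(ii) on $\mc{E}_{conc}$ to replace each quadratic form $\mbf{v}^{*}A\mbf{v}$ by $t\Tr{H_{z,t}A}$, and use \eqref{eq:etaMinorEdge} together with the bounds \eqref{B2}--\eqref{B6} to replace $H_{z}(\eta^{(\mbf{v})}_{z,t})$ by $H_{z,t}$ inside each trace via the resolvent identity. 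The denominator then simplifies to
\begin{align*}
    t\left[\theta_{z,t}\Tr{H_{z,t}\wt{H}_{z,t}}+\frac{|\Tr{H_{z,t}X_{z}H_{z,t}}|^{2}}{\Tr{H^{2}_{z,t}}}\right]=t\sigma_{z,t}
\end{align*}
by the definition \eqref{eq:sigma}. The main obstacle is ensuring that every error term, once divided by the leading order $t\sigma_{z,t}\simeq t$ (using Lemma \ref{lem:sigma}), is of size $O(1/(Nt^{3}))$. The concentration errors from Lemma \ref{lem:conc} carry factors of $\log N/\sqrt{Nt^{k}}$ for various $k$, which must be compared against the leading $t\simeq1$ contribution from the second term of $\sigma_{z,t}$; the $1/N$ rank-one perturbation errors from the replacement of $\eta^{(\mbf{v})}_{z,t}$ by $\eta_{z,t}$ enter similarly. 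A careful bookkeeping, exploiting $t\geq N^{-2/5+\epsilon}$ and the fact that the first term in $\sigma_{z,t}$ is of size at most $1/(\sqrt{N}t)$ relative to the second, delivers the stated error.
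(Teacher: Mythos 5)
Your proposal follows essentially the same route as the paper's own (rather terse) proof: the key identity comes from Lemma \ref{lem:b} applied with $U_{1}=\mbf{v}$, the quadratic forms are replaced by traces via the concentration estimates of Lemma \ref{lem:conc} on $\mc{E}_{conc}$, and \eqref{eq:H^n} and \eqref{eq:delta^v} are read off from the minor estimates \eqref{eq:minorTrace} and \eqref{eq:etaMinorEdge} established in the proof of Lemma \ref{lem:minor}. Your additional bookkeeping (in particular the observation that the first summand in $\sigma_{z,t}$ is $O(1/(\sqrt{N}t))$ relative to the second, so the dominant contribution comes from $|\Tr{H_{z,t}X_{z}H_{z,t}}|^{2}/\Tr{H^{2}_{z,t}}$) is correct and makes explicit what the paper leaves implicit.
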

\begin{proof}
Using Lemma \ref{lem:b} we have
\begin{align*}
    1+\mbf{b}^{*}H^{(\mbf{v})}_{z,t}\mbf{b}&=\left(\theta^{(\mbf{v})}_{z,t}\mbf{v}^{*}H_{z,t}\mbf{v}+\frac{|\mbf{v}^{*}X_{z}H_{z}\mbf{v}|^{2}}{\mbf{v}^{*}H_{z}\mbf{v}}\right)^{-1}.
\end{align*}
Using the estimates on the quadratic forms for $\mbf{v}\in\mc{E}_{conc}$ in Lemma \ref{lem:conc} we obtain \eqref{eq:b*Hb}. The other two estimates follow from the proof of Lemma \ref{lem:minor}.
\end{proof}

We can now conclude the proof of \eqref{eq:gaussDivisibleEdge}
\begin{proof}[Proof of \eqref{eq:gaussDivisibleEdge}]
The claim in \eqref{eq:gaussDivisibleEdge} follows from the bounds
\begin{align}
    \rho_{\beta,N}(z,s)&\lesssim e^{-cN|\log s|},\quad s\leq\frac{ct}{\theta^{(\mbf{v})}_{z,t}+\|X^{(\mbf{v})}_{z}\|^{2}},\\
    \rho_{\beta,N}(z,s)&\lesssim e^{-c\log^{2}N},\quad 0\leq s\leq s_{0}/\log N,
\end{align}
and the estimate
\begin{align}
    s_{0}\rho_{\beta,N}\left(z,s_{0}s\right)&=\left[1+O\left(\frac{\log N}{\sqrt{Nt^{3}}}\right)\right]\frac{\beta}{2\pi^{\beta}s^{2\beta+1}}e^{-\frac{\beta}{2}\left(\frac{1}{2s^{2}}-\frac{\delta_{z,t}}{s}\right)}\nonumber\\
    &\times\det\int_{\delta_{z,t}}^{\infty}x^{j+k-2}e^{-\frac{1}{2}x^{2}}\left[1+s(x-\delta_{z,t})\right]\,\mathrm{d}x,
\end{align}
uniformly in $s>1/\log N$. Indeed, with these estimates we can truncate the integral over $s$ to the region $s>1/\log N$, replace $\rho_{\beta,N}$ with the above approximation, and then extend the integral back to $s\in[0,\infty]$.

Recall the expression for $\rho_{\beta,N}$:
\begin{align*}
 \rho_{\beta,N}(z,s)&=\frac{1}{4\pi^{\beta-1}N^{\beta/2}}\left(\frac{\beta N}{2\pi t}\right)^{2}e^{-\frac{\beta N}{2}\phi_{z,t}}K_{\beta}(z)\mbb{E}_{\beta,z}\left[1_{\mc{E}_{conc}}e^{\frac{\beta N}{2}(\phi_{z,t}-\phi^{(\mbf{v})}_{z,t})}\psi^{(\mbf{v})}_{\beta}(s)\right].
\end{align*}
Using \eqref{eq:chiTildeSmall}, \eqref{eq:KUniform} and \eqref{eq:IUniform}, we find
\begin{align}
    \psi^{(\mbf{v})}_{\beta}(s)&\leq e^{-CN|\log s|},\quad s\leq\frac{ct}{\theta^{(\mbf{v})}_{z,t}+\|X^{(\mbf{v})}_{z}\|^{2}}.
\end{align}
Using the fact that $\chi^{(\mbf{v})}_{z}(s)$ is decreasing in $[0,cs^{(\mbf{v})}_{0}]$ and \eqref{eq:chiTildeSmall},\eqref{eq:KUniform} and \eqref{eq:IUniform} we find
\begin{align}
    \psi^{(\mbf{v})}_{\beta}(s)&\leq e^{-C\log^{2}N},\quad 0\leq s\leq s^{(\mbf{v})}_{0}/\log N.
\end{align}
When $s\geq s^{(\mbf{v})}_{0}/\log N$, we use \eqref{eq:chiTildeEdge}, \eqref{eq:KLarge}, \eqref{eq:ILarge} and the estimates in Lemma \ref{lem:vRemoval} to approximate $\psi^{(\mbf{v})}_{z}(s)$:
\begin{align}
    \psi^{(\mbf{v})}_{\beta}(s)&=\left[1+O\left(\frac{\log N}{\sqrt{Nt^{3}}}\right)\right]\left(\frac{4\pi}{\beta N\Tr{H^{2}_{z,t}}}\right)^{1/2}\frac{1}{(t\sigma_{z,t})^{2\beta}}\left(\frac{N}{t^{2}\Tr{H^{2}_{z,t}}}\right)^{\beta^{2}/2}\nonumber\\
    &\times \exp\left\{-\frac{\beta s^{2}_{0}}{4s^{2}}\left(1-\frac{2\delta_{z,t}s}{s_{0}}\right)\right\}\det\int_{\delta_{z,t}}^{\infty}x^{j+k-2}e^{-\frac{1}{2}x^{2}}\left[1+\frac{s}{s_{0}}(x-\delta_{z,t})\right]\,\mathrm{d}x.\label{eq:psiEstimate}
\end{align}
Since the right hand side of \eqref{eq:psiEstimate} is independent of $\mbf{v}$, we can take it outside the expectation over $\mu^{X}_{\beta,z}$, leaving us with
\begin{align*}
    \mbb{E}_{\beta,z}\left[1_{\mc{E}_z}e^{\frac{\beta N}{2}\left(\phi_{z,t}-\phi^{(\mbf{v})}_{z,t}\right)}\right]&=\left[1+O\left(\frac{\log N}{\sqrt{Nt^{3}}}\right)\right]\left(t^{2}\Tr{H^{2}_{z,t}}\sigma_{z,t}\right)^{\beta/2},
\end{align*}
which follows from the definition of $\mc{E}_{conc}$. The remaining term to estimate is $e^{-\frac{\beta N}{2}\phi_{z,t}}K_{\beta}(z)$, which is done in \eqref{eq:K}:
\begin{align*}
    e^{-\frac{\beta N}{2}\phi_{z,t}}K_{\beta}(z)&=\left[1+O\left(\frac{\log N}{\sqrt{Nt^{2}}}\right)\right]\sqrt{\frac{4\pi}{\beta N\Tr{H^{2}_{z,t}}}}.
\end{align*}
Collecting all the constant prefactors we find
\begin{align*}
    &\frac{1}{2\pi^{\beta-1}N^{\beta/2}}\left(\frac{\beta N}{2\pi t}\right)^{2}\cdot\frac{4\pi}{\beta N\Tr{H^{2}_{z,t}}}\cdot(t^{2}\Tr{H^{2}_{z,t}}\sigma_{z,t})^{\beta/2}\cdot\frac{1}{(t\sigma_{z,t})^{2\beta}}\left(\frac{N}{t^{2}\Tr{H^{2}_{z,t}}}\right)^{\beta^{2}/2}\\
    &=\frac{\beta}{2\pi^{\beta}}s_{0}^{2\beta},
\end{align*}
where we have used the fact that $(\beta-1)(\beta-2)=0$.
\end{proof}

We end this section with a brief discussion of the bulk case. In this case we define
\begin{align*}
    s^{(\mbf{v})}_{0}&:=\frac{N(1+\mbf{b}^{*}H^{(\mbf{v})}_{z,t}\mbf{b})(\eta^{(\mbf{v})}_{z,t})^{2}}{t}.
\end{align*}
By similar arguments as at the edge, we deduce that $\theta^{(\mbf{v})}_{z,t}$ satisfies \eqref{eq:theta_sSmall}, is positive and decreasing in $s>0$, and
\begin{align*}
    \theta^{(\mbf{v})}_{z,t}&=(\eta^{(\mbf{v})}_{z,t})^{2}+O\left(\frac{t}{N}\log^{2}N\right),
\end{align*}
when $s>s^{(\mbf{v})}_{0}/\log^{2}N$. Using these facts we find that 
$\chi^{(\mbf{v})}_{z}(s)$ is decreasing in $s>0$ and satisfies
\begin{align*}
    \chi^{(\mbf{v})}_{z}(s)&\geq -C+\log\frac{t}{(\theta^{(\mbf{v})}_{z,t}+|X^{(\mbf{v})}_{z}|^{2})s},\quad s\leq\frac{ct}{\theta^{(\mbf{v})}_{z,t}+|X^{(\mbf{v})}_{z}|^{2}},\\
    \chi^{(\mbf{v})}_{z}(s)&=\frac{s^{(\mbf{v})}_{0}}{s}+O\left(\frac{\log^{2} N}{N^{3/2}t}\right),\quad s>s^{(\mbf{v})}_{0}/\log^{2} N.
\end{align*}
This allows us to restrict to $s>s^{(\mbf{v})}_{0}/\log^{2} N$, where we can prove concentration of quadratic forms with respect to $\nu_{\beta,z,s}$ as before and ultimately estimate $P^{(\mbf{v})}$, $I^{(\mbf{v})}_{\beta}$ and $\rho_{\beta,N}$.

\section{Proof of Theorem \ref{thm2} for Gauss-divisible Matrices}\label{sec:thm2Gauss}
Let $t\geq0$ and $M_{N}=X+\sqrt{t}Y_{N}$, where $X$ is a non-Hermitian Wigner matrix and $Y_{N}\sim Gin_{\beta}(N)$. Let $f$ be a smooth function equal to $1$ when $|z|<1$ and 0 when $|z|>2$. Define
\begin{align}
    f_{z_{0}}(z)&=f\left(\frac{N^{1/2}(z-z_{0})}{r}\right),\label{eq:f}\\
    g_{\eta}(z)&=\eta\Im\tr G_{z}(i\eta)-2,\label{eq:g}
\end{align}
where $G_{z}$ is the resolvent of the Hermitisation of $M_{N}$. We want to prove the following stronger version of Theorem \ref{thm2} for Gauss-divisible matrices.
\begin{proposition}\label{prop:leastSVGauss}
Let $\epsilon>0$, $t\geq N^{-1/3+\epsilon}$ and $z_{0}\in\sqrt{1+t}\mbb{T}_{\beta}$. Then for any $\eta>0$ and fixed $r,D>0$ we have
\begin{align}
    \mbb{E}\left[\sum_{n=1}^{N_{\beta}}f_{z_{0}}(z_{n})g_{\eta}(z_{n})\right]&\lesssim N^{3/2}\eta^{2}\left|\log N^{3/4}\eta\right|^{2-\beta}+N^{-D},
\end{align}
where $\{z_{n}\}_{n=1}^{N_{\beta}}$ are the eigenvalues of $M_{N}$ in $\mbb{F}_{\beta}$.
\end{proposition}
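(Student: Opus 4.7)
The plan is to express $g_\eta(z_n)$ as a trace of an $(N-1)\times(N-1)$ resolvent via the partial Schur decomposition, insert this into formula \eqref{eq:partialSchur}, and then invoke the supersymmetric machinery developed in Section \ref{sec:thm1Gauss}. Using the block structure of $W_z$, a direct computation gives $\eta\Im\tr G_z(i\eta) = 2\eta^2\tr(|M_N-z|^2+\eta^2)^{-1}$. At an eigenvalue $z_n$ of $M_N$, the singular value $s_1(z_n)=0$ contributes exactly $2$ to this sum, while the partial Schur decomposition $M_N = R(\mbf{v}_n)\begin{pmatrix} z_n & \mbf{w}_n^* \\ 0 & M_{N-1}\end{pmatrix}R(\mbf{v}_n)$ identifies the remaining $N-1$ squared singular values with the eigenvalues of $|M_{N-1}-z_n|^2+\mbf{w}_n\mbf{w}_n^*$, so
\begin{equation*}
g_\eta(z_n) = 2\eta^2 \tr C_n^{-1}, \qquad C_n := |M_{N-1}-z_n|^2 + \mbf{w}_n\mbf{w}_n^* + \eta^2 I_{N-1}.
\end{equation*}
Applying \eqref{eq:partialSchur} to $h(z,\mbf{v},\mbf{w},M_{N-1}) = f_{z_0}(z) \cdot 2\eta^2 \tr C^{-1}$ reduces the proof to estimating
\begin{equation*}
\frac{\beta N \eta^2}{2\pi^\beta t}\int_{\mbb{F}_\beta} f_{z_0}(z) K_\beta(z) \mbb{E}_{\beta,z}\bigl[\mbb{E}_{\mbf{w}}\mbb{E}_{N-1}\bigl[\tr C^{-1}|\det(M_{N-1}-z)|^\beta\bigr]\bigr]\,\mathrm{d}^{\beta}z.
\end{equation*}

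The inner expectations can be handled by rerunning the supersymmetric computation that produced \eqref{eq:rhoN}, with one extra insertion. Writing $\tr C^{-1} = -\partial_\kappa\log\det(C + \kappa)|_{\kappa=0}$ (equivalently, as one additional Gaussian integral) on top of $|\det(M_{N-1}-z)|^\beta$, performing the Gaussian integrals over $\mbf{w}$ and $Y_{N-1}$, and diagonalising the emerging auxiliary matrix produces the measure $\nu^{(\mbf{v})}_{\beta,z,s}$ of \eqref{eq:nu}, the normalisation $K_{\beta,z}(s)$, and the $\boldsymbol\eta$-integral $I^{(\mbf{v})}_\beta(s)$ of \eqref{eq:I}, together with a single additional factor of the form $\sum_j 2\eta^2\bigl(\eta_j^2 + \textup{shift}\bigr)^{-1}$ inside the integrand. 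The asymptotic analysis then proceeds exactly as in Section \ref{sec:thm1Gauss}: Lemmas \ref{lem:theta_s}, \ref{lem:chiTilde} and \ref{lem:Kr} confine the dominant contribution to $s \gtrsim s^{(\mbf{v})}_0/\log N$ and $|\eta_j^2 - \theta^{(\mbf{v})}_{z,t}| \lesssim t^2 N^{-1/2}\log N$, and the error outside this region is absorbed into $N^{-D}$ by Lemmas \ref{lem:minor} and \ref{lem:conc}.

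Collecting prefactors as at the end of Section \ref{sec:thm1Gauss}, the combination $\tfrac{\beta N}{t}\cdot (N\Tr{H^2_{z,t}})^{-1/2}\cdot N^{-\beta/2}\cdot 2\eta^2$ integrated against the explicit edge density yields the announced $N^{3/2}\eta^2$ scaling, with the extra $N^{1/2}$ coming from the density of edge eigenvalues in the $f_{z_0}$ window. The hard part will be extracting the sharp logarithmic exponent $|\log N^{3/4}\eta|^{2-\beta}$: after rescaling $x_j = \sqrt{N\Tr{H^2_{z,t}}}(\eta_j^2 - \theta_{z,t})$, the inserted factor becomes essentially $\eta^2/(x_j^2/N + \eta^2)$, and the $\boldsymbol\eta$-integral reduces to $\int\Delta_\beta^2(\boldsymbol x)\prod_j e^{-x_j^2/2}\sum_j \eta^2/(x_j^2/N + \eta^2)\,\mathrm{d}\boldsymbol x$. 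For $\beta=2$ the Vandermonde $(x_1-x_2)^2$ supplies level repulsion at $x_j\sim 0$ that cancels the would-be logarithmic singularity; for $\beta=1$ the Vandermonde is trivial and a single $|\log N^{3/4}\eta|$ factor survives. Tracking this determinantal cancellation through the saddle-point analysis, rather than bounding by absolute values, is what produces the sharp $|\log|^{2-\beta}$ exponent.
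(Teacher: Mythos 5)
Your opening is correct: the identity $g_\eta(z_n)=2\eta^2\tr C_n^{-1}$ with $C_n=|M_{N-1}-z_n|^2+\mathbf{w}_n\mathbf{w}_n^*+\eta^2$ matches the paper (the singular values of $M_N-z_n$ other than $s_1=0$ are exactly the square roots of the eigenvalues of $|M_{N-1}-z_n|^2+\mathbf{w}_n\mathbf{w}_n^*$), and \eqref{eq:partialSchur} is the right starting point. From there, however, your plan diverges from what actually works, and the part where you account for the $\eta$-dependence contains a genuine gap.

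First, the paper does \emph{not} feed $\tr C^{-1}$ through the full $\nu^{(\mathbf{v})}_{\beta,z,s}$ machinery of Section~\ref{sec:thm1Gauss}. It short-circuits the computation by a different deterministic inequality for each $\beta$: for $\beta=2$ it bounds $\tr C^{-1}\le\tr|M_{N-1}-z|^{-2}$ (discarding both the $\mathbf{w}\mathbf{w}^*$ term and the $+\eta^2$ inside the trace), then uses $\tr|M_{N-1}-z|^{-2}|\det(M_{N-1}-z)|^2=\lim_{\eta'\to 0}\partial_{\eta'^2}\det(|M_{N-1}-z|^2+\eta'^2)$; for $\beta=1$ it bounds $|\det(M_{N-1}-z)|\le\det^{1/2}(|M_{N-1}-z|^2+\eta^2)$ and uses an $\eta^2$-derivative identity that keeps the $\eta^2$ inside. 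Your ``insert $-\partial_\kappa\log\det(C+\kappa)|_{\kappa=0}$ on top of $|\det|^\beta$'' is in principle a parallel route, but the claim that after the Gaussian integrals the insertion lands as a single factor of the form $\sum_j 2\eta^2(\eta_j^2+\text{shift})^{-1}$ is not justified; the $\eta_j$ are the bosonic dual variables for $|\det|^\beta$, and there is no reason $\tr C^{-1}$ should dualise into such a simple rational function of them. Some version of this idea might be pushed through, but as written it is an assertion, not an argument.

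Second, and more decisively, the mechanism you propose for the exponent $|\log N^{3/4}\eta|^{2-\beta}$ is wrong. You attribute the $\beta=1$ logarithm to the absence of Vandermonde repulsion in $\int\Delta^2_\beta(\boldsymbol{x})\prod_j e^{-x_j^2/2}\sum_j\eta^2/(x_j^2/N+\eta^2)\,\mathrm{d}\boldsymbol{x}$. But this integral has no logarithmic singularity at all: $\eta^2/(x^2/N+\eta^2)$ is a Lorentzian of width $N^{1/2}\eta$, bounded at $x=0$, and $\int e^{-x^2/2}\,\eta^2/(x^2/N+\eta^2)\,\mathrm{d}x\simeq \pi N^{1/2}\eta$ when $N^{1/2}\eta\ll1$. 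For $\beta=2$ the extra factor $(x_1-x_2)^2$ still leaves a term proportional to $\int e^{-x^2/2}\,\eta^2/(x^2/N+\eta^2)\,\mathrm{d}x\simeq N^{1/2}\eta$ as well, so your proposed $\boldsymbol\eta$-integral gives $\sim N^{1/2}\eta$ in both cases, neither producing the stated $\eta^2$-scaling nor the $|\log|$ factor. In the paper the logarithm instead comes from an entirely different variable: after averaging over the shell radius $s=\|\mathbf{w}\|^2$ in the real case, the $\eta$-dependence survives as a factor $e^{-N\eta^2 s/t}$ in the $s$-integral, and the leading contribution is $\int_{N^{1/2}t/\log N}^{\infty}\frac{\mathrm{d}s}{s}\,e^{-N\eta^2 s/t-cNt^2/s}\simeq|\log N^{3/4}\eta|$; for $\beta=2$ there is no log precisely because the deterministic bound $\tr C^{-1}\le\tr|M_{N-1}-z|^{-2}$ removes the $\eta^2$ from inside the integrand entirely, leaving only the prefactor $\eta^2$. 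You would need to rebuild the argument around this $s$-integral (i.e. the $\mathbf{w}$ shell, not the supersymmetric $\boldsymbol\eta$), which is where the sharp logarithmic exponent genuinely comes from.

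Finally, you do not mention conditioning on $X\in\mathcal{E}_{edge}(t)$ and using the trivial deterministic bound $\sum_n f_{z_0}(z_n)g_\eta(z_n)\le N^2$ to absorb the complement into $N^{-D}$; this step is needed to make the event where the Lemmas~\ref{lem:eta}--\ref{lem:minor} estimates are available carry the whole expectation.
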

The fact that this is stronger follows because on the event $s_{2}(z_{n})<\eta$ we have $g_{\eta}(z_{n})>1$ and so by a union bound and Markov's inequality we obtain
\begin{align}
    P\left(\min_{N^{1/2}|z_{n}-z_{0}|<r}s_{2}(z_{n})<\eta\right)&\leq\mbb{E}\left[\sum_{n=1}^{N_{\beta}}f_{z_{0}}(z_{n})g_{\eta}(z)\right].
\end{align}
\begin{proof}
Using the deterministic bound
\begin{align}
    \sum_{n=1}^{N_{\beta}}f_{z_{0}}(z_{n})g_{\eta}(z_{n})&\leq N^{2},
\end{align}
we can condition on the event that $X\in\mc{E}_{edge}(t)$, where $\mc{E}_{edge}$ is defined in Definition \ref{def:Eedge}, since this event has probability $1-N^{-D}$ for any $D>0$ as shown in Section \ref{sec:gap}. Thus we need to prove that on this event we have
\begin{align}
    \mbb{E}_{N}\left[\sum_{n=1}^{N_{\beta}}f_{z_{0}}(z_{n})g_{\eta}(z_{n})\right]&\lesssim N^{3/2}\eta^{2}\left|\log N^{3/4}\eta\right|^{2-\beta},
\end{align}
where we recall that $\mbb{E}_{N}$ is the expectation with respect to $Y_{N}\sim Gin_{\beta}(N)$. Using \eqref{eq:partialSchur} we have
\begin{align}
    \mbb{E}_{N}\left[\sum_{n=1}^{N_{\beta}}f_{z_{0}}(z_{n})g_{\eta}(z_{n})\right]&=\frac{\beta N}{2\pi^{\beta}t}\int_{\mbb{F}_{\beta}}f_{z_{0}}(z)K_{\beta}(z)F_{\beta}(z)\,\mathrm{d}^{\beta}z,
\end{align}
where
\begin{align}
    F_{\beta}(z)&:=\eta^{2}\mbb{E}_{\beta,z}\left[\mbb{E}_{\mbf{w}}\left[\tr(|M_{N-1}|^{2}+\mbf{w}\mbf{w}^{*}+\eta^{2})^{-1}|\det M_{N-1}|^{\beta}\right]\right].
\end{align}
At this point we consider real and complex matrices separately.

\paragraph{Complex matrices:} we use the bound
\begin{align*}
    \tr(|M_{N-1}|^{2}+\mbf{w}\mbf{w}^{*}+\eta^{2})^{-1}&\leq\tr|M_{N-1}|^{-2},
\end{align*}
and the identity
\begin{align*}
    \tr|M_{N-1}|^{-2}|\det M_{N-1}|^{2}&=\lim_{\eta'\to0}\frac{\partial}{\partial\eta'^{2}}\det\left(|M_{N-1}|^{2}+\eta'^{2}\right)
\end{align*}
to obtain (see the derivation of \cite[Eq. (6.6)]{osman_least_2024})
\begin{align*}
    F_{2}(z)&\leq\frac{2N^{3}\eta^{2}}{t^{3}}\int_{0}^{\infty}e^{-N\phi^{(\mbf{v})}_{z}(\sigma)}\sigma^{3}\,\mathrm{d}\sigma.
\end{align*}
Using \eqref{eq:phiBoundEdge} and \eqref{eq:phiEstimateEdge} we have
\begin{align}
    \int_{0}^{\infty}e^{-N\phi^{(\mbf{v})}_{z}(\sigma)}\sigma^{3}\,\mathrm{d}\sigma&=\left[1+O\left(\frac{\log^{3}N}{\sqrt{Nt^{2}}}\right)\right]\frac{e^{-N\phi^{(\mbf{v})}_{z}}}{2}\int_{\theta_{z,t}}^{\infty}e^{-\frac{1}{2}N\Tr{H^{2}_{z}}x^{2}}(x+\theta_{z,t})\,\mathrm{d}x\nonumber\\
    &\leq \frac{Ct^{4}}{N}e^{-N\phi^{(\mbf{v})}_{z}}.
\end{align}
By \eqref{eq:K} we have
\begin{align}
    K_{2}(z)&=\left[1+O\left(\frac{\log^{3}N}{\sqrt{Nt^{2}}}\right)\right]\sqrt{\frac{2\pi}{N\Tr{H^{2}_{z}}}}e^{N\phi_{z}},
\end{align}
and by \eqref{eq:minorEdge} we have
\begin{align}
    \mbb{E}_{2,z}\left[e^{N(\phi_{z}-\phi^{(\mbf{v})}_{z})}\right]&=\left[1+O\left(\frac{\log N}{\sqrt{Nt^{3}}}\right)\right]t^{2}\Tr{H^{2}_{z}}\sigma_{z,t}.
\end{align}
Putting everything together we find
\begin{align}
    \mbb{E}\left[\sum_{n=1}^{N}f_{z_{0}}(z_{n})g_{\eta}(z_{n})\right]&\lesssim N^{3/2}\eta^{2}\cdot N\|g\|_{1}\lesssim N^{3/2}\eta^{2}.
\end{align}

\paragraph{Real matrices:} we use the bound
\begin{align*}
    |\det M_{N-1}|&\leq\det^{1/2}\left(|M_{N-1}|^{2}+\eta^{2}\right),
\end{align*}
and the identity
\begin{align*}
    \tr(|M_{N-1}|^{2}+\eta^{2})^{-1}\det^{1/2}\left(|M_{N-1}|^{2}+\eta^{2}\right)&=-2\det(|M_{N-1}|^{2}+\eta^{2})\frac{\partial}{\partial \eta^{2}}\det^{-1/2}\left(|M_{N-1}|^{2}+\eta^{2}\right),
\end{align*}
to obtain (see \cite[Eq. (6.23)]{osman_least_2024})
\begin{align*}
    F_{1}(z)&\leq\frac{N^{2}\eta^{2}}{2\pi t^{2}}e^{-\frac{N}{2}\phi^{(\mbf{v})}_{z,t}}\int_{0}^{\infty}\frac{s}{(1+s)^{3}}e^{-\frac{N\eta^{2}}{t}s}\psi(s)ds,
\end{align*}
where
\begin{align*}
    \psi(s)&=e^{-\frac{N}{2}\phi^{(\mbf{v})}_{z,t}}K_{1,z}(s)I(s),
\end{align*}
and
\begin{align*}
    I(s)&=\frac{2N}{t}e^{-\frac{N}{t}\eta^{2}}\int_{0}^{\infty}e^{-N\left(\phi^{(\mbf{v})}_{z}(\sigma)-\phi^{(\mbf{v})}_{z,t}\right)}\left[(1+s\sigma^{2}\mbb{E}_{s}(\mbf{u}^{T}H^{(\mbf{v})}_{z}(\sigma)\mbf{u}))I_{0}\left(\frac{2N\eta\sigma}{t}\right)\right.\\
    &\left.+\eta s(1+s)\sigma\mbb{E}_{s}(\mbf{u}^{T}H^{(\mbf{v})}_{z}(\sigma)\mbf{u})I_{1}\left(\frac{2N\eta\sigma}{t}\right)\right]\sigma \,\mathrm{d}\sigma.
\end{align*}
Here $I_{m}(x)$ is the modified Bessel function of the first kind and the expectation $\mbb{E}_{\nu}$ is with respect to
\begin{align}
    d\nu_{s}(\mbf{u})&=\frac{1}{K_{1,z}(s)}\left(\frac{Ns}{2\pi t(1+s)}\right)^{(N-1)/2-1}e^{-\frac{Ns}{2t(1+s)}\|X^{(\mbf{v})}_{z}\mbf{u}\|^{2}}\,\mathrm{d}_{H}\mbf{u}.
\end{align}

Using \eqref{eq:phiBoundEdge}, we deduce that the integral is dominated by the region $\sigma\lesssim\frac{t\log N}{N^{1/4}}$. Now observe that $\nu_{s}$ is the same as $\nu_{\beta,z,s}$  from \eqref{eq:nu} but with $\mbf{b}=0$. We can repeat the same analysis as in the previous section to deduce that 
\begin{align*}
    e^{-\frac{N}{2}\phi^{(\mbf{v})}_{z,t}}K_{1,z}(s)&\leq e^{-cN|\log s|},\quad s\leq\frac{ct}{\theta_{z,t}+\|X^{(\mbf{v})}_{z}\|^{2}},\\
    e^{-\frac{N}{2}\phi^{(\mbf{v})}_{z,t}}K_{1,z}(s)&\leq e^{-c\log^{2}N},\quad s\lesssim N^{1/2}t/\log N,
\end{align*}
and, for $s\gtrsim N^{1/2}t/\log N$,
\begin{align*}
    e^{-\frac{N}{2}\phi^{(\mbf{v})}_{z,t}}K_{1,z}(s)&=\left[1+O\left(\frac{\log N}{\sqrt{Nt^{2}}}\right)\right]\sqrt{\frac{4\pi}{N\Tr{(H^{(\mbf{v})}_{z,t})^{2}}}}\exp\left\{-\frac{1}{4}\left(\frac{s^{(\mbf{v})}_{0}}{s}\right)^{2}\left(1-\frac{2\delta^{(\mbf{v})}_{z,t}s}{s^{(\mbf{v})}_{0}}\right)\right\},
\end{align*}
where we now have
\begin{align*}
    s^{(\mbf{v})}_{0}&=\frac{N^{1/2}}{t\Tr{(H^{(\mbf{v})}_{z,t})^{2}}^{1/2}}=O(N^{1/2}t).
\end{align*}
Moreover, when $s\gtrsim N^{1/2}t/\log N$, we have concentration of quadratic forms and thus
\begin{align*}
    \mbb{E}_{s}\mbf{u}^{T}H^{(\mbf{v})}_{z}(\sigma)\mbf{u}&\lesssim t\Tr{H^{(\mbf{v})}_{z}}\lesssim\frac{1}{t^{3}}.
\end{align*}
Combining these estimates we find
\begin{align*}
    F_{1}(z)&\lesssim N^{3/2}\eta^{2}e^{-\frac{N}{2}\phi^{(\mbf{v})}_{z,t}}\int_{\frac{N^{1/2}t}{\log N}}^{\infty}\frac{1}{s^{2}}\left(1+s+N^{3/2}\eta^{2}s^{2}\right)e^{-\frac{N\eta^{2}}{t}s-\frac{cNt^{2}}{s}}ds\\
    &\lesssim N^{3/2}\eta^{2}|\log N^{3/4}\eta|\cdot e^{-\frac{N}{2}\phi^{(\mbf{v})}_{z,t}}.
\end{align*}
Since
\begin{align*}
    K_{1}(z)&\lesssim\frac{t^{2}}{N^{1/2}}e^{\frac{N}{2}\phi_{z,t}},
\end{align*}
and
\begin{align*}
    \mbb{E}_{1,z}\left[e^{\frac{N}{2}\left(\phi_{z,t}-\phi^{(\mbf{v})}_{z,t}\right)}\right]&\lesssim\frac{\sigma_{z,t}^{1/2}}{t},
\end{align*}
we find
\begin{align*}
    \mbb{E}\left[\sum_{n=1}^{N_{1}}f_{z_{0}}(z_{n})g_{\eta}(z_{n})\right]&\lesssim N^{3/2}\eta^{2}|\log N^{3/4}\eta|\cdot N^{1/2}\|g_{z_{0}}\|_{1}\\
    &\lesssim N^{3/2}\eta^{2}|\log N^{3/4}\eta|.
\end{align*}
\end{proof}

\section{Overlap of Singular Vectors}\label{sec:svOverlap}
In this section we are concerned with obtaining an upper bound on the inner product between left and right singular vectors of $X-z$ when $X$ is a non-Hermitian Wigner matrix. Let us first recall some definitions. The Hermitisation $W_{z}$ is defined by
\begin{align*}
    W_{z}&=\begin{pmatrix}0&X-z\\X^{*}-\bar{z}&0\end{pmatrix},
\end{align*}
with eigenvalues $\lambda_{n}$ and eigenvectors $\mbf{w}_{n}$ for $n=-N,...,-1,1,...,N$. The resolvent is denoted by $G_{z}(w):=(W_{z}-w)^{-1}$ and the deterministic approximation is 
\begin{align}
    M_{z}(w)&=\begin{pmatrix}m_{z}(w)&-zu_{z}(w)\\-\bar{z}u_{z}(w)&m_{z}(w)\end{pmatrix},
\end{align}
where $m_{z}$ is the unique solution of
\begin{align}
    -\frac{1}{m_{z}(w)}&=m_{z}(w)+w-\frac{|z|^{2}}{m_{z}(w)+w},\quad \Im w\Im m_{z}(w)>0\label{eq:cubic}
\end{align}
and
\begin{align}
    u_{z}(w)&=\frac{m_{z}(w)}{m_{z}(w)+w}.\label{eq:u}
\end{align}
Since most bounds will be expressed in terms of the imaginary part of $m_{z}(w)$, we define $m_{z}(w)=:\sigma_{z}(w)+i\rho_{z}(w)$. For $|z|\leq1$ we have
\begin{align}
    \rho_{z}(w)&\simeq (1-|z|^{2})^{1/2}+|w|^{1/3}.\label{eq:rhoAsymp1}
\end{align}
For $|z|>1$ there is a gap $[-\Delta/2,\Delta/2]$ in the support of $\rho_{z}(w)$ with $\Delta\simeq(|z|-1)^{3/2}$ and we have
\begin{align}
    \rho_{z}(w)&\simeq\begin{cases}
    (|\kappa|+\eta)^{1/2}(\Delta+|\kappa|+\eta)^{-1/6}&\quad \kappa\in[0,c]\\
    \frac{\eta}{(\Delta+|\kappa|+\eta)^{1/6}(|\kappa|+\eta)^{1/2}}&\quad \kappa\in[-\Delta/2,0]
    \end{cases},\label{eq:rhoAsymp2}
\end{align}
where $\kappa:=E-\Delta/2$. These asymptotics hold for $|w|<c$ for some small $c>0$ and can be found in \cite[Eq. (3.6)]{cipolloni_universality_2024}. The quantiles $\gamma_{n},\,|n|\in[N]$ are defined by
\begin{align}
    \frac{1}{\pi}\int_{0}^{\gamma_{n}}\rho_{z}(\lambda)\,\mathrm{d}\lambda&=\frac{n}{2N},\quad n\in[N],
\end{align}
and $\gamma_{-n}=-\gamma_{n}$.

We have the following averaged and isotropic single resolvent laws.
\begin{proposition}[Theorem 3.1 in \cite{cipolloni_precise_2024}]\label{prop:singleLL}
There are (small) constants $\tau,\tau'>0$ such that for $\big||z|-1\big|<\tau$ and $|\Re w|<\tau'$ we have
\begin{align}
    \left|\Tr{G_{z}(w)-M_{z}(w)}\right|&\prec\frac{1}{N|\Im w|},
\end{align}
and, for any $\mbf{x},\mbf{y}\in S^{2N-1}$,
\begin{align}
    \left|\mbf{x}^{*}(G_{z}(w)-M_{z}(w))\mbf{y}\right|&\prec\sqrt{\frac{\rho_{z}(w)}{N|\Im w|}}+\frac{1}{N|\Im w|}.
\end{align}
\end{proposition}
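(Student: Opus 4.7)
The plan is to use the Matrix Dyson Equation (MDE) framework applied to the $2N \times 2N$ Hermitisation $W_z$. The $2\times 2$ block structure of $W_z - w$ reduces the MDE to the scalar cubic \eqref{eq:cubic}, with the deterministic approximation $M_z(w)$ read off from \eqref{eq:u}. Starting from the identity $G_z(w)(W_z - w) = 1$ and applying Schur complement expansion for each diagonal block of $G_z(w)$, combined with the independence of the $i$-th row and column of $X$ from the minor $W_z^{(i)}$, one derives an approximate self-consistent equation
\begin{align*}
    G_z(w)^{-1} + M_z(w)^{-1} &= D,
\end{align*}
where the entries of the error matrix $D$ are explicit polynomial expressions in the entries of $X$ and matrix elements of the minor resolvents.

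The next step is to bound $D$ using concentration of quadratic forms, which relies on the Ward identity $\sum_k |G_z(w)_{jk}|^2 = \Im G_z(w)_{jj}/\Im w$ and the moment condition \eqref{cond3}. Standard large deviation estimates give entrywise bounds on $D$ of order $\sqrt{\rho_z(w)/(N\Im w)}$, while the averaged trace $\Tr{D}$ gains an extra factor of $\sqrt{\rho_z(w)/(N\Im w)}$ through fluctuation averaging (typically via a cumulant expansion of $\mbb{E}\Tr{D G_z}$ combined with iterated resolvent identities), which is precisely what is needed for the target $(N\Im w)^{-1}$ bound on the averaged law.

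The third ingredient, and the main obstacle, is the quantitative stability of the cubic equation \eqref{eq:cubic} under the perturbation $D$. The stability operator is the linearisation of the map defined by the cubic, and near $|z|=1$ with $|w|$ small it has a small eigenvalue corresponding to the cusp-like profile of $\rho_z$ recorded in \eqref{eq:rhoAsymp1}--\eqref{eq:rhoAsymp2}. Following the approach of Alt--Erd\H{o}s--Kr\"{u}ger and Cipolloni--Erd\H{o}s--Schr\"{o}der, one would invert the stability operator quantitatively by exploiting the explicit cubic structure; uniform control over the regime $\big||z|-1\big|<\tau$, $|\Re w|<\tau'$ requires treating separately the bulk, the spectral edge and the gap regions appearing in \eqref{eq:rhoAsymp1}--\eqref{eq:rhoAsymp2}, and the resulting inversion bounds are precisely strong enough to translate the estimates on $D$ into the claimed bounds on $G_z(w) - M_z(w)$.

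Finally, the argument is closed by the customary bootstrap in $\eta = \Im w$: at $\eta \sim 1$ the local law is immediate from deterministic bounds and elementary concentration, and one descends in $\eta$ by a continuity argument, using the improving a priori control at each scale to self-consistently estimate the quadratic forms entering $D$. The isotropic bound follows from the same scheme applied to $\mbf{x}^{*}(G_z(w) - M_z(w))\mbf{y}$, with the factor $\sqrt{\rho_z(w)/(N\Im w)}$ emerging directly from the Ward identity applied to $\sum_k |(G_z(w)\mbf{x})_k|^{2}$.
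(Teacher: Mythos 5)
The paper does not prove Proposition \ref{prop:singleLL}: it is imported verbatim as Theorem 3.1 of \cite{cipolloni_precise_2024}, so there is no ``paper's own proof'' to compare against. Your sketch is a reasonable high-level outline of the MDE-based route that \cite{cipolloni_precise_2024} and its predecessors actually follow --- Schur-complement self-consistent equation, concentration of quadratic forms via the Ward identity, fluctuation averaging for the averaged law, quantitative stability of the cubic \eqref{eq:cubic}, and a bootstrap in $\Im w$. You correctly flag the real difficulty: the stability operator of the cubic degenerates precisely in the regime $\big||z|-1\big|\lesssim 1$, $|w|\to 0$, where $\rho_z$ has the cusp/edge/gap profile of \eqref{eq:rhoAsymp1}--\eqref{eq:rhoAsymp2}, and uniform inversion bounds are needed there. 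That step is stated as a plan rather than carried out, which is exactly where the cited reference invests its technical effort; as a blind reconstruction of a cited theorem your outline is faithful, but it should be understood as a pointer to the literature rather than a self-contained argument. One small notational slip: the perturbed self-consistent equation should read $G_z(w)^{-1} - M_z(w)^{-1} = D$ (or equivalently $-G^{-1} = w + \mathcal{S}[G] - D$ with the appropriate sign conventions), not $G_z(w)^{-1} + M_z(w)^{-1} = D$.
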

A standard corollary of the averaged local law is rigidity \cite[Corollary 3.2]{cipolloni_precise_2024}:
\begin{align}
    |\lambda_{n}-\gamma_{n}|&\prec\max\left\{\frac{1}{N^{3/4}n^{1/4}},\frac{\Delta^{1/9}}{N^{2/3}n^{1/3}}\right\},\quad|n|\leq cN,\label{eq:rigidity}
\end{align}
for some $c>0$.

Let 
\begin{align}
    M_{z}(w_{1},B_{1},...,B_{k-1},w_{k})
\end{align}
denote the deterministic approximation to resolvent chain
\begin{align*}
    G_{z}(w_{1}) B_{1}\cdots G_{z}(w_{k-1})B_{k-1}G_{z}(w_{k}).
\end{align*}
A recursive definition of $M_{z}$ can be found in \cite[Definition 4.1]{cipolloni_optimal_2024}. If $\Im G_{z}(w_{j})$ appears in a resolvent chain we denote its corresponding argument in $M_{z}$ by $\wh{w}_{j}$, e.g. $M_{z}(\wh{w}_{1},F,\wh{w}_{2})$ is the deterministic approximation to $\Im G_{z}(w_{1})F\Im G_{z}(w_{2})$.

Before stating the relevant bounds, we define the domain
\begin{align}
    \Omega_{K,\xi,\epsilon}&:=\left\{(z,w_{1},w_{2})\in\mbb{C}^{3}:\big||z|-1\big|<KN^{-1/2},\,N\eta_{j}\rho_{j}>N^{\xi},\,|w_{j}|<\epsilon\right\}.
\end{align}
Here and below we use the shorthand $m_{j}:=m_{z}(w_{j})$ and $u_{j}:=u_{z}(w_{j})$. Since the combination $\eta_{j}\rho_{j}$ will play a prominent role, we define
\begin{align}
    l_{j}&:=\eta_{j}\rho_{j},\label{eq:l}
\end{align}
The upper bounds will be stated in terms of the quantities
\begin{align}
    \phi^{av}_{2}(w_{1},w_{2})&:=\frac{|m_{1}m_{2}|}{\phi(w_{1},w_{2})},\label{eq:phi2av}\\
    \phi^{iso}_{1}(w_{1},w_{2})&:=\frac{|m_{1}|\vee |m_{2}|}{\phi(w_{1},w_{2})},\label{eq:phi1iso}\\
    \phi^{iso}_{2}(w_{1},w_{2})&:=\frac{|m_{1}m_{2}|}{|\eta_{1}|\phi(w_{1},w_{2})},\label{eq:phi2iso}
\end{align}
where
\begin{align}
    \phi(w_{1},w_{2})&:=1-|z|^{2}|\Re(u_{1}u_{2})|-|\Re(m_{1}m_{2})|.\label{eq:phi}
\end{align}
We define the $2N\times2N$ block matrices
\begin{align}
    F&:=\begin{pmatrix}0&1_{N}\\0&0\end{pmatrix},\quad E_{\pm}=\begin{pmatrix}1_{N}&0\\0&-1_{N}\end{pmatrix},
\end{align}
and, for $i,j\in[N]$,
\begin{align}
    \Delta_{ij}&=\mbf{e}_{i}\mbf{e}_{N+j}^{*}+\mbf{e}_{N+j}\mbf{e}_{i}^{*},\label{eq:Delta}
\end{align}
where $\mbf{e}_{i}\in\mbb{C}^{2N}$ is the $i$-th standard coordinate vector. The upper bounds on the deterministic approximation are contained in the following lemma, whose proof is deferred to Section \ref{sec:deterministicBounds}.
\begin{lemma}\label{lem:MBounds}
Let $z\in\mbb{C}$ and $w_{1},w_{2}\in\mbb{C}\setminus\mbb{R}$. Then
\begin{align}
    \left|\Tr{M_{z}(w_{1},F,w_{2})F^{*}}\right|&\lesssim\phi^{av}_{2}(w_{1},w_{2}),\label{eq:M12FF*}\\
    \|M_{z}(w_{1},F,w_{2})\|&\lesssim\phi^{iso}_{1}(w_{1},w_{2}),\label{eq:M12F}\\
    \|M_{z}(w_{1},F,\wh{w}_{2},F^{*},\bar{w}_{1})\|&\lesssim\phi^{iso}_{2}(w_{1},w_{2}).\label{eq:M121FF*}
\end{align}
Moreover, we have
\begin{align}
    \phi(w_{1},w_{2})&\gtrsim\frac{\eta_{1}}{\rho_{1}}+\frac{\eta_{2}}{\rho_{2}}+(|m_{1}|-|m_{2}|)^{2}.\label{eq:phiBound}
\end{align}
\end{lemma}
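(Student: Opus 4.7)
The plan is to reduce all four assertions to explicit algebra on the $2\times 2$ block structure that $M_z$ inherits from the Hermitisation. I would begin with the Dyson equation satisfied by the two-resolvent deterministic approximation,
\[
M_z(w_1, F, w_2) = M_z(w_1)\,F\,M_z(w_2) + M_z(w_1)\,\mc{S}\!\bigl[M_z(w_1, F, w_2)\bigr]\,M_z(w_2),
\]
where $\mc{S}$ is the self-energy operator of the Hermitisation and sees only the normalised traces of the two diagonal $N\times N$ blocks (up to a transpose contribution in the real case). Since every block of $M_z(w)$ is a scalar multiple of $1_N$, the fixed point $M_z(w_1, F, w_2)$ inherits blockwise-scalar structure, so inverting the two-body stability operator $1-\mc{L}_z(w_1,w_2)$ reduces to a small linear system for four scalar block-entries.

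My next step is to solve this system explicitly. The determinant of the relevant restriction of $1-\mc{L}_z$ should factor as a harmless non-vanishing $m_1 m_2$-prefactor times precisely $\phi(w_1, w_2)$ from \eqref{eq:phi}. Once this identification is made, reading off the four scalar entries gives all three upper bounds immediately: the top-right block has numerator of order $|m_1 m_2|$, so taking its normalised trace yields \eqref{eq:M12FF*}; the off-diagonal entries have numerator $|m_1|\vee|m_2|$, yielding \eqref{eq:M12F}. For the five-point chain in \eqref{eq:M121FF*} I would decompose $\Im G_z(w_2)=\tfrac{1}{2i}(G_z(w_2)-G_z(\bar w_2))$ and then apply the resolvent identity $G_z(w_1)\,A\,G_z(\bar w_1) = (2i\eta_1)^{-1}[G_z(w_1)\,A - A\,G_z(\bar w_1) + \cdots]$ at the level of deterministic approximations; the factor $|\eta_1|^{-1}$ in $\phi_2^{iso}$ arises directly from this step. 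Alternatively I would reapply the Dyson equation with the shorter deterministic approximation $M_z(w_2, F^*, \bar w_1)$ playing the role of $F$.

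The lower bound \eqref{eq:phiBound} I would prove independently and directly from the cubic \eqref{eq:cubic}. Taking imaginary parts of $-1/m_j = m_j + w_j - |z|^2/(m_j + w_j)$ and substituting $u_j=m_j/(m_j+w_j)$ yields, after rearrangement, the identity
\[
1 - |m_j|^2 - |z|^2 |u_j|^2 \;=\; \frac{\eta_j}{\rho_j}\bigl(|m_j|^2 + |z|^2 |u_j|^2\bigr), \qquad j=1,2.
\]
Averaging over $j$ and combining with the elementary AM-GM inequality $|\Re(ab)| \leq |a|\,|b| = \tfrac{1}{2}(|a|^2+|b|^2) - \tfrac{1}{2}(|a|-|b|)^2$ applied to $(a,b)=(m_1,m_2)$ and to $(u_1,u_2)$ gives
\[
\phi(w_1,w_2) \;\geq\; \tfrac{1}{2}\sum_{j=1}^{2}\tfrac{\eta_j}{\rho_j}\bigl(|m_j|^2+|z|^2|u_j|^2\bigr) + \tfrac{1}{2}(|m_1|-|m_2|)^2 + \tfrac{|z|^2}{2}(|u_1|-|u_2|)^2,
\]
and \eqref{eq:phiBound} follows because $|m_j|^2+|z|^2|u_j|^2\gtrsim 1$ uniformly in the regime $\big||z|-1\big|+|w_j|\ll 1$, as is immediate from \eqref{eq:cubic} and the asymptotics \eqref{eq:rhoAsymp1}--\eqref{eq:rhoAsymp2}. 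The main obstacle I anticipate lies in the second paragraph: carefully tracking which linear combinations of block generators appear when applying $\mc{S}$, accommodating the real-case transpose term, and verifying that the determinant of the reduced system is \emph{exactly} $\phi(w_1,w_2)$ (rather than merely $\phi$ up to a quantity that might itself vanish). Once that identification is confirmed, the three operator-type bounds fall out by inspection and the cubic-equation computation closes the proof.
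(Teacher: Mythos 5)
Your treatment of \eqref{eq:phiBound} is essentially identical to the paper's: the real and imaginary parts of the cubic give $1-|m_j|^2-|z|^2|u_j|^2 = \tfrac{\eta_j}{\rho_j}(|m_j|^2+|z|^2|u_j|^2)$, and the AM--GM refinement together with $\rho_j\gtrsim|\eta_j|$ closes the estimate. No issue there.

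For \eqref{eq:M12FF*} and \eqref{eq:M12F}, however, the factorization you hoped for is genuinely false, and the correct mechanism is more delicate than your sketch allows. The denominator that appears when the Dyson equation is solved (which the paper simply imports as an explicit formula) is
\begin{equation*}
(1-|z|^{2}u_{1}u_{2})^{2}-m_{1}^{2}m_{2}^{2},
\end{equation*}
which is a complex quantity and is \emph{not} $\phi(w_1,w_2)$, nor $\phi$ times a non-vanishing prefactor. Likewise the top-right block is not ``of order $|m_1m_2|$'': its numerator is $m_{1}m_{2}(1-u_{1}u_{2})$, and the extra factor $|1-u_{1}u_{2}|$ is \emph{small}, comparable to $\sum_{j}\bigl(\tfrac{\eta_j}{\rho_j+\eta_j}+\tfrac{E_j}{\sigma_j+E_j}\bigr)$. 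The bound $|A_{12}|\lesssim|m_1m_2|/\phi$ only comes out because the denominator admits a matching lower bound
\begin{equation*}
\bigl|(1-|z|^{2}u_{1}u_{2})^{2}-m_{1}^{2}m_{2}^{2}\bigr|\geq\bigl|1-|z|^{2}|u_{1}u_{2}|\bigr|\cdot\phi(w_{1},w_{2}),
\end{equation*}
where $|1-|z|^{2}|u_1u_2||$ is in turn bounded \emph{below} by a constant times the same sum $\sum_{j}\bigl(\tfrac{\eta_j}{\rho_j+\eta_j}+\tfrac{E_j}{\sigma_j+E_j}\bigr)$, using the same cubic-equation identities you derived for \eqref{eq:phiBound}. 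The two small factors cancel, leaving $|m_1m_2|/\phi$. This cancellation is the heart of the proof of \eqref{eq:M12FF*}--\eqref{eq:M12F}, and it is precisely the issue you flagged as a possible obstacle; as stated your reduction to ``numerator $|m_1m_2|$ over denominator $\phi$'' skips it.

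For \eqref{eq:M121FF*} there is a concrete error: the identity $G(w_{1})\,A\,G(\bar{w}_{1})=(2i\eta_{1})^{-1}[G(w_{1})A-AG(\bar{w}_{1})+\cdots]$ does not hold once a non-scalar $A$ sits between the resolvents; the Ward identity $G(w_1)G(\bar w_1)=\Im G(w_1)/\eta_1$ requires nothing in the middle. The paper avoids computing the five-point deterministic approximation explicitly. Instead it observes that every $M_z(w_1,B_1,\dots,w_k)$ lies in the span of $\{E_+,E_-,F,F^*\}$, so its operator norm is comparable to the maximum of the four traces $|\Tr{M_z(\cdots)A}|$, and it transfers the operator inequality $G_1^*AG_1\leq\|A\|G_1^*G_1=\|A\|\Im G_1/\eta_1$ (applied inside $\Tr{G_1F\Im G_2F^*G_1^*A}$, where positivity of $F\Im G_2F^*$ is used) from random resolvent chains to their deterministic equivalents via a meta-model argument with a growing inner dimension. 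That structural observation and the transfer principle are what you would need; your alternative of reapplying the Dyson equation with $M_z(w_2,F^*,\bar w_1)$ in the role of $F$ is plausible but not carried out, and the same cancellation issue as above would reappear there.

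**
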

We will often use the simplified bound
\begin{align}
    \frac{1}{\phi(w_{1},w_{2})}&\lesssim\frac{\rho_{1}}{\eta_{1}}\wedge\frac{\rho_{2}}{\eta_{2}},\label{eq:phiBoundSimplified}
\end{align}
which follows directly from \eqref{eq:phiBound}.

The bound on the overlap of left and right singular vectors follows from an extension of the local law in \cite[Theorem 3.5]{cipolloni_universality_2024} to a small neighbourhood of the real axis.
\begin{proposition}\label{prop:ll}
Let $\xi>0$, $z\in\mbb{C}$ and $w_{j}=E_{j}+i\eta_{j}$ be such that $\big||z|-1\big|\lesssim N^{-1/2}$ and $l_{j}:=|\eta_{j}|\rho_{j}>N^{\xi-1}$, where $\rho_{j}:=\Im m_{z}(E_{j}+i\eta_{j})$. Then there is a $c>0$ such that for $|w_{j}|<c$ we have the averaged laws
\begin{align}
    \left|\Tr{G_{z}(w_{1})FG_{z}(w_{2})-M_{z}(w_{1},F,w_{2})}\right|&\prec \frac{\phi^{av}_{1}(w_{1},w_{2})}{(N(l_{1}\wedge l_{2}))^{1/4}},\label{eq:avLL1}\\
    \left|\Tr{(\Im G_{z}(w_{1})F\Im G_{z}(w_{2})-M_{z}(\wh{w}_{1},F,\wh{w}_{2}))F^{*}}\right|&\prec \frac{\phi^{av}_{2}(w_{1},w_{2})}{(N(l_{1}\wedge l_{2}))^{1/4}},\label{eq:avLL2}
\end{align}
and the isotropic laws
\begin{align}
    \left|\mbf{e}_{\mu}^{*}(G_{z}(w_{1})FG_{z}(w_{2})-M_{z}(w_{1},F,w_{2}))\mbf{e}_{\nu}\right|&\prec \frac{\phi^{iso}_{1}(w_{1},w_{2})}{(N(l_{1}\wedge l_{2}))^{1/4}},\label{eq:isoLL1}\\
    \mbf{e}_{\mu}^{*}G_{z}(w_{1})F\Im G_{z}(w_{2})F^{*}G^{*}_{z}(w_{1})\mbf{e}_{\mu}&\prec \phi^{iso}_{2}(w_{1},w_{2}).\label{eq:isoLL2}
\end{align}
\end{proposition}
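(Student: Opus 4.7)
The plan is to follow the characteristic flow / zig-zag scheme of Cipolloni--Erd\H{o}s--Xu \cite{cipolloni_universality_2024}, but applied to spectral parameters $w_j = E_j + i\eta_j$ with nonzero real part. The natural setup is to write $X_t = X_0 + \sqrt{t}\,Y$ for an independent Ginibre $Y$ and consider the Ornstein--Uhlenbeck variant that preserves the law of a Gaussian matrix; then evolve the spectral parameters along characteristics $w_j(t)$ chosen so that $m_z(w_j(t))$ is constant in $t$. Along such characteristics the quantities $m_1,m_2,u_1,u_2$ are frozen, so $\phi(w_1(t),w_2(t))$ and the upper-bound functions $\phi^{av}_k,\phi^{iso}_k$ from Lemma \ref{lem:MBounds} do not change; only the widths $\eta_j(t)$ (and hence $l_j(t)=\eta_j(t)\rho_j$) shrink along the flow. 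This lets us start at a mesoscopic scale $\eta_0\sim N^{-\epsilon}$, where the desired bounds follow easily from the single-resolvent law (Proposition \ref{prop:singleLL}) via a Schur / second-resolvent identity, and propagate them down to the regime $l_j\simeq N^{\xi-1}$ required by the statement.

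The key steps, in order, are as follows. First, derive the stochastic differential equations for the error observables
\[
\mc{E}^{av}_t := \Tr{G_z(w_1(t))FG_z(w_2(t))-M_z(w_1(t),F,w_2(t))},
\]
and its $F^*$-weighted imaginary analogue, plus the isotropic counterparts. Itô's lemma produces a drift proportional to $\mc{E}^{av}_t/\phi$ (which is why characteristics are chosen to keep $\phi$ fixed and hence to give only a bounded Gronwall factor) together with a martingale whose quadratic variation, after applying Proposition \ref{prop:singleLL} to reduce products of four $G$'s to sums of two-resolvent chains plus errors, is of size $(\phi^{av}_2)^2/(N^2 l_1 l_2)$. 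Second, prove \eqref{eq:isoLL2} by a self-consistent bootstrap: this quantity, $\mbf{e}^*_\mu GF\Im G F^*G^*\mbf{e}_\mu$, controls the noise in the isotropic SDE and also equals $\|\mbf{e}_\mu^* G(w_1) F \sqrt{\Im G(w_2)}\|^2$, so it admits an a priori bound that can be closed against \eqref{eq:isoLL1}. Third, combine Burkholder--Davis--Gundy and Gronwall on the resulting system to obtain the $(N(l_1\wedge l_2))^{-1/4}$ factor, which is the fluctuation scale for a two-resolvent chain at spectral window $l$. Fourth, remove the Gaussian component $\sqrt{t_0}Y$ (with $t_0$ the length of the flow) by a Green's function comparison / Lindeberg swap, where the needed $N^\xi$-tolerant moment bounds on resolvent entries are supplied by entry-wise local laws combined with fluctuation averaging.

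The hardest part will be handling $\phi$ when $\Re w_j\neq 0$. For purely imaginary spectral parameters one has $m_j\in i\mbb{R}$, $u_j\in\mbb{R}$, and $\phi$ simplifies dramatically; in our setting $\phi$ may become small through cancellations between $\Re(m_1m_2)$ and $|z|^2\Re(u_1u_2)$, and the lower bound \eqref{eq:phiBound} is tight only up to the two terms shown. Consequently one cannot afford to dissipate the prefactor by using Cauchy--Schwarz and the resolvent identity, each of which would cost a factor of $\eta_j^{-1}$ and destroy the target bound. The remedy, as the authors outline, is to estimate the noise terms through matrix-entry local laws for products such as $(GFGF^*G^*)_{\mu\mu}$ and to exploit fluctuation averaging over an additional index when averaged observables appear, so that the naive $1/\eta$ losses become $1/(N\eta\rho)$ gains. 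This requires tracking, throughout the flow, not only $\mc{E}^{av}$ and $\mc{E}^{iso}$ but also the chained quantities whose entry-wise bounds feed back into their SDEs.
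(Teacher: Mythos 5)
Your proposal follows essentially the same route as the paper: the Cipolloni--Erd\H{o}s--Xu characteristic-flow method combined with a Green's function comparison, together with the crucial observation that one must use entrywise local laws and fluctuation averaging instead of Cauchy--Schwarz/Ward identities to avoid fatal $1/\eta$ losses when $\Re w_{j}\neq0$. Two details should, however, be corrected. You state that the characteristics freeze $m_{z}(w_{j}(t))$ and therefore freeze $\phi$ and the $\phi^{av/iso}_{k}$; in the Ornstein--Uhlenbeck normalisation used here the deterministic equivalent satisfies $\mathrm{d}M_{t}=\tfrac{1}{2}M_{t}\,\mathrm{d}t$, so $m_{j,t}=e^{t/2}m_{j,0}$, $u_{j,t}=e^{t}u_{j,0}$, and $\rho_{j,t}$ grows; none of $\phi$, $\phi^{av}_{k}$, $\phi^{iso}_{k}$ is constant, and the Gronwall step relies instead on the monotonicity of $|\eta_{t}|$, $|\eta_{t}|/\rho_{t}$, $l_{t}$ and $1/\phi_{t}$ together with the integral identities \eqref{eq:int1}--\eqref{eq:int2}.

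You also attribute the exponent $(N(l_{1}\wedge l_{2}))^{-1/4}$ to BDG plus Gronwall as if it were the natural two-resolvent fluctuation scale; it is not. The Gaussian flow produces the sharper error parameters $\psi^{av/iso}_{t}$ of order $(N(l_{1}\wedge l_{2}))^{-1/2}$, see \eqref{eq:psi2av}--\eqref{eq:psi1iso}. The $1/4$ power appears only in the Green's function comparison (Lemmas \ref{lem:isoGFT1}--\ref{lem:av2GFT}): the time derivative of high moments carries a prefactor of order $N^{\xi}/(N^{1/4}\eta^{3/4})$, so each GFT pass can only remove a Gaussian component of variance $\sim N^{1/4}\eta^{3/4}$, which forces the alternating flow/GFT iteration you allude to by the name ``zig-zag.'' Your step list should make that loop explicit rather than presenting flow-then-GFT as a one-shot pair of steps, since a single GFT pass from the global regime all the way down to $l\sim N^{\xi-1}$ would not close.
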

The error in Proposition \ref{prop:ll} is not optimal but we are mainly interested in upper bounds. Using this we can prove Theorem \ref{thm3}.
\begin{proof}[Proof of Theorem \ref{thm3}]
Let $\xi>0$ and $\eta_{n}=N^{-1+\xi}|\gamma_{n}|^{-1/3}$ and $\eta_{m}=N^{-1+\xi}|\gamma_{m}|^{-1/3}$. Since $\rho_{n}:=\Im m_{z}(\gamma_{n}+i\eta_{n})\simeq(|\gamma_{n}|+\eta)^{1/3}$, we have $N^{\xi}\lesssim N\eta_{n}\rho_{n}\lesssim N^{4\xi/3}$ for all $n\geq 1$. From the spectral decomposition of $G_{z}(w)$ we have
\begin{align*}
    \frac{\eta_{n}\eta_{m}|\mbf{w}_{n}^{*}F\mbf{w}_{m}|^{2}}{((\lambda_{n}-\gamma_{n})^{2}+\eta^{2}_{n})((\lambda_{m}-\gamma_{m})^{2}+\eta_{m}^{2})}&\leq N\Tr{\Im G_{z}(w_{n})F\Im G_{z}(w_{m})F^{*}},
\end{align*}
where $w_{n}=E_{n}+i\eta_{n}$. Using the rigidity of singular values $|\lambda_{n}-\gamma_{n}|\prec\frac{1}{N^{3/4}|n|^{1/4}}$ and Proposition \ref{prop:ll} we find
\begin{align*}
    |\mbf{w}_{n}^{*}F\mbf{w}_{m}|^{2}&\prec\frac{N\eta_{n}\eta_{m}\rho_{n}\rho_{m}}{\phi(w_{n},w_{m})}\\
    &\lesssim\frac{N^{4\xi/3}}{N\phi(w_{n},w_{m})}.
\end{align*}
Recall the bound for $\phi$ in \eqref{eq:phiBound}. When $|n|\wedge|m|\geq c|n|\vee|m|$, we use the simplified version in \eqref{eq:phiBoundSimplified} to obtain
\begin{align*}
    |\mbf{w}_{n}^{*}F\mbf{w}_{m}|^{2}&\prec\frac{N^{4\xi/3}}{N}\left(\frac{\rho_{n}}{\eta_{n}}\wedge\frac{\rho_{m}}{\eta_{m}}\right)\\
    &\lesssim N^{\xi/3}(\rho_{n}\wedge\rho_{m})^{2}.
\end{align*}
Since $|m_{z}(w)|\simeq\rho_{z}(w)$ for $\big||z|-1\big|\lesssim N^{-1/2}$ (see Lemma \ref{lem:interchange} below), when $|n|\wedge|m|\leq c|n|\vee|m|$ for sufficiently small $c>0$ we have $\big||m_{z}(w_{n})|-|m_{z}(w_{m})|\big|\simeq\rho_{n}\vee\rho_{m}$ and hence
\begin{align*}
    |\mbf{w}_{n}^{*}F\mbf{w}_{m}|^{2}&\prec\frac{N^{4\xi/3}}{N(\rho_{n}\vee\rho_{m})^{2}}.
\end{align*}
Now we use the asymptotics $\rho_{n}\simeq(|\gamma_{n}|+\eta_{n})^{1/3}$, $|\gamma_{n}|\simeq\bigl(\frac{|n|}{N}\bigr)^{3/4}$ and the fact that $\xi$ is arbitrary to conclude.
\end{proof}

To prove the local law we follow the dynamical method of \cite{cipolloni_universality_2024}, which consists of two parts. First we prove the analogous local law for matrices with a Gaussian component by considering the evolution $X(t)=e^{-t/2}X+\sqrt{1-e^{-t}}Y$ and allowing the parameters $z$ and $w_{j}$ to evolve simultaneously according to the ``characteristic flow". Then we argue that the Gaussian component can be removed by a Green's function comparison theorem (GFT). Since we want to capture the decay of traces of resolvents in the real parts of their arguments, there are several occasions in which we cannot afford to use Cauchy-Schwarz and the Ward identity, since this necessarily entails a factor of $1/|\Im w|$. To circumvent this, we need to first prove entrywise local laws and combine these with fluctuation averaging (see Lemma \ref{lem:fluctuationAveraging}). We write the details for the most involved case of real matrices.

\subsection{Characteristic Flow}
Let $B_{ij}(t),\,i,j=1,...,N$ be iid standard Brownian motions and consider the matrix stochastic differential equation (SDE)
\begin{align}
    \mathrm{d}X(t)&=-\frac{1}{2}X(t)\mathrm{d}t+\frac{1}{\sqrt{N}}\mathrm{d}B(t).
\end{align}
Let 
\begin{align}
    \Lambda_{t}&:=\begin{pmatrix}w_{t}&z_{t}\\\bar{z}_{t}&w_{t}\end{pmatrix}
\end{align}
solve
\begin{align}
    \frac{\mathrm{d}\Lambda_{t}}{\mathrm{d}t}&=-S[M_{t}]-\frac{1}{2}\Lambda_{t},\label{eq:dLambda}
\end{align}
with initial condition $w_{0},z_{0}\in\mbb{C}$, where
\begin{align}
    M_{t}&:=\begin{pmatrix}m_{t}&-z_{t}u_{t}\\-\bar{z}_{t}u_{t}&m_{t}\end{pmatrix}:=M_{z_{t}}(w_{t}).
\end{align}
This is equivalent to the system
\begin{align}
    \frac{\mathrm{d}w_{t}}{\mathrm{d}t}&=-\frac{1}{2}m_{t}-w_{t},\label{eq:dw}\\
    \frac{\mathrm{d}z_{t}}{\mathrm{d}t}&=-\frac{1}{2}z_{t}.\label{eq:dz}
\end{align}
By \cite[Lemma 6.5]{cipolloni_mesoscopic_2024}, we have
\begin{align}
    \mathrm{d}M_{t}&=\frac{1}{2}M_{t}\mathrm{d}t.
\end{align}
In particular, $\mathrm{d}m_{t}=\frac{1}{t}m_{t}\mathrm{d}t$. Taking the derivative of \eqref{eq:dw} we find
\begin{align*}
    \frac{\mathrm{d}^{2}w_{t}}{\mathrm{d}t^{2}}&=-\frac{\mathrm{d}m_{t}}{\mathrm{d}t}-\frac{1}{2}\frac{\mathrm{d}w_{t}}{\mathrm{d}t}\\
    &=-\frac{1}{2}m_{t}+\frac{1}{2}(m_{t}+w_{t})\\
    &=\frac{1}{4}w_{t}.
\end{align*}
Thus
\begin{align}
    w_{t}&=w_{0}e^{-t/2}-m_{0}(e^{t/2}-e^{-t/2}),\\
    z_{t}&=z_{0}e^{-t/2}.
\end{align}
The explicit solution allows us to reverse the dynamics:
\begin{align}
    w_{0}&=\left(w_{t}+m_{t}(1-e^{-t})\right)e^{t/2},\\
    z_{0}&=z_{t}e^{t/2}.
\end{align}
We denote the map $\mbb{C}^{3}\ni(z,w_{1},w_{2})\mapsto(z_{t},w_{1,t},w_{2,t})$ by $S_{t}$ and its inverse by $S^{-1}_{t}$. For any $t>0$, we define the time-reversed domain
\begin{align}
    \Omega_{K,\xi,\epsilon}(t)&:=S^{-1}_{t}(\Omega_{K,\xi,\epsilon}),
\end{align}
and the time-evolved quantities
\begin{align}
    \phi^{av/iso}_{j,t}(w_{1},w_{2})&:=\phi^{av/iso}_{j}(w_{1,t},w_{2,t}),\\
    \phi_{t}(w_{1},w_{2})&:=\phi(w_{1,t},w_{2,t}).
\end{align}

The important properties of this flow are summarised below.
\begin{lemma}
\begin{enumerate}[i)]
The flow in \eqref{eq:dLambda} satsifies:
\item $|z_{t}|$, $|\eta_{t}|$, $\frac{|\eta_{t}|}{\rho_{t}}$, $|\eta_{t}|\rho_{t}$ and $1/\phi_{t}$ are decreasing in $t\geq0$;
\item we have
\begin{align}
    \int_{s}^{t}\frac{\rho_{r}}{\eta_{r}}\,\mathrm{d}r&=\log\left(\frac{\eta_{s}}{\eta_{t}}\right)-\frac{t-s}{2};\label{eq:int1}
\end{align}
\item for any $\alpha>1$, there is a $C_{\alpha}>0$ such that
\begin{align}
    \int_{s}^{t}\frac{1}{|\eta_{r}|^{\alpha}}\,\mathrm{d}r&\leq\frac{C_{\alpha}}{|\eta_{t}|^{\alpha-1}\rho_{t}}.\label{eq:int2}
\end{align}
\end{enumerate}
\end{lemma}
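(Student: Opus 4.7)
My plan is to extract all the monotonicity and integral identities directly from the explicit solution of the characteristic flow, together with the two scalar ODEs it induces on the imaginary parts. From $\frac{dM_t}{dt} = \frac{1}{2}M_t$ and $\frac{dz_t}{dt} = -\frac{1}{2}z_t$ I get $m_{j,t} = m_{j,0}e^{t/2}$ and $z_t = z_0 e^{-t/2}$; combining with the $w$-equation shows $m_{j,t} + w_{j,t} = (m_{j,0}+w_{j,0})e^{-t/2}$, and therefore $u_{j,t} = m_{j,t}/(m_{j,t}+w_{j,t}) = u_{j,0}e^{t}$. Taking imaginary parts yields the two scalar ODEs $\rho'_t = \rho_t/2$ and $\eta'_t = -\rho_t - \eta_t/2$ (assuming the conventions $\eta_t,\rho_t>0$ throughout), which I will use to drive the rest of the arguments.

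For part (i), the monotonicity of $|z_t|$ follows from the explicit formula, and $\eta'_t<0$ immediately gives $|\eta_t|$ decreasing. A direct computation from the scalar ODEs yields
\[
\frac{d}{dt}\!\left(\frac{\eta_t}{\rho_t}\right) = \frac{\eta'_t - \eta_t/2}{\rho_t} = -1 - \frac{\eta_t}{\rho_t} < 0,
\qquad
\frac{d}{dt}(\eta_t\rho_t) = \eta'_t\rho_t + \eta_t\rho_t/2 = -\rho_t^2 < 0.
\]
For $1/\phi_t$ I would plug the explicit scalings $|z_t|^2 u_{1,t}u_{2,t} = |z_0|^2 u_{1,0}u_{2,0}\,e^{t}$ and $m_{1,t}m_{2,t} = m_{1,0}m_{2,0}\,e^{t}$ into the definition of $\phi_t$, reducing the claim to the monotonicity of a one-parameter exponential.

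Part (ii) is essentially a restatement of the $\eta$-equation. Dividing by $\eta_r$ gives $\frac{d}{dr}\log\eta_r = -\rho_r/\eta_r - 1/2$, i.e.\ $\rho_r/\eta_r = -\frac{d}{dr}\log\eta_r - 1/2$, and integrating from $s$ to $t$ produces $\int_s^t (\rho_r/\eta_r)\,dr = \log(\eta_s/\eta_t) - (t-s)/2$.

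Part (iii) is the one step that requires a small trick. The key observation is that $\frac{d}{dr}(\eta_r+2\rho_r) = -\rho_r - \eta_r/2 + \rho_r = -\eta_r/2 \leq 0$, so $\eta_r + 2\rho_r$ is monotone decreasing in $r$. Thus for every $r\le t$ we have $\eta_r + 2\rho_r \geq \eta_t + 2\rho_t$, yielding the pointwise bound $\rho_r + \eta_r/2 \geq \rho_t + \eta_t/2 \geq \rho_t$. Since $\eta_r$ is strictly monotone in $r$ one may change variables to $u=\eta_r$ and use $|d\eta_r| = (\rho_r + \eta_r/2)\,dr \geq \rho_t\,dr$ to obtain
\[
\int_s^t |\eta_r|^{-\alpha}\,dr \;\leq\; \frac{1}{\rho_t}\int_{\eta_t}^{\eta_s} u^{-\alpha}\,du \;\leq\; \frac{1}{(\alpha-1)\rho_t\,\eta_t^{\alpha-1}},
\]
which is exactly the claim with $C_\alpha = 1/(\alpha-1)$. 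I expect the main obstacle not to lie here but rather in the bookkeeping for the monotonicity of $1/\phi_t$ in part (i), where the absolute values $|\Re(u_1u_2)|$ and $|\Re(m_1m_2)|$ must be argued to keep definite signs along the flow so that the exponential $e^t$ scaling can be read off cleanly.
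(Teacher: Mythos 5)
Your computations for parts (ii), (iii) and for the scalar quantities $|z_t|$, $|\eta_t|$, $|\eta_t|/\rho_t$, $|\eta_t|\rho_t$ in part (i) are correct. The paper's own proof simply invokes \cite[Lemma 5.1]{cipolloni_universality_2024} for these five items and only addresses the $\phi_t$ claim directly, so your route is more self-contained; the scalar ODEs $\rho'_t=\rho_t/2$ and $\eta'_t=-\rho_t-\eta_t/2$, the identity $\rho_r/\eta_r=-\frac{d}{dr}\log\eta_r-\frac12$, and the monotone quantity $\eta_r+2\rho_r$ give a clean elementary proof of (ii) and (iii), with the explicit constant $C_\alpha=(\alpha-1)^{-1}$.

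The one step you leave unexecuted---the $1/\phi_t$ monotonicity---is exactly where a problem sits, though not the one you anticipate. The signs of $\Re(u_{1,t}u_{2,t})$ and $\Re(m_{1,t}m_{2,t})$ are automatically preserved along the flow, since multiplication by $e^t>0$ does not change the sign of the real part; your bookkeeping worry is therefore resolved trivially, and the scalings you wrote down yield
\begin{align*}
\phi_t \;=\; 1-\bigl(|z_0|^2|\Re(u_{1,0}u_{2,0})|+|\Re(m_{1,0}m_{2,0})|\bigr)e^t \;=\; 1-(1-\phi_0)e^t .
\end{align*}
Since $1-\phi_0\ge0$, this is \emph{decreasing} in $t$, so $1/\phi_t$ is \emph{increasing}, not decreasing as the lemma asserts. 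The lemma statement appears to carry a sign typo and should read ``$\phi_t$ is decreasing''; this reading is what the paper's own one-line justification (``follows immediately from $m_t=e^{t/2}m_0$ and $u_t=e^tu_0$'') actually proves, and it is also what the surrounding Gr\"onwall arguments require---there the error parameters $\psi^{av/iso}_t$, which contain factors of $1/\phi_t$, are used as \emph{increasing} functions of $t$. Had you carried out the step you sketched, you would have found this discrepancy; as it stands, that part of your proposal is a plan rather than a proof, and it is the one place where something nontrivial needed to be said.
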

Note that the right hand side of \eqref{eq:int1} is at most $O(\log N)$. We will routinely ignore factors of $\log N$ in various bounds since they are irrelevant to the concept of stochastic domination.
\begin{proof}
All of the statements are contained in \cite[Lemma 5.1]{cipolloni_universality_2024} except for the statement that $1/\phi_{t}$ is decreasing. This follows immediately from the facts that $m_{t}=e^{t/2}m_{0}$ and $u_{t}=e^{t}u_{0}$.
\end{proof}
The following lemma allows us to interchange $m_{j}$ with $\rho_{j}$, which is mostly a notational convenience.
\begin{lemma}\label{lem:interchange}
Let $t\geq0$, $(z_{-t},w_{1,-t},w_{2,-t})\in\Omega_{K,\xi,\epsilon}(t)$ and $m_{z_{-t}}(w_{j,-t})=:\sigma_{j,-t}+i\rho_{j,-t}$. Then for sufficiently small $\epsilon>0$ we have
\begin{align}
    \frac{\sigma_{j,-t}}{\rho_{j,-t}}&\lesssim1.
\end{align}
\end{lemma}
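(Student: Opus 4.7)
The strategy is to exploit the fact that $\arg m$ is preserved along the characteristic flow, reducing the claim to the endpoint at $t=0$. Integrating $\mathrm{d}m_t = \tfrac{1}{2} m_t\,\mathrm{d}t$ yields $m_{j,-t} = e^{-t/2} m_{j,0}$, and since $e^{-t/2}$ is a positive real scalar we have $\arg m_{j,-t} = \arg m_{j,0}$, hence $\sigma_{j,-t}/\rho_{j,-t} = \sigma_{j,0}/\rho_{j,0}$. Writing $(z,w_j) := (z_0, w_{j,0}) \in \Omega_{K,\xi,\epsilon}$ for the endpoint, the claim reduces to showing $|\sigma_z(w_j)|/\rho_z(w_j) \lesssim 1$ under the standing constraints $|\delta_0| \lesssim N^{-1/2}$ (with $\delta_0 := |z|^2 - 1$), $|w_j| < \epsilon$, and $N\eta_j\rho_j > N^{\xi}$.

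At the endpoint I would analyse the cubic \eqref{eq:cubic}, which rearranges to $m(m+w)^2 = \delta_0 m - w$. In the bulk/near-edge regime---namely when $\delta_0 \le 0$, or $\delta_0 > 0$ with $\kappa_j := |E_j| - \Delta/2 \ge 0$, or $\eta_j \gtrsim \delta_0^{1/2}$---both $|m_z(w_j)|$ and $\rho_z(w_j)$ come from the cube-root singularity of the cubic and are of the same order, so matching $|m|$-asymptotics against \eqref{eq:rhoAsymp1} and \eqref{eq:rhoAsymp2} gives $|\sigma_z(w_j)| \le |m_z(w_j)| \lesssim \rho_z(w_j)$ directly, provided $\epsilon$ is small enough that we stay within the region of validity of these asymptotics.

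The only delicate case is the deep gap: $\delta_0 > 0$, $\kappa_j \in [-\Delta/2, 0]$, and $\eta_j \lesssim \delta_0^{1/2}$. Here the spectral symmetry of $W_z$ forces $m_z(0) = 0$, and linearising the cubic around $m=0$---valid since $|w_j|^2 \lesssim \Delta^2 + \eta_j^2 \lesssim \delta_0^3 + \delta_0 \lesssim \delta_0$ in this subregime---yields $m_z(w_j) = -w_j/\delta_0 + O(|w_j|^3/\delta_0^3)$, so $\sigma_z(w_j) \simeq -E_j/\delta_0$ and $\rho_z(w_j) \simeq \eta_j/\delta_0$. The spectral-regularity constraint $\eta_j\rho_j > N^{-1+\xi}$ becomes $\eta_j^2 > \delta_0 N^{-1+\xi}$, which combined with $|E_j| \le \Delta/2 \simeq \delta_0^{3/2}$ and $\delta_0 \lesssim N^{-1/2}$ gives
$$\frac{|\sigma_z(w_j)|}{\rho_z(w_j)} \simeq \frac{|E_j|}{\eta_j} \lesssim \frac{\delta_0^{3/2}}{(\delta_0 N^{-1+\xi})^{1/2}} = \delta_0\, N^{(1-\xi)/2} \lesssim N^{-\xi/2} \ll 1.$$
The main obstacle is precisely this gap case: without the lower bound $N\eta\rho > N^{\xi}$ the ratio $\sigma/\rho$ can genuinely blow up as $\eta_j$ becomes small relative to $|E_j|$ inside the gap, so the constraint is essential in forcing $\eta_j$ to exceed $\Delta$ by a polynomial factor in $N$.
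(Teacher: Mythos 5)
Your approach matches the paper's: first reduce to the endpoint $t=0$ via $m_{z_{-t}}(w_{j,-t}) = e^{-t/2}m_{z}(w_{j})$, so the ratio $\sigma_{j,-t}/\rho_{j,-t}$ is constant along the flow, then bound that ratio by analysing the cubic at the endpoint using the constraint $N\eta_{j}\rho_{j}>N^{\xi}$. The paper compresses the endpoint step into the assertion $|\Re m_{z}(w)|\lesssim|\Re w|^{1/3}$, $|\Im m_{z}(w)|\gtrsim|w|^{1/3}$ for $\bigl||z|-1\bigr|^{3/2}\leq|w|\leq\epsilon$; you unpack this with an explicit case split, which is fine. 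Two small inaccuracies in your deep-gap branch: the cubic branch with $\Im m>0$ gives $m\approx +w_{j}/\delta_{0}$, not $-w_{j}/\delta_{0}$, and the linearisation around $m=0$ is only accurate when $|w_{j}|\ll\delta_{0}^{3/2}\simeq\Delta$ (the correction is $O(|w_{j}|^{3}/\delta_{0}^{4})$), not for all $|w_{j}|\lesssim\delta_{0}^{1/2}$ as stated. Neither affects the outcome, since, as your final inequality shows, the constraint actually forces $\eta_{j}\gg\Delta$; in that cube-root regime one has $\sigma_{j}\simeq E_{j}/\eta_{j}^{2/3}$, $\rho_{j}\simeq\eta_{j}^{1/3}$, so the ratio $\sigma_{j}/\rho_{j}\simeq E_{j}/\eta_{j}$ you use remains valid.
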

\begin{proof}
Let $(z,w_{1},w_{2})\in\Omega_{K,\xi,\epsilon}$. The bounds $|\Re m_{z}(w)|\lesssim|\Re w|^{1/3}$ and $|\Im m_{z}(w)|\gtrsim |w|^{1/3}$ follow from the defining cubic equation \eqref{eq:cubic} for $m_{z}(w)$ when $\big||z|-1\big|^{3/2}\leq|w|\leq\epsilon$. The latter condition is guaranteed by the definition of $\Omega_{K,\xi,\epsilon}$. Thus
\begin{align*}
    \left|\frac{\Re m_{z}(E_{j}+i\eta_{j})}{\Im m_{z}(E_{j}+i\eta_{j})}\right|&\lesssim 1.
\end{align*}
Now let $(z_{-t},w_{1,-t},w_{2,-t})\in\Omega_{K,\xi,\epsilon}(t)$. Since $m_{z_{-t}}(w_{j,-t})=e^{-t/2}m_{z}(w_{j})$ (note that this is the reversed dynamics), we have
\begin{align*}
    \left|\frac{\sigma_{j,-t}}{\rho_{j,-t}}\right|&=\left|\frac{\sigma_{j}}{\rho_{j}}\right|\lesssim 1.
\end{align*}
\end{proof}

Let $(z_{t},w_{1,t},w_{2,t})$ solve \eqref{eq:dw} with initial condition $(z,w_{1},w_{2})$ and set $G_{j,t}:=G_{z_{t}}(w_{j,t})$. We define
\begin{align}
    S^{av}_{t}(w_{1},A_{1},...,w_{k},A_{k})&:=\Tr{G_{1,t}A_{1}\cdots G_{k,t}A_{k}},\label{eq:Sav}\\
    S^{iso}_{t}(w_{1},A_{1},...,A_{k-1},w_{k})&:=\mbf{x}^{*}G_{1,t}A_{1}\cdots G_{k-1,t}A_{k-1}G_{k}\mbf{y},\label{eq:Siso}
\end{align}
and denote by $R^{av/iso}_{t}$ the difference between $S^{av/iso}_{t}$ and the corresponding deterministic approximation. For each $R^{av/iso}_{j,t}$, we define an error as a function of the initial condition $(z,w_{1},w_{2})$ by
\begin{align}
    \psi^{av}_{t}(w_{1},F)&:=\frac{\rho_{1,t}}{N\eta_{1,t}},\label{eq:psi1av}\\
    \psi^{av}_{t}(w_{1},F,w_{2},E_{\pm})&:=\frac{1}{\sqrt{N(l_{1,t}\wedge l_{2,t})}}\left(\frac{\phi^{iso}_{1,t}(w_{1},w_{2})}{\sqrt{N(l_{1,t}\vee l_{2,t})}}+\sqrt{\frac{\phi^{av}_{2,t}(w_{1},w_{2})}{N\eta_{1,t}\eta_{2,t}}}\right),\label{eq:psi2av}\\
    \psi^{av}_{t}(w_{1},F,w_{2},F^{*})&:=\frac{\phi^{av}_{2,t}(w_{1},w_{2})}{\sqrt{N(l_{1,t}\wedge l_{2,t})}},\label{eq:psi3av}\\
    \psi^{iso}_{t}(w_{1},F,w_{2})&:=\frac{\phi^{iso}_{1,t}(w_{1},w_{2})}{\sqrt{N(l_{1,t}\wedge l_{2,t})}}+\sqrt{\frac{\phi^{av}_{2,t}(w_{1},w_{2})}{N\eta_{1,t}\eta_{2,t}}},\label{eq:psi1iso}\\
    \psi^{iso}_{t}(w_{1},F,\wh{w}_{2},F^{*},\bar{w}_{1})&:=\phi^{iso}_{2,t}(w_{1},w_{2}),\label{eq:psi2iso}
\end{align}
and stopping times
\begin{align}
    \tau^{av}(w_{1},B_{1},...,w_{k},B_{k})&:=\inf\left\{t\geq0:\frac{|R^{av}_{t}(w_{1},B_{1},....,w_{k},B_{k})|}{\psi^{av}_{t}(w_{1},B_{1},...,w_{k},B_{k})}>N^{2\xi}\right\},\label{eq:tauav1}
\end{align}
and
\begin{align}
    \tau^{iso}_{1}(w_{1},w_{2})&:=\inf\left\{t\geq0:\frac{|R^{iso}_{t}(w_{1},F,w_{2})|}{\psi^{iso}_{t}(w_{1},F,w_{2})}>N^{2\xi}\right\},\label{eq:tauiso1}\\
    \tau^{iso}_{2}(w_{1},w_{2})&:=\inf\left\{t\geq0:\frac{|S^{iso}_{t}(w_{1},F,\wh{w}_{2},F^{*},\bar{w}_{2})|}{\psi^{iso}_{t}(w_{1},F,\wh{w}_{2},F^{*},\bar{w}_{1})}>N^{2\xi}\right\}.\label{eq:tauiso2}
\end{align}
Note that in the definition of $\tau^{iso}_{2}$ we have $S^{iso}_{t}$ instead of $R^{iso}_{t}$, i.e. we are only considering an upper bound on $\mbf{x}^{*}G_{1}F\Im G_{2}F^{*}G^{*}_{1}\mbf{x}$ rather than a bound on the error of the deterministic approximation, hence the fact that the corresponding ``error" parameter defined in \eqref{eq:psi2iso} is simply the upper bound from \eqref{eq:phi2iso}. The overall stopping time is the minimum
\begin{align}
    \tau^{av}(w_{1},w_{2})&:=\min\left\{\tau^{av}(w_{1},F),\tau^{av}(w_{1},F,w_{2},E_{\pm}),\tau^{av}(\wh{w}_{1},F,\wh{w}_{2},F^{*})\right\},\label{eq:tauav}\\
    \tau(w_{1},w_{2})&:=\min\left\{\tau^{av}(w_{1},w_{2}),\tau^{iso}_{1}(w_{1},w_{2}),\tau^{iso}_{2}(w_{1},w_{2}),\tau^{iso}_{2}(w_{2},w_{1})\right\}.\label{eq:tau}
\end{align}

The main goal of this section is the local law for matrices with a Gaussian component of size $T$, which is equivalent to the following statement.
\begin{proposition}\label{prop:gaussLL}
For any sufficiently small and fixed $\epsilon>0$ and $T>0$, we have
\begin{align}
    \tau(w_{1},w_{2})>T
\end{align}
with very high probability, uniformly in $(z,w_{1},w_{2})\in S^{-1}_{T}(\Omega_{K,\xi,\epsilon})$.
\end{proposition}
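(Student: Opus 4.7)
My strategy is to run a joint Itô calculus along the characteristic flow \eqref{eq:dw}--\eqref{eq:dz} for all of the observables appearing in the stopping times \eqref{eq:tauav1}--\eqref{eq:tauiso2} simultaneously. Fix an initial datum $(z,w_1,w_2)\in S_T^{-1}(\Omega_{K,\xi,\epsilon})$ and apply Itô's formula to each $S^{av/iso}_t$ under $dX(t) = -\tfrac{1}{2}X(t)\,dt + N^{-1/2} dB(t)$, recording the three sources of drift: the matrix drift $-\tfrac{1}{2}X\,dt$, the second-order Itô renormalisation $N^{-1}\sum_{ij}\partial^{2}_{x_{ij}}S$, and the $t$-derivatives of $G_{j,t}$ induced by the evolution of $w_{j,t}$ and $z_t$. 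The characteristic flow is precisely designed so that the $\partial_{w}G$ and $\partial_{z}G$ contributions combine with the renormalisation term via the self-consistent equation for $M_{t}$, cancelling to leading order and leaving a residual drift built from longer resolvent chains together with a martingale increment $dM_t$ whose quadratic variation is itself a (typically longer) resolvent chain.

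I would then close the argument by a bootstrap based on the master stopping time $\tau(w_1,w_2)$ in \eqref{eq:tau}. Before time $\tau$, each $|R^{av/iso}_t|$ is, by definition, controlled by $N^{2\xi}\psi^{av/iso}_t$, and $S^{iso}_{t}(w_1,F,\wh w_2,F^*,\bar w_1)$ is bounded by $N^{2\xi}\phi^{iso}_{2,t}$. These a priori bounds are enough to estimate both the residual drift and the quadratic variation of $dM_t$ in terms of quantities controlled via the deterministic estimates of Lemma \ref{lem:MBounds} and the initial-time bounds provided by Proposition \ref{prop:singleLL}. Integrating the resulting SDE from $0$ to $t\wedge\tau$, using the monotonicity of $|\eta_t|$, $|\eta_t|\rho_t$ and $1/\phi_t$ together with the time-integral bounds \eqref{eq:int1}--\eqref{eq:int2}, and applying the Burkholder--Davis--Gundy inequality to the martingale part, I would show that, with very high probability, each observable remains strictly below $\tfrac{1}{2}N^{2\xi}\psi^{av/iso}_{t}$ on $[0,T]$. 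Together with pathwise continuity of the observables in $t$, this rules out $\tau\le T$ with very high probability.

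The main technical obstacle is that the isotropic observables $R^{iso}_{t}(w_1,F,w_2)$ and $S^{iso}_{t}(w_1,F,\wh w_2,F^*,\bar w_1)$ have quadratic variations that are \emph{longer} resolvent chains than the quantities they track; the bootstrap must therefore be closed at the level of these long chains rather than reduced to them, which is precisely why both $\tau^{iso}_{2}(w_1,w_2)$ and its symmetric partner $\tau^{iso}_{2}(w_2,w_1)$ are included in \eqref{eq:tau}. A related difficulty is that we cannot afford to lose factors of $1/\eta_j$ via a naive Cauchy--Schwarz plus Ward identity estimate, since the whole point is to capture the dependence on $\Re w_j$; this forces me to first establish entrywise laws for the relevant resolvent chains and to combine them with a fluctuation-averaging step, as foreshadowed in the discussion preceding Theorem \ref{thm3}. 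The real case further produces transpose terms $G^{T}$ arising from $\partial_{x_{ij}}=\partial_{x_{ji}}$, which must be carried through the Itô expansion and shown to contribute deterministic pieces that are absorbed by $M_{z}$ through Lemma \ref{lem:MBounds}; the complex case then follows as a cleaner specialisation.

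Finally, the pointwise-in-$(z,w_1,w_2)$ bound obtained above is promoted to a uniform statement on $S_T^{-1}(\Omega_{K,\xi,\epsilon})$ by a standard net argument: the pointwise bound is applied on an $N^{-C}$-grid of initial data, a union bound uses the very-high-probability control, and interpolation to arbitrary points in the domain is done via the trivial Lipschitz continuity of the observables in the spectral parameters, combined with the monotonicity of $|\eta_{t}|\rho_{t}$ and $1/\phi_{t}$ in $t$ to control how the grid ``contracts'' along the flow.
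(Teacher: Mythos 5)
Your propagation mechanism---joint It\^{o} calculus along the characteristic flow, bootstrap against the master stopping time $\tau$, estimating drift and quadratic variation using the a priori bounds baked into the stopping time, then Burkholder--Davis--Gundy and Gr\"{o}nwall---is exactly what Lemmas \ref{lem:av1Gaussian}--\ref{lem:iso2Gaussian} do. However, your argument has a genuine gap: you never establish the initial conditions $|R^{av/iso}_{0}|\leq N^{\xi/2}\psi^{av/iso}_{0}$ and $|S^{iso}_{0}(w_1,F,\wh w_2,F^*,\bar w_1)|\leq N^{\xi/2}\phi^{iso}_{2,0}$, which every one of the propagation lemmas takes as a hypothesis. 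You cite ``initial-time bounds provided by Proposition \ref{prop:singleLL}'', but that proposition is a \emph{single}-resolvent law and cannot supply bounds on the two-resolvent chains $G_1 F G_2$, $\Im G_1 F\Im G_2 F^*$, $G_1 F\Im G_2 F^* G_1^*$ that define the stopping time. Without an input estimate at $t=0$, the bootstrap has nothing to propagate.

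The paper closes this gap by a separate observation: the reversed flow $S_T^{-1}$ pushes $z$ to $z_0 = ze^{T/2}$, which lies outside the unit circle by a fixed distance $\delta(T)>0$, so the Hermitisation $W_{z_{0}}$ has an $N$-independent spectral gap around $0$. Choosing $\epsilon$ small in $\Omega_{K,\xi,\epsilon}$ forces the initial spectral parameters $w_{j,0}$ into this gap, where the trivial bound $\|G_{z_0}(w_{j,0})\|\leq d(w_{j,0},\operatorname{spec}(W_{z_0}))^{-1}=O(1)$ holds deterministically. In this global regime the two-resolvent local laws needed as inputs follow from a much simpler cumulant expansion (cf.\ \cite[Appendix B]{cipolloni_optimal_2022}). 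This is the step your proposal is missing. A smaller point: the entrywise laws and fluctuation averaging you invoke are used in the paper only for the removal of the Gaussian component (the GFT in Section 5.2), not in the proof of Proposition \ref{prop:gaussLL}; the characteristic-flow lemmas freely use Cauchy--Schwarz, just in combinations that avoid fatal $1/\eta$ losses.
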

We prove this through  a series of lemmas, one for each quantity $R^{av/iso}_{j,t}$. The proof of each lemma follows the same pattern as the proofs in \cite[Section 5]{cipolloni_universality_2024}: derive an SDE and bound the time integral of each term. To compute derivatives of the deterministic approximation we can use the ``meta model" argument as outlined in the proof of \cite[Lemma 4.8]{cipolloni_eigenstate_2023}. 

In the remainder of this section we fix a $T>0$ and consider initial conditions in $S^{-1}_{T}(\Omega_{K,\xi,\epsilon})$. In particular, we have $Nl_{j,t}\geq N^{\xi}$ for all $0\leq t\leq T$. In the statements and proofs of Lemmas \ref{lem:av1Gaussian}--\ref{lem:iso2Gaussian}, all inequalities hold with very high probability.
\begin{lemma}\label{lem:av1Gaussian}
Assume that $|R^{av}_{0}(w_{1},F)|\leq N^{\xi/2}\psi^{av}_{0}(w_{1},F)$. Then
\begin{align}
    |R^{av}_{t}(w_{1},F)|&\lesssim N^{\xi/2}\psi^{av}_{0}(w_{1},F).
\end{align}
\end{lemma}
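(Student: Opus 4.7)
My plan is to follow the dynamical approach of \cite{cipolloni_universality_2024}: derive a stochastic differential equation for $R^{av}_t(w_1, F)$ by applying It\^{o}'s formula to $\Tr{G_{1,t}F}$, use the characteristic flow \eqref{eq:dw}--\eqref{eq:dz} to absorb the leading-order drift into the evolution of $M_{z_t}(w_{1,t})$, and then estimate the remaining martingale and It\^{o}-correction terms up to the stopping time $\tau(w_1, w_2)$ using the a priori bounds built into its definition.

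First I would compute $d\Tr{G_{1,t}F}$ using It\^{o} together with $dX_t = -\tfrac12 X_t\,dt + N^{-1/2} dB_t$ and the chain rule for the time-dependent parameters $(w_{1,t}, z_t)$. The characteristic evolution of $(w_{1,t}, z_t)$ is chosen precisely so that, combined with the meta-model identity $dM_{z_t}(w_{1,t}) = \tfrac12 M_{z_t}(w_{1,t})\,dt$, the full leading-order deterministic part of $d\Tr{G_{1,t}F}$ matches $d\Tr{M_{z_t}(w_{1,t})F}$ exactly. What survives is an SDE of the form
\[
  dR^{av}_t(w_1, F) \;=\; d\mc{M}_t + \mc{E}_t\,dt,
\]
where $\mc{M}_t$ is a local martingale driven by $dB_t$ and $\mc{E}_t$ is the It\^{o} correction from $dB_{ij}\,dB_{kl}$ (which for real $\beta = 1$ contains both a $\delta_{ik}\delta_{jl}$ and a $\delta_{il}\delta_{jk}$ contraction).

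Next I would bound the two contributions on the event $\{t < \tau(w_1, w_2)\}$. By the Burkholder--Davis--Gundy inequality,
\[
  \sup_{0 \leq s \leq t \wedge \tau} |\mc{M}_s|^2 \;\prec\; \int_0^{t \wedge \tau} \langle d\mc{M}\rangle_s,
\]
and the quadratic variation reduces to averages of $N^{-2}\tr(G_{1,s}FG_{1,s}^{*}F^{*})$ plus, in the real case, $N^{-2}\tr(G_{1,s}FG_{1,s}^{\top}F^{\top})$. Each such trace is controlled on $\{s < \tau\}$ by the deterministic bound \eqref{eq:M12FF*} together with the $\phi^{av}_{2,s}$ scaling, and using the integration identities \eqref{eq:int1}--\eqref{eq:int2} the time integral of $\langle d\mc{M}\rangle$ is at most $N^{\xi}(\psi^{av}_0(w_1, F))^2$ up to logarithmic factors. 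The It\^{o} drift $\mc{E}_t$, after Proposition \ref{prop:singleLL} and the Ward identity $G_{1,s}G_{1,s}^{*} = \eta_{1,s}^{-1}\Im G_{1,s}$, collapses to a deterministic integrand of size $O(\rho_{1,s}/(N\eta_{1,s}^{2}))$, whose $s$-integral is $O(\psi^{av}_0(w_1,F))$ by \eqref{eq:int2} with $\alpha = 2$. Combining these estimates with the initial bound $|R^{av}_0(w_1, F)| \leq N^{\xi/2}\psi^{av}_0(w_1, F)$ yields the conclusion.

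The main obstacle is the real-symmetric case: the extra It\^{o} term proportional to $\tr(G_{1,s}FG_{1,s}^{\top}F^{\top})$ involves $G_{1,s}^{\top}$, which is not equal to $G_{1,s}^{*}$ or $G_{1,s}$, so the standard resolvent identities do not apply directly. Here I would exploit the nilpotence $F^2 = 0$ and the block structure of $G_z(w)$ to collapse the trace into a combination still bounded by a $\phi^{av}_{2}$-type estimate; this is the step where the $F$-specific cancellation responsible for the improved $\rho_{1,t}/\eta_{1,t}$ scaling in $\psi^{av}_0$ (relative to the generic $1/(N\eta_{1,t})$ of the plain single-resolvent local law) enters, and it must be handled carefully to avoid losing the extra factor of $\rho_{1,t}$.
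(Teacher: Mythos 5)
Your overall strategy matches the paper's: derive the SDE for $R^{av}_t(w_1,F)$ via It\^{o} and the characteristic flow, then control the martingale part via BDG and the drift parts via the stopping-time bounds. However, there is a genuine gap in your description of the SDE. You decompose $\mathrm{d}R^{av}_t(w_1,F)$ as \emph{only} a martingale plus the It\^{o} correction from $\mathrm{d}B_{ij}\,\mathrm{d}B_{kl}$. This misses the cross term
\begin{align*}
\Tr{G_{1,t}-M_t(w_1)}\Tr{G_{1,t}FG_{1,t}}\,\mathrm{d}t,
\end{align*}
which arises because the characteristic flow \eqref{eq:dw} is driven by the deterministic $m_t$, while the chain-rule term in It\^{o}'s formula produces the random trace $\Tr{G_{1,t}}$ — the difference survives. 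This term is of the same order as the martingale contribution (its integrand is $\sim N^{\xi/2}\rho_{1,s}^2/(N\eta_{1,s}^2)$, integrating to $\sim N^{\xi/2}\psi^{av}_t$), so it is not a lower-order correction that can be absorbed; it must be estimated explicitly using Proposition~\ref{prop:singleLL} for the first factor and the stopping time bound on $\Tr{G_{1,s}FG_{1,s}}$ for the second. An argument that claims exact cancellation of the deterministic drift is incorrect.

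Your concern about the real-case $G^{\top}$ term is real in the sense that the It\^{o} correction does produce traces containing $G_{1,t}^{\top}$, but it is not the main difficulty, and your proposed route through $F^2=0$ and the block structure is not needed. The paper's It\^{o} correction is $\frac{1}{2N}\sum_{\nu=\pm}\nu\Tr{G^{\top}_{1,t}E_\nu G_{1,t}FG_{1,t}E_\nu}$, and a single application of Cauchy--Schwarz, splitting the trace between $G^{\top}_{1,t}E_\nu$ and $G_{1,t}FG_{1,t}E_\nu$, produces $\sqrt{\Tr{\Im G_{1,t}}/\eta_{1,t}}\cdot\sqrt{\Tr{\Im G_{1,t}F\Im G_{1,t}F^*}/\eta_{1,t}^2}$, using only that $G^{\top}\bar{G}=|G^{\top}|^2$ has the same trace as $|G|^2$ and the Ward identity. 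The same manipulation handles the $\Tr{(G_{1,t}FG_{1,t})^{\top}E_\nu G^*_{1,t}F^*G^*_{1,t}E_\nu}$ piece of the quadratic variation by bounding it against its non-transposed counterpart. As a minor point, your stated bound $O(\rho_{1,s}/(N\eta_{1,s}^2))$ for the It\^{o} drift integrand is off by a factor of $\rho_{1,s}$: the correct bound is $O(\rho_{1,s}^2/(N\eta_{1,s}^2))$, which is what makes the $s$-integral come out to $\psi^{av}_t(w_1,F)$ rather than $1/(N l_{1,t})$.
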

\begin{proof}
By It\^{o}'s lemma and the chain rule we find
\begin{align}
    \mathrm{d}R^{av}_{t}(w_{1},F)&=\frac{1}{2}R^{av}_{t}(w_{1},F)\mathrm{d}t+\Tr{G_{1,t}-M_{t}(w_{1})}\Tr{G_{1,t}FG_{1,t}}\mathrm{d}t\nonumber\\
    &+\frac{1}{2N}\sum_{\nu=\pm}\nu\Tr{G_{1,t}E_{\nu}G^{T}_{1,t}E_{\nu}G_{1,t}F}\mathrm{d}t+\frac{1}{\sqrt{N}}\sum_{i,j=1}^{N}\partial_{ij}(R^{av}_{t}(w_{1},F))\mathrm{d}B_{ij},\label{eq:d(G-M)F}
\end{align}

Consider the stochastic term in \eqref{eq:d(G-M)F}, which has quadratic variation
\begin{align*}
    \frac{1}{N}\sum_{i,j=1}^{N}\left|\Tr{\Delta_{ij}G_{1,t}FG_{1,t}}\right|^{2}\mathrm{d}t&=\frac{1}{N^{2}}\sum_{\nu=\pm}\nu\Tr{G_{1,t}FG_{1,t}E_{\nu}G^{*}_{1,t}F^{*}G^{*}_{1,t}E_{\nu}}\mathrm{d}t\\
    &+\frac{1}{N^{2}}\sum_{\nu=\pm}\nu\Tr{(G_{1,t}FG_{1,t})^{T}E_{\nu}G^{*}_{1,t}F^{*}G_{1,t}^{*}E_{\nu}}\mathrm{d}t.
\end{align*}
Each term on the right hand side can be bounded by
\begin{align*}
    \frac{1}{N^{2}\eta_{1,t}^{2}}\Tr{\Im G_{1,t}F\Im G_{1,t}F^{*}}&\lesssim\frac{\phi^{av}_{2,t}(w_{1},w_{1})}{N^{2}\eta_{1,t}^{2}}\lesssim\frac{\rho_{1,t}^{3}}{N^{2}\eta^{3}_{1,t}},
\end{align*}
using Cauchy-Schwarz. Thus the integral up to $t\leq\tau\wedge T$ is bounded by (c.f. \cite[Eq. (5.70)]{cipolloni_universality_2024})
\begin{align*}
    \int_{0}^{t}\frac{\rho^{3}_{1,s}}{N^{2}\eta_{1,s}^{3}}\,\mathrm{d}s&\lesssim\frac{\rho^{2}_{1,t}}{N^{2}\eta^{2}_{1,t}}\int_{0}^{t}\frac{\rho_{1,s}}{\eta_{1,s}}\,\mathrm{d}s\\
    &\lesssim(\psi^{av}_{t}(w_{1},F))^{2}.
\end{align*}
By the Burkholder-Davis-Gundy (BDG) inequality we obtain
\begin{align*}
    \left|\int_{0}^{t}\frac{1}{\sqrt{N}}\sum_{i,j=1}^{N}\partial_{ij}(R^{(av)}_{s}(w_{1},F))\,\mathrm{d}B_{ij}(s)\right|&\lesssim N^{\xi/2}\psi^{av}_{t}(w_{1},F).
\end{align*}

Now consider the second term in \eqref{eq:d(G-M)F}. Using the single resolvent local law in Proposition \ref{prop:singleLL}, the definition of $\tau$ to bound $\Tr{G_{1,s}FG_{1,s}}$ and the bound for $|\Tr{M_{t}(w_{1},F,w_{2})}|$ in \eqref{eq:M12F}, we bound the integral up to $t\leq\tau\wedge T$ by
\begin{align*}
    \int_{0}^{t}|\Tr{G_{1,s}-M_{s}(w_{1})}\Tr{G_{1,s}FG_{1,s}}|\,\mathrm{d}s&\lesssim\int_{0}^{t}\frac{N^{\xi/2}\rho^{2}_{1,s}}{N\eta^{2}_{1,s}}\,\mathrm{d}s\\
    &\lesssim \frac{N^{\xi/2}\rho_{1,t}}{N\eta_{1,t}}\int_{0}^{t}\frac{\rho_{1,s}}{\eta_{1,s}}\,\mathrm{d}s\\
    &\lesssim N^{\xi/2}\psi^{av}_{t}(w_{1},F).
\end{align*}

Finally, consider the third term,
\begin{align*}
    \frac{1}{N}\sum_{\nu=\pm}\nu\Tr{G_{1,s}^{T}E_{\nu}G_{1,s}FG_{1,s}E_{\nu}}\,\mathrm{d}t.
\end{align*}
Using Cauchy-Schwarz, the single resolvent local law and the definition of $\tau$, we bound the integral by
\begin{align*}
    \frac{1}{N}\int_{0}^{t}\sqrt{\frac{\Tr{\Im G_{1,s}}}{\eta_{1,s}}}\sqrt{\frac{\Tr{\Im G_{1,s}F\Im G_{1,s}F^{*}}}{\eta_{1,s}^{2}}}\,\mathrm{d}s&\leq\frac{1}{N}\int_{0}^{t}\frac{\rho_{1,s}^{2}}{\eta_{1,s}^{2}}\,\mathrm{d}s\\
    &\lesssim\psi^{av}_{1}(w_{1,t}).
\end{align*}
Combining these bounds and the input $|R^{av}_{0}(w_{1},F)|\leq N^{\xi/2}\psi^{av}_{0}(w_{1},F)\leq N^{\xi/2}\psi^{av}_{t}(w_{1},F)$ we conclude the proof.
\end{proof}

\begin{lemma}\label{lem:av2Gaussian}
Let $\mu=\pm$ and assume that $|R^{av}_{0}(w_{1},F,w_{2},E_{\mu})|\leq N^{\xi/2}\psi^{av}_{0}(w_{1},F,w_{2},E_{\mu})$. Then
\begin{align}
    |R^{av}_{t}(w_{1},F,w_{2},E_{\mu})|&\lesssim N^{\xi/2}\psi^{av}_{0}(w_{1},F,w_{2},E_{\mu}).
\end{align}
\end{lemma}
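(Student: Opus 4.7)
The plan is to mimic the proof of Lemma \ref{lem:av1Gaussian}, deriving an It\^o SDE for $R^{av}_{t}(w_{1},F,w_{2},E_{\mu})$ along the characteristic flow and bounding each contribution by $N^{\xi/2}\psi^{av}_{t}(w_{1},F,w_{2},E_{\mu})$ up to $t\leq\tau\wedge T$. Applying It\^o's lemma to $\Tr{G_{1,t}FG_{2,t}E_{\mu}}$ and using $\mathrm{d}M_{t}=\frac{1}{2}M_{t}\,\mathrm{d}t$ together with the meta-model computation of derivatives of the deterministic approximation, one obtains an SDE of the schematic form
\begin{align*}
    \mathrm{d}R^{av}_{t}(w_{1},F,w_{2},E_{\mu})&=R^{av}_{t}(w_{1},F,w_{2},E_{\mu})\,\mathrm{d}t\\
    &+\sum_{j=1}^{2}\Tr{G_{j,t}-M_{j,t}}\cdot(\text{three-resolvent factor})\,\mathrm{d}t\\
    &+\frac{1}{2N}\sum_{\nu=\pm}\nu(\text{transposed four-resolvent trace})\,\mathrm{d}t+\mathrm{d}\mc{M}_{t},
\end{align*}
where $\mc{M}_{t}$ is the martingale part coming from $\frac{1}{\sqrt{N}}\sum_{ij}\partial_{ij}(\cdot)\mathrm{d}B_{ij}$.

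The martingale term is controlled by the BDG inequality once we estimate its quadratic variation, which reduces via the identity $\sum_{i,j}|\Tr{\Delta_{ij}A}|^{2}=\frac{1}{N^{2}}\sum_{\nu}\nu\Tr{AE_{\nu}A^{*}E_{\nu}}$ to chains of the form $\Tr{G_{1,t}FG_{2,t}E_{\mu}E_{\nu}G^{*}_{2,t}F^{*}G^{*}_{1,t}E_{\nu}}$ (and a transposed analogue). Applying Cauchy--Schwarz and the Ward identity $GG^{*}=\Im G/\eta$ on one of the two copies of $G_{2,t}$ or $G_{1,t}$ decouples this into a product of a factor controlled by $\phi^{av}_{2,t}(w_{1},w_{2})$ via \eqref{eq:M12FF*} and a factor controlled by $\phi^{iso}_{1,t}(w_{1},w_{2})$ via \eqref{eq:M12F}, both on the event $t\leq\tau$ where the two-resolvent local laws \eqref{eq:avLL2} and \eqref{eq:isoLL1} are available. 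Time-integrating with \eqref{eq:int1}--\eqref{eq:int2} and the monotonicity of $\phi^{av}_{j,t},\phi^{iso}_{1,t}$ in $t$ yields a bound of order $(\psi^{av}_{t}(w_{1},F,w_{2},E_{\mu}))^{2}$, which after BDG gives the desired $N^{\xi/2}\psi^{av}_{t}$.

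For the first-order drift terms, the single-resolvent local law of Proposition \ref{prop:singleLL} bounds $|\Tr{G_{j,t}-M_{j,t}}|\prec 1/(N\eta_{j,t})$, while the three-resolvent factors $\Tr{G_{1,t}FG_{2,t}E_{\mu}G_{j,t}}$ are bounded using the definition of $\tau$ (they are obtained by differentiating $\Tr{G_{1}FG_{2}E_{\mu}}$ in the spectral parameter and can be written in terms of $R^{av}_{t}$-controlled quantities and $\phi^{av/iso}_{j,t}$-bounded deterministic approximations). The It\^o-correction term involving a transposed resolvent is again handled by Cauchy--Schwarz, splitting it into products of $\Tr{\Im G_{j,t}}/\eta_{j,t}$ and $\Tr{\Im G_{1,t}F\Im G_{2,t}F^{*}}/(\eta_{1,t}\eta_{2,t})$ and invoking \eqref{eq:avLL2}. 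Each resulting time integrand is summable against the flow via \eqref{eq:int1}--\eqref{eq:int2}, and the monotonicity of $\eta_{j,t}/\rho_{j,t}$ and of $1/\phi_{t}$ ensures the final bound matches $N^{\xi/2}\psi^{av}_{t}(w_{1},F,w_{2},E_{\mu})$.

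The main obstacle is bookkeeping: the asymmetry between $\eta_{1,t}$ and $\eta_{2,t}$ makes the two summands inside the parentheses defining $\psi^{av}_{t}(w_{1},F,w_{2},E_{\pm})$ in \eqref{eq:psi2av} play different roles depending on whether $l_{1,t}\lessgtr l_{2,t}$, and one must verify that for each resolvent chain arising in the quadratic variation and drift the ``correct'' combination of $\phi^{iso}_{1,t}$ and $\phi^{av}_{2,t}$ appears after Cauchy--Schwarz. Once this is organised, combining with the assumed initial bound $|R^{av}_{0}(w_{1},F,w_{2},E_{\mu})|\leq N^{\xi/2}\psi^{av}_{0}$ and the monotonicity $\psi^{av}_{0}\leq\psi^{av}_{t}$ closes the Gronwall-type argument.
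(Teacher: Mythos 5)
Your outline follows the paper's proof closely in strategy---SDE along the characteristic flow, Cauchy--Schwarz plus the stopping-time definition and \eqref{eq:int1}--\eqref{eq:int2} for the drift, BDG for the martingale, Gr\"onwall to close---but the schematic SDE you write down omits the one drift term that is genuinely new relative to Lemma~\ref{lem:av1Gaussian} and that makes Gr\"onwall necessary. Besides the $\Tr{G_{j,t}-M_{j,t}}\cdot(\text{three-resolvent})$ pieces, the It\^o corrections and the trivial $R^{av}_t\,\mathrm{d}t$, the drift contains
\begin{align*}
A_3=\Tr{G_{1,t}FG_{2,t}E_{\mu}}\Tr{G_{2,t}E_{\mu}G_{1,t}E_{\mu}}-\Tr{M_{t}(w_{1},F,w_{2})E_{\mu}}\Tr{M_{t}(w_{2},E_{\mu},w_{1})E_{\mu}},
\end{align*}
coming from the product structure of $\mathrm{d}M_t$. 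Splitting off $R^{av}_t(w_1,F,w_2,E_\mu)\Tr{M_t(w_2,E_\mu,w_1)}$ exposes a self-feedback with coefficient $\beta(t)\lesssim\sqrt{\rho_{1,t}\rho_{2,t}/(\eta_{1,t}\eta_{2,t})}$, which is unbounded along the flow; this, not the linear $R^{av}_t\,\mathrm{d}t$ term, is the reason Gr\"onwall is needed here and not in Lemma~\ref{lem:av1Gaussian}. You invoke Gr\"onwall without naming this term, and ``monotonicity of $\eta_{j,t}/\rho_{j,t}$ and of $1/\phi_t$'' does not by itself close the argument: one has to compute $\int_s^t\beta(r)\,\mathrm{d}r\leq\tfrac12\log(\eta_{1,s}/\eta_{1,t})+\tfrac12\log(\eta_{2,s}/\eta_{2,t})$ so that the Gr\"onwall kernel $e^{\int_s^t\beta}\simeq\sqrt{\eta_{1,s}\eta_{2,s}/(\eta_{1,t}\eta_{2,t})}$ is only polynomially large, and then check that convolving it against each of the two summands in \eqref{eq:psi2av} reproduces $\psi^{av}_t$. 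A smaller imprecision: in the quadratic variation, Cauchy--Schwarz and the Ward identity produce $\phi^{av}_{2,t}/(N^2\eta_{1,t}^3\eta_{2,t})$, i.e.\ only $\phi^{av}_2$ appears and it integrates to the second summand of $\psi^{av}_t$; the $\phi^{iso}_1$ contribution in \eqref{eq:psi2av} comes from the error part of $A_3$ (via $\Tr{(G_{2,t}E_\mu G_{1,t}-M_t(w_2,E_\mu,w_1))E_\mu}\prec N^{\xi/2}/(N\eta_1\eta_2)$), not from the martingale term as your sketch suggests.
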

\begin{proof}
We have the SDE
\begin{align}
    \mathrm{d}R^{av}_{t}(w_{1},F,w_{2},E_{\mu})&=R^{av}_{t}(w_{1},F,w_{2},E_{\mu})\mathrm{d}t+\sum_{m=1}^{3}(A_{m}+B_{m})dt\nonumber\\&+\frac{1}{\sqrt{N}}\sum_{i,j=1}^{N}\partial_{ij}(R^{av}_{t}(w_{1},F,w_{2},E_{\mu}))\mathrm{d}B_{ij},\label{eq:R2av}
\end{align}
where
\begin{align*}
    A_{1}&=\Tr{G_{1,t}-M_{t}(w_{1})}\Tr{G^{2}_{1,t}FG_{2,t}E_{\mu}},\\
    A_{2}&=\Tr{G_{2,t}-M_{t}(w_{2})}\Tr{G_{1,t}FG^{2}_{2,t}E_{\mu}},\\
    A_{3}&=\Tr{G_{1,t}FG_{2,t}E_{\mu}}\Tr{G_{2,t}E_{\mu}G_{1,t}E_{\mu}}-\Tr{M_{t}(w_{1},F,w_{2})E_{\mu}}\Tr{M_{t}(w_{2},E_{\mu},w_{1})E_{\mu}},
\end{align*}
and
\begin{align*}
    B_{1}&=\frac{1}{2N}\sum_{\nu=\pm}\nu\Tr{G_{1,t}E_{\nu}G_{1,t}^{T}E_{\nu}G_{1,t}FG_{2,t}E_{\mu}},\\
    B_{2}&=\frac{1}{2N}\sum_{\nu=\pm}\nu\Tr{G_{1,t}FG_{2,t}E_{\nu}G^{T}_{2,t}E_{\nu}G_{2,t}E_{\mu}},\\
    B_{3}&=\frac{2}{2N}\sum_{\nu=\pm}\nu\Tr{G_{1,t}FG_{2,t}E_{\nu}G^{T}_{1,t}E_{\mu}G^{T}_{2,t}E_{\nu}}.
\end{align*}

Consider $A_{1}$; by Cauchy-Schwarz we have
\begin{align*}
    \left|\Tr{G^{2}_{1,t}FG_{2,t}E_{\mu}}\right|&\leq\sqrt{\frac{\Tr{\Im G_{1,t}}}{\eta_{1,t}}}\cdot\sqrt{\frac{\Tr{\Im G_{1,t}F\Im G_{2,t}F^{*}}}{\eta_{1,t}\eta_{2,t}}}.
\end{align*}
Using the definition of $\tau$ and the averaged single resolvent local law we find
\begin{align*}
    \int_{0}^{t}|A_{1,s}|\,\mathrm{d}s&\lesssim \sqrt{\frac{\phi^{av}_{2,t}(w_{1},w_{2})}{N\eta_{1,t}\eta_{2,t}}}\int_{0}^{t}\sqrt{\frac{N^{\xi}\rho_{1,s}}{N\eta^{3}_{1,s}}}\,\mathrm{d}s\\
    &\lesssim\sqrt{\frac{N^{\xi}}{Nl_{1,t}}}\cdot\sqrt{\frac{\phi^{av}_{2,t}(w_{1},w_{2})}{N\eta_{1,t}\eta_{2,t}}}\\
    &\lesssim N^{\xi/2}\psi^{av}_{2,t}(w_{1},F,w_{2},E_{\mu}),
\end{align*}
and similarly for $A_{2}$ with $l_{2,t}$ in place of $l_{1,t}$.

We rewrite $A_{3}$ as follows:
\begin{align*}
    A_{3}&=R^{av}_{t}(w_{1},F,w_{2},E_{\mu})\Tr{M_{t}(w_{2},E_{\mu},w_{1})}\\
    &+\left(\Tr{M_{t}(w_{1},F,w_{2})E_{\mu}}+R^{av}_{t}(w_{1},F,w_{2},E_{\mu})\right)\Tr{(G_{2,t}E_{\mu}G_{1,t}-M_{t}(w_{2},E_{\mu},w_{1}))E_{\mu}}.
\end{align*}
By contour integration or the resolvent identity (depending on the size of $|w_{1}-w_{2}|$) and the averaged single resolvent local law we have
\begin{align*}
    \left|\Tr{(G_{2,t}E_{\mu}G_{1}-M_{t}(w_{2},E_{\mu},w_{1}))E_{\mu}}\right|&\lesssim\frac{N^{\xi/2}}{N\eta_{1,t}\eta_{2,t}}.
\end{align*}
Thus the integral of the term on the second line of the new expression for $A_{3}$ is bounded by
\begin{align*}
    \frac{N^{\xi/2}}{N\sqrt{l_{1,t}l_{2,t}}}\left(\phi^{iso}_{1,t}(w_{1},w_{2})+N^{\xi}\psi^{av}_{2,t}(w_{1},F,w_{2},E_{\mu})\right)\lesssim N^{\xi/2}\psi^{av}_{2,t}(w_{1},F,w_{2},E_{\mu}).
\end{align*}

For $B_{j},\,j=1,2,3$ we use Cauchy-Schwarz. All three terms are analogous so we demonstrate the computations for $B_{1}$:
\begin{align*}
    \int_{0}^{t}|B_{1,s}|\,\mathrm{d}s&\lesssim\frac{1}{N}\int_{0}^{t}\Tr{|E_{\mu}G_{1,s}E_{\nu}G^{T}_{1,s}E_{\nu}|^{2}}^{1/2}\Tr{|G_{1,s}FG_{2,s}|^{2}}^{1/2}\,\mathrm{d}s\\
    &\lesssim\sqrt{\frac{\phi^{av}_{2,t}(w_{1},w_{2})}{N\eta_{1,t}\eta_{2,t}}}\int_{0}^{t}\frac{\rho^{1/2}_{1,s}}{N^{1/2}\eta_{1,s}^{3/2}}\,\mathrm{d}s\\
    &\lesssim\frac{1}{\sqrt{Nl_{1,t}}}\cdot\sqrt{\frac{\phi^{av}_{2,t}(w_{1},w_{2})}{N\eta_{1,t}\eta_{2,t}}}\\
    &\lesssim\psi^{av}_{2,t}(w_{1},F,w_{2},E_{\mu}).
\end{align*}

The quadratic variation of the stochastic term can be bounded by
\begin{align*}
    &\frac{1}{N^{2}}\sum_{\nu=\pm}\nu\Tr{G_{1,t}FG_{2,t}E_{\mu}G_{1,t}E_{\nu}G_{1,t}^{*}E_{\mu}G_{2,t}^{*}F^{*}G_{1,t}^{*}E_{\nu}}\\
    &+\frac{1}{N^{2}}\sum_{\nu=\pm}\nu\Tr{(G_{1,t}FG_{2,t}E_{\mu}G_{1,t})^{T}E_{\nu}G_{1,t}^{*}E_{\mu}G_{2,t}^{*}F^{*}G_{1,t}^{*}E_{\nu}}.
\end{align*}
The term on the second line can be bounded by the first using Cauchy-Schwarz. For the latter, we have
\begin{align*}
    \frac{1}{N^{2}}\left|\Tr{G_{1,t}FG_{2,t}E_{\mu}G_{1,t}E_{\nu}G_{1,t}^{*}E_{\mu}G_{2,t}^{*}F^{*}G_{1,t}^{*}E_{\nu}}\right|&\leq\frac{1}{N^{2}\eta_{1,t}^{3}\eta_{2,t}}\Tr{\Im G_{1,t}F\Im G_{2,t}F^{*}}\\
    &\lesssim\frac{\phi^{av}_{2,t}(w_{1},w_{2})}{N^{2}\eta_{1,t}^{3}\eta_{2,t}}.
\end{align*}
Using the BDG inequality we obtain
\begin{align*}
    \left|\int_{0}^{t}\sum_{i,j=1}^{N}\partial_{ij}(R^{av}_{t}(w_{1},F,w_{2},E_{\mu}))\,\mathrm{d}B_{ij}\right|&\leq\frac{N^{\xi/2}}{\sqrt{N(l_{1,t}\wedge l_{2,t})}}\cdot\sqrt{\frac{\phi^{av}_{2,t}(w_{1},w_{2})}{N\eta_{1,t}\eta_{2,t}}}\\
    &\lesssim N^{\xi/2}\psi^{av}_{t}(w_{1},F,w_{2},E_{\mu}).
\end{align*}

Altogether we have shown that
\begin{align*}
    |R^{av}_{t}(w_{1},F,w_{2},E_{\mu})|&\leq \alpha(t)+\int_{0}^{t}\beta(s)|R^{av}_{s}(w_{1},F,w_{2},E_{\mu})|\,\mathrm{d}s,
\end{align*}
where
\begin{align*}
    \alpha(t)&=|R^{av}_{0}(w_{1},F,w_{2},E_{\mu})|+N^{\xi/2}\psi^{av}_{t}(w_{1},F,w_{2},E_{\mu}),\\
    \beta(t)&=\left|\Tr{M_{s}(w_{2},E_{\mu},w_{1},E_{\mu})}\right|.
\end{align*}
By Gr\"{o}nwall's inequality we obtain
\begin{align*}
    |R^{av}_{t}(w_{1},F,w_{2},E_{\mu})|&\leq\alpha(t)+\int_{0}^{t}\alpha(s)\beta(s)e^{\int_{s}^{t}\beta(r)\,\mathrm{d}r}\,\mathrm{d}s.
\end{align*}
By Cauchy-Schwarz we find
\begin{align*}
    \int_{s}^{t}\beta(r)\,\mathrm{d}r&\leq\int_{s}^{t}\sqrt{\frac{\rho_{1,r}\rho_{2,r}}{\eta_{1,r}\eta_{2,r}}}\,\mathrm{d}r\\
    &\leq\sqrt{\int_{s}^{t}\frac{\rho_{1,r}}{\eta_{1,r}}\,\mathrm{d}r}\sqrt{\int_{s}^{t}\frac{\rho_{2,r}}{\eta_{2,r}}\,\mathrm{d}r}\\
    &\leq\frac{1}{2}\log\left(\frac{\eta_{1,s}}{\eta_{1,t}}\right)+\frac{1}{2}\log\left(\frac{\eta_{2,s}}{\eta_{2,t}}\right).
\end{align*}
Moreover, we have
\begin{align*}
    \int_{0}^{t}\sqrt{\frac{\eta_{1,s}\eta_{2,s}}{\eta_{1,t}\eta_{2,t}}}\cdot\sqrt{\frac{\rho_{1,s}\rho_{2,s}}{\eta_{1,s}\eta_{2,s}}}\cdot\frac{1}{\sqrt{l_{1,s}l_{2,s}}}\,\mathrm{d}s&\lesssim\frac{1}{\sqrt{l_{1,s}l_{2,s}}},\\
    \int_{0}^{t}\sqrt{\frac{\eta_{1,s}\eta_{2,s}}{\eta_{1,t}\eta_{2,t}}}\cdot\sqrt{\frac{\rho_{1,s}\rho_{2,s}}{\eta_{1,s}\eta_{2,s}}}\cdot\frac{1}{\sqrt{\eta_{1,s}\eta_{2,s}}}\,\mathrm{d}s&\lesssim\frac{1}{\sqrt{\eta_{1,t}\eta_{2,t}}}.
\end{align*}
Using these bounds and the fact that 
\begin{align*}
    |R^{av}_{0}(w_{1},F,w_{2},E_{\nu})|&\leq N^{\xi/2}\psi^{av}_{0}(w_{1},F,w_{2},E_{\mu})\leq N^{\xi/2}\psi^{av}_{t}(w_{1},F,w_{2},E_{\mu}),
\end{align*}
by assumption (the second inequality follows because $\psi^{av}_{t}$ is increasing), we conclude the proof.
\end{proof}

\begin{lemma}\label{lem:av3Gaussian}
Assume that $|R^{av}_{0}(\wh{w}_{1},F,\wh{w}_{2},F^{*})|\leq N^{\xi/2}\psi^{av}_{0}(w_{1},F,w_{2},F^{*})$. Then
\begin{align}
    |R^{av}_{t}(\wh{w}_{1},F,\wh{w}_{2},F^{*})|&\lesssim N^{\xi/2}\psi^{av}_{t}(w_{1},F,w_{2},F^{*}).
\end{align}
\end{lemma}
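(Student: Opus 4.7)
The plan is to mirror the strategy of Lemmas \ref{lem:av1Gaussian}--\ref{lem:av2Gaussian}: derive an SDE for $R^{av}_{t}(\wh{w}_{1},F,\wh{w}_{2},F^{*})$ via It\^{o}'s lemma applied to the characteristic flow, estimate each driver on $[0,t\wedge\tau]$ by $N^{\xi/2}\psi^{av}_{t}(w_{1},F,w_{2},F^{*})$, and close the argument with Gr\"{o}nwall's inequality. Writing $\Im G=(G-G^{*})/(2i)$ expands the quantity of interest into four chains of the form $\Tr{G_{z_{t}}(u_{1})FG_{z_{t}}(u_{2})F^{*}}$ with $u_{j}\in\{w_{j,t},\bar{w}_{j,t}\}$, so it suffices to obtain the bound for a single such chain and take linear combinations (the meta model argument of \cite{cipolloni_eigenstate_2023} simultaneously produces the deterministic subtraction).

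Besides the drift $R^{av}_{t}\,\mathrm{d}t$ from the characteristic flow, It\^{o}'s lemma produces four families of terms. \emph{(i)} Single-resolvent error terms $\Tr{G_{j,t}-M_{t}(w_{j})}\cdot\Tr{G^{2}_{j,t}FG_{\bar j,t}F^{*}}$ for $j=1,2$; the large trace is bounded by Cauchy--Schwarz against $\sqrt{\Tr{\Im G_{j,t}}/\eta_{j,t}}$ and $\sqrt{\Tr{\Im G_{1,t}F\Im G_{2,t}F^{*}}/(\eta_{1,t}\eta_{2,t})}$, after which Proposition \ref{prop:singleLL} and the $\tau$-control on the latter trace via $\phi^{av}_{2,t}(w_{1},w_{2})$ give, upon integration and the bound $\int_{0}^{t}\rho_{j,s}/\eta_{j,s}\,\mathrm{d}s\leq\log N$ from \eqref{eq:int1}, a contribution of size $N^{\xi/2}\psi^{av}_{t}(w_{1},F,w_{2},F^{*})$. \emph{(ii)} A self-consistency term of the form $R^{av}_{t}\cdot\Tr{M_{t}(u_{2},F^{*},u_{1},F)}$, which will be absorbed by Gr\"{o}nwall. \emph{(iii)} The $1/N$ It\^{o} correction from the real structure, of the schematic form $\frac{1}{N}\sum_{\nu}\nu\Tr{G_{j,t}E_{\nu}G^{T}_{j,t}E_{\nu}\,(\cdots)}$; this is handled by the same Cauchy--Schwarz step as in Lemma \ref{lem:av2Gaussian}, using $\Tr{|E_{\nu}G^{T}_{j,t}E_{\nu}|^{2}}\lesssim\rho_{j,t}/\eta_{j,t}$ against the chain containing $F$ and $F^{*}$. \emph{(iv)} The martingale $N^{-1/2}\sum_{ij}\partial_{ij}R^{av}_{t}\,\mathrm{d}B_{ij}$, whose quadratic variation is controlled by $N^{-2}\eta_{1,t}^{-1}\eta_{2,t}^{-1}(\Tr{\Im G_{1,t}F\Im G_{2,t}F^{*}})^{2}\lesssim(\phi^{av}_{2,t})^{2}\rho_{1,t}\rho_{2,t}/(N^{2}\eta_{1,t}\eta_{2,t})$, so that after integration in time and applying the BDG inequality the stochastic term is dominated by $N^{\xi/2}\psi^{av}_{t}(w_{1},F,w_{2},F^{*})$.

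The step I expect to be the main obstacle is (ii). By \eqref{eq:M12FF*} we have $|\Tr{M_{t}(u_{2},F^{*},u_{1},F)}|\lesssim\phi^{av}_{2,t}(w_{1},w_{2})$, and the Gr\"{o}nwall exponent becomes $\int_{s}^{t}\phi^{av}_{2,r}(w_{1},w_{2})\,\mathrm{d}r$. Using \eqref{eq:phiBoundSimplified} I would bound $\phi^{av}_{2,r}\lesssim\rho_{1,r}\rho_{2,r}/\phi_{r}\lesssim\sqrt{(\rho_{1,r}/\eta_{1,r})(\rho_{2,r}/\eta_{2,r})}$ and apply Cauchy--Schwarz on the time integrals to reduce to two copies of \eqref{eq:int1}, so that the exponent is at most $\tfrac12\log(\eta_{1,s}\eta_{2,s}/\eta_{1,t}\eta_{2,t})$. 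Combining this with the monotonicity of $N(l_{1,t}\wedge l_{2,t})$ and the bookkeeping that was carried out in Lemma \ref{lem:av2Gaussian} to verify $\int_{0}^{t}\sqrt{\eta_{1,s}\eta_{2,s}/(\eta_{1,t}\eta_{2,t})}\cdot\alpha(s)\beta(s)\,\mathrm{d}s$ is absorbed into $\psi^{av}_{t}$, the input assumption $|R^{av}_{0}|\leq N^{\xi/2}\psi^{av}_{0}\leq N^{\xi/2}\psi^{av}_{t}$ then closes the Gr\"{o}nwall loop and delivers the claimed bound.
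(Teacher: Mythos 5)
Your item \emph{(ii)} misidentifies the structure of the quadratic term $A_{3}$ and as a consequence proposes a Gr\"{o}nwall step that the paper's proof does not use and that the SDE does not actually call for. In the SDE for $R^{av}_{t}(\wh{w}_{1},F,\wh{w}_{2},F^{*})$ the product-of-two-chains term has the schematic form $\sum_{\nu}\nu\bigl(\Tr{G_{1,t}FG_{2,t}E_{\nu}}\Tr{G_{2,t}F^{*}G_{1,t}E_{\nu}}-\Tr{M_{t}(w_{1},F,w_{2})E_{\nu}}\Tr{M_{t}(w_{2},F^{*},w_{1})E_{\nu}}\bigr)$, and the natural telescoping $AB-A_{0}B_{0}=(A-A_{0})B_{0}+A_{0}(B-B_{0})+(A-A_{0})(B-B_{0})$ produces errors of the \emph{2-chain without imaginary parts}, namely $\Tr{(G_{1,t}FG_{2,t}-M_{t}(w_{1},F,w_{2}))E_{\nu}}$, not the target quantity $R^{av}_{t}(\wh{w}_{1},F,\wh{w}_{2},F^{*})$ times a deterministic trace. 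So there is no self-consistency term $R^{av}_{t}\cdot\Tr{M_{t}(u_{2},F^{*},u_{1},F)}$ to feed into Gr\"{o}nwall; instead, the key input you are missing is that these 2-chain errors are controlled by a \emph{different} stopping time, $\tau^{av}(w_{1},F,w_{2},E_{\pm})$ from Lemma \ref{lem:av2Gaussian}, which bounds them by $N^{\xi}\psi^{av}_{s}(w_{1},F,w_{2},E_{\nu})$ (a genuinely smaller parameter than $\psi^{av}_{s}(w_{1},F,w_{2},F^{*})$). This hierarchical use of the collection of stopping times \eqref{eq:tauav} is the backbone of the argument, and without invoking it the $A_{3}$ estimate does not close.

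Once this is in place, the remaining time integrals are not simple applications of \eqref{eq:int1}: one needs, for example,
\begin{align*}
\frac{1}{\rho_{1,t}\wedge\rho_{2,t}}\int_{0}^{t}\frac{\phi^{iso}_{1,s}(w_{1},w_{2})}{\sqrt{l_{1,s}l_{2,s}}}\,\mathrm{d}s\lesssim\frac{1}{l_{1,t}\wedge l_{2,t}},
\end{align*}
which rests on the inequality $\phi^{iso}_{1,s}\leq(\rho_{1,s}\vee\rho_{2,s})\bigl(\frac{\rho_{1,s}}{\eta_{1,s}}\wedge\frac{\rho_{2,s}}{\eta_{2,s}}\bigr)$ and an application of \eqref{eq:int2}, and which your bookkeeping sketch does not reach. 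A smaller point: your preliminary expansion $\Im G=(G-G^{*})/(2i)$ into four plain chains is not needed and is not what the paper does; the SDE is derived directly for $\Tr{\Im G_{1,t}F\Im G_{2,t}F^{*}}$, with $M_{z}(\wh{w}_{1},F,\wh{w}_{2})$ as the deterministic subtraction and the $\Im$'s propagated linearly through the It\^{o} drift terms. Items \emph{(i)}, \emph{(iii)} and \emph{(iv)} of your plan are broadly in line with the paper's computation (the paper bounds $A_{1},A_{2}$ by the operator inequality $|G_{j}^{2}|\leq\Im G_{j}/\eta_{j}$ rather than Cauchy--Schwarz, but either route works), so the substantive gap is precisely in \emph{(ii)}.
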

\begin{proof}
The SDE for $R^{av}_{t}(w_{1},F,w_{2},F^{*})$ can be obtained from \eqref{eq:R2av} after replacing $E_{\mu}$ with $F^{*}$. Recall that for $R^{av}_{t}(\wh{w}_{1},F,\wh{w}_{2},F^{*})$, we have $\Im G_{1}$ and $\Im G_{2}$ in place of $G_{1}$ and $G_{2}$. Thus we can replace all linear terms in $G_{j,t}$ in \eqref{eq:R2av} with $\Im G_{j,t}$. Consider $A_{1}$; using the averaged single resolvent local law we find
\begin{align*}
    \left|\Tr{G_{1,t}-M_{1,t}}\Tr{G^{2}_{1,t}F\Im G_{2,t}F^{*}}\right|&\leq\frac{N^{\xi/2}}{N\eta_{1,t}}\cdot\frac{\Tr{\Im G_{1,t}F\Im G_{2,t}F^{*}}}{\eta_{1,t}}\\
    &\leq\frac{N^{\xi/2}\phi^{av}_{2,t}(w_{1},w_{2})}{N\eta^{2}_{1,t}}.
\end{align*}
Integrating up to $t$ using \eqref{eq:int2} we have
\begin{align*}
    \int_{0}^{t}\frac{N^{\xi/2}\phi^{av}_{2,s}(w_{1},w_{2})}{N\eta^{2}_{1,s}}\,\mathrm{d}s&\lesssim\frac{N^{\xi/2}\phi^{av}_{2,t}(w_{1},w_{2})}{Nl_{1,t}}\lesssim N^{\xi/2}\psi^{av}_{t}(w_{1},F,w_{2},F^{*}).
\end{align*}
For $A_{2}$ we have the same bounds with $l_{2,t}$ in place of $l_{1,t}$.

Now consider $A_{3}$, which we split as follows:
\begin{align}
    A_{3}&=\sum_{\nu=\pm}\nu\Tr{(G_{1,t}FG_{2,t}-M_{t}(w_{1},F,w_{2}))E_{\nu}}\Tr{M_{t}(w_{2},F^{*},w_{1})E_{\nu}}\nonumber\\
    &+\sum_{\nu=\pm}\nu\Tr{(G_{2,t}F^{*}G_{1,t}-M_{t}(w_{2},F^{*},w_{1}))E_{\nu}}\Tr{M_{t}(w_{1},F,w_{2})E_{\nu}}\nonumber\\
    &+\sum_{\nu=\pm}\nu\Tr{(G_{1,t}FG_{2,t}-M_{t}(w_{1},F,w_{2}))E_{\nu}}\Tr{(G_{2,t}F^{*}G_{1,t}-M_{t}(w_{2},F^{*},w_{1}))E_{\nu}}.\label{eq:A3}
\end{align}
The terms on the first two lines are of the same form so it is enough to bound one of them. Using \eqref{eq:M12F} and the definition of $\tau$ we have
\begin{align*}
    &\left|\int_{0}^{t}\Tr{(G_{1,s}FG_{2,s}-M_{s}(w_{1},F,w_{2}))E_{\nu}}\Tr{M_{s}(w_{2},F^{*},w_{1})E_{\nu}}\,\mathrm{d}s\right|\\
    &\leq\int_{0}^{t}N^{\xi}\psi^{av}_{s}(w_{1},F,w_{2},E_{\nu})\phi^{iso}_{1,s}(w_{1},w_{2})\,\mathrm{d}s\\
    &\leq\int_{0}^{t}\frac{N^{\xi}\phi^{iso}_{1,s}(w_{1},w_{2})}{\sqrt{N(l_{1,s}\wedge l_{2,s})}}\left(\frac{\phi^{iso}_{1,s}(w_{1},w_{2})}{\sqrt{N(l_{1,s}\vee l_{2,s})}}+\sqrt{\frac{\phi^{av}_{2,s}(w_{1},w_{2})}{N\eta_{1,s}\eta_{2,s}}}\right)\,\mathrm{d}s\\
    &\leq\frac{N^{\xi}\phi^{av}_{2,t}(w_{1},w_{2})}{N(\rho_{1,t}\wedge\rho_{2,t})}\int_{0}^{t}\left(\frac{\phi^{iso}_{1,s}(w_{1},w_{2})}{\sqrt{l_{1,s}l_{2,s}}}+\sqrt{\frac{\phi^{av}_{2,s}(w_{1},w_{2})}{(l_{1,s}\wedge l_{2,s})\eta_{1,s}\eta_{2,s}}}\right)\,\mathrm{d}s.
\end{align*}
Using the bound
\begin{align*}
    \phi^{iso}_{1,s}(w_{1},w_{2})&=\frac{\phi^{av}_{2,s}(w_{1},w_{2})}{\rho_{1,s}\wedge\rho_{2,s}}\leq(\rho_{1,s}\vee\rho_{2,s})\left(\frac{\rho_{1,s}}{\eta_{1,s}}\wedge\frac{\rho_{2,s}}{\eta_{2,s}}\right),
\end{align*}
we find
\begin{align*}
    \frac{1}{\rho_{1,t}\wedge\rho_{2,t}}\int_{0}^{t}\frac{\phi^{iso}_{1,s}(w_{1},w_{2})}{\sqrt{l_{1,s}l_{2,s}}}\,\mathrm{d}s&\lesssim\frac{\rho_{1,t}\vee\rho_{2,t}}{\rho_{1,t}\wedge\rho_{2,t}}\int_{0}^{t}\sqrt{\frac{\rho_{1,s}}{\eta_{1,s}^{3}\eta_{2,s}\rho_{2,s}}}\wedge\sqrt{\frac{\rho_{2,s}}{\eta_{1,s}\eta_{2,s}^{3}\rho_{1,s}}}\,\mathrm{d}s\\
    &\lesssim\frac{\rho_{1,t}\vee\rho_{2,t}}{\rho_{1,t}\wedge\rho_{2,t}}\left(\frac{1}{\eta_{1,t}\rho_{2,t}}\wedge\frac{1}{\eta_{2,t}\rho_{1,t}}\right)\\
    &\lesssim\frac{1}{l_{1,t}\wedge l_{2,t}},
\end{align*}
and
\begin{align*}
    \frac{1}{\rho_{1,t}\wedge\rho_{2,t}}\int_{0}^{t}\sqrt{\frac{\phi^{av}_{2,s}(w_{1},w_{2})}{(l_{1,s}\wedge l_{2,s})\eta_{1,s}\eta_{2,s}}}\,\mathrm{d}s&\lesssim\frac{1}{\sqrt{l_{1,t}\wedge l_{2,t}}(\rho_{1,t}\wedge\rho_{2,t})}\int_{0}^{t}\sqrt{\frac{\rho_{1,s}^{2}\rho_{2,s}}{\eta_{1,s}^{2}\eta_{2,s}}}\wedge\sqrt{\frac{\rho_{1,s}\rho_{2,s}^{2}}{\eta_{1,s}\eta_{2,s}^{2}}}\,\mathrm{d}s\\
    &\lesssim\frac{1}{\sqrt{l_{1,t}\wedge l_{2,t}}(\rho_{1,t}\wedge\rho_{2,t})}\left(\sqrt{\frac{\rho_{1,t}}{\eta_{1,t}}}\wedge\sqrt{\frac{\rho_{2,t}}{\eta_{2,t}}}\right)\\
    &\lesssim\frac{1}{l_{1,t}\wedge l_{2,t}}.
\end{align*}
Combining these two bounds we have
\begin{align*}
    \left|\int_{0}^{t}\Tr{(G_{1,s}FG_{2,s}-M_{s}(w_{1},F,w_{2}))E_{\nu}}\Tr{M_{t}(w_{2},F^{*},w_{1})E_{\nu}}\,\mathrm{d}s\right|&\lesssim\frac{N^{\xi}\phi^{av}_{2,t}(w_{1},w_{2})}{N(l_{1,t}\wedge l_{2,t})}\\
    &\lesssim N^{\xi/2}\psi^{av}_{t}(w_{1},F,w_{2},F^{*}).
\end{align*}
The integral of the term on the third line of \eqref{eq:A3} is bounded by
\begin{align*}
    N^{2\xi}\int_{0}^{t}\left(\psi^{av}_{s}(w_{1},F,w_{2},F^{*})\right)^{2}\,\mathrm{d}s.
\end{align*}
The only new term to estimate here is
\begin{align*}
    N^{2\xi}\int_{0}^{t}\frac{\phi^{av}_{2,s}(w_{1},w_{2})}{N^{2}\eta_{1,s}\eta_{2,s}(l_{1,s}\wedge l_{2,s})}\,\mathrm{d}s&\lesssim\frac{N^{2\xi}\phi^{av}_{2,s}(w_{1},w_{2})}{N^{2}\sqrt{l_{1,t}l_{2,t}}(l_{1,t}\wedge l_{2,t})}\\
    &\lesssim N^{\xi/2}\psi^{av}_{t}(w_{1},F,w_{2},F^{*}).
\end{align*}
Thus we have shown that
\begin{align*}
    \int_{0}^{t}|A_{j,s}|\,\mathrm{d}s&\lesssim N^{\xi/2}\psi^{av}_{t}(w_{1},F,w_{2},F^{*}),\quad j=1,2,3.
\end{align*}

Now consider $B_{1}$ and $B_{2}$; it is enough to bound $B_{1}$ since they are of the same form. Moreover, as mentioned earlier, since we are considering the evolution of $d\Tr{\Im G_{1,t}F\Im G_{2,t}F^{*}}$, it is enough to bound $B_{1}$ with $\Im G_{2,t}$ in place of $G_{2,t}$, i.e.
\begin{align*}
    \frac{1}{N}\sum_{\nu=\pm}\nu\Tr{G_{1,t}E_{\nu}G^{T}_{1,t}E_{\nu}G_{1,t}F\Im G_{2,t}F^{*}}.
\end{align*}
Using Cauchy-Schwarz we have
\begin{align*}
    \frac{1}{N}\left|\Tr{G_{1,t}E_{\nu}G^{T}_{1,t}E_{\nu}G_{1,t}F\Im G_{2,t}F^{*}}\right|&\leq\frac{1}{N}\Tr{\frac{\Im G_{1,t}}{\eta_{1,t}}}^{1/2}\cdot\Tr{\left(\frac{\Im G_{1,t}}{\eta_{1,t}}F\Im G_{2,t}F^{*}\right)^{2}}^{1/2}\\
    &\lesssim\frac{\rho_{1,t}^{1/2}}{N^{1/2}\eta_{1,t}^{3/2}}\Tr{\Im G_{1,t}F\Im G_{2,t}F^{*}}\\
    &\lesssim\frac{\rho_{1,t}^{1/2}\phi^{av}_{2,t}(w_{1},w_{2})}{N^{1/2}\eta_{1,t}^{3/2}}.
\end{align*}
By \eqref{eq:int2} we find
\begin{align*}
    \left|\int_{0}^{t}\frac{1}{N}\Tr{G_{1,s}E_{\nu}G^{T}_{1,s}E_{\nu}G_{1,s}F\Im G_{2,s}F^{*}}\,\mathrm{d}s\right|&\lesssim\frac{\rho_{1,t}^{1/2}\phi^{av}_{2,t}(w_{1},w_{2})}{N^{1/2}}\int_{0}^{t}\frac{1}{\eta_{1,s}^{3/2}}\,\mathrm{d}s\\
    &\lesssim\frac{\phi^{av}_{2,t}(w_{1},w_{2})}{\sqrt{Nl_{1,t}}}.
\end{align*}

Consider $B_{3}$; by Cauchy-Schwarz we have
\begin{align*}
    \frac{1}{N}\left|\Tr{G_{1,t}FG_{2,t}E_{\nu}G^{T}_{1,t}FG^{T}_{2,t}E_{\nu}}\right|&\leq\frac{1}{N\eta_{1,t}\eta_{2,t}}\Tr{\Im G_{1,t}F\Im G_{2,t}F^{*}}\\
    &\lesssim\frac{\phi^{av}_{2,t}(w_{1},w_{2})}{N\eta_{1,t}\eta_{2,t}}.
\end{align*}
Integrating up to $t$ we find
\begin{align*}
    \int_{0}^{t}\frac{\phi^{av}_{2,s}(w_{1},w_{2})}{N\eta_{1,s}\eta_{2,s}}\,\mathrm{d}s&\lesssim\frac{\phi^{av}_{2,t}(w_{1},w_{2})}{N\sqrt{l_{1,t}l_{2,t}}}.
\end{align*}

Finally, we consider the quadratic variation of the stochastic term. Again it is enough to replace $G_{j,t}$ with $\Im G_{j,t}$ wherever the former appears linearly. A representative term is
\begin{align*}
    \frac{1}{N^{2}}\left|\Tr{G_{1,t}F\Im G_{2,t}F^{*}G_{1,t}E_{\nu}G_{1,t}^{*}F^{*}\Im G_{2,t}FG_{1,t}E_{\nu}}\right|&\leq\frac{1}{N\eta_{1,t}^{2}}\Tr{\Im G_{1,t}F\Im G_{2,t}F^{*}}^{2}\\
    &\leq\frac{(\phi^{av}_{2,t}(w_{1},w_{2}))^{2}}{N\eta_{1,t}^{2}}.
\end{align*}
Integrating up to $t$ using \eqref{eq:int2} we find
\begin{align*}
    \left(\int_{0}^{t}\frac{(\phi^{av}_{2,s}(w_{1},w_{2}))^{2}}{N\eta_{1,s}^{2}}\,\mathrm{d}s\right)^{1/2}&\lesssim\frac{\phi^{av}_{2,t}(w_{1},w_{2})}{\sqrt{Nl_{1,t}}}.
\end{align*}
Combining all these bounds and the input $|R^{av}_{0}(\wh{w}_{1},F,\wh{w}_{2},F^{*})|<N^{\xi/2}$ we conclude the proof.
\end{proof}

Now we come to the isotropic quantities.
\begin{lemma}\label{lem:iso1Gaussian}
Assume that $|R^{iso}_{0}(w_{1},F,w_{2})|<N^{\xi/2}\psi^{iso}_{0}(w_{1},F,w_{2})$. Then
\begin{align}
    |R^{iso}_{t}(w_{1},F,w_{2})|\lesssim N^{\xi/2}\psi^{iso}_{0}(w_{1},F,w_{2}).
\end{align}
\end{lemma}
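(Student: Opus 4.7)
The plan is to mirror the argument of Lemmas \ref{lem:av1Gaussian}--\ref{lem:av3Gaussian}: derive an SDE for $R^{iso}_{t}(w_1,F,w_2)$ by It\^o's lemma, bound each drift and martingale contribution, and close with Gronwall's inequality. Writing $G_{j,t}$ for the time-evolved resolvents and using the characteristic flow together with $dM_t = \tfrac12 M_t\,dt$, one obtains schematically
\begin{align*}
dR^{iso}_{t}(w_1,F,w_2) = R^{iso}_{t}(w_1,F,w_2)\,dt + \sum_{m}(A_m + B_m)\,dt + \frac{1}{\sqrt{N}}\sum_{i,j=1}^{N}\partial_{ij}R^{iso}_{t}\,dB_{ij},
\end{align*}
where the $A_m$ are the genuine renormalization terms and the $B_m = \frac{1}{2N}\sum_{\nu=\pm}\nu(\cdots)$ are the It\^o corrections particular to the real case. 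The coefficient $1$ in front of $R^{iso}_{t}\,dt$ reflects the two resolvent factors, each contributing $\tfrac12$ from the flow.

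The representative $A$-terms are $A_1 = \Tr{G_{1,t}-M_{t}(w_1)}\,\mbf{x}^{*}G_{1,t}FG_{2,t}G_{1,t}\mbf{y}$ and its symmetric counterpart $A_2$ in $w_2$, together with a cross term $A_3$ of the form $\mbf{x}^{*}G_{1,t}FG_{2,t}\mbf{y}\cdot\Tr{M_{t}(w_2,E_\nu,w_1)E_\nu}$ which produces the $R^{iso}_{t}\,dt$ multiplier needed for Gronwall. The central isotropic estimate I would use is the Ward--Cauchy--Schwarz identity
\begin{align*}
|\mbf{x}^{*}G_{1,t}^{2}FG_{2,t}\mbf{y}|^{2} \leq \frac{\mbf{x}^{*}\Im G_{1,t}\mbf{x}}{\eta_{1,t}}\cdot\frac{\mbf{y}^{*}G_{2,t}^{*}F^{*}\Im G_{1,t}FG_{2,t}\mbf{y}}{\eta_{1,t}},
\end{align*}
in which the first factor is $\lesssim\rho_{1,t}/\eta_{1,t}$ by Proposition \ref{prop:singleLL}, while the second is controlled by $N^{2\xi}\phi^{iso}_{2,t}(w_2,w_1)/\eta_{1,t}$ via the already-included stopping time $\tau^{iso}_{2}(w_2,w_1)$ in the definition of $\tau$; the symmetric version handles $|\mbf{x}^{*}G_{1,t}FG_{2,t}^{2}\mbf{y}|$ through $\tau^{iso}_{2}(w_1,w_2)$. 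Combined with the averaged single-resolvent law, the time integrals of $A_1,A_2$ collapse, via \eqref{eq:int1}--\eqref{eq:int2}, to expressions of order $\sqrt{\phi^{av}_{2,t}(w_1,w_2)/(N\eta_{1,t}\eta_{2,t})}$, matching the second summand of $\psi^{iso}_{t}(w_1,F,w_2)$. For $A_3$ I would decompose $\mbf{x}^{*}G_{1,t}FG_{2,t}\mbf{y} = R^{iso}_{t} + \mbf{x}^{*}M_{t}(w_1,F,w_2)\mbf{y}$ and use $\|M_{t}(w_1,F,w_2)\|\lesssim\phi^{iso}_{1,t}$ from Lemma \ref{lem:MBounds}, together with the already-proven bound on $R^{av}_{t}(w_1,F,w_2,E_\nu)$, to reproduce the first summand of $\psi^{iso}_{t}$.

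The It\^o terms $B_m$ involve cubic resolvent expressions such as $\frac{1}{N}\mbf{x}^{*}G_{1,t}E_\nu G_{1,t}^{T}E_\nu G_{1,t}FG_{2,t}\mbf{y}$; these are again handled by Cauchy--Schwarz and Ward, pairing an averaged factor $\Tr{\Im G_{1,t}}/(N\eta_{1,t})$ against an isotropic trace controlled via $\tau^{iso}_{2}$, and yield a contribution of the same size as the $A$-terms. The martingale has quadratic variation $\frac{1}{N}\sum_{ij}|\partial_{ij}(\mbf{x}^{*}G_{1,t}FG_{2,t}\mbf{y})|^{2}\,dt$, which after expansion and Cauchy--Schwarz telescopes to expressions of the form $\frac{1}{N^{2}\eta_{j,t}}(\mbf{x}^{*}G_{1,t}F\Im G_{2,t}F^{*}G_{1,t}^{*}\mbf{x})(\mbf{y}^{*}\Im G_{2,t}\mbf{y})$, bounded by $(\psi^{iso}_{t})^{2}$ up to logarithms, so that BDG delivers an $N^{\xi/2}\psi^{iso}_{t}$ contribution. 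Assembling these bounds produces $|R^{iso}_{t}|\leq\alpha(t)+\int_{0}^{t}\beta(s)|R^{iso}_{s}|\,ds$ with $\alpha(t)\lesssim N^{\xi/2}\psi^{iso}_{t}$ and $\int_{s}^{t}\beta(r)\,dr\lesssim\tfrac12\log(\eta_{1,s}\eta_{2,s}/(\eta_{1,t}\eta_{2,t}))$, and Gronwall closes the argument exactly as in Lemma \ref{lem:av2Gaussian}. The main obstacle I anticipate is bookkeeping rather than any single estimate: one must verify that every mixed quantity $\mbf{x}^{*}G_{1,t}^{2}FG_{2,t}\mbf{y}$, $\mbf{x}^{*}G_{1,t}FG_{2,t}^{2}\mbf{y}$, and each $B_m$ It\^o correction reduces cleanly---via the previously established stopping times $\tau^{av}$ and $\tau^{iso}_{2}$---to the two summands of $\psi^{iso}_{t}(w_1,F,w_2)$ without generating extra factors of $1/\rho_{j,t}$ that would spoil the bound in the critical regime $\eta_{j,t}\ll\rho_{j,t}$.
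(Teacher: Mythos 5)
Your overall strategy matches the paper's: derive the SDE via It\^o and the chain rule, bound each drift and martingale term using the stopping times and Ward/Cauchy--Schwarz, integrate in time with \eqref{eq:int1}--\eqref{eq:int2}, and close using the initial condition. The treatment of $A_{1}, A_{2}$ via the identity $|\mbf{x}^{*}G_{1,t}^{2}FG_{2,t}\mbf{y}|^{2}\leq\eta_{1,t}^{-2}\,\mbf{x}^{*}\Im G_{1,t}\mbf{x}\cdot\mbf{y}^{*}G_{2,t}^{*}F^{*}\Im G_{1,t}FG_{2,t}\mbf{y}$, with the second factor controlled by $\tau^{iso}_{2}$, is exactly what the paper does. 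The $B$-terms and the quadratic-variation bound are also in the right spirit.

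The concrete discrepancy is in the cross term $A_{3}$. You place $F$ in the isotropic factor, writing $A_{3}\sim\mbf{x}^{*}G_{1,t}FG_{2,t}\mbf{y}\cdot\Tr{M_{t}(w_{2},E_{\nu},w_{1})E_{\nu}}$, and propose to close by splitting $\mbf{x}^{*}G_{1,t}FG_{2,t}\mbf{y}=R^{iso}_{t}+\mbf{x}^{*}M_{t}(w_{1},F,w_{2})\mbf{y}$, creating a Gr\"onwall loop ``exactly as in Lemma \ref{lem:av2Gaussian}.'' That is not the structure the SDE produces. When a derivative $\Delta_{ij}$ hits $G_{1}$ and a second $\Delta_{ij}$ hits $G_{2}$ across the $F$, the $F$ is trapped inside the trace, giving
\begin{align*}
A_{3}&=\sum_{\nu=\pm}\nu\Big[\Tr{G_{1,t}FG_{2,t}E_{\nu}}\,\mbf{x}^{*}G_{1,t}E_{\nu}G_{2,t}\mbf{y}-\Tr{M_{t}(w_{1},F,w_{2})E_{\nu}}\,\mbf{x}^{*}M_{t}(w_{1},E_{\nu},w_{2})\mbf{y}\Big],
\end{align*}
so the isotropic factor $\mbf{x}^{*}G_{1,t}E_{\nu}G_{2,t}\mbf{y}$ contains no $F$ and is not $R^{iso}_{t}(w_{1},F,w_{2})$. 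Your proposed split would therefore never meet this term, and the self-referential Gr\"onwall loop you anticipate from $A_{3}$ does not occur; the only $R^{iso}_{t}\,dt$ coefficient is the trivial chain-rule factor $1$, which is $O(1)$ for $t\leq T$ fixed. The paper instead telescopes $A_{3}$ by adding and subtracting the deterministic approximants: the factor $\Tr{(G_{1,t}FG_{2,t}-M_{t}(w_{1},F,w_{2}))E_{\nu}}$ is controlled by the already-stopped $\tau^{av}$, the factor $\mbf{x}^{*}(G_{1,t}E_{\nu}G_{2,t}-M_{t}(w_{1},E_{\nu},w_{2}))\mbf{y}$ is bounded by the single-resolvent isotropic law plus contour integration (order $\rho_{j,t}^{1/2}/(N^{1/2}\eta_{j,t}^{3/2})$), and $|\mbf{x}^{*}M_{t}(w_{1},E_{\nu},w_{2})\mbf{y}|\lesssim\sqrt{\rho_{1,t}\rho_{2,t}/(\eta_{1,t}\eta_{2,t})}$. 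With this correction the remainder of your argument goes through and gives the stated bound.
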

\begin{proof}
We have the SDE
\begin{align*}
    \mathrm{d}R^{iso}_{t}(w_{1},F,w_{2})&=R^{iso}_{t}(w_{1},F,w_{2})\mathrm{d}t+\frac{1}{2}\sum_{m=1}^{3}(A_{m}+B_{m})\mathrm{d}t\\
    &+\frac{1}{\sqrt{N}}\sum_{i,j=1}^{N}\partial_{ij}(R^{iso}_{t}(w_{1},F,w_{2}))\mathrm{d}B_{ij},
\end{align*}
where
\begin{align*}
    A_{1,t}&=\Tr{G_{1,t}-M_{1,t}}\mbf{x}^{*}G_{1,t}^{2}FG_{2,t}\mbf{y},\\
    A_{2,t}&=\Tr{G_{2,t}-M_{2,t}}\mbf{x}^{*}G_{1,t}FG^{2}_{2,t}\mbf{y},\\
    A_{3,t}&=\sum_{\nu=\pm}\nu\left[\Tr{G_{1,t}FG_{2,t}E_{\nu}}\mbf{x}^{*}G_{1,t}E_{\nu}G_{2,t}\mbf{y}-\Tr{M_{t}(w_{1},F,w_{2})E_{\nu}}\mbf{x}^{*}M_{t}(w_{1},E_{\nu},w_{2})\mbf{y}\right],
\end{align*}
and
\begin{align*}
    B_{1,t}&=\frac{1}{N}\sum_{\nu=\pm}\nu\mbf{x}^{*}G_{1,t}E_{\nu}G_{1,t}^{T}E_{\nu}G_{1,t}FG_{2,t}\mbf{y},\\
    B_{2,t}&=\frac{1}{N}\sum_{\nu=\pm}\nu\mbf{x}^{*}G_{1,t}E_{\nu}(G_{1,t}FG_{2,t})^{T}E_{\nu}G_{2,t}\mbf{y},\\
    B_{3,t}&=\frac{1}{N}\sum_{\nu=\pm}\nu\mbf{x}^{*}G_{1,t}FG_{2,t}E_{\nu}G_{2,t}^{T}E_{\nu}G_{2,t}\mbf{y}.
\end{align*}

Consider $A_{1}$; by Cauchy-Schwarz we have
\begin{align*}
    \left|\mbf{x}^{*}G_{1,t}^{2}FG_{2,t}\mbf{y}\right|&\leq\sqrt{\frac{\mbf{x}^{*}\Im G_{1,t}\mbf{x}}{\eta_{1,t}}}\cdot\sqrt{\frac{\mbf{y}^{*}G_{2,t}^{*}F^{*}\Im G_{1,t}FG_{2,t}\mbf{y}}{\eta_{1,t}}}\\
    &\lesssim\sqrt{\frac{N^{\xi}\rho_{1,t}\phi^{iso}_{2,t}(w_{2},w_{1})}{\eta_{1,t}^{2}}}\\
    &=\sqrt{\frac{N^{\xi}\rho_{1,t}}{\eta_{1,t}}}\cdot\sqrt{\frac{\phi^{av}_{2,t}(w_{1},w_{2})}{\eta_{1,t}\eta_{2,t}}}.
\end{align*}
Using the averaged single resolvent local law we obtain
\begin{align*}
    \int_{0}^{t}A_{1,s}\,\mathrm{d}s&\lesssim N^{\xi}\sqrt{\frac{\phi^{av}_{2,t}(w_{1},w_{2})}{N\eta_{1,t}\eta_{2,t}}}\int_{0}^{t}\sqrt{\frac{\rho_{1,s}}{N\eta_{1,s}^{3}}}\,\mathrm{d}s\\
    &\lesssim \frac{N^{\xi}}{\sqrt{Nl_{1,s}}}\cdot\sqrt{\frac{\phi^{av}_{2,t}(w_{1},w_{2})}{N\eta_{1,t}\eta_{2,t}}}\\
    &\lesssim N^{\xi/2}\psi^{iso}_{t}(w_{1},F,w_{2}),
\end{align*}
and likewise for $A_{2,s}$.

We rewrite $A_{3}$ as follows:
\begin{align*}
    A_{3}&=\sum_{\nu=\pm}\nu\Tr{(G_{1,t}FG_{2,t}-M_{t}(w_{1},F,w_{2}))E_{\nu}}\mbf{x}^{*}M_{t}(w_{1},E_{\nu},w_{2})\mbf{y}\\
    &+\sum_{\nu=\pm}\nu\Tr{M_{t}(w_{1},F,w_{2})E_{\nu}}\mbf{x}^{*}(G_{1,t}E_{\nu}G_{2,t}-M_{t}(w_{1},E_{\nu},w_{2}))\mbf{y}.
\end{align*}
We have the bounds
\begin{align*}
    \left|\Tr{(G_{1,t}FG_{2,t}-M_{t}(w_{1},F,w_{2}))E_{\nu}}\right|&\lesssim N^{\xi}\psi^{av}_{t}(w_{1},F,w_{2}),\\
    \left|\mbf{x}^{*}(G_{1,t}E_{\nu}G_{2,t}-M_{t}(w_{1},E_{\nu},w_{2}))\mbf{y}\right|&\prec\frac{\rho_{1,t}^{1/2}}{N^{1/2}\eta_{1,t}^{3/2}}+\frac{\rho_{2,t}^{1/2}}{N^{1/2}\eta_{2,t}^{3/2}},\\
    \left|\mbf{x}^{*}M_{t}(w_{1},E_{\nu},w_{2})\mbf{y}\right|&\lesssim\sqrt{\frac{\rho_{1,t}\rho_{2,t}}{\eta_{1,t}\eta_{2,t}}},
\end{align*}
where the second bound follows from the isotropic single resolvent local law and contour integration. Moreover,
\begin{align*}
    \int_{0}^{t}\frac{\rho_{j,s}^{1/2}}{N^{1/2}\eta_{j,s}^{3/2}}\,\mathrm{d}s&\lesssim\frac{1}{\sqrt{N\eta_{j,t}\rho_{j,t}}},\\
    \int_{0}^{t}\sqrt{\frac{\rho_{1,s}\rho_{2,s}}{\eta_{1,s}\eta_{2,s}}}\,\mathrm{d}s&\lesssim1.
\end{align*}
Thus we obtain
\begin{align*}
    \int_{0}^{t}|A_{3,s}|\,\mathrm{d}s&\prec N^{\xi/2}\psi^{iso}_{t}(w_{1},F,w_{2}).
\end{align*}

Now consider $B_{1}$; by Cauchy-Schwarz we have
\begin{align*}
    \left|\mbf{x}^{*}G_{1,t}E_{\nu}G_{1,t}^{T}E_{\nu}G_{1,t}FG_{2,t}\mbf{y}\right|&\leq\left(\mbf{x}^{*}G_{1,t}E_{\nu}G_{1,t}^{T}E_{\nu}\bar{G}_{1}E_{\nu}G_{1,t}^{*}\mbf{x}\right)^{1/2}\left(\mbf{y}^{*}G_{2,t}^{*}F^{*}G_{1,t}^{*}G_{1,t}FG_{2,t}\mbf{y}\right)^{1/2}\\
    &\leq\left(\frac{\mbf{x}^{*}\Im G_{1,t}\mbf{x}}{\eta_{1,t}^{3}}\right)^{1/2}\left(\frac{\mbf{y}^{*}G_{2,t}^{*}F^{*}\Im G_{1,t}FG_{2,t}\mbf{y}}{\eta_{1,t}}\right)^{1/2}\\
    &\prec\sqrt{\frac{\rho_{1}\phi^{iso}_{2,t}(w_{2},w_{1})}{\eta_{1,t}^{4}}}.
\end{align*}
After integration we obtain
\begin{align*}
    \int_{0}^{t}|B_{1,s}|\,\mathrm{d}s&\prec\frac{\sqrt{\phi^{iso}_{2,t}(w_{2},w_{1})}}{N\eta_{1,t}\rho_{1,t}^{1/2}}=\frac{1}{\sqrt{Nl_{1,t}}}\cdot\sqrt{\frac{\phi^{av}_{2,t}(w_{1},w_{2})}{N\eta_{1,t}\eta_{2,t}}}.
\end{align*}
By symmetry we also obtain
\begin{align*}
    \int_{0}^{t}|B_{3,s}|\,\mathrm{d}s&\prec\frac{1}{\sqrt{Nl_{2,t}}}\cdot\sqrt{\frac{\phi^{av}_{2}(w_{1,t},w_{2,t})}{N\eta_{1,t}\eta_{2,t}}}.
\end{align*}
For $B_{2}$ we again use Cauchy-Schwarz to find
\begin{align*}
    \left|\mbf{y}^{T}G_{2,t}^{T}E_{\nu}G_{1,t}FG_{2,t}E_{\nu}G_{1,t}^{T}\bar{\mbf{x}}\right|&\leq\left(\frac{\mbf{y}^{T}G_{2,t}^{T}E_{\nu}G_{1,t}F\Im G_{2,t}F^{*}G_{1,t}^{*}E_{\nu}\bar{G}_{2}\bar{\mbf{y}}}{\eta_{2,t}}\right)^{1/2}\left(\mbf{x}^{T}\bar{G}_{1}E_{\nu}G_{1,t}^{T}\mbf{x}\right)^{1/2}.
\end{align*}
For the first factor on the right hand side we use Cauchy-Schwarz again:
\begin{align*}
    \mbf{y}^{T}G_{2,t}^{T}E_{\nu}G_{1,t}F\Im G_{2,t}F^{*}G_{1,t}^{*}E_{\nu}\bar{G}_{2,t}\bar{\mbf{y}}&\leq\left(\mbf{y}^{T}G_{2,t}^{T}E_{\nu}\bar{G}_{2,t}\bar{\mbf{y}}\right)^{1/2}\\
    &\times\left(\frac{\mbf{y}^{T}G_{2,t}^{T}E_{\nu}G_{1,t}F\Im G_{2,t}F^{*}\Im G_{1,t}F\Im G_{2,t}F^{*}G_{1,t}^{*}E_{\nu}\bar{G}_{2,t}\bar{\mbf{y}}}{\eta_{1,t}}\right)^{1/2}.
\end{align*}
Now we use a spectral decomposition for the second term on the right hand side:
\begin{align*}
    &\mbf{y}^{T}G_{2,t}^{T}E_{\nu}G_{1,t}F\Im G_{2,t}F^{*}\Im G_{1,t}F\Im G_{2,t}F^{*}G_{1,t}^{*}E_{\nu}\bar{G}_{2,t}\bar{\mbf{y}}\\
    &=\sum_{n,m}\frac{\eta^{2}(\mbf{y}^{T}G_{2,t}^{T}E_{\nu}G_{1,t}F\mbf{w}_{2,n})(\mbf{w}_{2,n}^{*}F^{*}\Im G_{1,t}F\mbf{w}_{2,m})(\mbf{w}_{2,m}^{*}F^{*}G_{1,t}^{*}E_{\nu}\bar{G}_{2,t}\bar{\mbf{y}})}{|\lambda_{2,n}-w_{2}|^{2}|\lambda_{2,m}-w_{2}|^{2}}\\
    &\leq\left(\sum_{n,m}\frac{\eta^{2}|\mbf{y}^{T}G_{2,t}^{T}E_{\nu}G_{1,t}F\mbf{w}_{2,n}|^{2}|\mbf{w}_{2,m}^{*}F^{*}G_{1,t}^{*}E_{\nu}\bar{G}_{2,t}\bar{\mbf{y}}|^{2}}{|\lambda_{2,n}-w_{2}|^{2}|\lambda_{2,m}-w_{2}|^{2}}\right)^{1/2}\\
    &\times\left(\sum_{n,m}\frac{\eta^{2}|\mbf{w}_{2,n}^{*}F^{*}\Im G_{1,t}F\mbf{w}_{2,m}|^{2}}{|\lambda_{2,n}-w_{2}|^{2}|\lambda_{2,m}-w_{2}|^{2}}\right)^{1/2}\\
    &\leq N\Tr{\Im G_{1,t}F\Im G_{2,t}F^{*}}\mbf{y}^{T}G_{2,t}^{T}E_{\nu}G_{1,t}F\Im G_{2,t}F^{*}G_{1,t}^{*}E_{\nu}\bar{G}_{2,t}\bar{\mbf{y}}.
\end{align*}
Combined with the preceeding display we have shown that
\begin{align*}
    &\mbf{y}^{T}G_{2,t}^{T}E_{\nu}G_{1,t}F\Im G_{2,t}F^{*}G_{1,t}^{*}E_{\nu}\bar{G}_{2,t}\bar{\mbf{y}}\\
    &\leq\left(\mbf{y}^{T}G_{2,t}^{T}E_{\nu}\bar{G}_{2,t}\bar{\mbf{y}}\right)^{1/2}\left(\frac{N\Tr{\Im G_{1,t}F\Im G_{2,t}F^{*}}\mbf{y}^{T}G_{2,t}^{T}E_{\nu}G_{1,t}F\Im G_{2,t}F^{*}G_{1,t}^{*}E_{\nu}\bar{G}_{2,t}\bar{\mbf{y}}}{\eta_{1,t}}\right)^{1/2},
\end{align*}
which after rearrangement becomes
\begin{align*}
    \mbf{y}^{T}G_{2,t}^{T}E_{\nu}G_{1,t}F\Im G_{2,t}F^{*}G_{1,t}^{*}E_{\nu}\bar{G}_{2,t}\bar{\mbf{y}}&\leq\frac{N\Tr{\Im G_{1,t}F\Im G_{2,t}F^{*}}\mbf{y}^{T}G_{2,t}^{T}E_{\nu}\bar{G}_{2,t}\bar{\mbf{y}}}{\eta_{1,t}}\\
    &\lesssim\frac{N\rho_{2,t}\phi^{av}_{2}(w_{1},w_{2})}{\eta_{1,t}\eta_{2,t}}.
\end{align*}
After integration we obtain
\begin{align*}
    \int_{0}^{t}|B_{2,s}|\,\mathrm{d}s&\lesssim\int_{0}^{t}\sqrt{\frac{\rho_{1,s}\rho_{2,s}\phi^{av}_{2,s}(w_{1},w_{2})}{N\eta_{1,s}^{2}\eta_{2,s}^{2}}}\,\mathrm{d}s\\
    &\lesssim\sqrt{\frac{\phi^{av}_{2,t}(w_{1},w_{2})}{N\eta_{1,t}\eta_{2,t}}}\int_{0}^{t}\sqrt{\frac{\rho_{1,s}\rho_{2,s}}{\eta_{1,s}\eta_{2,s}}}\,\mathrm{d}s\\
    &\lesssim\sqrt{\frac{\phi^{av}_{2,t}(w_{1},w_{2})}{N\eta_{1,t}\eta_{2,t}}}.
\end{align*}

It remains to estimate the quadratic variation of the stochastic term, which is bounded (up to a constant) by
\begin{align*}
    \frac{1}{N}\sum_{i,j=1}^{N}\left(\left|\mbf{x}^{*}G_{1,t}\Delta_{ij}G_{1,t}FG_{2,t}\mbf{y}\right|^{2}+\left|\mbf{x}^{*}G_{1,t}FG_{2,t}\Delta_{ij}G_{2,t}\mbf{y}\right|^{2}\right).
\end{align*}
Consider the first term on the right hand side:
\begin{align*}
    \sum_{i,j=1}^{N}\left|\mbf{x}^{*}G_{1,t}\Delta_{ij}G_{1,t}FG_{2,t}\mbf{y}\right|^{2}&=\sum_{\nu=\pm}\nu\mbf{x}^{*}G_{1,t}E_{\nu}G_{1,t}^{*}\mbf{x}\mbf{y}^{*}G_{2,t}^{*}F^{*}G_{1,t}^{*}E_{\nu}G_{1,t}FG_{2,t}\mbf{y}\\
    &+\sum_{\nu=\pm}\nu\mbf{x}^{T}\bar{G}_{1}E_{\nu}G_{1,t}FG_{2,t}\mbf{y}\mbf{y}^{*}G_{2,t}^{*}F^{*}G_{1,t}^{*}E_{\nu}G_{1,t}^{T}\bar{\mbf{x}}.
\end{align*}
The second term can be bounded by the first with Cauchy-Schwarz. Using $E_{\nu}\leq1$ and the definition of $\tau$ we have
\begin{align*}
    \mbf{y}^{*}G_{2,t}^{*}F^{*}G_{1,t}^{*}E_{\nu}G_{1,t}FG_{2,t}\mbf{y}&\lesssim\frac{N^{\xi}\phi^{iso}_{2,t}(w_{2},w_{1})}{\eta_{1,t}}.
\end{align*}
Therefore we obtain
\begin{align*}
    \int_{0}^{t}\frac{1}{N}\sum_{i,j=1}^{N}\left|\mbf{x}^{*}G_{1,s}\Delta_{ij}G_{1,s}FG_{2,s}\mbf{y}\right|^{2}\,\mathrm{d}s&\lesssim N^{\xi}\int_{0}^{t}\left(\frac{\rho_{1,s}\phi^{iso}_{2,s}(w_{2},w_{1})}{N\eta_{1,s}^{2}}+\frac{\rho_{2,s}\phi^{iso}_{2,s}(w_{1},w_{2})}{N\eta_{2,s}^{2}}\right)\,\mathrm{d}s\\
    &\lesssim\frac{N^{\xi}\phi^{av}_{2,t}(w_{1},w_{2})}{N\eta_{1,s}\eta_{2,s}}\int_{0}^{t}\left(\frac{\rho_{1,s}}{\eta_{1,s}}+\frac{\rho_{2,s}}{\eta_{2,s}}\right)\,\mathrm{d}s\\
    &\lesssim\frac{N^{\xi}\phi^{av}_{2,t}(w_{1},w_{2})}{N\eta_{1,t}\eta_{2,t}}.
\end{align*}
This completes the estimates of each term in the SDE and hence the proof of the lemma.
\end{proof}

For the next lemma we only prove an upper bound on $S^{iso}_{t}(w_{1},F,\wh{w}_{2},F^{*},\bar{w}_{1})$ and not on the difference $R^{iso}_{t}$. We do this because it is sufficient for our purposes and slightly simpler.
\begin{lemma}\label{lem:iso2Gaussian}
Assume that $S^{iso}_{0}(w_{1},F,\wh{w}_{2},F^{*},\bar{w}_{1})\leq N^{\xi/2}\phi^{iso}_{2,0}(w_{1},w_{2})$. Then for any $t\geq0$ we have
\begin{align}
    S^{iso}_{t}(w_{1},F,\wh{w}_{2},F^{*},\bar{w}_{1})\leq N^{\xi/2}\psi^{iso}_{2,t}(w_{1},w_{2}).
\end{align}
\end{lemma}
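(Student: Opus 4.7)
The plan is to derive an SDE for $S^{iso}_t(w_1, F, \wh{w}_2, F^*, \bar{w}_1) = \mbf{x}^* G_{1,t} F \Im G_{2,t} F^* G_{1,t}^* \mbf{x}$ by combining It\^{o}'s lemma with the characteristic flow equations \eqref{eq:dw}--\eqref{eq:dz}, and then close a self-consistent Gr\"{o}nwall estimate up to time $\tau \wedge T$. Crucially, the target bound $\psi^{iso}_{2,t} = \phi^{iso}_{2,t}$ is a deterministic \emph{upper bound} (not a fluctuation error), so the integrated SDE terms must close back on $S^{iso}_t$ itself.

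The SDE will take the schematic form
\begin{align*}
\mathrm{d}S^{iso}_t &= c\, S^{iso}_t\, \mathrm{d}t + \sum_m (A_{m,t} + B_{m,t})\, \mathrm{d}t + \mathrm{d}\mc{M}_t,
\end{align*}
where the drift $cS^{iso}_t$ comes from the flow of $w_{j,t}$ and its conjugate; the $A_{m,t}$ terms arise from the chain rule acting on the $\Tr{G_{j,t}-M_{j,t}}\cdot(\text{four-resolvent chain})$ structure; the $B_{m,t}$ terms are It\^{o} corrections of the form $\frac{1}{N} \sum_\nu \nu (\cdot G_{j,t} E_\nu G_{j,t}^T E_\nu \cdot)$ characteristic of the real symmetry class; and $\mathrm{d}\mc{M}_t$ is the martingale differential $\frac{1}{\sqrt{N}} \sum_{ij} \partial_{ij}(S^{iso}_t)\, \mathrm{d}B_{ij}$.

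The main estimates would proceed analogously to Lemma \ref{lem:iso1Gaussian}. For each $A_{m,t}$, I would apply Proposition \ref{prop:singleLL} to control $|\Tr{G_{j,s}-M_{j,s}}| \prec (N\eta_{j,s})^{-1}$, then use Cauchy--Schwarz on the residual four-resolvent isotropic chain to express it as a geometric mean of $\mbf{x}^* \Im G_{1,s} \mbf{x}/\eta_{1,s}$ and a quantity of the form $\phi^{iso}_{2,s}$ or $S^{iso}_s/\eta_{j,s}$; integrating in $s$ using \eqref{eq:int1}--\eqref{eq:int2} converts the $\eta^{-2}$ singularities into factors of $(\eta_{j,t}\rho_{j,t})^{-1} = l_{j,t}^{-1}$, producing contributions of size $(N l_{j,t})^{-1/2} \phi^{iso}_{2,t}$. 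For $B_{m,t}$, the norm bound $\|G_{1,t} E_\nu G_{1,t}^T E_\nu\| \lesssim \eta_{1,t}^{-2}$ combined with Cauchy--Schwarz reduces to $S^{iso}_s \cdot (N\eta_{1,s}^2)^{-1}$, which integrates to a contribution absorbable in $N^{\xi/2} \phi^{iso}_{2,t}$. For the martingale, the BDG inequality and the identity $\sum_{ij} \Delta_{ij} A \Delta_{ij} = \sum_\nu \nu\, E_\nu (\tr_\nu A)$ collapse $\sum_{ij} |\mbf{x}^* G_{1,t} \Delta_{ij} G_{1,t} F \Im G_{2,t} F^* G_{1,t}^* \mbf{x}|^2$ into products of $\mbf{x}^* \Im G_{1,t} \mbf{x}$ with $S^{iso}_t$ itself (times $\eta_{1,t}^{-1}$), yielding a quadratic variation $\lesssim N^{-1}\eta_{1,t}^{-1} (S^{iso}_t)^2$ which integrates via \eqref{eq:int2} to $(Nl_{1,t})^{-1}(\phi^{iso}_{2,t})^2$.

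The concluding step is Gr\"{o}nwall's inequality applied to
\begin{align*}
S^{iso}_t \le S^{iso}_0 + N^{\xi/2} \phi^{iso}_{2,t}(w_1,w_2) + \int_0^t \beta(s)\, S^{iso}_s\, \mathrm{d}s,
\end{align*}
where $\beta(s)$ is at most polylogarithmic after the substitution $\int_s^t \sqrt{\rho_{1,r}\rho_{2,r}/(\eta_{1,r}\eta_{2,r})}\,\mathrm{d}r \lesssim \log N$ used in the proof of Lemma \ref{lem:av2Gaussian}. The input hypothesis $S^{iso}_0 \le N^{\xi/2} \phi^{iso}_{2,0}$ together with the monotonicity of $1/\phi_t$ and the identities $m_{j,t} = e^{t/2} m_{j,0}$, $\eta_{1,t}/\eta_{1,0} \le 1$ imply $\phi^{iso}_{2,0} \le \phi^{iso}_{2,t}$, so Gr\"{o}nwall closes the desired bound. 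The principal obstacle is the genuine self-consistency: every Cauchy--Schwarz step that generates a long chain $\mbf{x}^* G_1 F \Im G_2 F^* G_1^* \mbf{x}$ must be recognized as $S^{iso}_t$ and fed back, rather than estimated by the a priori norm bounds which would produce a factor $\eta_1^{-2}$ too large. A related subtlety is that the five-resolvent chain $\mbf{x}^* G_{1,t} F \Im G_{2,t} F^* \Im G_{1,t} F \Im G_{2,t} F^* G_{1,t}^* \mbf{x}$ produced by one of the $B_m$ terms does not admit a direct Cauchy--Schwarz reduction; following the spectral decomposition argument used in the treatment of $B_{2,t}$ in Lemma \ref{lem:iso1Gaussian} collapses it to $N\Tr{\Im G_1 F\Im G_2 F^*}\cdot(\text{shorter chain})$, which is controllable by the averaged law \eqref{eq:avLL2}.
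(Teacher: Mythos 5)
Your proposal follows essentially the same route as the paper: derive the SDE for $S^{iso}_t$ via It\^{o} and the characteristic flow, split into $A_m$ (chain rule of the deterministic approximation), $B_m$ (real-symmetry It\^{o} corrections) and the martingale part, bound each via Cauchy--Schwarz, the stopping-time feedback, the single-resolvent local law, and the $\int_s^t$ formulas \eqref{eq:int1}--\eqref{eq:int2}, then absorb the drift by a Gr\"{o}nwall step. You also correctly identify the two subtle points that make the argument work: every long chain that appears as $\mbf{x}^* G_1 F \Im G_2 F^* G_1^* \mbf{x}$ must be fed back as $S^{iso}_t$ itself rather than crushed by norm bounds, and the five-resolvent chains produced by Cauchy--Schwarz must be collapsed through the spectral decomposition $\mbf{x}^*\cdots\Im G_j\cdots\mbf{x}\le N\Tr{\Im G_1 F\Im G_2 F^*}\cdot(\text{shorter chain})$. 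One small imprecision: in your sketch of the $B_m$ terms, the claim that the operator norm bound $\|G_{1,t}E_\nu G_{1,t}^T E_\nu\|\lesssim\eta_{1,t}^{-2}$ together with Cauchy--Schwarz directly yields $S^{iso}_s\cdot(N\eta_{1,s}^2)^{-1}$ is not quite how it works (the expression $\mbf{x}^*ABA^*\mbf{x}$ with $A=G_1E_\nu G_1^TE_\nu$ does not reduce to $\|A\|^2\,\mbf{x}^*B\mbf{x}$); the spectral-decomposition collapse you describe later is actually needed for all of the $B_m$ chains that contain the five-resolvent product, not only one of them, and this is what the paper uses to obtain the cleaner bound $|B_{1,s}|\lesssim\phi^{iso}_{2,s}\sqrt{\rho_{1,s}/(N\eta_{1,s}^3)}$. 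With that caveat the blueprint matches the paper's argument.
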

\begin{proof}
We have the SDE
\begin{align*}
    \mathrm{d}S^{iso}_{t}(w_{1},F,\wh{w}_{2},F^{*},\bar{w}_{1})&=\frac{3}{2}S^{iso}_{t}(w_{1},F,\wh{w}_{2},F^{*},\bar{w}_{1})\mathrm{d}t\\&+\frac{1}{2}\sum_{m=1}^{6}(A_{m}+B_{m})\mathrm{d}t+\frac{1}{\sqrt{N}}\sum_{i,j=1}^{N}\partial_{ij}(S^{iso}_{t}(w_{1},F,\wh{w}_{2},F^{*},\bar{w}_{1}))\mathrm{d}B_{ij},
\end{align*}
where
\begin{align*}
    A_{1}&=\Tr{G_{1,t}-M_{1}}\mbf{x}^{*}G_{1,t}^{2}F\Im G_{2,t}F^{*}G^{*}_{1}\mbf{x},\\
    A_{2}&=\Im\left(\Tr{G_{2,t}-M_{2}}\mbf{x}^{*}G_{1,t}FG^{2}_{2}F^{*}G^{*}_{1}\mbf{x}\right),\\
    A_{3}&=\Tr{G^{*}_{1}-M^{*}_{1}}\mbf{x}^{*}G_{1,t}F\Im G_{2,t}F^{*}(G^{*}_{1})^{2}\mbf{x},\\
    A_{4}&=\Im\sum_{\nu=\pm}\nu\Tr{G_{1,t}FG_{2,t}E_{\nu}}\mbf{x}^{*}G_{1,t}E_{\nu}G_{2,t}F^{*}G^{*}_{1}\mbf{x},\\
    A_{5}&=\Im\sum_{\nu=\pm}\nu\Tr{G_{2,t}F^{*}G^{*}_{1}E_{\nu}}\mbf{x}^{*}G_{1,t}FG_{2,t}E_{\nu}G^{*}_{1}\mbf{x},\\
    A_{6}&=\sum_{\nu=\pm}\nu\Tr{G_{1,t}F\Im G_{2,t}F^{*}G^{*}_{1}E_{\nu}}\mbf{x}^{*}G_{1,t}E_{\nu}G^{*}_{1}\mbf{x},
\end{align*}
and
\begin{align*}
    B_{1}&=\frac{1}{N}\sum_{\nu=\pm}\nu\mbf{x}^{*}G_{1,t}E_{\nu}G_{1,t}^{T}E_{\nu}G_{1,t}F\Im G_{2,t}F^{*}G^{*}_{1}\mbf{x},\\
    B_{2}&=\frac{1}{N}\Im\sum_{\nu=\pm}\nu\mbf{x}^{*}G_{1,t}E_{\nu}(G_{1,t}FG_{2,t})^{T}E_{\nu}G_{2,t}F^{*}G^{*}_{1}\mbf{x},\\
    B_{3}&=\frac{1}{N}\sum_{\nu=\pm}\nu\mbf{x}^{*}G_{1,t}E_{\nu}(G_{1,t}F\Im G_{2,t}F^{*}G^{*}_{1})^{T}E_{\nu}G^{*}_{1}\mbf{x},\\
    B_{4}&=\frac{1}{N}\Im\sum_{\nu=\pm}\nu\mbf{x}^{*}G_{1,t}FG_{2,t}E_{\nu}G^{T}_{2}E_{\nu}G_{2,t}F^{*}G^{*}_{1}\mbf{x},\\
    B_{5}&=\frac{1}{N}\Im\sum_{\nu=\pm}\nu\mbf{x}^{*}G_{1,t}FG_{2,t}E_{\nu}(G_{2,t}F^{*}G^{*}_{1})^{T}E_{\nu}G^{*}_{1}\mbf{x},\\
    B_{6}&=\frac{1}{N}\sum_{\nu=\pm}\nu\mbf{x}^{*}G_{1,t}F\Im G_{2,t}F^{*}G^{*}_{1}E_{\nu}\bar{G}_{1}E_{\nu}G^{*}_{1}\mbf{x}.
\end{align*}

Consider $A_{1}$; by Cauchy-Schwarz, the isotropic single resolvent local law and a spectral decomposition we have
\begin{align*}
    \left|\mbf{x}^{*}G_{1,t}^{2}F\Im G_{2,t}F^{*}G_{1,t}^{*}\mbf{x}\right|&\leq\left(\frac{\mbf{x}^{*}\Im G_{1,t}\mbf{x}}{\eta_{1,t}}\right)^{1/2}\left(\mbf{x}^{*}G_{1,t}F\Im G_{2,t}F^{*}\frac{\Im G_{1,t}}{\eta_{1,t}}F\Im G_{2,t}F^{*}G_{1,t}^{*}\mbf{x}\right)^{1/2}\\
    &\lesssim\sqrt{\frac{N^{1+\xi}\rho_{1,t}\phi^{iso}_{2,t}(w_{1},w_{2})\phi^{av}_{2,t}(w_{1},w_{2})}{\eta_{1,t}^{2}}}\\
    &=\frac{N^{(1+\xi)/2}\rho^{1/2}_{1,t}\phi^{iso}_{2,t}(w_{1},w_{2})}{\eta^{1/2}_{1,t}}.
\end{align*}
By the averaged single resolvent local law we find
\begin{align*}
    \int_{0}^{t}|A_{1,s}|\,\mathrm{d}s&\lesssim\int_{0}^{t}\frac{N^{\xi/2}}{N\eta_{1,s}}\cdot\frac{N^{(1+\xi)/2}\rho^{1/2}_{1,s}\phi^{iso}_{2,s}(w_{1},w_{2})}{\eta^{1/2}_{1,s}}\,\mathrm{d}s\\
    &\lesssim\frac{N^{\xi}\phi^{iso}_{2,t}(w_{1},w_{2})}{\sqrt{Nl_{1,t}}}\\
    &\lesssim N^{\xi/2}\phi^{iso}_{2,t}(w_{1},w_{2}),
\end{align*}
and similarly for $A_{3}$. $A_{2}$ can be bounded using the fact that $|G^{2}_{2}|=\frac{\Im G_{2,t}}{\eta_{2}}$:
\begin{align*}
    \int_{0}^{t}|A_{2,s}|\,\mathrm{d}s&\lesssim\int_{0}^{t}\frac{N^{\xi/2}}{N\eta_{2,s}}\cdot\frac{N^{\xi}\phi^{iso}_{2,s}(w_{1},w_{2})}{\eta_{2,s}}\,\mathrm{d}s\\
    &\lesssim\frac{N^{\xi/2}\phi^{iso}_{2,s}(w_{1},w_{2})}{Nl_{2,t}}.
\end{align*}

We write $A_{4}+A_{5}$ as follows:
\begin{align*}
    A_{4}+A_{5}&=\sum_{\nu=\pm}\nu\left[\Tr{G_{1,t}F\Im G_{2,t}E_{\nu}}\mbf{x}^{*}G_{1,t}E_{\nu}G^{*}_{2}F^{*}G_{1,t}^{*}\mbf{x}+\Tr{G_{1,t}FG_{2,t}E_{\nu}}\mbf{x}^{*}G_{1,t}E_{\nu}\Im G_{2,t} F^{*}G_{1,t}^{*}\mbf{x}\right]\\
    &+\sum_{\nu=\pm}\nu\left[\Tr{\Im G_{2,t}F^{*}G_{1,t}^{*}E_{\nu}}\mbf{x}^{*}G_{1,t}FG_{2,t}E_{\nu}G_{1,t}^{*}\mbf{x}+\Tr{G^{*}_{2}F^{*}G_{1,t}^{*}E_{\nu}}\mbf{x}^{*}G_{1,t}F\Im G_{2,t}E_{\nu}G_{1,t}^{*}\mbf{x}\right].
\end{align*}
The second line is analogous to the first so we concentrate on the latter. For the first term in the first line we have
\begin{align*}
    \left|\Tr{G_{1,t}F\Im G_{2,t}E_{\nu}}\mbf{x}^{*}G_{1,t}E_{\nu}G_{2,t}^{*}F^{*}G_{1,t}^{*}\mbf{x}\right|&\leq\sqrt{\frac{\Tr{\Im G_{1,t}F\Im G_{2,t}F^{*}}\Tr{\Im G_{2,t}}}{\eta_{1,t}}}\nonumber\\
    &\times\sqrt{\frac{\mbf{x}^{*}\Im G_{1,t}\mbf{x}\cdot\mbf{x}^{*}G_{1,t}F\Im G_{2,t}F^{*}G_{1,t}^{*}\mbf{x}}{\eta_{1,t}\eta_{2,t}}}\nonumber\\
    &\lesssim\sqrt{\frac{N^{\xi}\rho_{1,t}\rho_{2,t}}{\eta_{1,t}\eta_{2,t}}}\cdot\phi^{iso}_{2,t}(w_{1},w_{2}).
\end{align*}
For the second term, we note that $\Im G_{2,t}$ commutes with $E_{\nu}$ and so
\begin{align*}
    \left|\mbf{x}^{*}G_{1,t}E_{\nu}\Im G_{2,t}F^{*}G_{1,t}^{*}\mbf{x}\right|&=\left|\mbf{x}^{*}G_{1,t}\Im G_{2,t}F^{*}G_{1,t}^{*}\mbf{x}\right|\\
    &\leq\sqrt{\frac{\mbf{x}^{*}\Im G_{1,t}\Im G_{2,t}\mbf{x}\cdot\mbf{x}^{*}G_{1,t}F\Im G_{2,t}F^{*}G_{1,t}^{*}\mbf{x}}{\eta_{1,t}}}\\
    &\lesssim\sqrt{\frac{N^{\xi}\phi^{iso}_{2,t}(w_{1},w_{2})}{\eta_{1,t}}\left(\frac{\rho_{1,t}}{\eta_{2,t}}\wedge\frac{\rho_{2,t}}{\eta_{1,t}}\right)}\\
    &\lesssim\sqrt{\frac{N^{\xi}\rho_{1,t}\rho_{2,t}}{\eta_{1,t}^{2}}\left(\frac{\rho_{1,t}}{\eta_{2,t}}\wedge\frac{\rho_{2,t}}{\eta_{1,t}}\right)\left(\frac{\rho_{1,t}}{\eta_{1,t}}\wedge\frac{\rho_{2,t}}{\eta_{2,t}}\right)}\\
    &\lesssim\sqrt{\frac{N^{\xi}\rho_{1,t}\rho_{2,t}}{\eta_{1,t}\eta_{2,t}}}\cdot\frac{\rho_{1,t}\wedge\rho_{2,t}}{\eta_{1,t}}.
\end{align*}
Moreover, from the definition of $\tau$ we have
\begin{align*}
    \left|\Tr{G_{1,t}FG_{2,t}E_{\nu}}\right|&\leq\phi^{iso}_{1,t}(w_{1},w_{2})+N^{\xi}\psi^{av}_{2,t}(w_{1},F,w_{2},E_{\nu})\\
    &\lesssim\phi^{iso}_{1,t}(w_{1},w_{2})+\frac{N^{\xi}}{\sqrt{N(l_{1,t}\wedge l_{2,t})}}\cdot\sqrt{\frac{\phi^{av}_{2,t}(w_{1},w_{2})}{N\eta_{1,t}\eta_{2,t}}}\\
    &=\frac{\eta_{1}\phi^{iso}_{2,t}(w_{1},w_{2})}{\rho_{1,t}\wedge\rho_{2,t}}+\frac{N^{\xi}}{\sqrt{N(l_{1,t}\wedge l_{2,t})}}\cdot\sqrt{\frac{\phi^{av}_{2,t}(w_{1},w_{2})}{N\eta_{1,t}\eta_{2,t}}}.
\end{align*}
Since we have
\begin{align*}
    \int_{0}^{t}\sqrt{\frac{\rho_{1,s}\rho_{2,s}}{\eta_{1,s}\eta_{2,s}}}\,\mathrm{d}s&\lesssim1,\\
    \int_{0}^{t}\sqrt{\frac{\rho_{1,s}}{N\eta_{1,s}\eta^{2}_{2,s}}}\,\mathrm{d}s&\lesssim\frac{1}{\sqrt{Nl_{2,t}}},
\end{align*}
we obtain
\begin{align*}
    \int_{0}^{t}|A_{4,s}+A_{5,s}|\,\mathrm{d}s&\lesssim N^{\xi/2}\phi^{iso}_{2,t}(w_{1},w_{2}).
\end{align*}

The bound for $A_{6}$ is straightforward:
\begin{align*}
    \Tr{G_{1,t}F\Im G_{2,t}F^{*}G_{1,t}^{*}E_{\nu}}\mbf{x}^{*}G_{1,t}E_{\nu}G_{1,t}^{*}\mbf{x}&\leq\frac{1}{\eta_{1,t}^{2}}\Tr{\Im G_{1,t}F\Im G_{2,t}F^{*}}\mbf{x}^{*}\Im G_{1,t}\mbf{x}\\
    &\lesssim\frac{\rho_{1,t}\phi^{iso}_{2,t}(w_{1},w_{2})}{\eta_{1,t}}.
\end{align*}
After integration, 
\begin{align*}
    \int_{0}^{t}|A_{6,s}|\,\mathrm{d}s&\lesssim\phi^{iso}_{2,t}(w_{1},w_{2}).
\end{align*}

$B_{1}$ and $B_{6}$ are of the same form so we focus on the former. By Cauchy-Schwarz we have
\begin{align*}
    \left|\mbf{x}^{*}G_{1,t}E_{\nu}G_{1,t}^{T}E_{\nu}G_{1,t}F\Im G_{2,t}F^{*}G_{1,t}^{*}\mbf{x}\right|&\leq\left(\mbf{x}^{*}G_{1,t}E_{\nu}G_{1,t}^{T}\bar{G}_{1}E_{\nu}G_{1,t}^{*}\mbf{x}\right)^{1/2}\\
    &\times\left(\mbf{x}^{*}G_{1,t}F\Im G_{2,t}F^{*}G^{*}_{1}G_{1,t}F\Im G_{2,t}F^{*}G_{1,t}^{*}\mbf{x}\right)^{1/2}\\
    &\lesssim\sqrt{\frac{\rho_{1,t}}{\eta_{1,t}^{4}}}\left(\mbf{x}^{*}G_{1,t}F\Im G_{2,t}F^{*}\Im G_{1,t}F\Im G_{2,t}F^{*}G_{1,t}^{*}\mbf{x}\right)^{1/2}\\
    &\lesssim\sqrt{\frac{N\rho_{1,t}\phi^{av}_{2,t}(w_{1},w_{2})\phi^{iso}_{2,t}(w_{1},w_{2})}{\eta_{1,t}^{4}}},
\end{align*}
where the last line follows by a spectral decomposition (as with $A_{1}$). Integrating up to $t$ we find
\begin{align*}
    \int_{0}^{t}|B_{1,s}|\,\mathrm{d}s&\lesssim\phi^{iso}_{2,t}(w_{1},w_{2})\int_{0}^{t}\sqrt{\frac{\rho_{1,s}}{N\eta^{3}_{1,s}}}\,\mathrm{d}s\lesssim\frac{\phi^{iso}_{2,t}(w_{1},w_{2})}{\sqrt{Nl_{1,t}}}.
\end{align*}

Consider $B_{2}$; by Cauchy-Schwarz we have
\begin{align*}
    \left|\mbf{x}^{*}G_{1,t}E_{\nu}G_{2,t}^{T}F^{*}G_{1,t}^{T}E_{\nu}G_{2,t}F^{*}G_{1,t}^{*}\mbf{x}\right|&\leq\left(\frac{1}{\eta_{2}}\mbf{x}^{*}G_{1,t}F\Im G_{2,t}F^{*}G_{1,t}^{*}\mbf{x}\right)^{1/2}\\
    &\times\left(\frac{1}{\eta_{1,t}}\mbf{x}^{*}G_{1,t}E_{\nu}G_{2,t}^{T}F^{*}\Im\bar{G}_{1}F\bar{G}_{2}E_{\nu}G_{1,t}^{*}\mbf{x}\right)^{1/2}.
\end{align*}
We focus on the second factor. Another application of Cauchy-Schwarz gives
\begin{align*}
    \mbf{x}^{*}G_{1,t}E_{\nu}G_{2,t}^{T}F^{*}\Im\bar{G}_{1}F\bar{G}_{2}E_{\nu}G_{1,t}^{*}\mbf{x}&\leq\left(\frac{1}{\eta_{1}}\mbf{x}^{*}\Im G_{1,t}\mbf{x}\right)^{1/2}\\
    &\times\left(\frac{1}{\eta_{2}}\mbf{x}^{*}G_{1,t}E_{\nu}G_{2,t}^{T}F^{*}\Im\bar{G}_{1}F\Im\bar{G}_{2}F^{*}\Im\bar{G}_{1}F\bar{G}_{2}E_{\nu}G_{1,t}^{*}\mbf{x}\right)^{1/2}\\
    &\leq\left(\frac{1}{\eta_{1}}\mbf{x}^{*}\Im G_{1,t}\mbf{x}\right)^{1/2}\\
    &\times\left(\frac{N}{\eta_{2}}\mbf{x}^{*}G_{1,t}E_{\nu}G_{2,t}^{T}F^{*}\Im\bar{G}_{1}F\bar{G}_{2}E_{\nu}G^{*}_{1}\mbf{x}\Tr{\Im G_{1,t}F\Im G_{2,t}F^{*}}\right)^{1/2},
\end{align*}
where the last line follows by a spectral decomosition. Notice that the left hand side appears again in the last line, so we obtain
\begin{align*}
    \mbf{x}^{*}G_{1,t}E_{\nu}G_{2,t}^{T}F^{*}\Im\bar{G}_{1}F\bar{G}_{2}E_{\nu}G_{1,t}^{*}\mbf{x}&\leq\frac{N\mbf{x}^{*}\Im G_{1,t}\mbf{x}\Tr{\Im G_{1,t}F\Im G_{2,t}F^{*}}}{\eta_{1,t}\eta_{2,t}}.
\end{align*}
Thus we have found
\begin{align*}
    \left|\mbf{x}^{*}G_{1,t}E_{\nu}G_{2,t}^{T}F^{*}G_{1,t}^{T}E_{\nu}G_{2,t}F^{*}G_{1,t}^{*}\mbf{x}\right|&\leq\left(\frac{N\mbf{x}^{*}\Im G_{1,t}\mbf{x}\mbf{x}^{*}G_{1,t}F\Im G_{2,t}F^{*}G_{1,t}^{*}\mbf{x}\Tr{\Im G_{1,t}F\Im G_{2,t}F^{*}}}{\eta_{1,t}^{2}\eta_{2,t}^{2}}\right)^{1/2}\\
    &\lesssim\sqrt{\frac{N\rho_{1,t}}{\eta_{1,t}\eta_{2,t}^{2}}}\cdot\phi^{iso}_{2,t}(w_{1},w_{2}),
\end{align*}
and after integration
\begin{align*}
    \int_{0}^{t}|B_{2,s}|\,\mathrm{d}s&\lesssim\phi^{iso}_{2,t}(w_{1},w_{2})\int_{0}^{t}\sqrt{\frac{\rho_{1,s}}{N\eta_{1,s}\eta_{2,s}^{2}}}\,\mathrm{d}s\lesssim\frac{\phi^{iso}_{2,t}(w_{1},w_{2})}{\sqrt{Nl_{2,t}}}.
\end{align*}
$B_{5}$ is of the same form and bounded in an analogous way.

Now consider $B_{3}$; by similar manipulations as applied to $B_{2}$, we find
\begin{align*}
    \left|\mbf{x}^{*}G_{1,t}E_{\nu}\bar{G}_{1}F\Im G_{2,t}^{T}F^{*}G^{T}_{1}E_{\nu}G_{1,t}^{*}\mbf{x}\right|&\leq\frac{N\mbf{x}^{*}\Im G_{1,t}\mbf{x}\Tr{\Im G_{1,t}F\Im G_{2,t}F^{*}}}{\eta_{1,t}^{2}}\\
    &\lesssim\frac{N\rho_{1,t}\phi^{iso}_{2,t}(w_{1},w_{2})}{\eta_{1,t}},
\end{align*}
and upon integration
\begin{align*}
    \int_{0}^{t}|B_{3,s}|\,\mathrm{d}s&\lesssim\phi^{iso}_{2,t}(w_{1},w_{2}).
\end{align*}

For $B_{4}$ we have
\begin{align*}
    \left|\mbf{x}^{*}G_{1,t}FG_{2,t}E_{\nu}G_{2,t}^{T}E_{\nu}G_{2,t}F^{*}G_{1,t}^{*}\mbf{x}\right|&\leq\frac{1}{\eta_{2,t}^{2}}\mbf{x}^{*}G_{1,t}F\Im G_{2,t}F^{*}G_{1,t}^{*}\mbf{x}\\
    &\lesssim\frac{N^{\xi}\phi^{iso}_{2,t}(w_{1},w_{2})}{\eta_{2,t}^{2}},
\end{align*}
and upon integration
\begin{align*}
    \int_{0}^{t}|B_{4,s}|\,\mathrm{d}s&\lesssim\frac{N^{\xi}\phi^{iso}_{2,t}(w_{1},w_{2})}{Nl_{2}}.
\end{align*}
Using $|x+y|^{2}\leq\sqrt{2}(|x|^{2}+|y|^{2})$, we can bound the quadratic variation of the stochastic term by
\begin{align*}
    &\frac{1}{N}\sum_{\nu=\pm}\nu\mbf{x}^{*}G_{1,t}E_{\nu}G_{1,t}^{*}\mbf{x}\mbf{x}^{*}G_{1,t}^{*}F\Im G_{2,t}F^{*}G_{1,t}^{*}E_{\nu}G_{1,t}F\Im G_{2,t}F^{*}G_{1,t}\mbf{x}\\
    &+\frac{1}{N}\sum_{\nu=\pm}\nu\mbf{x}^{*}G_{1,t}E_{\nu}(G^{*}_{1}F\Im G_{2,t}FG^{*}_{1})^{T}\bar{\mbf{x}}\mbf{x}^{T}(G_{1,t}F\Im G_{2,t}F^{*}G_{1,t})^{T}E_{\nu}G_{1,t}\mbf{x}\\
    &+\frac{1}{N}\sum_{\nu=\pm}\nu\mbf{x}^{*}G_{1,t}FG_{2,t}E_{\nu}G_{2,t}^{*}F^{*}G_{1,t}^{*}\mbf{x}\mbf{x}^{*}G_{1,t}^{*}FG_{2,t}^{*}E_{\nu}G_{2,t}F^{*}G_{1,t}\mbf{x}\\
    &+\frac{1}{N}\sum_{\nu=\pm}\nu\mbf{x}^{*}G_{1,t}FG_{2,t}E_{\nu}(G_{1,t}^{*}FG_{2,t}^{*})^{T}\bar{\mbf{x}}\mbf{x}^{T}(G_{2,t}F^{*}G_{1,t})^{T}E_{\nu}G_{2,t}^{*}F^{*}G_{1,t}^{*}\mbf{x}\\
    &+\frac{1}{N}\sum_{\nu=\pm}\nu\mbf{x}^{*}G_{1,t}F\Im G_{2,t}F^{*}G_{1,t}E_{\nu}G_{1,t}^{*}F\Im G_{2,t}F^{*}G_{1,t}^{*}\mbf{x}\mbf{x}^{*}G_{1,t}^{*}E_{\nu}G_{1,t}\mbf{x}\\
    &+\frac{1}{N}\sum_{\nu=\pm}\nu\mbf{x}^{*}G_{1,t}F\Im G_{2,t}F^{*}G_{1,t}E_{\nu}(G_{1,t}^{*})^{T}\bar{\mbf{x}}\mbf{x}^{T}G_{1,t}^{T}E_{\nu}G_{1,t}^{*}F\Im G_{2,t}F^{*}G_{1,t}^{*}\mbf{x}.
\end{align*}
The terms on even numbered lines can be bounded by the term on the preceeding line by Cauchy-Schwarz. The term on the first and fifth lines are of the same form up to switching $G_{1,t}$ and $G^{*}_{1}$. Thus it is enough to bound the terms on the first and third lines. By a spectral decomposition we have
\begin{align*}
    \mbf{x}^{*}G_{1,t}^{*}F\Im G_{2,t}F^{*}G_{1,t}^{*}E_{b}G_{1,t}F\Im G_{2,t}F^{*}G_{1,t}\mbf{x}&\leq N\mbf{x}^{*}G_{1,t}^{*}F\Im G_{2,t}F^{*}G_{1,t}\mbf{x}\Tr{\Im G_{1,t}F\Im G_{2,t}F^{*}},
\end{align*}
and hence the first line is bounded by
\begin{align*}
    \frac{N^{\xi}\rho_{1,t}\phi^{iso}_{2}(w_{1},w_{2})\phi^{av}_{2}(w_{1},w_{2})}{\eta^{2}_{1,t}}&=\frac{N^{\xi}\rho_{1,t}(\phi^{iso}_{2}(w_{1},w_{2}))^{2}}{\eta_{1,t}}.
\end{align*}
The term on the third line is bounded by
\begin{align*}
    \frac{N^{2\xi}(\phi^{iso}_{2})^{2}}{N\eta_{2,t}^{2}}.
\end{align*}
By the BDG inequality we find that the integral of the stochastic term is bounded by
\begin{align*}
    \sqrt{1+\frac{N^{\xi}}{N\eta_{2}\rho_{2}}}N^{\xi/2}\phi^{iso}_{2}(w_{1},w_{2})&\lesssim N^{\xi/2}\phi^{iso}_{2}(w_{1},w_{2}).
\end{align*}
This completes the bounds for each term in the SDE and hence the proof.
\end{proof}

Combining each of these lemmas we can prove Proposition \ref{prop:gaussLL}.
\begin{proof}[Proof of Proposition \ref{prop:gaussLL}]
From Lemmas \ref{lem:av1Gaussian}--\ref{lem:iso2Gaussian} we deduce that, for any fixed $T>0$, if $R^{av/iso}_{0}<N^{\xi/2}\psi^{av/iso}_{0}$, then $\tau>T$ with very high probability. For the input bounds we use the fact that $|z_{0}|>1+\delta$ for some fixed $\delta>0$ depending on $T$, so with very high probability there is a neighbourhood of 0 containing no eigenvalues of $W_{z_{0}}$. Choosing $\epsilon$ sufficiently small in the definition of $\Omega_{K,\xi,\epsilon}$ and small (but fixed) $T>0$, we can ensure that the parameters $w_{1},w_{2}$ from the initial condition $S^{-1}_{T}(\Omega_{K,\xi,\epsilon})$ are inside this gap. In other words, there is a finite, $N$-independent distance from the spectral parameters $w_{j}$ to the spectrum, meaning that we are in the global regime at $t=0$. In this regime the proof of the local laws by cumulant expansion is much simpler because the trivial bound $\|G_{z}(w)\|\leq\frac{1}{d(w,\text{spec}(W))}$ is affordable, as explained in \cite[Appendix B]{cipolloni_optimal_2022}.
\end{proof}

\subsection{Removal of Gaussian Component}
To remove the Gaussian component we argue along the same lines as \cite[Appendix D]{cipolloni_universality_2024} by bounding the time derivative of high moments and using Gr\"{o}nwall's inequality. Let $X_{t}=e^{-t/2}X+\sqrt{1-e^{-t}}Y$, where $Y\sim Gin_{1}(N)$, and let $W_{t}$ denote the corresponding Hermitisation. For $w_{j}\in\mbb{C}$, let $G_{j}:=G_{z}(w_{j})$. Note that $G_{j}$ depends on $t$ only through $X_{t}$, i.e. the spectral parameters $z$ and $w_{j}$ are time-independent. Throughout this section we assume that $|z|\geq 1-CN^{-1/2}$ for some $C>0$. We consider the time evolution of 
\begin{align}
    R^{av}_{t}(w_{1},B_{1},...,w_{k},B_{k})&:=\Tr{(G_{1}B_{1}\cdots G_{k}B_{k}-M_{z}(w_{1},B_{1},...,w_{k}))B_{k}},\\
    R^{iso}_{t,\mu,\nu}(w_{1},B_{1},...,w_{k})&:=\mbf{e}_{\mu}^{*}(G_{1}B_{1}\cdots G_{k}-M_{z}(w_{1},B_{1},...,w_{k}))\mbf{e}_{\nu}.
\end{align}
We have to prove the local law for isotropic quantities first since this will be used as an input for the proof of the averaged law. In the course of the proof other matrix elements appear besides $(\mu,\nu)$ and so we define
\begin{align}
    \Omega_{m,t}(w_{1},B_{1},...,w_{k})&:=\max_{\mu,\nu\in[2N]}\mbb{E}|R^{iso}_{t,\mu,\nu}|^{m}.
\end{align}
Here and in the sequel we use the following convention on matrix indices: Latin indices take values in $[N]$ and Greek indices take values in $[2N]$. We also define the conjugate index $\hat{\mu}\equiv N+\mu\text{ mod } 2N$.

We begin with the requisite input bounds. Let $G_{\mu,\nu}:=(G_{z}(w))_{\mu,\nu}$. From the isotropic single resolvent local law in Proposition \ref{prop:singleLL} we have
\begin{align}
    |G_{\mu,\nu}|&\prec\rho\delta_{\mu,\nu}+\delta_{\mu,\hat{\nu}}+\sqrt{\frac{\rho}{N\eta}}.\label{eq:isotropicBound}
\end{align}
In addition to this bound, we need the following simple case of fluctuation averaging.
\begin{lemma}\label{lem:fluctuationAveraging}
For any $\mu\in[1,2N]$ we have
\begin{align}
    \left|\sum_{j}G_{\mu,j}\right|&\prec\frac{1}{\eta\rho},\label{eq:fluctuation1}\\
    \left|\sum_{j}G_{\mu,\hat{\jmath}}\right|&\prec\frac{1}{\eta\rho}.\label{eq:fluctuation2}
\end{align}
\end{lemma}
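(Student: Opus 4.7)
The plan is to realise each sum as an isotropic product of $G_{z}(w)$ with the indicator vector of one of the two $N$-blocks, and then apply the isotropic single-resolvent local law of Proposition \ref{prop:singleLL}. Let $\mbf{1}^{(1)},\mbf{1}^{(2)}\in\mbb{R}^{2N}$ denote the indicator vectors of the first and second $N$-blocks of indices, so that
\begin{align*}
\sum_{j}G_{\mu,j}=\mbf{e}_{\mu}^{*}G_{z}(w)\mbf{1}^{(1)},\qquad\sum_{j}G_{\mu,\hat{\jmath}}=\mbf{e}_{\mu}^{*}G_{z}(w)\mbf{1}^{(2)}.
\end{align*}
Since $\|\mbf{1}^{(i)}\|=\sqrt{N}$, applying the isotropic law to the unit vectors $\mbf{e}_{\mu}$ and $\mbf{1}^{(i)}/\sqrt{N}$ and multiplying through by $\sqrt{N}$ yields
\begin{align*}
\left|\mbf{e}_{\mu}^{*}(G_{z}(w)-M_{z}(w))\mbf{1}^{(i)}\right|\prec\sqrt{\frac{\rho}{\eta}}+\frac{1}{\sqrt{N}\,\eta}.
\end{align*}

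For the deterministic part, each of the four $N\times N$ sub-blocks of $M_{z}(w)$ is a scalar multiple of $I_{N}$, so $\mbf{1}^{(1)}$ and $\mbf{1}^{(2)}$ span an $M_{z}(w)$-invariant subspace on which $M_{z}(w)$ acts by the fixed $2\times 2$ matrix with entries $m_{z},-zu_{z},-\bar{z}u_{z},m_{z}$. Hence $|\mbf{e}_{\mu}^{*}M_{z}(w)\mbf{1}^{(i)}|\leq\max(|m_{z}|,|zu_{z}|)$. Using $|z|\simeq 1$ and Lemma \ref{lem:interchange} we have $|m_{z}|\simeq\rho\lesssim 1$, and from \eqref{eq:u} together with $|m_{z}+w|\geq\rho+\eta$ we obtain $|u_{z}|\leq|m_{z}|/(\rho+\eta)\leq 1$. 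The deterministic contribution is therefore $O(1)$.

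Combining the two estimates, both sums are $\prec 1+\sqrt{\rho/\eta}+1/(\sqrt{N}\,\eta)$, and each of these three terms is dominated by $1/(\eta\rho)$: the first uses $\eta\rho\lesssim 1$; the second is equivalent to $\rho^{3}\eta\lesssim 1$; the third is equivalent to $\rho\lesssim\sqrt{N}$; all follow from $\eta,\rho\lesssim 1$. The fluctuation-averaging cancellation that saves a factor $\sqrt{N}$ over the naive entrywise bound $N\cdot\max_{j}|G_{\mu,j}|$ is already built into the isotropic law applied to a vector of norm $\sqrt{N}$, so there is no real obstacle -- the statement is essentially a direct corollary of Proposition \ref{prop:singleLL}, and the only mild point to verify is the book-keeping of the three simple scalar inequalities above.
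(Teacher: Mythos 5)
Your proof is correct and takes a genuinely different, considerably simpler route than the paper. The paper carries out an explicit fluctuation-averaging argument: it decomposes $\sum_{j}G_{\mu,j}$ and $\sum_{j}G_{\mu,\hat\jmath}$ into partial-expectation parts $A,C$ and martingale parts $B,D$ (via the projections $P_{j},Q_{j}$), bounds $B,D$ by the moment method through the minor expansions \eqref{eq:expansion1}--\eqref{eq:expansion2}, and solves a self-consistent $2\times2$ system for $A,C$ using the cubic equation for $m_{z}$. You instead note that the full sum is the isotropic pairing $\mbf{e}_{\mu}^{*}G_{z}(w)\mbf{1}^{(i)}$ and feed the normalized vector $\mbf{1}^{(i)}/\sqrt{N}$ directly into Proposition \ref{prop:singleLL}; the $\sqrt{N}$ cancellation that the paper's argument produces by hand is already encoded in the isotropic law for such a spread-out vector. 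The algebra of the deterministic part is right (each block of $M_{z}(w)$ is a scalar multiple of $I_{N}$, $|m_{z}|\simeq\rho\lesssim1$ by Lemma \ref{lem:interchange}, and $|u_{z}|=|m_{z}|/|m_{z}+w|\lesssim\rho/(\rho+\eta)\lesssim1$; you should write $\lesssim$ rather than $\leq$ in the final step of that chain since it uses Lemma \ref{lem:interchange} up to a constant). Your final estimate $\prec1+\sqrt{\rho/\eta}+(\sqrt{N}\eta)^{-1}$ is in fact strictly stronger than the required $1/(\eta\rho)$: all three comparisons you list reduce to $\rho,\eta\lesssim1$, which holds in the domain of Proposition \ref{prop:singleLL}. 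The paper's argument is self-contained at a more elementary level and is perhaps more instructive about where the cancellation comes from, but as a proof of the lemma your one-step reduction to the isotropic local law is clean and correct.
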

\begin{proof}
We will write the details for $\mu=i\in[N]$; the case $\mu=\hat{\imath}\in N+[N]$ is analogous. Let $P_{j}$ and $P_{\hat{\jmath}}$ denote the expectation with respect to the $j$-th row and column of $X$ respectively, and let $Q_{\mu}=1-P_{\mu}$. Let $G^{(\mu)}$ denote the resolvent of the matrix obtained from $W_{z}$ by removing the $\mu$-th row and column. Thus $G^{(j)}$ and $G^{(\hat{\jmath})}$ are independent of the $j$-th row and column of $X$ respectively, which means in particular that $P_{j}G^{(j)}=G^{(j)}$ and $P_{\hat{\jmath}}G^{(\hat{\jmath})}=G^{(\hat{\jmath})}$. Let
\begin{align*}
    A&:=\sum_{j}^{(i)}P_{j}G_{i,j},\\
    B&:=\sum_{j}^{(i)}Q_{j}G_{i,j},\\
    C&:=\sum_{j}^{(i)}P_{\hat{\jmath}}G_{i,\hat{\jmath}},\\
    D&:=\sum_{j}^{(i)}Q_{\hat{\jmath}}G_{i,\hat{\jmath}},
\end{align*}
where $\sum^{(\mu)}_{\nu}$ denotes the sum over $\nu\neq \mu$. We claim that
\begin{align}
    |B|,|D|&\prec\frac{1}{\eta},\label{eq:fluctuationClaim1}
\end{align}
and
\begin{align}
    A&=\bar{z}m(C+D)+|z|^{2}u(A+B)+O_{\prec}\left(\frac{1}{\eta}\right),\label{eq:fluctuationClaim2}\\
    C&=zm(A+B)+|z|^{2}u(C+D)+O_{\prec}\left(\frac{1}{\eta}\right),\label{eq:fluctuationClaim3}
\end{align}
where we recall that $u=m/(m+w)$. From this we can finish the proof as follows. Using \eqref{eq:fluctuationClaim1} we rewrite \eqref{eq:fluctuationClaim2} and \eqref{eq:fluctuationClaim3} as
\begin{align*}
    C&=\frac{1-|z|^{2}u}{\bar{z}m}A+O_{\prec}\left(\frac{1}{\eta\rho}\right)=-\frac{m+w}{\bar{z}}A+O_{\prec}\left(\frac{1}{\eta\rho}\right),\\
    A&=\frac{1-|z|^{2}u}{zm}C+O_{\prec}\left(\frac{1}{\eta\rho}\right)=-\frac{m+w}{z}C+O_{\prec}\left(\frac{1}{\eta\rho}\right),
\end{align*}
where we have used the identity $1-|z|^{2}u=-m(m+w)$ which follows from the cubic equation for $m$. Now multiply the second equation by $-\frac{m+w}{\bar{z}}$ and add the result to the first to obtain
\begin{align*}
    C-\frac{m+w}{\bar{z}}A&=-\frac{m+w}{\bar{z}}A+\frac{(m+w)^{2}}{|z|^{2}}C+O_{\prec}\left(\frac{1}{\eta\rho}\right),
\end{align*}
which implies that $C=O_{\prec}\left(\frac{1}{\eta\rho}\right)$ since $|z|\gtrsim 1$ and $|m|\lesssim |w|^{1/3}$. The proof for $A$ follows in the same way.

Let us now prove \eqref{eq:fluctuationClaim2} assuming \eqref{eq:fluctuationClaim1}. We use the isotropic law $|G_{i,j}-m\delta_{i,j}|\prec\sqrt{\frac{\rho}{N\eta}}$ to insert a factor of $m/G_{j,j}$:
\begin{align*}
    \sum_{j}^{(i)}P_{j}G_{i,j}&=m\sum_{j}^{(i)}P_{j}\frac{G_{i,j}}{G_{j,j}}+O_{\prec}\left(\frac{1}{\eta}\right).
\end{align*}
Now we use the identity
\begin{align*}
    \frac{G_{i,j}}{G_{j,j}}&=-\sum_{\mu}^{(j)}G_{i,\mu}^{(j)}(W_{z})_{\mu,j}\\
    &=-\sum_{k}G^{(j)}_{i,\hat{k}}(\bar{X}_{j,k}-\bar{z}\delta_{j,k}).
\end{align*}
Taking the partial expectation with respect to the $j$-th row we find
\begin{align*}
    P_{j}\frac{G_{i,j}}{G_{j,j}}&=\bar{z}G^{(j)}_{i,\hat{\jmath}}.
\end{align*}
Now we use
\begin{align*}
    G^{(j)}_{i,\hat{\jmath}}&=G_{i,\hat{\jmath}}-\frac{G_{i,j}G_{j,\hat{\jmath}}}{G_{j,j}}\\
    &=G_{i,\hat{\jmath}}+\frac{zu}{m}G_{i,j}+O_{\prec}\left(\frac{1}{N\eta}\right).
\end{align*}
Multiplying by $\bar{z}m$ and summing over $j$ we have
\begin{align*}
    A&=\bar{z}m\sum_{j}^{(i)}G_{i,\hat{\jmath}}+|z|^{2}u\sum_{j}^{(i)}G_{i,j}+O_{\prec}\left(\frac{1}{\eta}\right)\\
    &=\bar{z}m(C+D)+|z|^{2}u(A+B)+O_{\prec}\left(\frac{1}{\eta}\right),
\end{align*}
which is \eqref{eq:fluctuationClaim2}. Repeating the same steps for $C$ we obtain \eqref{eq:fluctuationClaim3}.

For $B$ and $D$ we can follow \cite[Proof of Proposition 6.1]{benaych-georges_lectures_2016} or \cite[Proof of Theorem 4.8]{erdos_averaging_2013}. The idea is to bound even moments by making repeated use of the identities
\begin{align}
    G_{i,j}&=G_{i,j}^{(k)}+\frac{G_{i,k}G_{k,j}}{G_{k,k}},\label{eq:expansion1}\\
    \frac{1}{G_{i,i}}&=\frac{1}{G^{(j)}_{i,i}}-\frac{G_{i,j}G_{j,i}}{G_{i,i}G^{(j)}_{i,i}G_{j,j}},\label{eq:expansion2}
\end{align}
for $i\neq j$ and $k\neq i,j$, and the fact that $P_{k}\left(G^{(k)}_{i,j}Q_{k}(\cdot)\right)=P_{k}Q_{k}\left(G^{(k)}_{i,j}\cdot\right)=0$. For any $m\in\mbb{N}$ we have
\begin{align*}
    \mbb{E}|B|^{2m}&=\mbb{E}\sum_{j_{1},...,j_{m}}\sum_{k_{1},...,k_{m}}Q_{j_{1}}(G_{i,j_{1}})\cdots Q_{j_{m}}(G_{i,j_{m}})Q_{k_{1}}(\bar{G}_{i,k_{1}})\cdots Q_{k_{m}}(\bar{G}_{i,k_{m}}).
\end{align*}
The complex conjugation plays no role in the remainder and so for simplicity we consider
\begin{align*}
    \mbb{E}\sum_{j_{1},...,j_{2m}}Q_{j_{1}}(G_{i,j_{1}})\cdots Q_{j_{2m}}(G_{i,j_{2m}}).
\end{align*}
We split the sum according to the number $d$ of distinct indices:
\begin{align*}
    \mbb{E}\sum_{d=1}^{2m}\sum_{l}\sum_{j_{1},...,j_{d}}'\left(Q_{j_{1}}(G_{i,j_{1}})\right)^{l_{1}+1}\cdots\left(Q_{j_{d}}(G_{i,j_{d}})\right)^{l_{d}+1}.
\end{align*}
Here the innermost sum is over distinct index tuples and $l$ ranges over partitions of $2m-d$ of length $d$, i.e. $l_{j}\geq0,\,j=1,...,d$ and $l_{1}+\cdots+l_{d}=2m-d$. When $d\leq m$, the result follows immediately from the isotropic law:
\begin{align*}
    \sum_{d=1}^{m}\sum_{l}\sum_{j_{1},...,j_{d}}'\left(Q_{j_{1}}(G_{i,j_{1}})\right)^{l_{1}+1}\cdots\left(Q_{j_{d}}(G_{i,j_{d}})\right)^{l_{d}+1}&\prec N^{d}\cdot\left(\frac{\rho}{N\eta}\right)^{m}\\
    &=\frac{1}{N^{m-d}}\cdot\left(\frac{\rho}{\eta}\right)^{m}.
\end{align*}
When $d>m$, let $I=\{j:l_{j}=0\}$. We use \eqref{eq:expansion1} or \eqref{eq:expansion2} a total of $|I|$ times for each index in $I$. At each step we generate new terms by choosing the first or second term from the right hand sides of \eqref{eq:expansion1} and \eqref{eq:expansion2}. Since $\mbb{E}\left(Q_{j}(G_{i,j})Y\right)=0$ if $P_{j}Y=Y$, any non-zero term must have been obtained by choosing the second terms on the right hand sides of \eqref{eq:expansion1} and \eqref{eq:expansion2} at least once for each of the $|I|$ steps in this procedure. The former increases the number of off-diagonal and (reciprocals of) diagonal elements by 1, while the latter increases this number by 2. The largest terms correspond to those with the minimum number of additional off-diagonal elements, which is equal to $|I|$. Such a term has $2m+|I|$ off-diagonal elements and $|I|$ recicprocals of diagonal elements and is therefore bounded by
\begin{align*}
    \frac{\rho^{m-|I|/2}}{(N\eta)^{m+|I|/2}}.
\end{align*}
Summing over all terms corresponding to a given partition, all tuples $(j_{1},...,j_{d})\in[1,N]$ of distinct elements and all partitions of length $d$, we obtain the bound
\begin{align*}
    \frac{1}{(N\eta)^{m-d+|I|/2}}\cdot\frac{\rho^{2m-d}}{\eta^{d}}.
\end{align*}
Since $|I|\geq2(d-m)$, this takes its maximum value $\eta^{-2m}$ at $d=2m$. This proves \eqref{eq:fluctuationClaim1} for $B$, and the proof for $D$ is analogous.
\end{proof}

The starting point for the GFT is the formula
\begin{align}
    \frac{\mathrm{d}}{\mathrm{d}t}\mbb{E}|R_{t}|^{2m}&=\sum_{p=2}^{P-1}\frac{\kappa_{p+1}}{p!N^{\frac{p+1}{2}}}\sum_{i,j=1}^{N}\mbb{E}\left[\partial_{ij}^{p+1}(R_{t}^{m}\bar{R}_{t}^{m})\right]+O(N^{-D}),\label{eq:dR}
\end{align}
which follows by It\^{o}'s lemma and a cumulant expansion. We have truncated the sum at some large $P$ (depending on $m$) so that the error is bounded by $N^{2-P/2}\eta^{-4m}\leq N^{2+4m-P/2}\leq N^{-D}$ (the term $\eta^{-4m}$ comes from using the single resolvent isotropic local law to bound the extra factors of elements of $G$ that arise from the derivatives). Strictly speaking, since we assume independent but not necessarily identically distributed entries, the cumulants $\kappa_{p}$ of $\sqrt{N}X_{i,j}$ should depend on the indices $i,j$, but this dependence is irrelevant since we assume a uniform upper bound for all cumulants.
\begin{lemma}\label{lem:isoGFT1}
For any $w_{j}\in\mbb{C},\,j=1,2$, $\mu,\nu\in[1,...,2N]$, $t\geq0$ and $\xi>0$, we have
\begin{align}
    \left|\frac{\mathrm{d}}{\mathrm{d}t}\mbb{E}|R|^{2m}\right|&\leq\frac{N^{\xi}}{N^{1/4}\eta^{3/4}}\left[\left(\frac{\phi^{iso}_{1}}{(N\eta)^{1/4}}\right)^{2m}+\frac{\Omega_{t,2m}}{(N\eta)^{m/2}}+|R|^{2m}\right],
\end{align}
with very high probability, where $R:=R^{iso}_{t,\mu,\nu}(w_{1},F,w_{2})$ and $\eta=|\eta_{1}|\wedge|\eta_{2}|$.
\end{lemma}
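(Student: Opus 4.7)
The plan is to apply the cumulant expansion \eqref{eq:dR} and bound each term $\kappa_{p+1} N^{-(p+1)/2} \sum_{i,j} \mathbb{E}[\partial_{ij}^{p+1}(R^m \bar R^m)]$ separately. The $p = 2$ contribution (third cumulant) is the critical one and fixes the rate $N^{-1/4}\eta^{-3/4}$; higher cumulants give strictly smaller contributions thanks to the extra $N^{-1/2}$ per order and will be handled by the same scheme.

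First I would compute the derivatives. Since $\partial_{ij} G_k = -G_k \Delta_{ij} G_k$ with $\Delta_{ij}$ from \eqref{eq:Delta}, one derivative of $R = (G_1 F G_2 - M_z(w_1,F,w_2))_{\mu,\nu}$ produces a sum of four monomials, each a product of two matrix entries of the schematic form $G_{k,\mu,\alpha}(G_1 F G_2)_{\beta,\nu}$ or $(G_1 F G_2)_{\mu,\alpha}G_{k,\beta,\nu}$ with $\{\alpha,\beta\} \subset \{i, \hat\jmath\}$. Iterating $\partial_{ij}^{p+1}$ and applying Leibniz to $R^m \bar R^m$, the expectation on the right-hand side of \eqref{eq:dR} is rewritten as a finite sum of expectations of products of resolvent-chain entries of length at most $p+2$, multiplied by residual factors of $R$ and $\bar R$.

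The next step is to perform the sum $\sum_{i,j}$ on each such monomial. Single-resolvent entries $G_{k,\alpha,\beta}$ are bounded via \eqref{eq:isotropicBound}; two-resolvent entries $(G_1 F G_2)_{\alpha,\beta}$ by the a priori isotropic estimate $\phi^{iso}_1 + N^{2\xi}\psi^{iso}$ (which, inductively in the GFT scheme, is already known at the current step from Proposition \ref{prop:gaussLL}) or, when $(\alpha,\beta)=(\mu,\nu)$, by $|R|+|M_{\mu,\nu}|$; occasional longer chains of the form $(G_1 F G_2 F^* G_1^*)_{\alpha,\alpha}$ are bounded via \eqref{eq:isoLL2}. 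The crucial observation is that a trivial Cauchy--Schwarz/Ward bound on the free sums gives only $\sqrt{N/\eta}$ per free index and loses the needed $\eta$-power; instead, whenever a single $G$-entry is the only factor depending on $i$ (or on $j$), I would invoke Lemma \ref{lem:fluctuationAveraging} to replace this by $1/(\eta\rho)$. When a summation index appears squared the Ward identity $\sum_\alpha|G_{\mu,\alpha}|^2 \leq \Im G_{\mu,\mu}/\eta \prec \rho/\eta$ is sharp and suffices. Residual $|R|^{2m-k}$ factors are absorbed after H\"older into $\Omega_{t,2m}$ or into the $|R|^{2m}$ piece of the stated bound.

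The hard part will be the combinatorial bookkeeping. Each topology of index contractions produced by the $p+1$ derivatives must be classified as contributing to one of the three pieces of the right-hand side: the $(\phi^{iso}_1/(N\eta)^{1/4})^{2m}$ piece arises when all $2m$ factors of $R$ and $\bar R$ are dominated by the a priori $\psi^{iso}$; the $|R|^{2m}$ piece arises when a derivative-produced entry $(G_1 F G_2)_{\mu,\nu}$ collapses to $R+M_{\mu,\nu}$ and the $R$ part is retained as a random factor; and the $\Omega_{t,2m}/(N\eta)^{m/2}$ piece arises when some $R$-factor is kept intact and matched via H\"older against the moment of $R$. The task is to verify that every topology fits into one of these three classes with prefactor no larger than $N^\xi/(N^{1/4}\eta^{3/4})$, and that this prefactor is in fact saturated (rather than improved) only when the $p = 2$ term hits three distinct factors of $R/\bar R$ and fluctuation averaging is applied to one summation index per pair. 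All $p \geq 3$ terms follow by the same analysis with strictly smaller prefactors.
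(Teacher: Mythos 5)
Your overall strategy matches the paper's: cumulant expansion \eqref{eq:dR}, the entrywise bound \eqref{eq:isotropicBound}, fluctuation averaging (Lemma \ref{lem:fluctuationAveraging}) to gain an extra factor of $1/(\eta\rho)$ when a single $G$-entry carries a free summation index, and Young/H\"older to close the estimate into the three pieces of the right-hand side. But two points need correcting.

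First, you have inverted the identification of the critical topology. You claim the rate $N^{-1/4}\eta^{-3/4}$ is saturated when the three derivatives of the third-cumulant term are distributed over \emph{three distinct} copies of $R,\bar R$. The paper's proof identifies the critical term as the one where all three derivatives hit the \emph{same} copy (the case $k=1$ in the Leibniz expansion). The reason is that only in this case can the derivative produce diagonal entries $(G_{j,t})_{i,i}$, $(G_{j,t})_{\hat\jmath,\hat\jmath}$, which are $O(\rho)$ rather than $O(\sqrt{\rho/(N\eta)})$; when derivatives are spread over $k>1$ copies, each derivative-acted copy carries an off-diagonal $G$-entry, and the crude bound $N^{2}\bigl(\rho/(N\eta)\bigr)^{k}\cdot\max_{\alpha,\beta}\mbb{E}\bigl(\phi^{iso}_{1}+|R^{iso}_{t,\alpha,\beta}|\bigr)^{k}\,|R|^{2m-k}$ already suffices. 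With your identification you would carry out the delicate fluctuation-averaging step on a non-critical term and risk missing the genuinely dominant one, for which an entry such as $\frac{1}{N^{3/2}}\sum_{i,j}(G_{1,t})_{\mu,i}(G_{1,t})_{\hat\jmath,\hat\jmath}(G_{1,t})_{i,i}(G_{1,t}FG_{2,t})_{\hat\jmath,\nu}\,R^{m-1}\bar R^{m}$ must be handled.

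Second, the invocation of Proposition \ref{prop:gaussLL} as an a priori bound for $(G_1 F G_2)_{\alpha,\beta}$ does not make sense at this stage: Proposition \ref{prop:gaussLL} is a statement about matrices with a Gaussian component of size $T$, which is not available for the general matrix under the GFT flow at the current step. Using it here would be circular (it is essentially what one is trying to propagate). The correct move is purely deterministic: write $(G_1 F G_2)_{\alpha,\beta} = M_z(w_1,F,w_2)_{\alpha,\beta} + R^{iso}_{t,\alpha,\beta}$ and use \eqref{eq:M12F} to bound the first piece by $\phi^{iso}_1$. The random piece $R^{iso}_{t,\alpha,\beta}$ must be retained as a random variable and pushed into $\Omega_{t,2m}$ via Young's inequality; this is precisely why $\Omega_{t,2m}$ appears in the statement of the lemma. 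If you could really bound $(G_1 F G_2)_{\alpha,\beta}$ deterministically by $\phi^{iso}_1 + N^{2\xi}\psi^{iso}$, the $\Omega_{t,2m}$ term would be unnecessary.
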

\begin{proof}
The $p$-th derivative of $R$ is a sums of products of $p$ elements of $G_{j,t}$ and an element of $G_{1,t}FG_{2,t}$. For example,
\begin{align*}
    \partial_{ij}R&=-(G_{1,t})_{\mu,i}(G_{1,t}FG_{2,t})_{\hat{\jmath},\nu}-(G_{1,t})_{\mu,\hat{\jmath}}(G_{1,t}FG_{2,t})_{i,\nu}\\
    &-(G_{1,t}FG_{2,t})_{\mu,i}(G_{2,t})_{\hat{\jmath},\nu}-(G_{1,t}FG_{2,t})_{\mu,\hat{\jmath}}(G_{2,t})_{i,\nu},\\
    \partial_{ij}^{2}R&=(G_{1,t})_{\mu,i}(G_{1,t})_{\hat{\jmath},i}(G_{1,t}FG_{2,t})_{\hat{\jmath},\nu}+(G_{1,t})_{\mu,i}(G_{1,t})_{\hat{\jmath},\hat{\jmath}}(G_{1,t}FG_{2,t})_{i,\nu}+\cdots.
\end{align*}
To generate a term in the $p$-th derivative, we choose $l=0,...,p$, write down a string containing $p-l$ factors of $G_{1,t}$, one factor of $G_{1,t}FG_{2,t}$ and $l$ factors $G_{2,t}$ in that order and assign indices $i$ and $\hat{\jmath}$ such that: i) the first index of the first factor is $\mu$; ii) the second index of the last factor is $\nu$; iii) the second index of a given factor is different to first index of the next (for example, we cannot have a factor $(G_{1,t})_{\alpha,i}(G_{1,t})_{i,\beta}$). The important point to note is that for each copy of $R$ on which a derivative acts we always have at least one $(G_{1,t})_{\mu,\alpha}$ or $(G_{2,t})_{\alpha,\nu}$ and exactly one entry of $G_{1,t}FG_{2,t}$. Using the bound for off-diagonal elements in \eqref{eq:isotropicBound}, we obtain a general estimate
\begin{align*}
    \left|\mbb{E}\sum_{i,j}\sum_{q}\partial_{ij}^{q_{1}}(R)\cdots\partial_{ij}^{q_{k}}(R)\cdot R^{m-k}\bar{R}^{m}\right|&\leq N^{2}\cdot\left(\frac{\rho}{N\eta}\right)^{k}\cdot\max_{\alpha,\beta}\mbb{E}\left(\phi^{iso}_{1}+|R^{iso}_{t,\alpha,\beta}|\right)^{k}\cdot|R|^{2m-k},
\end{align*}
where the sum over $q$ on the left hand side is over partitions of $p+1$ of length $k$. This is enough as soon as $p>2$ or $k>1$. We are left with the terms corresponding to $p=3$ and $k=1$. By using \eqref{eq:isotropicBound} for off-diagonal elements, we deduce that the most critical term is that for which we have the maximum number of diagonal elements of $G_{j,t}$, i.e. terms of the form
\begin{align*}
    \frac{1}{N^{3/2}}\mbb{E}\sum_{i,j=1}^{N}(G_{1,t})_{\mu,i}(G_{1,t})_{\hat{\jmath},\hat{\jmath}}(G_{1,t})_{i,i}(G_{1,t}FG_{2,t})_{\hat{\jmath},\nu}R^{m-1}\bar{R}^{m}.
\end{align*}
For these terms we use the fluctuation averaging result in Lemma \ref{lem:fluctuationAveraging}. In the above example we have the bound
\begin{align*}
    \frac{N^{\xi}}{N^{3/2}\eta}\sum_{j}\mbb{E}\left(\phi^{iso}_{1}+|R^{iso}_{t,\hat{\jmath},\nu}|\right)|R|^{2m-1}&\leq\frac{N^{\xi}}{N^{1/4}\eta^{3/4}}\left[\left(\frac{\phi^{iso}_{1}}{(N\eta)^{1/4}}\right)^{2m}+\frac{\Omega_{t,2m}}{(N\eta)^{m/2}}+|R|^{2m}\right],
\end{align*}
with very high probability.
\end{proof}

With this lemma in hand we can conclude the proof of the first isotropic law in \eqref{eq:isoLL1}.
\begin{proof}[Proof of \eqref{eq:isoLL1}]
Using Lemma \ref{lem:isoGFT1} and Gr\"{o}nwall's inequality, we conclude that we can remove a Gaussian component of variance $t=O(1)$ for $w_{j}$ such that $\eta=\eta_{1}\wedge\eta_{2}\geq N^{-1/3+\xi}$. In other words, for sufficiently small $T>0$, the local law in \eqref{eq:isoLL1} holds in the domain
\begin{align}
    \bigcup_{t\in[0,T]}\left(\Omega_{K,\xi,\epsilon}(t)\cap\{(z,w_{1},w_{2})\in\mbb{C}^{3}:|\eta_{1}|\wedge|\eta_{2}|\geq N^{-1/3+\xi}\}\right).\label{eq:domain}
\end{align}

For $\eta<N^{-1/3+\xi}$, we let $t=N^{1/4}\eta^{3/4}$ and observe that the initial condition $\eta_{j,-t}$ satisfies
\begin{align*}
    \eta_{j,-t}&\simeq \eta_{j}+t\rho_{j}\gtrsim t\eta^{1/3}_{j}.
\end{align*}
If $\eta\geq N^{-7/13+\xi}$ then $\eta_{j,-t}\gtrsim N^{-1/3+\xi}$ and the initial condition belongs to \eqref{eq:domain}. We can now apply Lemma \ref{lem:iso1Gaussian} to propagate this law to $\eta$ for matrices with a Gaussian component of variance $t$. Applying Lemma \ref{lem:isoGFT1} again we remove this Gaussian component and prove the local law for any $\eta$ with $\eta\geq N^{-7/13}$. Repeating this a finite number of times we can redue $\eta$ to the threshold $N\eta\rho\geq N^{\xi}$. The fact that we can reach the threshold follows because at each stage we go from $N^{-x_{n}}$ to $N^{-x_{n+1}}$, where
\begin{align*}
    x_{n+1}&=\frac{x_{n}+1/4}{3/4+1/3}.
\end{align*}
The fixed point of this iteration is $x=3>1$, so we can reach $N^{-1+\xi}$ in a finite number of steps.
\end{proof}

For the second isotropic law in \eqref{eq:isoLL2}, we use \eqref{eq:isoLL1} as an additional input to prove the following.
\begin{lemma}\label{lem:isoGFT2}
For any $w_{j}\in\mbb{C},\,j=1,2$, $\mu\in[2N]$, $\xi>0$ and $t\geq0$, we have
\begin{align}
    \left|\frac{\mathrm{d}}{\mathrm{d}t}\mbb{E}|S|^{2m}\right|&\leq\frac{N^{\xi}}{N^{1/2}\eta}\left[(\phi^{iso}_{2})^{2m}+|S|^{2m}\right],
\end{align}
with very high probability, where $S:=S^{iso}_{t,\mu,\mu}(w_{1},F,\wh{w}_{2},F^{*},\bar{w}_{1})$ and $\eta=\eta_{1}\wedge\eta_{2}$.
\end{lemma}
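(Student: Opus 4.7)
The plan is to adapt the Green's function comparison argument used in Lemma \ref{lem:isoGFT1} to the longer five-resolvent chain encoded in $S$. The starting point is the same cumulant expansion
\begin{align*}
    \frac{\mathrm{d}}{\mathrm{d}t}\mbb{E}|S|^{2m}&=\sum_{p=2}^{P-1}\frac{\kappa_{p+1}}{p!N^{(p+1)/2}}\sum_{i,j=1}^{N}\mbb{E}\bigl[\partial_{ij}^{p+1}(S^{m}\bar{S}^{m})\bigr]+O(N^{-D}),
\end{align*}
truncated at some large $P=P(m)$, the $p=1$ term vanishing by the mean-zero condition exactly as in Lemma \ref{lem:isoGFT1}. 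Each derivative $\partial_{ij}$ splits one of the five factors in $S=\mbf{e}_{\mu}^{*}G_{1}F\Im G_{2}F^{*}G_{1}^{*}\mbf{e}_{\mu}$ into two shorter factors glued by an index in $\{i,\hat{\jmath}\}$. Distributing the $p+1$ derivatives across $k\leq p+1$ copies of $S$ and $\bar{S}$, I would bound individual $G$-entries by \eqref{eq:isotropicBound}, entries of $G_{1}FG_{2}$-type sub-chains by the just-proven isotropic law \eqref{eq:isoLL1}, and entries of the undifferentiated long chain by $\phi^{iso}_{2}+|S|^{iso}_{t,\alpha,\alpha}+|S|^{iso}_{t,\beta,\beta}$ (using Cauchy--Schwarz and the stopping-time definition to bound the off-diagonal $(\alpha,\beta)$-entry by the two diagonal ones).

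For $k\geq 2$ the prefactor $N^{-(p+1)/2}$, the sum $\sum_{i,j}$ of size at most $N^{2}$, and the necessarily appearing off-diagonal factors $\sqrt{\rho/(N\eta)}$ easily combine to a contribution smaller than the target $N^{-1/2}\eta^{-1}[(\phi^{iso}_{2})^{2m}+|S|^{2m}]$. The only critical case, exactly as in Lemma \ref{lem:isoGFT1}, is $p=3$ with $k=1$, i.e.\ all four derivatives landing on a single copy of $S$ or $\bar{S}$. Then $\partial^{4}S$ is a sum of products of five matrix entries, two carrying the boundary index $\mu$ and three with both indices in $\{i,\hat{\jmath}\}$. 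The dangerous subterms are those maximising the number of diagonal factors $G_{i,i}$, $G_{\hat{\jmath},\hat{\jmath}}$, $G_{i,\hat{\imath}}$ (of size $\rho$ or $1$) at the expense of off-diagonal ones (of size $\sqrt{\rho/(N\eta)}$).

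For such subterms I would first replace each diagonal factor by its deterministic value ($m$ or $-zu$) via the single-resolvent isotropic law, up to an admissible error, and then apply Lemma \ref{lem:fluctuationAveraging} to the remaining sum $\sum_{i}G_{\mu,i}$ or $\sum_{j}G_{\mu,\hat{\jmath}}$, converting the naive factor $N$ into $1/(\eta\rho)$. Combined with the cumulant normalisation $N^{-2}$, one unavoidable off-diagonal entry of size $\sqrt{\rho/(N\eta)}$, the bound $\phi^{iso}_{2}+|S|$ for the surviving long-chain entry, and $(\phi^{iso}_{2})^{2m-1}+|S|^{2m-1}$ from the undifferentiated copies, the product is precisely of size $N^{\xi}N^{-1/2}\eta^{-1}[(\phi^{iso}_{2})^{2m}+|S|^{2m}]$.

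The main obstacle is the bookkeeping of this $p=3,\,k=1$ case: one must enumerate all ways of partitioning $\partial^{4}S$ into five-entry products and verify that in each subterm either at least two off-diagonal factors are present, which already suffices via the naive bound, or some sum reduces, after replacing diagonal entries by their deterministic limits, to one of the form handled by Lemma \ref{lem:fluctuationAveraging}. Once the estimate is in place, Gr\"onwall's inequality applied to the resulting differential inequality yields the claim.
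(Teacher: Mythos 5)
Your high-level plan — cumulant expansion, case analysis in the number of derivatives and the number of copies hit, treatment of a single critical sub-case, and then Gr\"onwall — does match the paper's strategy, and the observation that the local law \eqref{eq:isoLL1} must now be invoked for $G_1FG_2$-type sub-chains is correct. However, there is a gap: you have not identified the ingredient that actually distinguishes this lemma from Lemma \ref{lem:isoGFT1}. When derivatives act on the middle $\Im G_2$ block, the five-resolvent chain $G_1F\Im G_2F^*G_1^*$ is broken into pieces of the form $G_1^q\cdot G_1FG_2\cdot G_2^{1+r}\cdot G_2F^*G_1^*\cdot(G_1^*)^s$, and the entries of $G_1FG_2$ contribute factors of $\phi^{iso}_1$ rather than $\phi^{iso}_2$. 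Converting these back into powers of $\phi^{iso}_2$ (which is what the target bound is stated in) requires the two inequalities
\begin{align*}
\phi^{iso}_1(w_1,w_2)\lesssim\frac{\eta_1}{\rho_1\wedge\rho_2}\,\phi^{iso}_2(w_1,w_2),\qquad
\phi^{iso}_1(w_1,w_2)\lesssim(\rho_1\vee\rho_2)\Bigl(\frac{\rho_1}{\eta_1}\wedge\frac{\rho_2}{\eta_2}\Bigr),
\end{align*}
which are the genuinely new input of this lemma and do not appear in your plan. Without them it is not clear how the $(\phi^{iso}_1)^2$-type factors produced by the split-$\Im G_2$ terms are to be absorbed into the $(\phi^{iso}_2)^{2m}$ target.

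A second, more concrete misalignment: for the critical term you propose to ``apply Lemma \ref{lem:fluctuationAveraging} to the remaining sum $\sum_i G_{\mu,i}$ or $\sum_j G_{\mu,\hat\jmath}$.'' In the relevant subterm (see the paper's display), one of the two summation indices, say $\hat\jmath$, sits on chain entries such as $(G_1FG_2)_{\hat\jmath,\hat\jmath}$ and $(G_2F^*G_1^*)_{\hat\jmath,\mu}$, not on bare resolvent entries, so Lemma \ref{lem:fluctuationAveraging} does not apply to that sum. The paper instead pairs the two $\hat\jmath$-indexed chain factors by Cauchy--Schwarz, turning $\sum_j$ into $\sqrt N\,\phi^{iso}_1\sqrt{S/\eta_2}$ and producing the crucial half-power of $S$ that closes the Young-inequality step. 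Your plan does neither of these: it would either stall at the $\hat\jmath$-sum (FA inapplicable) or, if bounded naively, require exactly the two $\phi^{iso}$ inequalities above to recover the target exponent. So the proposal is missing the key computation specific to this lemma.
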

\begin{proof}
The proof is very similar to the proof of Lemma \ref{lem:isoGFT1}. The only difference is that there are two types of term in a general derivative:
\begin{align*}
    &G_{1}^{q}\cdot G_{1}F\Im G_{2}F^{*}G^{*}_{1}\cdot (G^{*}_{1})^{r},\qquad q+r=p,\\
    &G_{1}^{q}\cdot G_{1}FG_{2}\cdot G_{2}^{1+r}\cdot G_{2}F^{*}G^{*}_{1}\cdot (G^{*}_{1})^{s},\qquad q+r+s=p-1,
\end{align*}
where $A^{q}$ represents $q$ factors of matrix elements of $A$. The first type can be bounded as in the proof of Lemma \ref{lem:isoGFT1}. For the second type we use \eqref{eq:isoLL1} and the bounds
\begin{align*}
    \phi^{iso}_{1}(w_{1},w_{2})&\lesssim\frac{\eta_{1}}{\rho_{1}\wedge\rho_{2}}\phi^{iso}_{2}(w_{1},w_{2}),\\
    \phi^{iso}_{1}(w_{1},w_{2})&\lesssim(\rho_{1}\vee\rho_{2})\left(\frac{\rho_{1}}{\eta_{1}}\wedge\frac{\rho_{2}}{\eta_{2}}\right).
\end{align*}
Again, the most critical terms occur when $p=2$ and the derivative acts on a single copy of $S:=S^{iso}_{t,\mu,\mu}$, and in particular when we have a diagonal element $(G_{2})_{\nu,\nu}$, e.g.
\begin{align*}
    \frac{1}{N^{3/2}}\sum_{i,j}(G_{1})_{\mu,i}(G_{1}FG_{2})_{\hat{\jmath},\hat{\jmath}}(G_{2})_{i,i}(G_{2}F^{*}G^{*}_{1})_{\hat{\jmath},\mu}S^{m-1}\bar{S}^{m}.
\end{align*}
Using Cauchy-Schwarz for the sum over $j$ we find that this is bounded by
\begin{align*}
    \sqrt{\frac{\rho_{1}}{N\eta_{1}\eta_{2}}}\phi^{iso}_{1}|S|^{2m-1/2}&\lesssim\sqrt{\frac{\rho_{1}(\rho_{1}\vee\rho_{2})}{N\eta_{2}(\rho_{1}\wedge\rho_{2})}\left(\frac{\rho_{1}}{\eta_{1}}\wedge\frac{\rho_{2}}{\eta_{2}}\right)}\sqrt{\phi^{iso}_{2}}|S|^{2m-1/2}\\
    &\lesssim\frac{1}{N^{1/2}\eta}\left[(\phi^{iso}_{2})^{2m}+|S|^{2m}\right].
\end{align*}
\end{proof}
Using this lemma the proof of \eqref{eq:isoLL2} is concluded in the same way as the proof of \eqref{eq:isoLL1}. With \eqref{eq:isoLL1} and \eqref{eq:isoLL2} as additional input, the proof of the averaged law in \eqref{eq:avLL2} can be carried out in the same way using the following lemma.
\begin{lemma}\label{lem:av2GFT}
For any $w_{j}\in\mbb{C}$, $t\geq0$ and $\xi>0$ we have
\begin{align}
    \left|\frac{\mathrm{d}}{\mathrm{d}t}\mbb{E}|R|^{2m}\right|&\leq\frac{N^{\xi}}{N^{1/2}\eta}\left[\left(\frac{\phi^{av}_{2}(w_{1},w_{2})}{\sqrt{N\eta}}\right)^{2m}+|R|^{2m}\right],
\end{align}
with very high probability, where $R:=R^{av}_{t}(\wh{w}_{1},F,\wh{w}_{2},F^{*})$ and $\eta=\eta_{1}\wedge\eta_{2}$.
\end{lemma}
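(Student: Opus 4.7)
The plan is to follow the template established in Lemmas~\ref{lem:isoGFT1} and~\ref{lem:isoGFT2}. Starting from the cumulant expansion~\eqref{eq:dR} applied to $\mbb{E}|R|^{2m}$ with
\[
R \;=\; \Tr{\Im G_{1}F\Im G_{2}F^{*}}\;-\;\Tr{M_{z}(\wh{w}_{1},F,\wh{w}_{2})F^{*}},
\]
each derivative $\partial_{ij}$ acting on $\Im G_{k}$ inserts a $\Delta_{ij}$ between matrix factors (modulo the $\pm$ combination from $\Im G = \tfrac{1}{2i}(G-G^{*})$), and the full $\partial_{ij}^{p+1}(R^{m}\bar{R}^{m})$ is organised by partitions $(q_{1},\ldots,q_{k})$ of $p+1$ describing how the derivatives distribute among the $2m$ copies of $R$ and $\bar{R}$.

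For all non-critical terms, i.e.\ either $p\geq 3$ or $k\geq 2$, I would extract from each $\partial_{ij}^{q_{s}}(R)$ one trace of the form $\Tr{\Im G_{1}F\Im G_{2}F^{*}\cdot\text{(additional resolvents)}}$ and bound it by $\phi^{av}_{2}(w_{1},w_{2})$ using \eqref{eq:M121FF*} together with the already-established two-resolvent averaged law~\eqref{eq:isoLL1}, while each of the remaining $q_{s}-1$ resolvent entries coming from the extra $\Delta_{ij}$-insertions is bounded using the single-resolvent isotropic bound \eqref{eq:isotropicBound}. After the $N^{2}$ sum over $(i,j)$, the $N^{-(p+1)/2}$ prefactor and the off-diagonal gains $(\rho/(N\eta))^{k}$, the cost of each such contribution is at most $N^{\xi}(\phi^{av}_{2})^{k}(N\eta)^{-k/2}|R|^{2m-k}$, and Young's inequality absorbs this into $|R|^{2m}+(\phi^{av}_{2}/\sqrt{N\eta})^{2m}$ with prefactor better than $N^{\xi}/(N^{1/2}\eta)$.

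The main obstacle is the critical case $p=2$, $k=1$ --- the third-cumulant contribution $\frac{\kappa_{3}}{6N^{3/2}}\sum_{ij}\mbb{E}[\partial_{ij}^{3}(R)\cdot R^{m-1}\bar{R}^{m}]$. The three $\Delta_{ij}$-insertions acting on a single copy of $R$ generate a sum of traces of long resolvent chains; a naive estimate using \eqref{eq:isotropicBound} on each matrix element leaves an uncontrolled residual factor of $N^{1/2}$. The remedy is fluctuation averaging: within each sub-term one can, after isolating a single trace of the $\Im G_{1}F\Im G_{2}F^{*}$-pattern (bounded by $\phi^{av}_{2}$) and applying \eqref{eq:isotropicBound} to all but one of the remaining resolvent entries, reduce the $(i,j)$-sum to partial sums of the form $\sum_{j}G_{\mu,j}$ or $\sum_{j}G_{\mu,\hat{\jmath}}$, which by Lemma~\ref{lem:fluctuationAveraging} contribute $O_{\prec}(1/(\eta\rho))$ rather than the naive $O(N\rho)$. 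The bookkeeping consists in identifying, for each sub-term, the correct pair of free indices on which to apply this averaging and handling the remaining matrix elements through \eqref{eq:isoLL1}--\eqref{eq:isoLL2} and the bounds in Lemma~\ref{lem:MBounds}. Once this is carried out systematically, the desired estimate $N^{\xi}(N^{1/2}\eta)^{-1}\bigl[(\phi^{av}_{2})^{2m}/(N\eta)^{m}+|R|^{2m}\bigr]$ follows, where the $\eta^{-1}$ factor (as opposed to the $\eta^{-3/4}$ in Lemma~\ref{lem:isoGFT1}) reflects the fact that an averaged quantity has stronger concentration than an isotropic one, allowing an extra power of $N^{-1/4}\eta^{-1/4}$ to be converted into two more factors of the fluctuation-averaging gain.
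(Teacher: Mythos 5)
Your non-critical bookkeeping ($p\geq 3$ or $k\geq 2$) is roughly in line with the paper, but your handling of the critical third-cumulant term $p=2$, $k=1$ is not the mechanism the paper uses and, more importantly, does not actually apply here. The paper's proof of Lemma~\ref{lem:av2GFT} does \emph{not} invoke Lemma~\ref{lem:fluctuationAveraging}; fluctuation averaging appears only in the isotropic GFT (Lemma~\ref{lem:isoGFT1}). For the averaged quantity the key extra input is that $R^{av}_{t}=\frac{1}{N}\tr(\cdots)$ carries a normalised-trace prefactor, so each copy of $R^{av}_{t}$ on which derivatives act already contributes a factor $N^{-1}$, and the remaining estimate is closed by Cauchy--Schwarz over $(i,j)$ together with the Ward-type bound
\begin{align*}
\sum_{i,j}\bigl|(G_{1}FG_{2})_{i,\hat{\jmath}}\bigr|^{2}\;\leq\;\frac{N\Tr{\Im G_{1}F\Im G_{2}F^{*}}}{\eta_{1}\eta_{2}}\;\lesssim\;\frac{N}{\eta_{1}\eta_{2}}\bigl(\phi^{av}_{2}+|R|\bigr),
\end{align*}
applied to the off-diagonal chain entries, and similarly for $(G_{1}F\Im G_{2}F^{*}G_{1})_{i,\hat{\jmath}}$ via \eqref{eq:isoLL2} and a spectral decomposition. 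That Cauchy--Schwarz step is precisely what generates the trace of the $\Im G_{1}F\Im G_{2}F^{*}$ pattern that you mention, but it has nothing to do with Lemma~\ref{lem:fluctuationAveraging}.

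The specific reason your fluctuation-averaging step fails: the critical third-cumulant contribution has the form
\begin{align*}
\frac{1}{N^{5/2}}\sum_{i,j}(G_{1})_{i,i}(G_{1})_{\hat{\jmath},\hat{\jmath}}\,(G_{1}F\Im G_{2}F^{*}G_{1})_{i,\hat{\jmath}}\,R^{m-1}\bar{R}^{m},
\end{align*}
and here \emph{both} free summation indices sit inside the multi-resolvent chain $(G_{1}F\Im G_{2}F^{*}G_{1})_{i,\hat{\jmath}}$, not in a single-resolvent entry $G_{\mu,j}$. Lemma~\ref{lem:fluctuationAveraging} is proved only for $\sum_{j}G_{\mu,j}$ and $\sum_{j}G_{\mu,\hat{\jmath}}$ for a single resolvent $G$; it says nothing about $\sum_{j}(G_{1}F\Im G_{2}F^{*}G_{1})_{i,\hat{\jmath}}$, and no amount of peeling off diagonal entries (which are bounded by $\rho\lesssim 1$ and do not help) turns the remaining object into one to which the lemma applies. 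Relatedly, your closing heuristic that the factor $N^{-1/2}\eta^{-1}$ arises from ``two more factors of the fluctuation-averaging gain'' is off: the improvement over the isotropic $N^{-1/4}\eta^{-3/4}$ comes from the extra $N^{-1}$ of the normalised trace, not from fluctuation averaging. You should replace the $p=2$, $k=1$ step with the Cauchy--Schwarz/Ward argument sketched above.
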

\begin{proof}
Derivatives of $R^{av}_{t}(\wh{w}_{1},F,\wh{w}_{2},F^{*})$ generate terms of the form
\begin{align*}
    &\frac{1}{N}G_{1}^{p}\cdot G_{1}F\Im G_{2}F^{*}G_{1},\\
    &\frac{1}{N}G_{1}^{p}\cdot G_{1}FG_{2}\cdot G_{2}^{q}\cdot G_{2}F^{*}G_{1},\\
    &\frac{1}{N}G_{2}F^{*}\Im G_{1}FG_{2}\cdot G_{2}^{p},
\end{align*}
where each factor is understood to be a matrix element. Note that each copy of $R^{av}_{t}$ on which a derivative acts contributes a factor $N^{-1}$. By \eqref{eq:isoLL1} we have $(G_{1}FG_{2})_{\mu,\nu}\prec\phi^{iso}_{1}$ and by \eqref{eq:isoLL2} we have
\begin{align}
    |(G_{1}F\Im G_{2}F^{*}G_{1})_{\mu,\nu}|&\leq\sqrt{(G_{1}F\Im G_{2}F^{*}G_{1}^{*})_{\mu,\mu}}\cdot\sqrt{(G_{1}^{*}F\Im G_{2}F^{*}G_{1})_{\nu,\nu}}\nonumber\\
    &\prec\phi^{iso}_{2}(w_{1},w_{2})\nonumber\\
    &=\frac{\phi^{av}_{2}(w_{1},w_{2})}{\eta_{1}}.\label{eq:av2GFT1}
\end{align}

For the second kind of term, when both $G_{1}FG_{2}$ and $G_{2}F^{*}G_{1}$ are diagonal elements, we use
\begin{align}
    |(G_{1}FG_{2})_{\mu,\mu}||(G_{2}F^{*}G_{1})_{\nu,\nu}|&\prec(\phi^{iso}_{1})^{2}\nonumber\\
    &\leq\frac{\rho_{1}\vee\rho_{2}}{\rho_{1}\wedge\rho_{2}}\left(\frac{\rho_{1}}{\eta_{1}}\wedge\frac{\rho_{2}}{\eta_{2}}\right)\cdot\phi^{av}_{2}\nonumber\\
    &\leq\frac{\phi^{av}_{2}}{\eta}.\label{eq:av2GFT2}
\end{align}
When one of $G_{1}FG_{2}$ and $G_{2}F^{*}G_{1}$ is an off-diagonal element, we use Cauchy-Schwarz and
\begin{align*}
    \sum_{i,j}|(G_{1}FG_{2})_{i,\hat{\jmath}}|^{2}&\leq\frac{N\Tr{\Im G_{1}F\Im G_{2}F^{*}}}{\eta_{1}\eta_{2}}\\
    &\lesssim\frac{N}{\eta_{1}\eta_{2}}\left(\phi^{av}_{2}+R^{av}_{t}(\wh{w}_{1},F,\wh{w}_{2},F^{*})\right).
\end{align*}
Each extra derivative comes with a factor $N^{-1/2}$ and each extra copy of $R^{av}_{t}$ on which a derivative acts contributes a factor $N^{-1}$. Therefore the most critical terms occur when three derivatives act on the same copy of $R^{av}_{t}$. By Cauchy-Schwarz we have
\begin{align*}
    \left|\frac{1}{N^{5/2}}\sum_{i,j}(G_{1})_{i,i}(G_{1})_{\hat{\jmath},\hat{\jmath}}(G_{1}F\Im G_{2}F^{*}G_{1})_{i,\hat{\jmath}}R^{m-1}\bar{R}^{m}\right|&\prec\frac{|R|^{2m}}{N^{1/2}\eta_{1}},
\end{align*}
and
\begin{align*}
    &\left|\frac{1}{N^{5/2}}\sum_{i,j}(G_{1})_{i,i}(G_{1}FG_{2})_{\hat{\jmath},\hat{\jmath}}(G_{2}F^{*}G_{1})_{i,\hat{\jmath}}R^{m-1}\bar{R}^{m}\right|\\
    &\prec\frac{\rho_{1}\phi^{iso}_{1}}{N\sqrt{\eta_{1}\eta_{2}}}\sqrt{\phi^{av}_{2}+|R|}|R|^{2m-1}\\
    &\lesssim\frac{\rho_{1}}{N\sqrt{\eta_{1}\eta_{2}}}\sqrt{\frac{\rho_{1}\vee\rho_{2}}{\rho_{1}\wedge\rho_{2}}\left(\frac{\rho_{1}}{\eta_{1}}\wedge\frac{\rho_{2}}{\eta_{2}}\right)}\sqrt{\phi^{av}_{2}}\sqrt{\phi^{av}_{2}+|R|}|R|^{2m-1}\\
    &\lesssim\frac{1}{N^{1/2}\eta}\left[\left(\frac{\phi^{av}_{2}}{\sqrt{N\eta}}\right)^{2m}+|R|^{2m}\right].
\end{align*}
In all other cases the required bounds follow directly from \eqref{eq:isotropicBound}, \eqref{eq:av2GFT1} and \eqref{eq:av2GFT2}.
\end{proof}

Finally, using \eqref{eq:isoLL1}, \eqref{eq:isoLL2} and \eqref{eq:avLL2}, we can prove the following lemma and ultimately \eqref{eq:avLL1}.
\begin{lemma}
For any $w_{j}\in\mbb{C}$, $t\geq0$ and $\xi>0$ we have
\begin{align}
    \left|\frac{\mathrm{d}}{\mathrm{d}t}\mbb{E}|R|^{2m}\right|&\leq\frac{N^{\xi}}{N^{1/2}\eta}\left[\left(\phi^{iso}_{1}(w_{1},w_{2})+\sqrt{\frac{\phi^{av}_{2}(w_{1},w_{2})}{N\eta_{1}\eta_{2}}}\right)^{2m}+|R|^{2m}\right],
\end{align}
with very high probability, where $R:=R^{av}_{t}(w_{1},F,w_{2})$ and $\eta=\eta_{1}\wedge\eta_{2}$.
\end{lemma}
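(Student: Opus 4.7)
The plan is to mimic the proof of Lemma \ref{lem:av2GFT} step by step, adapting it from the imaginary-part chain $\Im G_1 F \Im G_2 F^*$ to $R = R^{av}_t(w_1,F,w_2) = \Tr{(G_1FG_2 - M_z(w_1,F,w_2))F^*}$. Starting from the cumulant expansion \eqref{eq:dR}, truncated at some $P = P(m,D)$ large enough that the remainder is $O(N^{-D})$, the $(p+1)$-th derivative of a copy of $R$ produces, after summing over $i,j$, products of $p+1$ matrix elements drawn from $G_1$, $G_2$, $G_1FG_2$, and $G_2F^*G_1$, with the same combinatorial bookkeeping as in Lemma \ref{lem:av2GFT}. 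The toolkit is the single-resolvent bound \eqref{eq:isotropicBound} for entries of $G_j$, the already proved \eqref{eq:isoLL1} giving $|(G_1FG_2)_{\mu,\nu}|\prec\phi^{iso}_1$, and the Ward-type off-diagonal sum bound
\begin{align*}
\sum_{i,j=1}^{N}|(G_1FG_2)_{i,\hat{\jmath}}|^2 \leq \frac{N\Tr{\Im G_1F\Im G_2F^*}}{\eta_1\eta_2} \prec \frac{N\phi^{av}_2}{\eta_1\eta_2},
\end{align*}
obtained by combining the Ward identity with the averaged law \eqref{eq:avLL2}.

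With these inputs, every term at cumulant order $p\geq 3$, and every $p=2$ term in which the three derivatives are distributed across at least two different copies of $R$, admits an immediate bound strictly smaller than the stated right-hand side, with a gain of at least $(N\eta)^{-1/2}$. The critical terms are the $p=2$ contributions with all three derivatives on one copy of $R$ and the maximal number of diagonal resolvent entries---representatively
\begin{align*}
\frac{1}{N^{5/2}}\sum_{i,j=1}^{N}(G_1)_{i,i}(G_1)_{\hat{\jmath},\hat{\jmath}}(G_1FG_2)_{i,\hat{\jmath}}\,R^{m-1}\bar R^m,
\end{align*}
together with its variants carrying $(G_2)_{\hat{\jmath},\hat{\jmath}}$ diagonals or diagonal $(G_1FG_2)_{i,i}$ factors. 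For the off-diagonal $(G_1FG_2)_{i,\hat{\jmath}}$ variants I would use $|(G_j)_{\mu,\mu}|\prec\rho_j$ together with Cauchy-Schwarz in $(i,j)$ against the displayed sum bound, producing the $\sqrt{\phi^{av}_2/(N\eta_1\eta_2)}$ contribution in the stated inequality; for the diagonal $(G_1FG_2)_{i,i}$ variants, \eqref{eq:isoLL1} directly yields the $\phi^{iso}_1$ term. In both cases Young's inequality converts the $|R|^{2m-1}$ factor into $|R|^{2m}$ plus the $2m$-th power of the prefactor, and the It\^{o} quadratic variation, estimated by the BDG inequality, is analogous and smaller.

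The main obstacle is precisely this class of diagonal-heavy $p=2$ terms: applying \eqref{eq:isoLL1} pointwise to $(G_1FG_2)_{i,\hat{\jmath}}$ would cost a full factor of $\eta^{-1}$ and would break the subsequent Gr\"{o}nwall iteration at the mesoscopic threshold $Nl_j\gtrsim N^\xi$. Trading this pointwise bound for the Ward--Cauchy-Schwarz route exchanges the $\eta^{-1}$ for the benign $N^{-1/2}$, which is the structural reason the bound takes the two-term form $\phi^{iso}_1+\sqrt{\phi^{av}_2/(N\eta_1\eta_2)}$ rather than a single self-consistent scale.
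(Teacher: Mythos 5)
Your overall strategy---cumulant expansion, bound the critical terms with diagonal entries via a combination of the isotropic law for $G_1FG_2$ and a Ward--Cauchy--Schwarz step using \eqref{eq:avLL2}---is broadly the right framework, and matches the paper's. But the identification of the critical terms is incorrect in a way that leaves a genuine gap.

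Differentiating the \emph{trace} $\Tr{G_1 F G_2}$ produces cyclic chains: after inserting $k$ copies of $\Delta_{ij}$, exactly one of the resulting $k$ matrix-element factors wraps around the full product $F G_2$ and both copies of $G_1$. Concretely, for three derivatives all hitting $G_1$ one gets $(G_1)_{\cdot,\cdot}(G_1)_{\cdot,\cdot}(G_1 F G_2 G_1)_{\cdot,\cdot}$; for two on $G_1$ and one on $G_2$, $(G_1)_{\cdot,\cdot}(G_1 F G_2)_{\cdot,\cdot}(G_2 G_1)_{\cdot,\cdot}$; for all three on $G_2$, $(G_2)_{\cdot,\cdot}(G_2)_{\cdot,\cdot}(G_2 G_1 F G_2)_{\cdot,\cdot}$. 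Your representative critical term $(G_1)_{i,i}(G_1)_{\hat{\jmath},\hat{\jmath}}(G_1 F G_2)_{i,\hat{\jmath}}$ is not a term that can arise from $\partial_{ij}^3\Tr{G_1FG_2}$: you have borrowed the index structure of the isotropic Lemma \ref{lem:isoGFT1}, where a fixed index $\mu$ breaks the cyclicity, but for an averaged quantity the chain must close. As a result, your toolkit covers entries of $G_j$ (via \eqref{eq:isotropicBound}), entries of $G_1FG_2$ (via \eqref{eq:isoLL1}), and the Ward sum $\sum_{i,j}|(G_1FG_2)_{i,\hat{\jmath}}|^2$ (via \eqref{eq:avLL2}), but omits what is actually needed to close the argument: a pointwise bound on entries of $G_1 F G_2 G_1$ and $G_2 G_1 F G_2$, and a Ward-type bound for their off-diagonal sums. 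The paper obtains the former from the already-proved isotropic law \eqref{eq:isoLL2} via $|(G_1 F G_2 G_1)_{\mu,\nu}| \leq \eta_1^{-1}\sqrt{(G_1 F \Im G_2 F^* G_1^*)_{\mu,\mu}}\sqrt{(\Im G_1)_{\nu,\nu}}\prec\sqrt{\rho_1\phi^{av}_2/(\eta_1^2\eta_2)}$, and the latter from $\sum_{i,j}|(G_1 F G_2 G_1)_{i,\hat{\jmath}}|^2 = N\Tr{\Im G_1 F \Im G_2 \Im G_1 F^*}/(\eta_1^2\eta_2)\prec N\phi^{av}_2/(\eta_1^3\eta_2)$. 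Both estimates are indispensable and are absent from your argument; without them, the diagonal-heavy $\kappa_3$-terms cannot be brought below the target scale $(N^{1/2}\eta)^{-1}(\phi^{iso}_1+\sqrt{\phi^{av}_2/(N\eta_1\eta_2)})^{2m}$. (Secondary: $R^{av}_t(w_1,F,w_2)$ is $\Tr{G_1FG_2 - M_z(w_1,F,w_2)}$ with no trailing $F^*$; this is why the long chains are $G_1FG_2G_1$ and $G_2G_1FG_2$ rather than $G_1FG_2F^*G_1$.)
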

\begin{proof}
The proof is essentially the same as the proof of Lemma \ref{lem:av2GFT}. Taking derivatives of $R^{av}_{t}(w_{1},F,w_{2})$ generates terms
\begin{align*}
    &G_{1}^{p}\cdot G_{1}FG_{2}\cdot G_{2}^{q}\cdot G_{1}G_{2},\\
    &G_{1}^{p}\cdot G_{1}FG_{2}G_{1},\\
    &G_{2}G_{1}FG_{2}\cdot G_{2}^{p}.
\end{align*}
We have $(G_{1}FG_{2})_{\mu,\nu}\prec\phi^{iso}_{1}$ by \eqref{eq:isoLL1}, and
\begin{align*}
    |(G_{1}FG_{2}G_{1})_{\mu,\nu}|&\leq\sqrt{\frac{(G_{1}F\Im G_{2}F^{*}G_{1}^{*})_{\mu,\mu}}{\eta_{1}}}\cdot\sqrt{\frac{\Im (G_{1})_{\nu,\nu}}{\eta_{1}}}\\
    &\prec\sqrt{\frac{\rho_{1}\phi^{av}_{2}(w_{1},w_{2})}{\eta_{1}^{2}\eta_{2}}},
\end{align*}
by \eqref{eq:isoLL2} and \eqref{eq:isotropicBound}. For off-diagonal elements of $G_{j}$ we use \eqref{eq:isotropicBound}, for off-diagonal elements of $G_{1}FG_{2}$ we use Cauchy-Schwarz and
\begin{align*}
    \sum_{i,j}|(G_{1}FG_{2})_{i,\hat{\jmath}}|^{2}&=\frac{N\Tr{\Im G_{1}F\Im G_{2}F^{*}}}{\eta_{1}\eta_{2}}\\
    &\prec\frac{N\phi^{av}_{2}(w_{1},w_{2})}{\eta_{1}\eta_{2}}.
\end{align*}
and for off-diagonal elements of $G_{1}FG_{2}G_{1}$ we use 
\begin{align*}
    \sum_{i,j}|(G_{1}FG_{2}G_{1})_{i,\hat{\jmath}}|^{2}&=\frac{N}{\eta^{2}_{1}\eta_{2}}\Tr{\Im G_{1}F\Im G_{2}\Im G_{1}F^{*}}\\
    &\prec\frac{N\phi^{av}_{2}(w_{1},w_{2})}{\eta_{1}^{3}\eta_{2}}.
\end{align*}
The last two estimates follow by \eqref{eq:avLL2}.
\end{proof}

\section{Comparison Lemmas and Proofs of Theorems \ref{thm1} and \ref{thm2}}\label{sec:comparison}
The main inputs for the comparison lemmas are the rigidity in \eqref{eq:rigidity}, eigenvector delocalisation and a bound on the inner product between left and right singular vectors. Let $X$ be a non-Hermitian Wigner matrix and $\mbf{w}_{n}(z),\,|n|\in[N]$ be the eigenvectors of the Hermitisation $W_{z}$. We will often suppress the $z$-dependence for notational simplicity. The delocalisation bounds follow directly from the isotropic local law in Proposition \ref{prop:singleLL} and \cite[Theorem 2.6]{cipolloni_optimal_2024}:
\begin{align}
    \left|\mbf{x}^{*}\mbf{w}_{n}(z)\right|&\prec N^{-1/2},\qquad |n|<cN,\label{eq:deloc}
\end{align}
for any $\mbf{x}\in S^{2N-1}$ and $|z|<1+\epsilon$. For bulk eigenvalues, the singular vector bound required is contained in \cite[Theorem 2.7]{cipolloni_optimal_2024}:
\begin{align}
    \left|\mbf{w}_{n}^{*}(z)F\mbf{w}_{m}(z)\right|&\prec N^{-1/2},\qquad |n|<cN\text{ and }|z|<1-\epsilon.\label{eq:svOverlapBulk}
\end{align}
For edge eigenvalues, we use the bound in \eqref{eq:svOverlapEdge}, i.e.
\begin{align*}
    |\mbf{w}_{n}^{*}(z)F\mbf{w}_{m}(z)|&\prec\begin{cases}
    \left(\frac{|n|\wedge |m|}{N}\right)^{1/4}& \quad |n|\wedge|m|\geq c_{1}|n|\vee|m|\\
    \frac{1}{N^{1/4}(|n|\vee |m|)^{1/4}}&\quad |n|\wedge|m|\leq c_{1}|n|\vee|m|
    \end{cases},
\end{align*}
for all $|n|,|m|<c_{2}N$ and $\big||z|-1\big|\lesssim N^{-1/2}$. We also observe that the asymptotics for the density $\rho_{z}(E)$ in \eqref{eq:rhoAsymp1} and \eqref{eq:rhoAsymp2} imply that, when $\big||z|-1\big|\lesssim N^{-1/2}$, we have
\begin{align*}
    |\gamma_{n}|&\simeq\left(\frac{|n|}{N}\right)^{3/4},
\end{align*}
and hence by rigidity
\begin{align}
    |\lambda_{n}|&\simeq\left(\frac{|n|}{N}\right)^{3/4},
\end{align}
for $N^{\xi}<|n|<cN$ with very high probability.

\subsection{Least Non-Zero Singular Value}
To prove a bound for the least non-zero singular value at the edge, we need a comparison lemma for the statistic
\begin{align*}
    \sum_{n=1}^{N}f_{z_{0}}(z_{n})g_{\eta}(z_{n}),
\end{align*}
where $f_{z_{0}}(z)=f(N^{1/2}(z-z_{0}))$ for a smooth function $f$ supported in $B_{r}(0)$ and $g_{\eta}(z)=\eta\tr G_{z}(i\eta)-1$. For this we need the following a priori estimates which follow from the delocalisation and singular vector overlap bounds listed at the beginning of this section.
\begin{lemma}\label{lem:leastSVaPriori}
Let $w=E+i\eta$ with $\eta\in(0,N^{-3/4}]$ and $|E|\in[0,N^{-3/4}]$. Then we have
\begin{align}
    \Tr{G_{z}(i\eta)}&\prec\frac{1}{N\eta},\\
    \partial_{z}\Tr{G_{z}(i\eta)}&\prec\frac{1}{N^{5/4}\eta^{2}},\\
    \nabla^{2}_{z}\Tr{G_{z}(i\eta)}&\prec\frac{1}{N^{3/2}\eta^{3}}.
\end{align}
Let $\epsilon>0$ and $\eta_{0}=N^{-3/4+\epsilon}$. Then for any $\mu,\nu\in[2N]$ we have
\begin{align}
    \left|(G_{z}(w))_{\mu,\nu}-(G_{z}(i\eta_{0}))_{\mu,\nu}\right|&\prec\frac{1}{N\eta},\label{eq:svaPriori4}\\
    (G_{z}(w)FG_{z}(w))_{\mu,\nu}&\prec \frac{1}{N^{5/4}\eta^{2}},\label{eq:svaPriori5}\\
    (G_{z}(w)FG_{z}(w)F^{*}G_{z}(w))_{\mu,\nu}&\prec\frac{|w|}{N^{3/2}\eta^{4}}.\label{eq:svaPriori6}
\end{align}
\end{lemma}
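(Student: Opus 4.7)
The plan is to combine spectral decomposition, chiral symmetry, the rigidity in \eqref{eq:rigidity} (which gives $\lambda_n\simeq(|n|/N)^{3/4}$ at the edge), the delocalisation bound \eqref{eq:deloc}, and the singular-vector overlap bound \eqref{eq:svOverlapEdge} from Theorem \ref{thm3}. Writing $\mbf{w}_{n}=\tfrac{1}{\sqrt2}(\mbf{u}_{|n|},\mathrm{sgn}(n)\mbf{v}_{|n|})$, one has $\mbf{w}_{n}^{*}F\mbf{w}_{n}=\tfrac{1}{2}\mathrm{sgn}(n)\mbf{u}_{|n|}^{*}\mbf{v}_{|n|}$, $\lambda_{-n}=-\lambda_{n}$, so the chiral pairing $(n,-n)$ organises the relevant sums.

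For (1), the identity $\eta\tr G_{z}(i\eta)=\sum_{n}\tfrac{2\eta^{2}}{\lambda_{n}^{2}+\eta^{2}}$, together with the fact that at most $N^{\xi}$ eigenvalues satisfy $\lambda_{n}\le\eta$ (by rigidity and $\eta\le N^{-3/4}$), and $\sum_{n\ge n_{*}}\eta/\lambda_{n}^{2}\prec\eta N^{3/2}\le N^{3/4}$, yields the claim. For (2) and (3), one computes $\partial_{z}G_{z}=G_{z}FG_{z}$ so
\[
\partial_{z}\Tr G_{z}(i\eta)=\frac{1}{2N}\sum_{n\ge1}\mbf{w}_{n}^{*}F\mbf{w}_{n}\cdot\frac{4i\eta\lambda_{n}}{(\lambda_{n}^{2}+\eta^{2})^{2}},
\]
and analogously $\partial_{z}\partial_{\bar{z}}\Tr G_{z}(i\eta)$ reduces via chiral symmetry to $\sum_{n,m\ge1}|\mbf{u}_{n}^{*}\mbf{v}_{m}|^{2}$ weighted by rational expressions in $\lambda_{n}^{2}+\eta^{2},\lambda_{m}^{2}+\eta^{2}$. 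I would apply $|\mbf{u}_{n}^{*}\mbf{v}_{n}|\prec(n/N)^{1/4}$ from Theorem \ref{thm3} and $\sum_{m}|\mbf{u}_{n}^{*}\mbf{v}_{m}|^{2}\le1$ for the off-diagonal sum, split the ranges of summation at $n_{*}$ where $\lambda_{n}\simeq\eta$, and use the asymptotic $\lambda_{n}\simeq(n/N)^{3/4}$. The arithmetic balances at $\eta=N^{-3/4}$ to give exactly $N^{-5/4}\eta^{-2}$ and $N^{-3/2}\eta^{-3}$ respectively, and is monotone in $\eta$ on $(0,N^{-3/4}]$.

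For (4), the resolvent identity gives $G_{z}(w)-G_{z}(i\eta_{0})=(w-i\eta_{0})G_{z}(w)G_{z}(i\eta_{0})$. Spectral decomposition and \eqref{eq:deloc} yield
\[
|(G_{z}(w)-G_{z}(i\eta_{0}))_{\mu,\nu}|\prec\frac{|w-i\eta_{0}|}{N}\sum_{n}\frac{1}{|\lambda_{n}-w||\lambda_{n}-i\eta_{0}|},
\]
and Cauchy--Schwarz combined with part (1) (using $\sum_{n}|\lambda_{n}-w|^{-2}\prec\eta^{-2}$ and similarly for $\eta_{0}$) gives the bound after noting $|w-i\eta_{0}|\lesssim\eta_{0}$. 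For part (5), delocalisation gives
\[
(G_{z}(w)FG_{z}(w))_{\mu,\nu}\prec\frac{1}{N}\sum_{n,m}\frac{|\mbf{w}_{n}^{*}F\mbf{w}_{m}|}{|\lambda_{n}-w||\lambda_{m}-w|},
\]
and I would split the sum into the comparable regime $|n|\wedge|m|\gtrsim|n|\vee|m|$ (using the first case of \eqref{eq:svOverlapEdge}) and the incomparable regime (using the second case); in each regime, rigidity reduces the sum to a convergent series in $n/N$ that matches the claimed bound when $\eta\le N^{-3/4}$.

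Part (6) is the main obstacle, but the extra $|w|$ has a clean algebraic origin. A direct block computation using $F=\bigl(\begin{smallmatrix}0&1\\0&0\end{smallmatrix}\bigr)$ and the formulas $G^{11}(w)=w(X_{z}X_{z}^{*}-w^{2})^{-1}$, $G^{22}(w)=w(|X_{z}|^{2}-w^{2})^{-1}$ shows
\[
G(w)FG(w)F^{*}G(w)=\begin{pmatrix}G^{11}G^{22}G^{11}&G^{11}G^{22}G^{12}\\G^{21}G^{22}G^{11}&G^{21}G^{22}G^{12}\end{pmatrix},
\]
so every block carries at least one factor of $G^{11}$ or $G^{22}$, each of which is proportional to $w$; in particular $G(0)FG(0)F^{*}G(0)=0$. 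The extracted factor $|w|$ combines with bounds on the remaining two resolvent factors, which I would estimate by spectral decomposition and delocalisation in the same style as part (5), now with an extra $|w|$ and one fewer singular denominator, matching $|w|/(N^{3/2}\eta^{4})$ at the critical scale $\eta\simeq N^{-3/4}$. The delicate point is to track the cancellation block-by-block so the $|w|$ is not lost to crude operator-norm bounds.
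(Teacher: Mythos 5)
Your treatment of \eqref{eq:svaPriori4}, and your approach to the first two traced quantities, are fine; but the direct spectral-decomposition estimate you propose for \eqref{eq:svaPriori5} and \eqref{eq:svaPriori6} fails, and this is precisely where the paper's argument does something genuinely different. To see the gap, take \eqref{eq:svaPriori5} and restrict to the comparable regime $m\simeq n$ with $n,m\in I_2=\{N^{2\xi}<|n|<c_2N\}$, where $|\mbf{w}_n^*F\mbf{w}_m|\prec(n/N)^{1/4}$ and $|\lambda_n-w|\simeq(n/N)^{3/4}$ (recall $\eta\le N^{-3/4}$ so $|\lambda_n|\gg\eta$ throughout $I_2$). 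Then your display gives
\begin{align*}
\frac{1}{N}\sum_{n\in I_2}n\cdot\frac{(n/N)^{1/4}}{(n/N)^{3/2}}\simeq N^{1/4}\sum_{n\le cN}n^{-1/4}\simeq N,
\end{align*}
whereas the claimed bound at $\eta=N^{-3/4}$ is only $N^{1/4}$; the incomparable regime similarly produces $N^{1/2}$. The series is not, as you assert, convergent against the target. The same obstruction appears in \eqref{eq:svaPriori6}: even after extracting the $|w|$ factor via the block structure (that computation is correct, and is exactly what the paper achieves by the chiral pairing $m\leftrightarrow-m$ of the middle resolvent), the $I_2$ triple sum is of order $|w|N^{2}$, a factor $N^{1/2}$ above the target. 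The paper's fix, which it states explicitly in the proof of \eqref{eq:svaPriori6} and applies to \eqref{eq:svaPriori4}--\eqref{eq:svaPriori6} alike, is to compare the expression at $w$ with the same expression at $i\eta_0$, $\eta_0=N^{-3/4+\xi}$: the reference is controlled by the isotropic local law of Proposition \ref{prop:ll} (which captures cancellations your absolute-value bound throws away and gives $\prec\eta_0^{-1/3}$ for \eqref{eq:svaPriori5} and $\prec\eta_0^{-1}$ for \eqref{eq:svaPriori6}, both below the target), while each term of the difference acquires at least one extra factor such as $1/|\lambda_n-i\eta_0|\simeq(N/n)^{3/4}$, which supplies exactly the inverse powers of $|n|$ needed to make the $I_2$ sums converge. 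You use this comparison idea for \eqref{eq:svaPriori4} but abandon it for \eqref{eq:svaPriori5}--\eqref{eq:svaPriori6}; carrying it through is the missing step.

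A secondary issue: for $\nabla^2_z\Tr{G_z(i\eta)}$ you propose bounding the off-diagonal $m$-sum with the crude $\sum_m|\mbf{u}_n^*\mbf{v}_m|^2\le1$, but this only gives $\prec 1/(N\eta^3)$, off by $N^{1/2}$ from the target $1/(N^{3/2}\eta^3)$. That estimate also requires the $m$-dependent decay of the overlap from Theorem \ref{thm3}; the paper sidesteps this by citing \cite[Lemma 7.4]{cipolloni_universality_2024} together with a contour argument.
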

\begin{proof}
The first bound follows from the monotonicity of $\eta\mapsto\eta\Im G_{z}(i\eta)$, while the second and third bounds follow by \cite[Lemma 7.4]{cipolloni_universality_2024} and contour integration. Thus we need only prove the entrywise bounds, for which we use the delocalisation and overlap bounds on the singular vectors (we could also prove the second and third bounds in this way). Since the proofs of \eqref{eq:svaPriori4}, \eqref{eq:svaPriori5} and \eqref{eq:svaPriori6} follow the same pattern we write the details for the latter.

Let $\mbf{x}=\mbf{e}_{\mu}$ and $\mbf{y}=\mbf{e}_{\nu}$. From the spectral decomposition of $G_{z}$ we have
\begin{align*}
    x&:=\mbf{x}^{*}G_{z}(w)FG_{z}(w)F^{*}G_{z}(w)\mbf{y}\\
    &=\sum_{n,m,l}\frac{w(\mbf{x}^{*}\mbf{w}_{n})(\mbf{w}_{n}^{*}F\mbf{w}_{m})(\mbf{w}_{m}^{*}F^{*}\mbf{w}_{l})(\mbf{w}_{l}^{*}\mbf{y})}{(\lambda_{n}-w)(\lambda^{2}_{m}-w^{2})(\lambda_{l}-w)}.
\end{align*}
Let $\xi\in(0,\epsilon)$, $\eta_{0}=N^{-3/4+\xi}$ and
\begin{align*}
    x_{0}&:=\mbf{x}^{*}G_{z}(i\eta_{0})FG_{z}(i\eta_{0})F^{*}G_{z}(i\eta_{0})\mbf{y}.
\end{align*}
Since $\xi>0$ is arbitrarily small, we will routinely absorb factors of $N^{\xi}$ in the symbol $\prec$. By Proposition \ref{prop:ll}, we have
\begin{align*}
    |x_{0}|&=|\mbf{x}^{*}G_{z}(i\eta_{0})F\Im G_{z}(i\eta_{0})F^{*}G_{z}(i\eta_{0})\mbf{y}|\\
    &\leq\sqrt{\mbf{x}^{*}G_{z}(i\eta_{0})F\Im G_{z}(i\eta_{0})F^{*}G^{*}_{z}(i\eta_{0})\mbf{x}}\cdot\sqrt{\mbf{y}^{*}G^{*}_{z}(i\eta_{0})F\Im G_{z}(i\eta_{0})F^{*}G_{z}(i\eta_{0})\mbf{y}}\\
    &\lesssim\frac{\rho_{0}^{3}}{\eta_{0}^{2}}\\
    &\lesssim\frac{1}{\eta_{0}}\\
    &\lesssim\frac{|w|}{N^{3/2}\eta^{4}}.
\end{align*}
We want to compare $x$ and $x_{0}$, so we write
\begin{align*}
    x&=\frac{w}{\eta_{0}}x_{0}+\sum_{i=1}^{7}A_{i},
\end{align*}
where
\begin{align*}
    A_{1}&=w(i\eta_{0}-w)\sum_{n,m,l}\frac{(\mbf{x}^{*}\mbf{w}_{n})(\mbf{w}_{n}^{*}F\mbf{w}_{m})(\mbf{w}_{m}^{*}F^{*}\mbf{w}_{l})(\mbf{w}_{l}^{*}\mbf{y})}{(\lambda_{n}-w)(\lambda_{n}-i\eta_{0})(\lambda_{m}^{2}+\eta_{0}^{2})(\lambda_{l}-i\eta_{0})},\\
    A_{2}&=w(\eta^{2}_{0}-w^{2})\sum_{n,m,l}\frac{(\mbf{x}^{*}\mbf{w}_{n})(\mbf{w}_{n}^{*}F\mbf{w}_{m})(\mbf{w}_{m}^{*}F^{*}\mbf{w}_{l})(\mbf{w}_{l}^{*}\mbf{y})}{(\lambda_{n}-i\eta_{0})(\lambda_{m}^{2}-w^{2})(\lambda_{m}^{2}+\eta_{0}^{2})(\lambda_{l}-i\eta_{0})},\\
    A_{3}&=w(i\eta_{0}-w)\sum_{n,m,l}\frac{(\mbf{x}^{*}\mbf{w}_{n})(\mbf{w}_{n}^{*}F\mbf{w}_{m})(\mbf{w}_{m}^{*}F^{*}\mbf{w}_{l})(\mbf{w}_{l}^{*}\mbf{y})}{(\lambda_{n}-i\eta_{0})(\lambda_{m}^{2}+\eta_{0}^{2})(\lambda_{l}-w)(\lambda_{l}-i\eta_{0})},\\
    A_{4}&=w(i\eta_{0}-w)(\eta_{0}^{2}-w^{2})\sum_{n,m,l}\frac{(\mbf{x}^{*}\mbf{w}_{n})(\mbf{w}_{n}^{*}F\mbf{w}_{m})(\mbf{w}_{m}^{*}F^{*}\mbf{w}_{l})(\mbf{w}_{l}^{*}\mbf{y})}{(\lambda_{n}-w)(\lambda_{n}-i\eta_{0})(\lambda_{m}^{2}-w^{2})(\lambda_{m}^{2}+\eta_{0}^{2})(\lambda_{l}-i\eta_{0})},\\
    A_{5}&=w(i\eta_{0}-w)^{2}\sum_{n,m,l}\frac{(\mbf{x}^{*}\mbf{w}_{n})(\mbf{w}_{n}^{*}F\mbf{w}_{m})(\mbf{w}_{m}^{*}F^{*}\mbf{w}_{l})(\mbf{w}_{l}^{*}\mbf{y})}{(\lambda_{n}-w)(\lambda_{n}-i\eta_{0})(\lambda_{m}^{2}+\eta_{0}^{2})(\lambda_{l}-w)(\lambda_{l}-i\eta_{0})},\\
    A_{6}&=w(i\eta_{0}-w)(\eta_{0}^{2}-w^{2})\sum_{n,m,l}\frac{(\mbf{x}^{*}\mbf{w}_{n})(\mbf{w}_{n}^{*}F\mbf{w}_{m})(\mbf{w}_{m}^{*}F^{*}\mbf{w}_{l})(\mbf{w}_{l}^{*}\mbf{y})}{(\lambda_{n}-i\eta_{0})(\lambda_{m}^{2}-w^{2})(\lambda_{m}^{2}+\eta_{0}^{2})(\lambda_{l}-w)(\lambda_{l}-i\eta_{0})},\\
    A_{7}&=w(i\eta_{0}-w)^{2}(\eta_{0}^{2}-w^{2})\sum_{n,m,l}\frac{(\mbf{x}^{*}\mbf{w}_{n})(\mbf{w}_{n}^{*}F\mbf{w}_{m})(\mbf{w}_{m}^{*}F^{*}\mbf{w}_{l})(\mbf{w}_{l}^{*}\mbf{y})}{(\lambda_{n}-w)(\lambda_{n}-i\eta_{0})(\lambda_{m}^{2}-w^{2})(\lambda_{m}^{2}+\eta_{0}^{2})(\lambda_{l}-w)(\lambda_{l}-i\eta_{0})}.
\end{align*}

Let $c_{2}>0$ be sufficiently small such that the overlap bound in \eqref{eq:svOverlapEdge} holds for indices $|n|,|m|<c_{2}N$. We split the sum over each index into three:
\begin{align*}
    I_{1}&:=\{|n|<N^{2\xi}\},\\
    I_{2}&:=\{N^{2\xi}<|n|<c_{2}N\},\\
    I_{3}&:=\{|n|>c_{2}N\},\\
\end{align*}
In accordance with this decomposition we denote by $A_{i}(I,J,K)$ the absolute value of the term in $A_{i}$ for which $n\in I,\,m\in J,\,k\in K$.

Let us first make some basic observations. In $I_{2}$ we have $|\lambda_{n}|\simeq (|n|/N)^{3/4}\gg\eta_{0}\gg|w|$ and so
\begin{align}
    \frac{1}{|\lambda_{n}-w|}\simeq\frac{1}{|\lambda_{n}-i\eta_{0}|}\simeq\left(\frac{N}{|n|}\right)^{3/4}.\label{eq:p1}
\end{align}
In $I_{j}\times I_{k}$ for $j,k\in\{1,2\}$ we can use the delocalisation and overlap bounds for $\mbf{x}^{*}\mbf{w}_{n}$ and $\mbf{w}_{n}^{*}F\mbf{w}_{m}$ respectively. In $I_{1}\times I_{1}$ we have
\begin{align}
    |\mbf{w}_{n}^{*}F\mbf{w}_{m}|&\prec \frac{1}{N^{1/4}}.\label{eq:p2}
\end{align}
For $(n,m)\in I_{2}\times I_{1}$ we have
\begin{align}
    |\mbf{w}_{n}^{*}F\mbf{w}_{m}|&\prec \frac{1}{(N|n|)^{1/4}},\label{eq:p3}
\end{align}
where we have absorbed factors of $N^{\xi}$ in the symbol $\prec$. For $(n,m)\in I_{2}\times I_{2}$ we have
\begin{align}
    |\mbf{w}_{n}^{*}F\mbf{w}_{m}|&\prec\left(\frac{|m|}{N}\right)^{1/4},\quad |m|\leq|n|.\label{eq:p4}
\end{align}

Consider a general sum of the kind in $A_{i},\,i=1,...,7$ with indices restricted to $I_{1}\cup I_{2}$. Using \eqref{eq:p1} to \eqref{eq:p4} we conclude the following:
\begin{itemize}
\item for each factor of $\mbf{x}^{*}\mbf{w}_{n}$ or $\mbf{w}_{n}^{*}\mbf{y}$ we gain a factor $N^{-1/2}$;
\item for each index $n\in I_{1}$ we gain a factor $1/|\Im x|$ from $1/|\lambda_{n}-x|$;
\item for each index $n\in I_{2}$ we gain a factor $(N/|n|)^{3/4}$ from $1/|\lambda_{n}-x|$;
\item for each pair $(n,m)\in I_{1}\times I_{1}$ we gain a factor $N^{-1/4}$ from $\mbf{w}_{n}^{*}F\mbf{w}_{m}$;
\item for each pair $(n,m)\in I_{2}\times I_{1}$ we gain a factor $(N|n|)^{-1/4}$ from $\mbf{w}_{n}^{*}F\mbf{w}_{m}$;
\item for each pair $(n,m)\in I_{2}\times I_{2}$ we gain a factor $N^{-1/4}(|n|\wedge|m|)^{1/4}$.
\end{itemize}
We claim that a general sum is bounded by the product of each of these factors, i.e. we do not gain any additional factors from the summation. The summation over $I_{1}$ gives a factor $|I_{1}|\prec N^{2\xi}$, which we can absorb in $\prec$ since $\xi>0$ is arbitrary. We thus need to check that the summation over $I_{2}$ does not contribute extra powers of $N$.

Let us begin with an example:
\begin{align*}
    \sum_{n,m\in I_{2}}\frac{|\mbf{w}_{n}^{*}F\mbf{w}_{m}|}{|\lambda_{n}|^{p}||\lambda_{m}|^{q}}&\prec\sum_{n,m\in I_{2}}\left(\frac{N}{|n|}\right)^{3p/4}\left(\frac{N}{|m|}\right)^{3q/4}\left(\frac{|n|\wedge|m|}{N}\right)^{1/4}\\
    &\prec\sum_{n\in I_{2}}\frac{N^{(3(p+q)-1)/4}}{|n|^{(3(p+q)-5)/4}}.
\end{align*}
If we now sum over $n$ then we will not gain any extra powers of $N$ if $p+q\geq3$. If $p+q=3$ then we sum over $|n|^{-1}$ leading to a factor $\log N$ which can be absorbed in the symbol $\prec$.

Consider a second example:
\begin{align*}
    \sum_{n,l\in I_{2}}\sum_{m\in I_{1}}\frac{|\mbf{w}_{n}^{*}F\mbf{w}_{m}||\mbf{w}_{m}^{*}F^{*}\mbf{w}_{l}|}{|\lambda_{n}||\lambda_{m}-w|^{q}|\lambda_{l}|}&\prec\frac{1}{\eta^{q}}\sum_{n\in I_{2}}\frac{N^{1/2}}{|n|}\sum_{l\in I_{2}}\frac{N^{1/2}}{|l|}
\end{align*}
In this case the extra factors of $|n|^{-1/4}$ and $|l|^{-1/4}$ from $\mbf{w}_{n}^{*}F\mbf{w}_{m}$ and $\mbf{w}_{m}^{*}F^{*}\mbf{w}_{l}$ ensure that we do not gain any factors (except $\log N$, which we can ignore) from the summations over $I_{2}$. Note that in the first example this extra gain is not needed if $p+q$ is large enough.

Since we have at least five factors of singular values in the denomniator of any $A_{i}$, we always have sufficiently many inverse powers of $|n|$ to bound the summation over $I_{2}$. This is the reason for comparing $x$ and $x_{0}$ rather than directly estimating $x$, since it introduces additional factors of singular values in the denominator. We can now bound any $A_{i}(I,J,K)$ with $I,J,K\in\{I_{1},I_{2}\}$ by applying the rules listed above; we obtain
\begin{align*}
    A_{i}(I,J,K)&\prec\frac{|w|}{N^{3/2}\eta^{4}},\quad I,J,K\in \{I_{1},I_{2}\}.
\end{align*}

When an index is in $I_{3}$, we use Cauchy-Schwarz: if $m\in I_{3}$ then by Cauchy-Schwarz we have
\begin{align*}
    A_{1}(I_{j},I_{3},I_{k})&\prec|w|\eta_{0}\sum_{n}\frac{|\mbf{x}^{*}\mbf{w}_{n}|}{|\lambda_{n}-w||\lambda_{n}-i\eta_{0}|}\sum_{l}\frac{|\mbf{w}_{l}^{*}\mbf{y}|}{|\lambda_{l}-i\eta_{0}|};
\end{align*}
if $n\in I_{3}$ then
\begin{align*}
    A_{1}(I_{3},I_{j},I_{k})&\prec|w|\eta_{0}\sum_{m,l}\frac{|\mbf{w}_{m}^{*}F^{*}\mbf{w}_{l}||\mbf{w}_{l}^{*}\mbf{y}|}{(\lambda_{m}^{2}+\eta_{0}^{2})|\lambda_{l}-i\eta_{0}|};
\end{align*}
if $l\in I_{3}$ then
\begin{align*}
    A_{1}(I_{j},I_{k},I_{3})&\prec|w|\eta_{0}\sum_{n,m}\frac{|\mbf{x}^{*}\mbf{w}_{n}||\mbf{w}_{n}^{*}F\mbf{w}_{m}|}{|\lambda_{n}-w||\lambda_{n}-i\eta_{0}|(\lambda_{m}^{2}+\eta_{0}^{2})}.
\end{align*}
We can bound each of these resulting expressions using the above rules.
\end{proof}
We remark that the above proof would be simpler if we were able to capture the sharp decay in $||n|-|m||$ of $|\mbf{w}_{n}^{*}F\mbf{w}_{m}|$. We would then be able to directly estimate each expression at $w$, without the need to compare with the corresponding expression at $\eta_{0}$.

For $\epsilon>0$ and $f:\mbb{F}_{\beta}\to\mbb{C}$ define
\begin{align}
    f_{z_{0}}(z)&:=f(N^{1/2}(z-z_{0})),\\
    g_{\eta}(z)&:=\eta\Im\tr G_{z}(i\eta)-1.
\end{align}
Theorem \ref{thm2} will follow directly from the result for Gauss-divisible matrices and the comparison lemma below. For this we introduce the notion of $t$-matching matrices.

\begin{definition}\label{def2}
We say that $A$ and $B$ are $t$-matching for some $t\geq0$ if 
\begin{align}
    \mbb{E}\left[(\Re a_{jk})^{p}(\Im a_{jk})^{3-p}\right]&=\mbb{E}\left[(\Re b_{jk})^{p}(\Im a_{jk})^{3-p}\right],\qquad p=0,1,2,3,
\end{align}
and
\begin{align}
    \left|\mbb{E}\left[(\Re a_{jk})^{p}(\Im a_{jk})^{4-p}\right]-\mbb{E}\left[(\Re b_{jk})^{p}(\Im b_{jk})^{4-p}\right]\right|&\lesssim \frac{t}{N^{2}},\qquad p=0,...,4.
\end{align}
\end{definition}

\begin{lemma}\label{lem:svComparison}
Let $\tau>0$ and $A$ and $B$ be $t$-matching non-Hermitian Wigner matrices for $t\leq N^{-\tau}$. Let $\epsilon\in(0,1/24)$ and $\eta=N^{-3/4-\epsilon}$. Then for any $z_{0}\in\mbb{T}$ and $f\in C^{2}(\mbb{F}_{\beta})$ with compact support we have
\begin{align}
    \left|\left(\mbb{E}_{A}-\mbb{E}_{B}\right)\left[\sum_{n=1}^{N_{\beta}}f_{z_{0}}(z_{n})g_{\eta}(z_{n})\right]\right|&\prec N^{3/4}\eta+\frac{t}{N(N^{3/4}\eta)^{36}}+\frac{1}{N^{7/4}(N^{3/4}\eta)^{42}}.\label{eq:svMatching}
\end{align}
\end{lemma}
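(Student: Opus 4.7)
The argument is by Girko's Hermitisation formula followed by a Lindeberg swap, adapting the bulk comparison of \cite{maltsev_bulk_2024,osman_bulk_2024} to the edge regime. Writing
\begin{align*}
T(X):=\sum_{n=1}^{N_{\beta}}f_{z_{0}}(z_{n})g_{\eta}(z_{n}),
\end{align*}
the first step is to express $T(X)$ as a smooth functional of the entries of $X$. For fixed $X$, the function $z\mapsto f_{z_{0}}(z)g_{\eta}(z;X)$ is $C^{2}$ with compact support in $B_{rN^{-1/2}}(z_{0})$, so Girko's formula gives
\begin{align*}
T(X)=-\frac{1}{4\pi}\int_{\mbb{F}_{\beta}}\Delta_{z}\bigl[f_{z_{0}}(z)g_{\eta}(z)\bigr]\log|\det(X-z)|^{2}\,d^{2}z.
\end{align*}
Expanding $\Delta_{z}[f_{z_{0}}g_{\eta}]$ by Leibniz and using $\partial_{z}G_{z}(w)=G_{z}(w)FG_{z}(w)$, $\partial_{\bar z}G_{z}(w)=G_{z}(w)F^{*}G_{z}(w)$, the $z$-derivatives of $g_{\eta}$ produce short resolvent chains $\tr(G_{z}FG_{z})(i\eta)$ and $\tr(G_{z}FG_{z}F^{*}G_{z})(i\eta)$ whose sizes are controlled by Lemma~\ref{lem:leastSVaPriori}.

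Next I would introduce a cutoff $\eta_{0}=N^{-3/4-\xi}$ for a small $\xi>0$ and write
\begin{align*}
\log|\det(X-z)|^{2}=\int_{\eta_{0}}^{T_{0}}\left[\frac{2N\eta'}{1+\eta'^{2}}-\Im\tr G_{z}(i\eta')\right]d\eta'+\log\det(H_{z}+\eta_{0}^{2})+C(T_{0}),
\end{align*}
for a large truncation $T_{0}$ at which the integrand is negligible. The part with $\eta'\geq\eta_{0}$ is a smooth functional of $X$ suited to Lindeberg. The short-range piece $[0,\eta_{0}]$, together with $\log\det(H_{z}+\eta_{0}^{2})$, encodes the singular behaviour near eigenvalues of $X$; integrating in $z$ over the ball $|z-z_{0}|\lesssim N^{-1/2}$ (volume $N^{-1}$) and using the a priori bounds of Lemma~\ref{lem:leastSVaPriori} together with \eqref{eq:isotropicBound}, this piece contributes the first error term $N^{3/4}\eta$.

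The Lindeberg swap then proceeds entry by entry. Replacing $a_{jk}$ by $b_{jk}$ and Taylor-expanding in the entry to order $5$, the $t$-matching hypothesis (Definition~\ref{def2}) eliminates the terms of order $0$ through $3$ in $\mbb{E}_{A}-\mbb{E}_{B}$; the fourth-order piece is bounded by $\tfrac{t}{N^{2}}\sup|\partial^{4}_{jk}T|$ and the fifth-order Taylor remainder by $\tfrac{1}{N^{5/2}}\sup|\partial^{5}_{jk}T|$, with the suprema taken over single-entry perturbations of size $O_{\prec}(N^{-1/2})$. Summing over $N^{2}$ entries and combining with derivative bounds of the form
\begin{align*}
\max_{jk}\bigl|\partial^{p}_{jk}T(X)\bigr|\prec\frac{(N^{3/4}\eta)^{-\alpha_{p}}}{N^{c_{p}}},\qquad \alpha_{4}=36,\;c_{4}=1,\qquad \alpha_{5}=42,\;c_{5}=5/4,
\end{align*}
produces the two remaining error terms in the claim.

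The main obstacle is precisely this high-order derivative bound. Each derivative $\partial_{jk}$ inserts a rank-one matrix unit $E_{jk}$ between two resolvent factors, so after $p$ derivatives the Girko integrand becomes a long alternating chain of resolvents $G_{z}(i\eta)$, $G_{z}(i\eta')$ and rank-one insertions, with all arguments close to the edge and $\eta$ just above the typical scale $N^{-3/4}$ of $s_{2}$. The cheap bounds via Cauchy--Schwarz and the Ward identity each cost a factor of $1/\eta$ or $1/\eta'$ and are too lossy; instead one must estimate each entry of the chain individually using the isotropic single-resolvent bound \eqref{eq:isotropicBound} and handle internal sums over coordinate indices with a spectral decomposition combined with the sharp singular-vector overlap bound \eqref{eq:svOverlapEdge}, exactly as in the proof of Lemma~\ref{lem:leastSVaPriori}. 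The resulting powers $\alpha_{4}=36$ and $\alpha_{5}=42$ emerge from this bookkeeping, and the restriction $\epsilon<1/24$ is precisely the inequality $42\epsilon<7/4$, needed so that the fifth-order remainder $N^{-7/4+42\epsilon}$ remains a negative power of $N$.
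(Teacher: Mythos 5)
Your high-level strategy — Girko's formula, a split at a scale $\eta_{0}$ slightly below $N^{-3/4}$, integration by parts to turn the long-range piece into a smooth functional of resolvents, and a Lindeberg swap to fifth order with the first three moments cancelled by $t$-matching — is exactly the paper's approach. Your bookkeeping of the derivative bounds ($\alpha_{4}=36$, $c_{4}=1$; $\alpha_{5}=42$, $c_{5}=5/4$) matches the paper's estimate \eqref{eq:pBound} once $\eta_{0}=(N^{3/4}\eta)^{6}N^{-3/4}$ is chosen, and you correctly identify that $42\epsilon<7/4$ forces $\epsilon<1/24$, and that the key inputs to the entrywise derivative bounds are \eqref{eq:isotropicBound}, the spectral decomposition, and the singular-vector overlap bound \eqref{eq:svOverlapEdge}, all packaged in Lemma~\ref{lem:leastSVaPriori}.

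There is, however, a genuine gap in your treatment of the short-range piece. You write that the contribution from $\sigma\in[0,\eta_{0}]$ (equivalently, the $\log\det(H_{z}+\eta_{0}^{2})$ remainder in your decomposition) is controlled by \emph{"the a priori bounds of Lemma~\ref{lem:leastSVaPriori} together with \eqref{eq:isotropicBound}"}. This is not enough. The best a priori bound available from Lemma~\ref{lem:leastSVaPriori} is $\Im\Tr{G_{z}(i\sigma)}\prec 1/(N\sigma)$ uniformly in $\sigma\le N^{-3/4}$, which upon integration over $\sigma\in(0,\eta_{0}]$ is logarithmically divergent at $\sigma=0$; the divergence reflects the event that $X-z$ is nearly singular, which no resolvent estimate valid only with very high probability can rule out. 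What the paper actually uses here is of a different nature: (i) the least-singular-value anti-concentration bound $P\bigl(s_{N}(z)<N^{-A-1}\bigr)\prec N^{-A}$ from \cite{tao_smooth_2010}, which gives only polynomial (not very high) probability and so must be handled in expectation; (ii) the counting estimate $\mbb{E}\bigl[|\{n:s_{n}(z)\le N^{\nu/2}\eta_{0}\}|\bigr]\prec N^{3/4+\nu/2}\eta_{0}$ from \cite{cipolloni_edge_2021}; and (iii) a Riemann-sum / discretization argument in $z$ to integrate these bounds against the smoothed test function. None of these follows from Lemma~\ref{lem:leastSVaPriori} or the isotropic single-resolvent law, and without them the first error term $N^{3/4}\eta$ in \eqref{eq:svMatching} is not justified. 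You should replace the appeal to Lemma~\ref{lem:leastSVaPriori} in this step with the Tao--Vu and Cipolloni--Erd\H{o}s--Schr\"oder inputs and the discretization argument.
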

\begin{proof}
We only give a sketch since the argument is standard and the details in this particular case are essentially the same as in \cite[Proof of Theorem 2.1]{osman_least_2024}. By a union bound we have
\begin{align*}
    P\left(\min_{|z_{n}-z_{0}|<rN^{-1/2}}s_{2}(z_{n})<\eta\right)&\leq\mbb{E}\left[\sum_{n=1}^{N}f_{\eta}(z_{n})g_{z_{0}}(z_{n})\right].
\end{align*}
By Girko's formula we have for large $D>0$
\begin{align*}
    \sum_{n=1}^{N}f_{z_{0}}(z_{n})g_{\eta}(z_{n})&=-\frac{1}{4\pi}\int_{\mbb{C}}\nabla^{2}_{z}(f_{z_{0}}g_{\eta})\int_{0}^{N^{D}}\Im\tr G_{z}(i\sigma)\,\mathrm{d}\sigma \,\mathrm{d}^{2}z+O(N^{2-D}),
\end{align*}
where the error is in the sense of stochastic domination.

We define 
\begin{align*}
    I(\eta_{1},\eta_{2})&:=\int_{\mbb{C}}\nabla^{2}_{z}(f_{z_{0}}g_{\eta})\int_{\eta_{1}}^{\eta_{2}}\Im\tr G_{z}(i\sigma)\,\mathrm{d}\sigma,
\end{align*}
and split the integral over $\sigma$ into $I(0,\eta_{0})+I(\eta_{0},N^{D})$ for $\eta_{0}=N^{-3/4-\nu}$. By \cite[Theorem 3.2]{tao_smooth_2010} we have, for any fixed $A>0$,
\begin{align*}
    P\left(s_{N}(z)<N^{-A-1}\right)&\prec N^{-A}.
\end{align*}
By \cite[Proposition 2.1]{cipolloni_edge_2021} we have
\begin{align*}
    \mbb{E}\left[\left|\left\{n:s_{n}(z)\leq N^{\nu/2}\eta_{0}\right\}\right|\right]&\prec N^{3/4+\nu/2}\eta_{0},
\end{align*}
for $\big||z|-1\big|<CN^{-1/2}$. Using these bounds and a Riemann sum approximation (see \cite[Section 5]{osman_least_2024}, which is based on \cite[Proof of Lemma 4]{cipolloni_edge_2021} and \cite[Proof of Theorem 1.1]{he_edge_2023}) we can obtain
\begin{align}
    \mbb{E}\left[I(0,\eta_{0})\right]&\prec \frac{N^{-\nu/2}}{N^{3/2}\eta^{2}}.
\end{align}

For the region $\sigma\geq\eta_{0}$ we integrate by parts:
\begin{align}
    I(\eta_{0},N^{D})&=\Re\int_{\mbb{C}}f_{z_{0}}(z)g_{\eta}(z)h_{\eta_{0}}(z)\,\mathrm{d}^{2}z+O(N^{-D}),
\end{align}
where
\begin{align}
    h_{\eta_{0}}(z)&:=\tr G_{z}(i\eta_{0})FG_{z}(i\eta_{0})F^{*}.
\end{align}
The rest of the proof is a straightforward application of the Lindeberg method, using the a priori bounds in Lemma \ref{lem:leastSVaPriori} to estimate the matrix elements that arise from the resolvent expansion
\begin{align*}
    G^{(1)}&=\sum_{p=0}^{4}(G^{(0)}\Delta_{ij})^{p}G^{(0)}+(G^{(0)}\Delta_{ij})^{5}G^{(1)},
\end{align*}
where $G^{(0)}$ and $G^{(1)}$ are resolvents of Hermitisations of matrices that differ in one element.

The contribution from the first three moments cancels after taking expectations due to the moment matching assumption. For $p\geq4$, we use
\begin{align*}
    |G_{i,i}|,|G_{\hat{\jmath},\hat{\jmath}}|&\prec\frac{1}{N\eta_{0}}\\
    |G_{i,\hat{\jmath}}|,|G_{\hat{\jmath},i}|&\prec\delta_{i,j}+\frac{1}{N\eta_{0}},
\end{align*}
and Lemma \ref{lem:leastSVaPriori} to bound the contribution from the $p$-th moment by
\begin{align*}
    &N^{2}\cdot\frac{1}{N^{p/2}}\cdot\frac{1}{(N\eta_{0})^{p-1}}\cdot\frac{1}{N^{3/2}\eta_{0}^{3}}\cdot\frac{1}{N}\\
    &+N\cdot\frac{1}{N^{p/2}}\cdot\frac{1}{N^{3/2}\eta_{0}^{3}}\cdot\frac{1}{N}.
\end{align*}
In the first line, the first factor comes from summing over $O(N^{2})$ off-diagonal elements, the second from the bound $|X_{i,j}|\prec N^{-1/2}$, the third from the $p-1$ factors of off-diagonal resolvent entries, the fourth from an entry of $GFGF^{*}G$ and the fifth from $\|f_{z_{0}}\|_{1}\lesssim N^{-1}$. The second line comes from the sum over diagonal elements, where we have $|G_{i,\hat{\imath}}|\prec1$. When $p=4$ we also have a factor $t$ from the moment matching assumption. We can rewrite this as
\begin{align}
    \frac{1+t\delta_{p,4}}{N^{\frac{3p-8}{4}}}\cdot\frac{1}{(N^{3/4}\eta)^{6(p+2)}}+\frac{1+t\delta_{p,4}}{N^{\frac{2p-3}{4}}}\cdot\frac{1}{(N^{3/4}\eta)^{18}},\label{eq:pBound}
\end{align}
where we recall that $N^{3/4}\eta_{0}=(N^{3/4}\eta)^{6}$. If $\epsilon<1/24$ then this decreases with $p$ and so we obtain \eqref{eq:svMatching} from the $p=4$ and $p=5$ terms.
\end{proof}

Note that \eqref{eq:pBound} we observe that only two moments need to match exactly. We conclude with the proof of Theorem \ref{thm2}.
\begin{proof}[Proof of Theorem \ref{thm2}]
By standard arguments, we find a non-Hermitian Wigner matrix $\wt{X}$ such that $X$ and $M:=\frac{1}{\sqrt{1+t}}(\wt{X}+\sqrt{t}Y)$ are $t$-matching for $Y\sim Gin_{\beta}(N)$ and $t=N^{-1/3+\xi}$. By Lemma \ref{lem:svComparison} and Proposition \ref{prop:leastSVGauss}, for $\eta=N^{-3/4-\epsilon}$ and $\epsilon\in(0,1/24)$ we have
\begin{align*}
    P_{X}\left(\min_{N^{1/2}|z_{n}-z_{0}|<r}s_{2}(z_{n})<\eta\right)&\lesssim\mbb{E}_{M}\left[\sum_{n=1}^{N_{\beta}}f_{z_{0}}(z_{n})g_{\eta}(z_{n})\right]+N^{3/4}\eta\\
    &\lesssim N^{3/2}\eta^{2}|\log N^{3/4}\eta|^{2-\beta}+N^{3/4}\eta+N^{-D}\\
    &\lesssim N^{3/4}\eta,
\end{align*}
where $f_{\eta}$ and $g_{\eta}$ are as in \eqref{eq:f} and \eqref{eq:g}.
\end{proof}

\subsection{Overlap}
We begin with the construction of an approximate overlap in terms of the resolvent $G_{z}$. For $\eta,\zeta>0$ and an eigenvalue $z_{n}$, define $O_{\eta,\zeta}(z_{n})$ by
\begin{align}
    \frac{1}{O_{\eta,\zeta}(z_{n})}&=\frac{2}{\pi}\int_{0}^{N^{\zeta}\eta}\eta\tr\Im G_{z_{n}}(E+i\eta)F\Im G_{z_{n}}(E+i\eta)F^{*} \,\mathrm{d}E.\label{eq:Ohat}
\end{align}
\begin{lemma}\label{lem:approximateOverlap}
Let $z_{n}$ be an eigenvalue of $X$ such that $|z_{n}-z_{0}|\lesssim N^{-1/2}$. For sufficiently small $\epsilon>3\zeta>0$ and any $\xi>0$ the following holds with probability at least $1-N^{-\epsilon}$:
\begin{enumerate}[i)]
\item if $z_{0}\in\mbb{D}_{\beta}$ and $\eta=N^{-1-\epsilon}$ then
\begin{align}
    \frac{N}{O_{\eta,\zeta}(z_{n})}&=\left[1+O(N^{-3\zeta})\right]\frac{N}{O_{nn}}+O(N^{\xi-\epsilon});
\end{align}
\item if $z_{0}\in\mbb{T}_{\beta}$ and $\eta=N^{-3/4-\epsilon}$ then
\begin{align}
    \frac{N^{1/2}}{O_{\eta,\zeta}(z_{n})}&=\left[1+O(N^{-3\zeta})\right]\frac{N^{1/2}}{O_{nn}}+O(N^{\xi-\epsilon}).
\end{align}
\end{enumerate}
\end{lemma}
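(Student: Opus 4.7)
The plan is to spectrally decompose $\Im G_{z_n}(E+i\eta)$ and isolate the contribution of the zero eigenspace of $W_{z_n}$, which carries the information about $O_{nn}$. Since $z_n$ is a simple eigenvalue of $X$, $\ker W_{z_n}$ is two-dimensional with orthonormal basis $w_{\pm 1}=\tfrac{1}{\sqrt{2}}(\hat{l}_n,\pm\hat{r}_n)^T$, where $\hat{l}_n:=l_n/\|l_n\|$ and $\hat{r}_n:=r_n/\|r_n\|$. A direct block-matrix computation gives $\tr(PFPF^*)=|\hat{l}_n^*\hat{r}_n|^2=1/O_{nn}$ for the corresponding rank-two projection $P$. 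I would then write $\Im G_{z_n}(E+i\eta)=\frac{\eta}{E^2+\eta^2}P+B_E$, where $B_E$ collects the spectral contributions from $|k|\geq 2$, and expand $\eta\tr(\Im G_{z_n}F\Im G_{z_n}F^*)$ into a pure main term, two cross terms and a pure error term.

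For the main term, the substitution $E=\eta t$ in $\int_0^{N^\zeta\eta}\eta^3/(E^2+\eta^2)^2\,dE$ reduces it to $\int_0^{N^\zeta}(1+t^2)^{-2}\,dt$, which equals its infinite-range limit up to a tail of size $O(N^{-3\zeta})$; this reproduces the claimed leading behaviour of $1/O_{\eta,\zeta}(z_n)$ with the stated relative error $O(N^{-3\zeta})$.

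For the cross and pure error terms I would use the elementary bounds $\int_0^{N^\zeta\eta}\eta^3\,dE/[(E^2+\eta^2)((\lambda_k-E)^2+\eta^2)]\lesssim\eta^2/\lambda_k^2$ and $\int_0^{N^\zeta\eta}\eta^3\,dE/[((\lambda_k-E)^2+\eta^2)((\lambda_l-E)^2+\eta^2)]\lesssim N^\zeta\eta^4/(\lambda_k^2\lambda_l^2)$, which are valid whenever $|\lambda_k|,|\lambda_l|\gg N^\zeta\eta$. The latter is ensured by the lower bound $s_2(z_n)\gtrsim N^{-1-\epsilon}$ in the bulk (\cite[Theorem 2.1]{osman_least_2024}) or $s_2(z_n)\gtrsim N^{-3/4-\epsilon}$ at the edge (Theorem \ref{thm2}). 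The overlap factors $|w_k^*Fw_l|^2$ are then controlled by \eqref{eq:svOverlapBulk} in the bulk and by Theorem \ref{thm3} and \eqref{eq:svOverlapEdge} at the edge, while rigidity \eqref{eq:rigidity} yields $|\lambda_k|\simeq(|k|/N)^{3/4}$ (edge) or $|\lambda_k|\simeq|k|/N$ (bulk) for $|k|$ above a logarithmic cutoff, with the $s_2$ lower bound taking over below it. Summing the resulting bounds over $|k|,|l|\geq 2$ then yields a total error of order $N^{-1+\xi-\epsilon}$ in $1/O_{\eta,\zeta}(z_n)$, matching the claim after multiplication by $N^{\beta/2}$.

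The main technical obstacle will be the pure error term at the edge for small indices $|k|,|l|$ close to 2, where $\lambda_k$ and $\lambda_l$ saturate near $s_2$ and the prefactor $\eta^4\lambda_k^{-2}\lambda_l^{-2}$ is comparable to $N^{\zeta+4\epsilon}\cdot N^{-3}$. Closing the estimate here requires both the $N^{-1/2}$ gain from the comparable-index branch of \eqref{eq:svOverlapEdge} for $|w_k^*Fw_l|^2$ and the hypothesis $\epsilon>3\zeta$; any weaker input would fail to produce the stated $N^{\xi-\epsilon}$ remainder.
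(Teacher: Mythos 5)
Your approach is essentially the same as the paper's: the decomposition $\Im G_{z_n}(E+i\eta) = \frac{\eta}{E^2+\eta^2}P + B_E$ with $P$ the rank-two kernel projection is exactly the spectral decomposition the paper uses (writing out $m,l\in\{\pm1\}$ versus $m,l\neq\pm(1,1)$), your identity $\tr(PFPF^*)=|\hat{\mathbf{l}}_n^*\hat{\mathbf{r}}_n|^2=1/O_{nn}$ is the paper's main-term prefactor, and the elementary integral bounds combined with the $s_2$ lower bound (from \cite[Theorem 2.1]{osman_least_2024} in the bulk, Theorem \ref{thm2} at the edge), the overlap bounds \eqref{eq:svOverlapBulk}/\eqref{eq:svOverlapEdge}, and rigidity \eqref{eq:rigidity} are precisely the ingredients the paper invokes. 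One small caveat: your claim that the hypothesis $\epsilon>3\zeta$ is \emph{required} to close the pure error term at the edge overstates matters — the paper's bound for that term is of order $N^{-1/2-2\epsilon+\zeta}$, which only needs $\zeta\lesssim\epsilon+\xi$; the full strength $\epsilon>3\zeta>\xi$ is what ultimately gets used in the subsequent comparison Lemma \ref{lem:overlapComparison} where the multiplicative error $N^{-3\zeta}$ must dominate the size of $N^{\beta/2}/O_{nn}$, so the stronger hypothesis is imposed here for uniformity rather than out of necessity for this particular estimate.
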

\begin{proof}
Since $z_{n}$ is an eigenvalue of $X$ we have $s_{1}(z_{n})=0$. By the spectral decomposition of $G_{z}$ we have
\begin{align*}
    \eta\tr\Im G_{z_{n}}(w)F\Im G_{z_{n}}(w)F^{*}&=\frac{2\eta^{3}}{(E^{2}+\eta^{2})^{2}}\cdot\frac{1}{O_{nn}}\\
    &+\sum_{m,l\neq \pm(1,1)}\frac{\eta^{3}|\mbf{w}_{m}^{*}F\mbf{w}_{l}|^{2}}{|\lambda_{m}(z_{n})-w|^{2}|\lambda_{l}(z_{n})-w|^{2}}.
\end{align*}
Observe that
\begin{align*}
    \frac{1}{\pi}\int_{0}^{N^{\zeta}\eta}\frac{4\eta^{3}}{(E^{2}+\eta^{2})^{2}}\,\mathrm{d}E&=1+O(N^{-3\zeta}).
\end{align*}

Consider $z_{0}\in\mbb{D}_{\beta}$ and $\eta=N^{-1-\epsilon}$. By \cite[Theorem 2.1]{osman_least_2024} (see also \cite[Lemma 2.4]{dubova_gaussian_2024}), for sufficiently small $\epsilon$ we have $|\lambda_{m}(z_{n})|>N^{\epsilon/2}\eta\gg N^{\zeta}\eta$ for $|m|>1$ with probability at least $1-N^{-\epsilon}$. On this event we have $|\lambda_{m}-w|^{2}\simeq\lambda_{m}^{2}+N^{\epsilon}\eta^{2}$ for $|E|\lesssim N^{\zeta}\eta$ and so
\begin{align*}
    \int_{0}^{N^{\zeta}\eta}\sum_{|m|>1}\frac{\eta^{3}|\mbf{w}_{1}^{*}F\mbf{w}_{m}|^{2}}{(E^{2}+\eta^{2})|\lambda_{m}-w|^{2}}\,\mathrm{d}E&\lesssim\sum_{m=2}^{N}\frac{\eta^{2}|\mbf{w}_{1}^{*}F\mbf{w}_{m}|^{2}}{\lambda_{m}^{2}+N^{\epsilon}\eta^{2}}\\
    &\prec\frac{1}{N}\sum_{m=2}^{N}\frac{\eta^{2}}{\lambda_{m}^{2}+N^{\epsilon}\eta^{2}}\\
    &\lesssim N^{-1-\epsilon},
\end{align*}
where in the second line we used the overlap bound $|\mbf{w}_{1}^{*}F\mbf{w}_{m}|^{2}\prec N^{-1}$ for $|m|<cN$ and the trivial bound $|\lambda_{m}|>c$ for $|m|>cN$. Similarly, we have
\begin{align*}
    \int_{0}^{N^{\zeta}\eta}\sum_{|m|,|l|>1}\frac{\eta^{3}|\mbf{w}_{m}^{*}F\mbf{w}_{l}|^{2}}{|\lambda_{m}-w|^{2}|\lambda_{l}-w|^{2}}\,\mathrm{d}E&\prec N^{\zeta}\eta^{4}\left(\frac{1}{N}\sum_{m=2}^{N}\frac{1}{\lambda_{m}^{2}+N^{\epsilon}\eta^{2}}\right)^{2}\\
    &\lesssim N^{-1-2\epsilon+\zeta}.
\end{align*}
for some $\delta>0$ depending on $\epsilon$ and $\zeta$.

Now consider $z_{0}\in\mbb{T}_{\beta}$ and $\eta=N^{-3/4-\epsilon}$. By Theorem \ref{thm2}, for sufficiently small $\epsilon$ we have $|\lambda_{m}(z_{n})|>N^{\epsilon/2}\eta\gg N^{\zeta}\eta$ for $|m|>1$ with probability at least $1-N^{-\epsilon}$. Thus
\begin{align*}
    \int_{0}^{N^{\zeta}\eta}\sum_{|m|>1}\frac{\eta^{3}|\mbf{w}_{1}^{*}F\mbf{w}_{m}|^{2}}{(E^{2}+\eta^{2})|\lambda_{m}-w|^{2}}\,\mathrm{d}E&\lesssim\sum_{m=2}^{N}\frac{\eta^{2}|\mbf{w}_{1}^{*}F\mbf{w}_{m}|^{2}}{\lambda_{m}^{2}+N^{\epsilon}\eta^{2}}.
\end{align*}
Splitting the sum into $m<N^{\xi}$ and $m>N^{\xi}$ and using \eqref{eq:svOverlapEdge} and rigidity we find
\begin{align*}
    \sum_{m<N^{\xi}}\frac{\eta^{2}|\mbf{w}_{1}^{*}F\mbf{w}_{m}|^{2}}{\lambda_{m}^{2}+N^{\epsilon}\eta^{2}}&\prec N^{-1/2-\epsilon},
\end{align*}
and
\begin{align*}
    \sum_{m>N^{\xi}}\frac{\eta^{2}|\mbf{w}_{1}^{*}F\mbf{w}_{m}|^{2}}{\lambda_{m}^{2}+N^{\epsilon}\eta^{2}}&\prec\sum_{N^{\xi}<m<cN}\frac{\eta^{2}}{(Nm)^{1/2}}\left(\frac{N}{m}\right)^{3/2}+\sum_{m>cN}C\eta^{2}\\
    &\prec N\eta^{2}\\
    &\prec N^{-1/2-2\epsilon}.
\end{align*}

For the double sum over $|m|,|l|>1$, we note that by \eqref{eq:aPriori5} below we have
 \begin{align*}
    \int_{0}^{N^{\zeta}\eta}\sum_{|m|,|l|>1}\frac{\eta^{3}|\mbf{w}_{m}^{*}F\mbf{w}_{l}|^{2}}{|\lambda_{m}-w|^{2}|\lambda_{l}-w|^{2}}\,\mathrm{d}E&\lesssim N^{\zeta}\sum_{m,l=2}^{N}\frac{\eta^{4}|\mbf{w}_{m}^{*}F\mbf{w}_{l}|^{2}}{(\lambda_{m}^{2}+N^{\epsilon}\eta^{2})(\lambda_{l}^{2}+N^{\epsilon}\eta^{2})}\\
    &\prec N^{-1/2-2\epsilon+\zeta}.
\end{align*}
\end{proof}

We can prove a comparison lemma for $O_{\eta,\zeta}(z_{n})$ by the Lindeberg method. The a priori estimates that we need are contained in the following.
\begin{lemma}\label{lem:aPrioriOverlap}
Let $w=E+i\eta$. Then we have the following bounds:
\begin{enumerate}[i)]
\item if $|z|<1-\epsilon$ for some $\epsilon>0$ and $|w|\in(0,N^{-1}]$,
\begin{align}
    \Tr{G_{z}(w)FG_{z}(w)F^{*}}&\prec\frac{1}{N^{2}\eta^{2}},\label{eq:aPriori1}\\
    \partial_{z}\Tr{G_{z}(w)FG_{z}(w)F^{*}}&\prec\frac{1}{N^{5/2}\eta^{3}},\label{eq:aPriori2}\\
    \nabla^{2}_{z}\Tr{G_{z}(w)FG_{z}(w)F^{*}}&\prec\frac{1}{N^{3}\eta^{4}},\label{eq:aPriori3}
\end{align}
and, for any $\mu,\nu\in[2N]$,
\begin{align}
    |(G_{z}(w)FG_{z}(w)F^{*}G_{z}(w))_{\mu,\nu}|&\prec\frac{1}{N^{2}\eta^{3}};\label{eq:aPriori4}
\end{align}
\item if $\big||z|-1\big|\lesssim N^{-1/2}$ and $|w|\in(0,N^{-3/4}]$,
\begin{align}
    \Tr{\Im G_{z}(w)F\Im G_{z}(w)F^{*}}&\prec\frac{1}{N^{3/2}\eta^{2}},\label{eq:aPriori5}\\
    \partial_{z}\Tr{\Im G_{z}(w)F\Im G_{z}(w)F^{*}}&\prec\frac{1}{N^{7/4}\eta^{3}},\label{eq:aPriori6}\\
    \nabla^{2}_{z}\Tr{\Im G_{z}(w)F\Im G_{z}(w)F^{*}}&\prec\frac{1}{N^{2}\eta^{4}}.\label{eq:aPriori7}
\end{align}
\end{enumerate}
\end{lemma}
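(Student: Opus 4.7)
The plan is to parallel the approach of Lemma \ref{lem:leastSVaPriori}: I would express each quantity via the spectral decomposition $G_{z}(w) = \sum_{|n|\in[N]}(\lambda_{n}-w)^{-1}\mbf{w}_{n}\mbf{w}_{n}^{*}$ and then estimate using singular-vector delocalisation \eqref{eq:deloc}, the overlap bounds \eqref{eq:svOverlapBulk} (bulk) and \eqref{eq:svOverlapEdge} (edge), rigidity \eqref{eq:rigidity}, and the least non-zero singular value bounds from \cite[Theorem 2.1]{osman_least_2024} and Theorem \ref{thm2}. Since the scales of $\eta$ considered in the lemma lie below the threshold $N\eta\rho>N^{\xi}$ at which Proposition \ref{prop:ll} applies, neither two-resolvent local law can be invoked directly, so in the bulk I will rely on direct spectral-decomposition estimates and at the edge I will use monotonicity in $\eta$ to propagate the local law from a safe scale $\eta_{0}$ down to the required small $\eta$.

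For the bulk bounds \eqref{eq:aPriori1}--\eqref{eq:aPriori4}, the block structure of $G_{z}$ yields $\Tr{G_{z}(w)FG_{z}(w)F^{*}}=(w^{2}/2)\Tr{\tilde{H}_{z}(w)H_{z}(w)}$, whose spectral form reads $N^{-1}\sum_{n,m}|\langle\mbf{u}_{n},\mbf{v}_{m}\rangle|^{2}/((s_{n}^{2}-w^{2})(s_{m}^{2}-w^{2}))$ in terms of the singular values $s_{n}$ and singular vectors $\mbf{u}_{n},\mbf{v}_{n}$ of $X-z$. The bulk overlap bound gives $|\langle\mbf{u}_{n},\mbf{v}_{m}\rangle|^{2}\prec N^{-1}$, and combining rigidity $s_{n}\gtrsim n/N$ for $n\geq 2$ with $s_{1}\gtrsim N^{-1-\xi}$ yields $\sum_{n}(s_{n}^{2}+\eta^{2})^{-1}\prec N^{2}$ uniformly in $\eta>0$, whence $\Tr{\tilde{H}_{z}(w)H_{z}(w)}\prec N^{2}$ and $|\Tr{GFGF^{*}}|\prec |w|^{2}\leq N^{-2}\leq N^{-2}\eta^{-2}$ in the regime $|w|\leq N^{-1}$. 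For \eqref{eq:aPriori2} and \eqref{eq:aPriori3} I would differentiate using $\partial_{z}G=GFG$; each additional resolvent factor contributes an extra $(\lambda_{l}-w)^{-1}$ in the spectral decomposition, and an analogous summation argument gains the claimed $N^{-1/2}\eta^{-1}$ per derivative. For \eqref{eq:aPriori4} I would use the same spectral decomposition together with delocalisation $|\mbf{e}_{\mu}^{*}\mbf{w}_{n}|\prec N^{-1/2}$ and the index splitting $I_{1}=\{|n|\leq N^{\xi}\}$, $I_{2}=\{N^{\xi}<|n|<cN\}$, $I_{3}=\{|n|\geq cN\}$ exactly as in the proof of Lemma \ref{lem:leastSVaPriori}.

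For the edge bound \eqref{eq:aPriori5}, the key observation is that $\eta^{2}\Tr{\Im G_{z}(E+i\eta)F\Im G_{z}(E+i\eta)F^{*}}$ is non-decreasing in $\eta$ at fixed $E$, since each term in its spectral decomposition $\eta^{4}|\mbf{w}_{n}^{*}F\mbf{w}_{m}|^{2}/\bigl(((\lambda_{n}-E)^{2}+\eta^{2})((\lambda_{m}-E)^{2}+\eta^{2})\bigr)$ is monotone in $\eta$. Setting $\eta_{0}=N^{-3/4+\xi}$, the averaged local law \eqref{eq:avLL2} combined with $\phi^{av}_{2}(w_{0},w_{0})\lesssim\rho_{0}^{3}/\eta_{0}\simeq 1$ (using the edge asymptotic $\rho_{0}\simeq\eta_{0}^{1/3}$ together with \eqref{eq:phiBound}) will give $\Tr{\Im GF\Im GF^{*}}(E+i\eta_{0})\prec 1$, so $\eta_{0}^{2}\Tr{\Im GF\Im GF^{*}}(E+i\eta_{0})\prec N^{-3/2+2\xi}$, and monotonicity then propagates this down to \eqref{eq:aPriori5}. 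For \eqref{eq:aPriori6} and \eqref{eq:aPriori7} I would apply $\partial_{z}G=GFG$ and use Cauchy--Schwarz to bound the resulting mixed traces by positive-semidefinite quantities of the form $\Tr{\Im G A\Im G A^{*}}$ for suitable matrices $A$ built from $F$, $F^{*}$, and $G$, controlling them at $\eta_{0}$ by the isotropic two-resolvent law \eqref{eq:isoLL2} and then propagating down in $\eta$ by the same monotonicity mechanism.

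The hardest step will be the derivative bounds at the edge: mixed traces are not manifestly non-negative, so the Cauchy--Schwarz reduction to positive-semidefinite quantities must be arranged so that the powers of $\eta_{0}$ balance correctly on the two sides and so that the bound from \eqref{eq:isoLL2} is tight enough to give the desired $N^{-1/4}\eta^{-1}$ gain per derivative. A secondary difficulty is that the loss of Proposition \ref{prop:ll} below $N\eta\rho>N^{\xi}$ rules out a direct application of the two-resolvent laws, while the non-sharp overlap bound \eqref{eq:svOverlapEdge} rules out a direct spectral-decomposition argument at the edge; it is the combination of monotonicity with Proposition \ref{prop:ll} at the safe scale $\eta_{0}$ that circumvents both.
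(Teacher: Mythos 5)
Your outline matches the paper for \eqref{eq:aPriori5} (monotonicity of $\eta\mapsto\eta\Im G_{z}(E+i\eta)$ plus the averaged two-resolvent law at $\eta_{0}=N^{-3/4+\xi}$), and is in the right spirit for \eqref{eq:aPriori7} (the paper reduces to $N\Tr{|G|F|G|F^{*}}^{2}$ via Cauchy--Schwarz, propagates $|G|$ down from $\eta_{0}$ by the same monotonicity, and controls $\Tr{|G_{0}|F|G_{0}|F^{*}}\prec 1$ via a contour-integral representation of $|G_{0}|$ in terms of $\Im G$ and the local law at larger scales). However, there are two genuine gaps.

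First, your bulk argument does not hold. The claim that $\sum_{n}(s_{n}^{2}+\eta^{2})^{-1}\prec N^{2}$ uniformly in $\eta>0$ requires $s_{1}(z)\gtrsim N^{-1}$ with \emph{very high} probability, which is false: Tao--Vu only gives $s_{1}\geq N^{-A-1}$ with probability $1-N^{-A}$, not with all polynomial rates, and since $\eta$ is unbounded from below the single term $(s_{1}^{2}+\eta^{2})^{-1}$ alone can exceed $N^{2}$ by an arbitrary power. Consequently, $\Tr{\tilde{H}_{z}H_{z}}\not\prec N^{2}$: when $s_{1}\lesssim\eta$ and $E=0$, the $(n,m)=(1,1)$ term alone contributes of order $(N^{2}\eta^{4})^{-1}|\mbf{u}_{1}^{*}\mbf{v}_{1}|^{2}/N^{-1}\sim(N^{2}\eta^{4})^{-1}\gg N^{2}$. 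Your conclusion $|\Tr{GFGF^{*}}|\prec|w|^{2}N^{2}\leq1$ would then be wrong; the correct size can be as large as $(N^{2}\eta^{2})^{-1}$, and obtaining even this requires a finer lower bound on $|s_{1}^{2}-w^{2}|$ for $E\neq0$ rather than $s_{1}^{2}+\eta^{2}$. The paper avoids this issue entirely by using the same monotonicity argument ($\eta|G|\leq\eta_{0}|G_{0}|$) in the bulk that you correctly invoke only at the edge; no direct spectral decomposition at the small scale $\eta$ is needed, and the $s_{1}$ singularity is never seen.

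Second, your plan for \eqref{eq:aPriori6} — reduce to a positive trace by Cauchy--Schwarz and propagate by monotonicity — does not close. The relevant trace $\Tr{GF\Im GF\Im GF^{*}}$ has an asymmetric structure (one $G$, two $\Im G$); any natural Cauchy--Schwarz split produces a three-resolvent positive trace like $\Tr{\Im GF\Im GF^{*}\Im GF}$ or $\Tr{F\Im GF^{*}\Im GF\Im GF^{*}}$ for which no local law is available in the paper (Proposition \ref{prop:ll} stops at two resolvents, and the three-$\Im G$ chain cannot be reduced to it without paying extra $1/\eta$ factors). The paper's route here is genuinely different: it goes back to the direct spectral decomposition as in Lemma \ref{lem:leastSVaPriori}, using the $I_{1}/I_{2}/I_{3}$ index splitting, the singular-vector overlap bound \eqref{eq:svOverlapEdge}, and rigidity, and explicitly relies on having exactly two factors of $\Im G$ so that enough inverse powers of $|\lambda_{n}|$ appear in the denominator when $n,m,l\in I_{2}$. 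You identify the derivative bounds as the hardest step, but the resolution is not the Cauchy--Schwarz plus monotonicity route you propose.
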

\begin{proof}
We prove the bounds in the edge regime; the proof in the bulk regime is similar. Let $\xi>0,\,\eta_{0}=N^{-3/4+\xi}$ and $w_{0}=E+i\eta_{0}$. For \eqref{eq:aPriori5} we use the fact that $\eta\Im G_{z}(E+i\eta)\leq\eta_{0}\Im G_{z}(w_{0})$ and the local law in Proposition \ref{prop:ll}:
\begin{align*}
    \Tr{\Im G_{z}(w)F\Im G_{z}(w)F^{*}}&\leq\frac{\eta_{0}^{2}}{\eta^{2}}\Tr{\Im G_{z}(w_{0})F\Im G_{z}(w_{0})F^{*}}\\
    &\prec\frac{\eta_{0}^{2}}{\eta^{2}}\\
    &\leq\frac{N^{2\xi}}{N^{3/2}\eta^{2}}.
\end{align*}

For \eqref{eq:aPriori7}, we use the bound
\begin{align*}
    |\Tr{G_{z}(w)FG_{z}(w)FG_{z}(w)F^{*}G_{z}(w)F^{*}}|&\leq\frac{1}{2N}\sum_{n,m,l,k}\frac{|\mbf{w}_{n}^{*}F\mbf{w}_{m}||\mbf{w}_{m}^{*}F\mbf{w}_{l}||\mbf{w}_{l}^{*}F^{*}\mbf{w}_{k}||\mbf{w}_{k}^{*}F^{*}\mbf{w}_{n}|}{|\lambda_{n}-w||\lambda_{m}-w||\lambda_{l}-w||\lambda_{k}-w|}\\
    &\leq\frac{1}{2N}\sum_{n,m,l,k}\frac{|\mbf{w}_{n}^{*}F\mbf{w}_{m}|^{2}|\mbf{w}_{l}^{*}F^{*}\mbf{w}_{k}|^{2}}{|\lambda_{n}-w||\lambda_{m}-w||\lambda_{l}-w||\lambda_{k}-w|}\\
    &+\frac{1}{2N}\sum_{n,m,l,k}\frac{|\mbf{w}_{m}^{*}F\mbf{w}_{l}|^{2}|\mbf{w}_{k}^{*}F^{*}\mbf{w}_{n}|^{2}}{|\lambda_{n}-w||\lambda_{m}-w||\lambda_{l}-w||\lambda_{k}-w|}\\
    &=N\Tr{|G_{z}(w)|F|G_{z}(w)|F^{*}}^{2}.
\end{align*}
Since $\eta|G_{z}(w)|\leq\eta_{0}|G_{z}(w_{0})|$ we have
\begin{align*}
    \Tr{|G_{z}(w)|F|G_{z}(w)|F^{*}}&\leq\frac{\eta_{0}^{2}}{\eta^{2}}\Tr{|G_{z}(w_{0})|F|G_{z}(w_{0})|F^{*}}.
\end{align*}
For any $D>0$, there is an $L>0$ such that (see \cite[(A.19)]{cipolloni_universality_2024})
\begin{align*}
    |G_{z}(w_{0})|&=\frac{2}{\pi}\int_{0}^{N^{L}}\frac{\Im G_{z}(w_{x})}{\eta_{x}}\,\mathrm{d}x+O_{\prec}(N^{-D}),
\end{align*}
where $\eta_{x}=\sqrt{\eta^{2}_{0}+x^{2}}$ and $w_{x}=E+i\eta_{x}$, and so
\begin{align*}
    \Tr{|G_{z}(w_{0})|F|G_{z}(w_{0})|F^{*}}&=\frac{4}{\pi^{2}}\int_{0}^{N^{L}}\int_{0}^{N^{L}}\frac{\Tr{\Im G_{z}(w_{x})F\Im G_{z}(w_{y})F^{*}}}{\eta_{x}\eta_{y}}\,\mathrm{d}x\,\mathrm{d}y+O_{\prec}(N^{-D})
\end{align*}
Using the local law in Proposition \ref{prop:ll} we have
\begin{align*}
    \Tr{\Im G_{z}(w_{x})F\Im G_{z}(w_{y})F^{*}}&\prec\frac{\rho_{x}\rho_{y}}{\frac{\eta_{x}}{\rho_{x}}+\frac{\eta_{y}}{\rho_{y}}}\\
    &\lesssim\frac{(\eta_{x}\eta_{y})^{1/3}}{(\eta_{x}\vee\eta_{y})^{2/3}},
\end{align*}
when $\eta_{x},\eta_{y}\lesssim1$. The contribution of this region to the integral is 
\begin{align*}
    \int_{0}^{1}\int_{0}^{1}\left(\frac{1}{(\eta_{x}\vee\eta_{y})\eta_{x}\eta_{y}}\right)^{2/3}\,\mathrm{d}x\,\mathrm{d}y&\lesssim\int_{0}^{1}\frac{1}{\eta_{x}}\,\mathrm{d}x\lesssim-\log\eta_{0}\prec1.
\end{align*}
When $\eta_{y}\gtrsim1$, we use
\begin{align*}
    \Tr{\Im G_{z}(w_{x})F\Im G_{x}(w_{y})F^{*}}&\leq\frac{1}{\eta_{y}}\Im\Tr{G_{z}(w_{x})}.
\end{align*}
The contribution of this region to the integral is
\begin{align*}
    \int_{0}^{1}\frac{1}{\eta_{x}^{2/3}}\,\mathrm{d}x+\int_{1}^{N^{L}}\frac{1}{\eta_{x}^{2}}\,\mathrm{d}x&\lesssim1.
\end{align*}
Thus we find
\begin{align*}
    \Tr{|G_{z}(w_{0})|F|G_{z}(w_{0})|F^{*}}&\prec1,
\end{align*}
and hence
\begin{align*}
    \left|\Tr{G_{z}(w)FG_{z}(w)FG_{z}(w)F^{*}G_{z}(w)F^{*}}\right|&\leq N\Tr{|G_{z}(w)|F|G_{z}(w)|F^{*}}^{2}\\
    &\prec N\left(\frac{\eta_{0}^{2}}{\eta^{2}}\right)^{2}\\
    &\prec \frac{N^{4\xi}}{N^{2}\eta^{4}}.
\end{align*}
We obtain the same bound for the remaining combinations of $G_{z}$ and $F$ in $\nabla^{2}_{z}\Tr{\Im G_{z}(w)F\Im G_{z}(w)F^{*}}$. Since $\xi$ is arbitrary we obtain \eqref{eq:aPriori7}.

To prove \eqref{eq:aPriori6} we argue as in the proof of Lemma \ref{lem:leastSVaPriori} using the bound on singular vector overlaps. It is enough to prove
\begin{align*}
    \Tr{\partial_{z}(\Im G_{z}(w))F\Im G_{z}(w)F^{*}}&\prec\frac{1}{N^{7/4}\eta^{3}}.
\end{align*}
Observe that
\begin{align*}
    \partial_{z}\Im G_{z}(w)&=\frac{1}{2i}(G_{z}FG_{z}-G^{*}_{z}FG^{*}_{z})\\
    &=G_{z}F\Im G_{z}+\Im(G_{z})FG^{*}_{z},
\end{align*}
so it is enough to prove
\begin{align*}
    \Tr{G_{z}F\Im G_{z}F\Im G_{z}F^{*}}&\prec\frac{1}{N^{7/4}\eta^{3}}.
\end{align*}
By the spectral decomposition we have
\begin{align*}
    \Tr{G_{z}F\Im G_{z}F\Im G_{z}F^{*}}&=\frac{1}{N}\sum_{n,m,l}\frac{\eta^{2}\mbf{w}_{n}^{*}F\mbf{w}_{m}\mbf{w}_{m}^{*}F\mbf{w}_{l}\mbf{w}_{l}^{*}F^{*}\mbf{w}_{n}}{(\lambda_{n}-w)|\lambda_{m}-w|^{2}||\lambda_{l}-w|^{2}}.
\end{align*}
The bound for this expression follows from the argument in the proof of Lemma \ref{lem:leastSVaPriori}. It is important that we have two factors of $\Im G_{z}$ so that we have enough powers of $\lambda_{n}$ in the denominator to bound the term for which $n,m,l\in I_{2}$:
\begin{align*}
    \frac{1}{N}\sum_{n,m,l\in I_{2}}\frac{\eta^{2}|\mbf{w}_{n}^{*}F\mbf{w}_{m}||\mbf{w}_{m}^{*}F\mbf{w}_{l}||\mbf{w}_{l}^{*}F^{*}\mbf{w}_{n}|}{|\lambda_{n}-w||\lambda_{m}-w|^{2}||\lambda_{l}-w|^{2}}&\lesssim\frac{\eta^{2}}{N}\sum_{n>N^{2\xi}}n^{2}\left(\frac{N}{n}\right)^{3}\\
    &\lesssim N^{2}\eta^{2}\log N\\
    &\lesssim\frac{1}{N^{7/4}\eta^{3}}.
\end{align*}
\end{proof}

By first comparing $O_{nn}$ with $O_{\eta,\zeta}(z_{n})$ and then following the argument of Lemma \ref{lem:svComparison}, we can obtain the following comparison result for bulk overlaps.
\begin{lemma}\label{lem:overlapComparison}
Let $\epsilon>0$ and $A$ and $B$ be $t$-matching non-Hermitian Wigner matrices for $t\leq N^{-\epsilon}$. Let $r>0$ and $f\in C^{2}(\mbb{F}_{\beta})$ be supported in the ball of radius $r$ about the origin. Let $L>0$ and $g\in C^{5}([0,\infty])$ such that
\begin{align}
    \left|\frac{\mathrm{d}^{p}g}{\mathrm{d}x^{p}}\right|&\lesssim(1+x)^{L},\quad p=0,...,5.
\end{align}
Then there is a $\delta>0$ such that the following holds for sufficiently large $N$:
\begin{enumerate}
\item if $z_{0}\in\mbb{D}_{\beta}$,
\begin{align}
    \left|(\mbb{E}_{A}-\mbb{E}_{B})\left[\sum_{n=1}^{N_{\beta}}f_{z_{0}}(z_{n})g\left(\frac{N}{O_{nn}}\right)\right]\right|&\leq N^{-\delta};
\end{align}
\item if $z_{0}\in\mbb{T}_{\beta}$,
\begin{align}
    \left|(\mbb{E}_{A}-\mbb{E}_{B})\left[\sum_{n=1}^{N_{\beta}}f_{z_{0}}(z_{n})g\left(\frac{N^{1/2}}{O_{nn}}\right)\right]\right|&\leq N^{-\delta}.
\end{align}
\end{enumerate}
\end{lemma}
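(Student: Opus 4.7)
The argument follows the same three-step template as Lemma \ref{lem:svComparison}, with the test function $g_{\eta}(z)$ replaced by $g(s_0/O_{\eta,\zeta}(z))$, where $s_0 = N$ in the bulk and $s_0 = N^{1/2}$ at the edge. Throughout, the parameters $\eta, \zeta$ are chosen as in Lemma \ref{lem:approximateOverlap}.

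First, Lemma \ref{lem:approximateOverlap} together with the $C^{5}$ smoothness of $g$ gives
$$
g(s_0/O_{nn}) = g(s_0/O_{\eta,\zeta}(z_n)) + O(N^{-\delta'})
$$
on an event of probability at least $1 - N^{-\epsilon_1}$ for suitable $\delta', \epsilon_1 > 0$. On the complementary event, the deterministic lower bound $O_{nn} \geq 1$ and the polynomial growth of $g$ give $|g(s_0/O_{nn})| \lesssim s_{0}^{L}$; since only $O_\prec(1)$ eigenvalues lie in the support of $f_{z_0}$, choosing $\epsilon_1$ large enough compared to $L$ renders this contribution $o(N^{-\delta})$. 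It therefore suffices to compare
$$
\widetilde{S}(X) := \sum_{n=1}^{N_\beta} f_{z_0}(z_n)\, g(s_0/O_{\eta,\zeta}(z_n))
$$
between $X=A$ and $X=B$.

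Next, since $O_{\eta,\zeta}(z)$ is a smooth function of $z$ (an integral over $E$ of the double-resolvent trace $\tr\, \Im G_{z}(E+i\eta) F \Im G_{z}(E+i\eta) F^{*}$), I set $h(z) := f_{z_0}(z)\, g(s_0/O_{\eta,\zeta}(z))$ and apply Girko's Hermitisation formula to obtain
$$
\widetilde{S}(X) = -\frac{1}{4\pi}\int_{\mbb{C}} \nabla^2_z h(z) \int_0^{N^D} \Im \tr G_z(i\sigma)\,\mathrm{d}\sigma\,\mathrm{d}^2 z + O_\prec(N^{2-D}).
$$
Split the inner integral at $\sigma_0 := N^{-1-\nu}$ in the bulk case and $\sigma_0 := N^{-3/4-\nu}$ at the edge. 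The small-$\sigma$ piece is bounded by a Riemann-sum approximation in the spirit of \cite[Proof of Lemma 4]{cipolloni_edge_2021}, combined with the least singular value bounds of \cite[Theorem 2.1]{osman_least_2024} (bulk) and Theorem \ref{thm2} (edge); the chain rule applied to $g(s_0/O_{\eta,\zeta}(z))$ shows that $\nabla^2_z h$ generates at most quadruple-resolvent traces, all controlled by Lemma \ref{lem:aPrioriOverlap}.

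On $[\sigma_0, N^D]$ I integrate by parts in $z$ to reduce to $\Re \int h(z) \tr G_z(i\sigma_0) F G_z(i\sigma_0) F^* \,\mathrm{d}^2 z$ and then run the Lindeberg method: swap $A_{ij}$ with $B_{ij}$ one entry at a time, expand the resolvent to fifth order in the perturbation, and use the matching of the first three moments to cancel those terms. The main obstacle lies precisely in the resulting bookkeeping: the derivatives of $h$ produced both by the integration by parts and by the Lindeberg expansion generate, via the chain rule, products of $g^{(k)}(s_0/O_{\eta,\zeta})$ times products of traces of triple- and quadruple-resolvent chains (from $\nabla O_{\eta,\zeta}$ and $\nabla^2 O_{\eta,\zeta}$), alongside isotropic factors $(G F G F^* G)_{\mu,\nu}$ coming from the differentiated entries. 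Using the a priori bounds \eqref{eq:aPriori1}--\eqref{eq:aPriori7} uniformly in $\sigma \in [\sigma_0, N^D]$, the contribution of each moment order $p$ decreases in $p$ in the manner of \eqref{eq:pBound}, with the dominant $p=4$ term carrying the factor $t \leq N^{-\epsilon}$ from the matching assumption. A sufficiently small $\nu > 0$, together with a suitable choice of $\epsilon_1$ and $\zeta$, then yields the desired $N^{-\delta}$ bound.
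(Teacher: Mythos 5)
Your overall template is right — approximate $O_{nn}$ by $O_{\eta,\zeta}(z_n)$, then apply Girko's formula and the Lindeberg swap to the functional built from $O_{\eta,\zeta}$ — but the first step contains a genuine gap. You write that on the failure event of Lemma~\ref{lem:approximateOverlap} the deterministic bound $O_{nn}\geq 1$ gives $|g(s_0/O_{nn})|\lesssim s_0^{L}$, and that ``choosing $\epsilon_1$ large enough compared to $L$'' kills this contribution. But $\epsilon_1$ is not a free parameter: the probability guarantee in Lemma~\ref{lem:approximateOverlap} is $1-N^{-\epsilon}$ with $\epsilon$ required to be \emph{small} (indeed, $\epsilon>3\zeta>0$ with $\epsilon$ sufficiently small is an explicit hypothesis of that lemma). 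In the bulk $s_0=N$, so the naive failure-event contribution is of size $N^{L}\cdot N^{-\epsilon}$, which blows up. The paper repairs this by inserting an intermediate conditioning on a very-high-probability event $\mc{E}_2$ (controlled by the single-resolvent local law and \eqref{eq:aPriori1}) on which $O_{nn}\gtrsim N^{1-\xi}$, so that the observable is already bounded by $N^{(L+1)\xi}$ before the probability $N^{-\epsilon}$ from Lemma~\ref{lem:approximateOverlap} comes into play; choosing $\xi$ small relative to $\epsilon$ then closes the argument. Without a step of this kind your truncation does not go through.

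A second, smaller, omission: for $\beta=1$ the sum $\sum_{n=1}^{N_\beta}$ runs only over real eigenvalues, so Girko's formula cannot be applied directly to $\widetilde{S}(X)$. The paper addresses this by replacing $f_{z_0}$ with a cutoff $\wt f_{z_0}$ supported in a thin strip of width $N^{-1/2-\tau}$ around $\mbb{R}$, using the bulk universality of the correlation functions from \cite{osman_bulk_2024} to show that with probability $1-N^{-\tau}$ the only eigenvalue in that strip is the real one; only then can the sum be extended to the full spectrum and Girko's formula applied. Your proposal should include this reduction (or an equivalent one) before running the Lindeberg method in the real case.
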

We remark that this extends without difficulty to finite collections of diagonal overlaps.
\begin{proof}
Assume that $z_{0}\in\mbb{D}_{\beta}$; the details in the case $z_{0}\in\mbb{T}_{\beta}$ are analogous. Let $\epsilon>3\zeta>\xi>0$ and $O_{\eta,\zeta}(z_{n})$ be defined by \eqref{eq:Ohat}. Let
\begin{align*}
    \mc{L}&=\sum_{n=1}^{N_{\beta}}f_{z_{0}}(z_{n})g\left(\frac{N}{O_{nn}}\right),\\
    \wh{\mc{L}}&=\sum_{n=1}^{N_{\beta}}f_{z_{0}}(z_{n})g\left(\frac{N}{O_{\eta,\zeta}(z_{n})}\right).
\end{align*}
Since $O_{nn}\geq1$, we have $g(N/O_{nn})\lesssim N^{L}$ and hence the deterministic bounds
\begin{align}
    |\mc{L}|&\lesssim N^{L+1},\label{eq:detL}\\
    |\wh{\mc{L}}|&\lesssim N^{(2+\zeta)L+1}.\label{eq:detLhat}
\end{align}
Define the events
\begin{align*}
    \mc{E}_{1}&=\left\{|\{n:N^{1/2}|z_{n}-z_{0}|\leq r\}|\leq N^{\xi}\right\},\\
    \mc{E}_{2}&=\bigcap_{N^{1/2}|z_{n}-z_{0}|<r}\left\{\Tr{\Im G_{z}(w)F\Im G_{z}(w)F^{*}}\leq \frac{N^{\xi}}{N^{2}(\Im w)^{2}},\,|w|\in(0,N^{-1}]\right\},\\
    \mc{E}_{3}&=\bigcap_{N^{1/2}|z_{n}-z_{0}|<r}\left\{\left|\frac{N}{O_{nn}}-\frac{N}{O_{\eta,\zeta}(z_{n})}\right|\lesssim\frac{N}{O_{nn}}\cdot N^{-3\zeta}+N^{\xi-\epsilon}\right\}.
\end{align*}
Note that on $\mc{E}_{2}$ we have
\begin{align*}
    O_{nn}&\geq\frac{2}{N\eta^{2}\Tr{\Im G_{z}(i\eta)F\Im G_{z}(i\eta)F^{*}}}\geq 2N^{1-\xi}.
\end{align*}
By the single resolvent averaged local law and \eqref{eq:aPriori1}, for any $D>0$ we have
\begin{align*}
    P(\mc{E}_{1}\cap\mc{E}_{2})&\geq1-N^{-D},
\end{align*}
and hence by the deterministic bounds in \eqref{eq:detL} and \eqref{eq:detLhat} we have
\begin{align*}  
    \mbb{E}\mc{L}&=\mbb{E}1_{\mc{E}_{1}\cap\mc{E}_{2}}\mc{L}+O(N^{-D}),\\
    \mbb{E}\wh{\mc{L}}&=\mbb{E}1_{\mc{E}_{1}\cap\mc{E}_{2}}\wh{\mc{L}}+O(N^{-D}).
\end{align*}
On the event $\mc{E}_{1}\cap\mc{E}_{2}$, we have the improved bounds
\begin{align*}
    1_{\mc{E}_{1}\cap\mc{E}_{2}}|\mc{L}|&\lesssim N^{(L+1)\xi},\\
    1_{\mc{E}_{1}\cap\mc{E}_{2}}|\wh{\mc{L}}|&\lesssim N^{(L+1)\zeta}.
\end{align*}
By Lemma \ref{lem:approximateOverlap}, for sufficiently small $\epsilon$ we have
\begin{align*}
    P(\mc{E}_{3})&\geq1-N^{-\epsilon},
\end{align*}
and so with the improved bounds we obtain
\begin{align*}
    \mbb{E}1_{\mc{E}_{1}\cap\mc{E}_{2}}\mc{L}&=\mbb{E}1_{\mc{E}_{1}\cap\mc{E}_{2}\cap\mc{E}_{3}}\mc{L}+O(N^{-\epsilon/2+(L+1)\xi}),\\
    \mbb{E}1_{\mc{E}_{1}\cap\mc{E}_{2}}\wh{\mc{L}}&=\mbb{E}1_{\mc{E}_{1}\cap\mc{E}_{2}\cap\mc{E}_{3}}\wh{\mc{L}}+O(N^{-\epsilon/2+L\zeta}).
\end{align*}
Now we can compare $\mc{L}$ and $\wh{\mc{L}}$ using the definition of $\mc{E}_{3}$. If $\xi$ is sufficiently smaller than $\zeta$ which itself is sufficiently smaller than $\epsilon$, then
\begin{align*}
    \left|\mbb{E}1_{\mc{E}_{1}\cap\mc{E}_{2}\cap\mc{E}_{3}}(\mc{L}-\wh{\mc{L}})\right|&\lesssim N^{-\delta},
\end{align*}
for some $\delta>0$ depending on $L,\epsilon,\zeta$ and $\xi$. By the triangle inequality we obtain
\begin{align*}
    \left|\mbb{E}(\mc{L}-\wh{\mc{L}})\right|&\lesssim N^{-\delta}.
\end{align*}
    
At this point we simply follow the same steps as in the proof of Lemma \ref{lem:svComparison} with
\begin{align*}
    g_{\eta}(z)&:=g\left(\frac{N}{O_{\eta,\zeta}(z)}\right)
\end{align*}
using the a priori bounds in Lemma \ref{lem:aPrioriOverlap}. For real matrices we replace $f_{z_{0}}$ with $\wt{f}_{z_{0}}$ which is supported in
\begin{align*}
    \left\{x+iy\in\mbb{C}:N^{1/2}|x-z_{0}|\leq r,\,|y|\leq N^{-1/2-\tau}\right\},
\end{align*}
for some $\tau>0$. Using the universality of correlation functions for Gauss-divisible matrices in \cite[Theorem 2.1]{osman_bulk_2024} and a moment matching argument (this is in fact part of the proof of \cite[Theorem 2.2]{osman_bulk_2024}) we can deduce that for sufficiently small $\tau>0$,
\begin{align*}
    P\left(\left|\{n:|z_{n}-z_{0}|<N^{-1/2-\tau}\}\right|>1\right)\lesssim N^{-\tau},
\end{align*}
and so
\begin{align*}
    \mbb{E}\left[\sum_{n=1}^{N_{\beta}}f_{z_{0}}(z_{n})g_{\eta}(z_{n})\right]&=\mbb{E}\left[\sum_{n=1}^{N}f_{z_{0}}(z_{n})g_{\eta}(z_{n})\right]+O(N^{-\delta}).
\end{align*}
The sum on the right hand side is over the whole spectrum, so we can apply Girko's formula and argue as in the complex case.
\end{proof}

We are now in a position to prove Theorem \ref{thm1}.
\begin{proof}[Proof of Theorem \ref{thm1}]
We give the argument for $z_{0}\in\mbb{D}_{\beta}$; the case $z_{0}\in\mbb{T}_{\beta}$ is analogous. We find a non-Hermitian Wigner matrix $\wt{X}$ such that $X$ and $M=\frac{1}{\sqrt{1+t}}(\wt{X}+\sqrt{t}Y)$ are $t$-matching. By Proposition \ref{prop:gaussDivisible}, for $t\geq N^{-1/3+\epsilon}$ and bounded $g$ we have
\begin{align}
    \mbb{E}_{\sqrt{1+t}M}\left[\frac{1}{N^{\beta/2}}\sum_{n=1}^{N_{\beta}}f_{\beta,z_{0}}(z_{n})g(1/S_{n})\right]&=\int_{\mbb{F}_{\beta}\times[0,\infty]}f_{\beta,z_{0}}(z)g(s)\rho_{\beta,bulk}(z,1/s)\,\mathrm{d}m(z,1/s)\nonumber\\&+O(N^{-\delta}),\label{eq:1/S}
\end{align}
where the rescaled overlap $S_{n}$ is defined in \eqref{eq:SBulkGauss}. Using the local law in Proposition \ref{prop:ll} and the explicit expression for the deteriminstic approximation of the resolvent (see Section \ref{sec:deterministicBounds}) we can obtain
\begin{align*}
    \sigma_{z,t}&=1+O(N^{-\delta}),\\
    \frac{\eta_{z,t}^{2}}{t^{2}}&=1-|z|^{2}+O(N^{-\delta}),
\end{align*}
and hence
\begin{align*}
    S_{n}&=\left[1+O(N^{-\delta})\right]\frac{O_{nn}}{N(1-|z|^{2})}.
\end{align*}
Using the comparison result in Lemma \ref{lem:overlapComparison} we obtain \eqref{eq:1/S} for general non-Hermitian Wigner matrices and bounded $g$ satisfying the assumptions of Lemma \ref{lem:overlapComparison}. Taking $g(x)$ to be equal to 1 on $x<N^{-\epsilon}$ and 0 on $x>2N^{-\epsilon}$, we deduce that there is a $\delta>0$ such that
\begin{align}
    P\left(O_{nn}>N^{1+\epsilon},\,z_{n}\in\mbb{D}_{\beta}\right)\leq N^{-\delta}.
\end{align}
Note that strictly speaking such a $g$ does not satisfy the conditions in Lemma \ref{lem:overlapComparison} but the same proof works since extra factors of $N^{\epsilon}$ from the derivatives of $g$ can be absorbed in the error for sufficiently small $\epsilon>0$. Using this bound we have
\begin{align*}
    \frac{O_{\eta,\zeta}(z_{n})}{N}&=\frac{O_{nn}}{N}+O(N^{-\delta}),\quad z_{n}\in\mbb{D}_{\beta},
\end{align*}
with probability at least $1-N^{-\epsilon}$ for some $\epsilon,\delta>0$. We can now repeat the argument of Lemma \ref{lem:overlapComparison} with $g(S_{n})$ in place of $g(1/S_{n})$ (here we have to assume $g$ and $g'$ are bounded since we do not have a very high probability upper bound on $S_{n}$) and hence conclude the proof.
\end{proof}

\paragraph{Acknowledgements}
This work was supported by the Royal Society, grant number \\RF/ERE210051.

\appendix
\section{Bounds on the Deterministic Approximation: Proof of Lemma \ref{lem:MBounds}}\label{sec:deterministicBounds}
From the definition of $M_{z}(w_{1},F,w_{2})$ in \cite[Eq. (5.7) and (5.8)]{cipolloni_central_2023}, we have the explicit formulae
\begin{align*}
    M_{z}(w_{1},F,w_{2})&=\begin{pmatrix}A_{11}&A_{12}\\A_{21}&A_{22}\end{pmatrix},
\end{align*}
where
\begin{align*}
    A_{11}&=-\frac{\bar{z}m_{1}(1-u_{1})u_{2}}{(1-|z|^{2}u_{1}u_{2})^{2}-m_{1}^{2}m_{2}^{2}},\\
    A_{12}&=\frac{m_{1}m_{2}(1-u_{1}u_{2})}{(1-|z|^{2}u_{1}u_{2})^{2}-m_{1}^{2}m_{2}^{2}},\\
    A_{21}&=\bar{z}^{2}\left(u_{1}u_{2}+\frac{m_{1}^{2}(1-u_{1})u_{2}^{2}+m_{2}^{2}(1-u_{2})u_{1}^{2}}{(1-|z|^{2}u_{1}u_{2})^{2}-m_{1}^{2}m_{2}^{2}}\right),\\
    A_{22}&=-\frac{\bar{z}m_{2}(1-u_{2})u_{1}}{(1-|z|^{2}u_{1}u_{2})^{2}-m_{1}^{2}m_{2}^{2}}.
\end{align*}
In particular,
\begin{align*}
    \Tr{M_{z}(w_{1},F,w_{2})F^{*}}&=A_{12}.
\end{align*}

Recall that $w_{j}=E_{j}+i\eta_{j}$ and $m_{j}=sigma_{j}+i\rho_{j}$. We begin with the claim that
\begin{align}
    \left|(1-|z|^{2}u_{1}u_{2})^{2}-m_{1}^{2}m_{2}^{2}\right|&\geq\frac{1}{4}\phi(w_{1},w_{2})\sum_{j=1,2}\left(\frac{\eta_{j}}{\rho_{j}+\eta_{j}}+\frac{E_{j}}{\sigma_{j}+E_{j}}\right).\label{eq:denom}
\end{align}
Taking the real and imaginary parts of the cubic equation for $m_{j}$ we obtain
\begin{align*}
    1-|z|^{2}|u_{j}|^{2}-|m_{j}|^{2}&=(|z|^{2}|u_{j}|^{2}+|m_{j}|^{2})\cdot\frac{\eta}{\rho_{j}}=\frac{\eta}{\rho_{j}+\eta},\\
    1-|z|^{2}|u_{j}|^{2}+|m_{j}|^{2}&=(|z|^{2}|u_{j}|^{2}-|m_{j}|^{2})\cdot\frac{E_{j}}{\sigma_{j}}=\frac{E_{j}}{\sigma_{j}+E_{j}}.
\end{align*}
Note that since $1-|z|^{2}|u_{j}|^{2}+|m_{j}|^{2}>0$ we have $\frac{E_{j}}{\sigma_{j}+E_{j}}>0$. Now observe that since $|z|^{2}|u_{1}u_{2}|<1$ we have
\begin{align*}
    \left|(1-|z|^{2}u_{1}u_{2})^{2}-m_{1}^{2}m_{2}^{2}\right|&\geq\left|1-|z|^{2}|u_{1}u_{2}|\right|\cdot\left|1-|z|^{2}|\Re(u_{1}u_{2})|-|\Re(m_{1}m_{2})|\right|\nonumber\\
    &=\left|1-|z|^{2}|u_{1}u_{2}|\right|\cdot\phi(w_{1},w_{2}).
\end{align*}
For the first factor in the right hand side above we obtain
\begin{align*}
    \left|1-|z|^{2}|u_{1}u_{2}|\right|&\geq\frac{1}{2}\left(1-|z|^{2}|u_{1}|^{2}+1-|z|^{2}|u_{2}|^{2}\right)\\
    &=\frac{1}{4}\left(\frac{\eta_{1}}{\rho_{1}+\eta_{1}}+\frac{E_{1}}{\sigma_{1}+E_{1}}+\frac{\eta_{2}}{\rho_{2}+\eta}+\frac{E_{2}}{\sigma_{2}+E_{2}}\right),
\end{align*}
which proves \eqref{eq:denom}.

Now we note that
\begin{align*}
    1-u_{1}u_{2}&=1-u_{1}+1-u_{2}-(1-u_{1})(1-u_{2}),
\end{align*}
and
\begin{align*}
    |1-u_{j}|&=\left|\frac{w_{j}}{m_{j}+w_{j}}\right|\\
    &\leq\sqrt{2}\cdot\frac{|E_{j}|+\eta_{j}}{|\Re m_{j}+E_{j}|+|\Im m_{j}+\eta|}\\
    &\leq\sqrt{2}\left(\frac{E_{j}}{\sigma_{j}+E_{j}}+\frac{\eta_{j}}{\rho_{j}+\eta_{j}}\right).
\end{align*}
Thus we find
\begin{align*}
    |1-u_{1}u_{2}|&\leq2\left(|1-u_{1}|+|1-u_{2}|\right)\\
    &\leq2\sqrt{2}\left(\frac{\eta_{1}}{\rho_{1}+\eta_{1}}+\frac{E_{1}}{\sigma_{1}+E_{1}}+\frac{\eta_{2}}{\rho_{2}+\eta_{2}}+\frac{E_{2}}{\sigma_{2}+E_{2}}\right).
\end{align*}
In the first inequality we used $|u_{j}|\leq1$. Combining this with the bound on the denominator in \eqref{eq:denom} we obtain \eqref{eq:M12FF*} and \eqref{eq:M12F}.

We prove \eqref{eq:phiBound} as follows:
\begin{align*}
    \phi(w_{1},w_{2})&=1-|z|^{2}|\Re(u_{1}u_{2})|-|\Re(m_{1}m_{2})|\\
    &\geq1-\frac{|z|^{2}|u_{1}|^{2}+|z|^{2}|u_{2}|^{2}}{2}-\frac{|m_{1}|^{2}+|m_{2}|^{2}-(|m_{1}|-|m_{2}|)^{2}}{2}\\
    &=\frac{1-|z|^{2}|u_{1}|^{2}-|m_{1}|^{2}}{2}+\frac{1-|z|^{2}|u_{2}|^{2}-|m_{2}|^{2}}{2}+\frac{(|m_{1}|-|m_{2}|)^{2}}{2}\\
    &=\frac{1}{2}\left(\frac{\eta_{1}}{\rho_{1}+\eta_{1}}+\frac{\eta_{2}}{\rho_{2}+\eta_{2}}+(|m_{1}|-|m_{2}|)^{2}\right)\\
    &\gtrsim\frac{\eta_{1}}{\rho_{1}}+\frac{\eta_{2}}{\rho_{2}}+(|m_{1}|-|m_{2}|)^{2},
\end{align*}
where the last line follows from the bound $|\rho_{j}|\gtrsim|\eta_{j}|$.

To estimate the norm of higher order deterministic approximations, we observe that all deterministic approximations are in the span of $\{E_{+},E_{-},F,F^{*}\}$ (i.e. the $2\times2$ block matrices whose blocks are multiples of the identity). Thus
\begin{align}
    \left\|M(w_{1},A_{1},w_{2},A_{2},w_{3})\right\|&\lesssim\max_{A_{3}\in\{E_{+},E_{-},F,F^{*}\}}\left|\Tr{M(w_{1},A_{1},w_{2},A_{2},w_{3})A_{3}}\right|.\label{eq:Mnorm}
\end{align}
In principle we can use the explicit definition of $M_{z}(w_{1},A_{1},...,w_{k})$ in \cite[Definition 4.1]{cipolloni_optimal_2024} to obtain the required bounds, but it is simpler to use the so-called ``meta model" argument (in fact the explicit definition itself is obtained in \cite{cipolloni_optimal_2024} by such an argument; see also the earlier work \cite{najim_gaussian_2016} for this argument in a different context). The idea is to transfer relations satisfied by resolvent chains to the deterministic equivalents. One constructs the random matrix $\mbf{X}$ of size $Nd\times Nd$ whose entries have mean zero and variance $1/Nd$ and considers the deterministic equivalent $\mbf{M}(w_{1},\mbf{B}_{1},...,w_{k})$ of $\mbf{G}_{z}(w_{1})\mbf{B}_{1}\cdots\mbf{G}_{z}(w_{k})$, where
\begin{align*}
    \mbf{G}_{z}(w)&:=\begin{pmatrix}-w&\mbf{X}-z\\\mbf{X}^{*}-\bar{z}&-w\end{pmatrix}^{-1}.
\end{align*}
For $\mbf{B}_{j}=B_{j}\otimes 1_{M}$ we have $\mbf{M}_{z}(w_{1},\mbf{B}_{1},...,w_{k})=M_{z}(w_{1},B_{1},...,w_{k})\otimes1_{M}$. If the spectral parameters $w_{j}\in\mbb{C}\setminus\mbb{R}$ are independent of $M$, then when $N$ is fixed and $M\to\infty$ we are in the global regime and can obtain
\begin{align*}
    \lim_{M\to\infty}\mbb{E}\Tr{\mbf{G}_{1}\mbf{B}_{1}\cdots\mbf{G}_{k}\mbf{B}_{k}}&=\Tr{M_{z}(w_{1},B_{1},...,w_{k})B_{k}}
\end{align*}
by arguing as in \cite[Lemma D.1]{cipolloni_optimal_2024}. In our case we want to bound the right hand side of \eqref{eq:Mnorm}, which we do by transferring an inequality satisfied by resolvents to the deterministic approximation:
\begin{align*}
    \left|\Tr{M_{z}(w_{1},F,\wh{w}_{2},F^{*},\bar{w}_{1})A}\right|&=\left|\lim_{M\to\infty}\mbb{E}\Tr{\mbf{G}_{1}\mbf{F}\Im\mbf{G}_{2}\mbf{F}^{*}\mbf{G}^{*}_{1}\mbf{A}}\right|\\
    &\leq\lim_{M\to\infty}\mbb{E}\left|\Tr{\mbf{G}_{1}\mbf{F}\Im\mbf{G}_{2}\mbf{F}^{*}\mbf{G}^{*}_{1}\mbf{A}}\right|\\
    &\leq\lim_{M\to\infty}\mbb{E}\frac{\|\mbf{A}\|}{\eta_{1}}\Tr{\Im\mbf{G}_{1}\mbf{F}\Im\mbf{G}_{2}\mbf{F}^{*}}\\
    &=\frac{\|A\|}{\eta_{1}}\Tr{M_{z}(\wh{w}_{1},F,\wh{w}_{2})F^{*}}\\
    &\leq\frac{\|A\|\phi^{av}_{2}(w_{1},w_{2})}{\eta_{1}}.
\end{align*}
In the third line we used Cauchy-Schwarz.

\section{Resolvent Estimates in the Spectral Gap}\label{sec:gap}
The analysis of Gauss-divisible matrices at the edge requires understanding the behaviour of the resolvent of the Hermitisation of $X-z$ for $|z-z_{0}|\lesssim N^{-1/2}$ and $|z_{0}|=\sqrt{1+t}$. In this appendix we prove the following.
\begin{lemma}\label{lem:edge}
Let $X$ be a non-Hermitian Wigner matrix, $\epsilon>0$ and $t\geq N^{-2/5+\epsilon}$. Then $X\in\mc{E}_{edge}(t)$ with very high probability.
\end{lemma}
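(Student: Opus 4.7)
The plan is to verify each of the six conditions (B1)--(B6) with probability at least $1-N^{-D}$ uniformly in $|z-z_{0}|\lesssim N^{-1/2}$, in direct analogy with the bulk argument in Section 8 of \cite{maltsev_bulk_2024}, but now exploiting the spectral gap structure at the edge.

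For (B1), the operator norm bound $\|X\|\leq e^{\log^{2}N}$ is standard for non-Hermitian Wigner matrices with entries satisfying the moment bound \eqref{cond3}; it follows from a high-moment Markov inequality. For (B2), I would apply the rigidity estimate \eqref{eq:rigidity} at $n=1$. Since $\delta_{0}\simeq t$ and the gap half-width is $\Delta\simeq\delta_{0}^{3/2}$, the first quantile $\gamma_{1}$ sits at distance $\gtrsim\Delta$ from the origin, while rigidity gives $|\lambda_{1}-\gamma_{1}|\prec\max\bigl(N^{-3/4},\,N^{-2/3}\Delta^{1/9}\bigr)$. For $t\geq N^{-2/5+\epsilon}$ we have $\Delta\gtrsim N^{-3/5+3\epsilon/2}\gg N^{-3/4}$, so the error is negligible compared with $\Delta$ and we obtain $s_{1}(z)\gtrsim\delta_{0}^{3/2}$ with very high probability. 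Uniformity in $|z-z_{0}|\lesssim N^{-1/2}$ follows because $\delta_{z}=\delta_{0}+O(|z-z_{0}|)$ preserves the order of the gap.

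For (B3)--(B6) the idea is to compute the deterministic approximations explicitly from the cubic \eqref{eq:cubic} and then invoke the local laws to control fluctuations. In the gap, the cubic admits the small-$\eta$ expansion $m_{z}(i\eta)=i\eta/\delta_{z}+O(\eta^{3}/\delta_{z}^{4})$ and $u_{z}(i\eta)=1/(1+\delta_{z})+O(\eta^{2}/\delta_{z}^{2})$. Using $\Tr{H_{z}(\eta)}=\Tr{G_{z}(i\eta)}/(i\eta)$ and its $\eta$-derivatives, together with the explicit formula for $M_{z}(w_{1},F,w_{2})$ from Section \ref{sec:deterministicBounds}, one reads off the deterministic values
\begin{align*}
\Tr{H_{z}(0)}&\approx 1/\delta_{z},\\
\Tr{H_{z}^{2}(0)}&\approx 1/\delta_{z}^{4},\\
\Tr{H_{z}(0)\wt{H}_{z}(0)}&\approx 1/\delta_{z}^{3},\\
|\Tr{H_{z}(\eta)X_{z}H_{z}(\eta)}|&\approx 1/\delta_{z}^{2}
\end{align*}
(the last valid in the range $|\eta^{2}|<\delta_{0}^{2}|z-z_{0}|$ where the quadratic correction does not yet cancel the leading term). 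The identity $\delta_{z}=\delta_{0}+O(|z-z_{0}|)$ then yields the stated versions (B3)--(B6). The random fluctuations about these deterministic quantities are handled by Proposition \ref{prop:singleLL} for the single-resolvent trace and by Proposition \ref{prop:ll} for the two-resolvent expressions; applied at an auxiliary scale $\eta_{0}\simeq\delta_{0}^{3/2}$ just inside the gap, they give errors $\prec 1/(N\delta_{0}^{3/2})$, which is negligible compared with $1/\delta_{0}$ precisely when $N\delta_{0}^{5/2}\gg 1$, i.e. $t\gg N^{-2/5}$. This is where the assumption $t\geq N^{-2/5+\epsilon}$ is used.

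The main technical obstacle is the passage from $\eta>0$ to $\eta=0$, since the local laws require $\Im w>0$. I would resolve this by exploiting the monotone identity
\[
\Tr{H_{z}(0)}=\Tr{H_{z}(\eta_{0})}+\int_{0}^{\eta_{0}^{2}}\Tr{H_{z}^{2}(\sqrt{s})}\,\mathrm{d}s,
\]
and similar relations for the other trace expressions, combined with the deterministic a priori bound $\|G_{z}(i\eta)\|\leq(s_{1}^{2}(z)+\eta^{2})^{-1/2}\lesssim\delta_{0}^{-3/2}$ granted by (B2). Choosing $\eta_{0}=\delta_{0}^{3/2}N^{-\xi}$ makes the boundary correction $\leq\eta_{0}^{2}\|H_{z}\|^{2}\lesssim N^{-2\xi}\delta_{0}^{-3}$, which is subleading to the main term $1/\delta_{z}^{4}$ for sufficiently small $\xi>0$, closing a self-consistent estimate. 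The bootstrap starts from Proposition \ref{prop:ll} at $\eta=\eta_{0}$ and terminates at $\eta=0$, producing the two-sided bounds required by (B3)--(B6).
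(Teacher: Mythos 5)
Your treatment of (B1) and (B2) matches the paper's, and your overall strategy for (B3)--(B6) (asymptotic expansion of $m_{z}$ in the gap, local laws at an auxiliary scale $\eta_{0}$, extension to $\eta^{2}$ near $0$) is also the paper's. However, Proposition~\ref{prop:ll} cannot be used here: it requires $\big||z|-1\big|\lesssim N^{-1/2}$, whereas in the edge Gauss-divisible analysis $\big||z|-1\big|\simeq t\gg N^{-1/2}$. The correct two-resolvent input in the gap regime is Theorem~3.5 of \cite{cipolloni_universality_2024} (Proposition~\ref{prop:twoLL}), which is why the paper needs to prove Lemma~\ref{lem:ext} to carry its conclusions down to $\eta^{2}\leq0$.

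The more serious gap is in the boundary-correction estimate. Bounding $\int_{0}^{\eta_{0}^{2}}\Tr{H_{z}^{2}(\sqrt{s})}\,\mathrm{d}s\leq\eta_{0}^{2}\|H_{z}\|^{2}\lesssim N^{-2\xi}\delta_{0}^{-3}$ and comparing to $1/\delta_{z}^{4}$ mixes up quantities: the main term for $\Tr{H_{z}(0)}$ is $1/\delta_{z}\simeq1/\delta_{0}$, not $1/\delta_{z}^{4}$, and the resulting ratio $N^{-2\xi}\delta_{0}^{-2}$ does not vanish. The operator-norm bound $\|H_{z}\|^{2}\simeq\delta_{0}^{-6}$ overestimates $\Tr{H_{z}^{2}}\simeq\delta_{0}^{-4}$ by a factor $\delta_{0}^{-2}$ because it sees only the smallest singular value. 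The fix is the multiplicative bound $\Tr{H_{z}^{2}(\sqrt{s})}\leq\|H_{z}(0)\|\Tr{H_{z}(0)}$, which gives the self-consistent inequality $\Tr{H_{z}(0)}\leq\Tr{H_{z}(\eta_{0})}+C\eta_{0}^{2}\delta_{0}^{-3}\Tr{H_{z}(0)}$ and closes because $\eta_{0}^{2}\delta_{0}^{-3}\ll1$; this is precisely what the paper derives from the resolvent identity $H_{z}(\eta)=(1+(\eta^{2}-\eta_{0}^{2})H_{z}(\eta_{0}))^{-1}H_{z}(\eta_{0})$ in Lemma~\ref{lem:ext}, whose form also handles the $\eta^{2}<0$ values needed in (B6) which your monotone integral identity cannot reach. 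Finally, the claim that $1/(N\delta_{0}^{3/2})\ll1/\delta_{0}$ is ``precisely when $N\delta_{0}^{5/2}\gg1$'' miscounts the exponents (it only needs $\delta_{0}\gg N^{-2}$), so the side remark about where $t\geq N^{-2/5+\epsilon}$ enters is off.
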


To prepare for the proof, recall that $\rho_{z}(w)=\Im m_{z}(w)$ and $m_{z}(w)$ satisfies the cubic equation \eqref{eq:cubic}. When $|z|>1$, $\rho_{z}$ is zero in an interval $[-\Delta_{z}/2,\Delta_{z}/2]$, where $\Delta_{z}\simeq(|z|-1)^{3/2}$. When $|z|\gtrsim 1+N^{-1/2+\epsilon}$, we have the following two-resolvent local law from \cite[Theorem 3.5]{cipolloni_universality_2024}.
\begin{proposition}[Theorem 3.5 in \cite{cipolloni_universality_2024}]\label{prop:twoLL}
Let $\xi>0$ and $c>0$ be sufficiently small. Let $z\in\mbb{C}$ and $\eta>0$ such that $1\leq|z|\leq c$ and $N\eta\rho\geq N^{\xi}$, where $\rho:=\rho_{z}(i\eta)$. Then
\begin{align}
    \left|\Tr{G_{z}(i\eta)}-m_{z}(i\eta)\right|&\prec\frac{1}{N\eta},\\
    \left|\Tr{G_{z}(i\eta)FG_{z}(i\eta)-M_{z}(i\eta,F,i\eta)}\right|&\prec\frac{\rho}{N\eta^{2}},\label{eq:twoLL1}\\
    \left|\Tr{(G_{z}(i\eta)FG_{z}(i\eta)-M_{z}(i\eta,F,i\eta))F^{*}}\right|&\prec\frac{\rho^{5/2}}{N^{1/2}\eta^{3/2}}.\label{eq:twoLL2}
\end{align}
\end{proposition}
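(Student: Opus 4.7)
The plan is to verify conditions (B1)--(B6) in Definition \ref{def:Eedge} uniformly in $z_{0}$ with $\delta_{0}:=|z_{0}|^{2}-1\simeq t$ and in $z$ with $|z-z_{0}|\lesssim N^{-1/2}$; writing $\delta_{z}:=|z|^{2}-1=\delta_{0}+O(|z-z_{0}|)$ one has $\delta_{z}\simeq t$ throughout. (B1) is the standard $\|X\|\leq 2$ bound. For (B2), I would use the rigidity estimate \eqref{eq:rigidity} applied to $W_{z}$: the first positive quantile satisfies $\gamma_{1}(z)\simeq\Delta_{z}/2\simeq\delta_{z}^{3/2}$, whereas the rigidity error $\prec N^{-2/3}\Delta_{z}^{1/9}$ is $o(\Delta_{z})$ as soon as $t\gg N^{-1/2}$, which is amply implied by $t\geq N^{-2/5+\epsilon}$. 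Uniformity in $z$ over $|z-z_{0}|\lesssim N^{-1/2}$ follows from Lipschitz continuity of singular values combined with a standard net argument.

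For (B3)--(B6) the strategy is to apply the local laws of Propositions \ref{prop:singleLL} and \ref{prop:twoLL} at a small spectral parameter $w=i\eta$ inside the spectral gap, and then descend to $\eta=0$ by Taylor expansion of $H_{z}(\eta)$ about $\eta=0$, which is valid for $\eta\ll s_{1}(z)\simeq\delta_{z}^{3/2}$ by (B2). The key input is the asymptotic
\begin{align*}
m_{z}(i\eta)=\frac{i\eta}{\delta_{z}}+O\!\left(\frac{\eta^{3}}{\delta_{z}^{4}}\right),\qquad u_{z}(i\eta)=\frac{1}{1+\delta_{z}}+O\!\left(\frac{\eta^{2}}{\delta_{z}^{3}}\right),
\end{align*}
as $\eta\to 0^{+}$, extracted directly from \eqref{eq:cubic} by choosing the physical root (which in the gap satisfies $m_{z}(0)=0$, not the naive $\pm\sqrt{\delta_{z}}$). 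The block structure of $G_{z}(i\eta)$ gives the exact identities $\Tr{H_{z}(\eta)}=\Tr{G_{z}(i\eta)}/(i\eta)$, $\Tr{H_{z}(\eta)\wt{H}_{z}(\eta)}=-2\Tr{G_{z}(i\eta)FG_{z}(i\eta)F^{*}}/\eta^{2}$, $|\Tr{H_{z}(\eta)X_{z}H_{z}(\eta)}|=|\Tr{G_{z}(i\eta)FG_{z}(i\eta)}|/\eta$, and (using the identity $XH^{2}X^{*}=\wt{H}-\eta^{2}\wt{H}^{2}$) an analogous expression for $\Tr{H_{z}^{2}(\eta)}$ in terms of $\Tr{G_{z}^{2}(i\eta)}$. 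Inserting the above asymptotics for $m_{z},u_{z}$ into the explicit deterministic formulas from Appendix \ref{sec:deterministicBounds} then yields the leading orders $1/\delta_{z}$, $1/\delta_{z}^{4}$, $4/\delta_{z}^{3}$ and $|z|/\delta_{z}^{2}$ for (B3)--(B6) respectively.

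The error analysis balances three contributions: a deterministic $O(\eta^{2})$ correction in the cubic expansion; a stochastic local-law error of order $O_{\prec}(1/(N\eta))$ for the single-resolvent quantities and $O_{\prec}(\rho/(N\eta^{2}))$ or $O_{\prec}(\rho^{5/2}/(N^{1/2}\eta^{3/2}))$ for the two-resolvent ones, with $\rho_{z}(i\eta)\simeq\eta/\delta_{z}$ in the gap; and a Taylor-descent error $O(\eta^{2}/\delta_{z}^{2k+1})$ obtained using (B2). The admissible window $\delta_{z}N^{\xi-1}\lesssim\eta^{2}\lesssim\delta_{z}^{3}$ (where the lower bound is the hypothesis $N\eta\rho\geq N^{\xi}$ of Proposition \ref{prop:twoLL}) is non-empty iff $\delta_{z}\gg N^{-1/2}$, comfortably implied by $t\geq N^{-2/5+\epsilon}$. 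Optimising $\eta$ in this window gives the required bounds; for instance, the balance $\eta^{2}\simeq\delta_{z}^{2}/N^{1/2}$ produces $|\delta_{0}\Tr{H_{z}(0)}-1|\prec 1/(N^{1/2}\delta_{0})\simeq|z-z_{0}|/\delta_{0}$, which is exactly (B3), while the restriction $|\eta^{2}|<\delta_{0}^{2}|z-z_{0}|$ in (B6) is precisely the regime in which the leading order $z/\delta_{z}^{2}$ is not drowned out by the next-order correction.

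The main technical obstacle I expect will be establishing the matching lower bounds in (B4) and (B5); the upper bounds and the asymptotic expansions follow directly from the analysis above, but one must rule out an accidental cancellation between the deterministic leading order and the random fluctuation. For (B4), I would work through the identity $\Tr{H_{z}^{2}(\eta)}=(\Tr{H_{z}(\eta)}-\Tr{G_{z}^{2}(i\eta)})/(2\eta^{2})$ and evaluate $\Tr{G_{z}(i\eta)G_{z}(i\eta)}$ using the two-resolvent local law, whose deterministic value (obtained by differentiating \eqref{eq:cubic} once with respect to $w$) does produce the expected $1/\delta_{z}^{4}$ leading order. The delicate point is to avoid losing an additional factor $1/\eta$ when extracting this second-derivative information; I expect this will require restricting to slightly larger $\eta$ within the admissible window and controlling the descent to $\eta=0$ separately via a fluctuation-averaging argument in the spirit of Lemma \ref{lem:fluctuationAveraging}.
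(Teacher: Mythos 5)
Your proposal does not address the statement you were asked to prove. Proposition \ref{prop:twoLL} is a two-resolvent local law: it asserts that $\Tr{G_{z}(i\eta)}$, $\Tr{G_{z}(i\eta)FG_{z}(i\eta)}$ and $\Tr{G_{z}(i\eta)FG_{z}(i\eta)F^{*}}$ concentrate around their deterministic approximations with the specific error rates $1/(N\eta)$, $\rho/(N\eta^{2})$ and $\rho^{5/2}/(N^{1/2}\eta^{3/2})$, for $|z|\geq1$ in the regime $N\eta\rho\geq N^{\xi}$. What you have sketched instead is a verification of conditions (B1)--(B6) of Definition \ref{def:Eedge}, i.e.\ a proof of Lemma \ref{lem:edge}. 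These are different statements: the former is a concentration estimate for resolvent traces, the latter is a collection of deterministic asymptotics for $\Tr{H_{z}^{k}}$ etc.\ that one \emph{deduces} from the former. Worse, your argument is circular with respect to the actual target: in your second paragraph you explicitly write that ``the strategy is to apply the local laws of Propositions \ref{prop:singleLL} and \ref{prop:twoLL}'', so you are invoking the very proposition you are supposed to establish.

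For the record, the paper does not prove Proposition \ref{prop:twoLL} internally at all --- it is imported verbatim as Theorem 3.5 of \cite{cipolloni_universality_2024}. A genuine proof would require either reproducing the argument of that reference or running the machinery the present paper develops for the related Proposition \ref{prop:ll}: a characteristic-flow analysis to establish the law for matrices with an order-one Gaussian component (deriving SDEs for $R^{av}_{t}$ along the flow \eqref{eq:dLambda} and controlling each term via the integrals \eqref{eq:int1}--\eqref{eq:int2}), followed by a Green's function comparison theorem with fluctuation averaging in the spirit of Lemmas \ref{lem:fluctuationAveraging}--\ref{lem:av2GFT} to remove the Gaussian component. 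None of that appears in your proposal. The material you did write (the expansion $m_{z}(i\eta)=i\eta/\delta_{z}+O(\eta^{3}/\delta_{z}^{4})$ in the gap, the identities relating $\Tr{H_{z}^{k}}$ to traces of resolvent chains, the choice of the window $\delta_{z}N^{\xi-1}\lesssim\eta^{2}\lesssim\delta_{z}^{3}$) is essentially the content of the proof of Lemma \ref{lem:edge} and Lemma \ref{lem:ext} in Appendix \ref{sec:gap}, and is reasonable for that purpose, but it cannot be spliced in as a proof of the local law itself.
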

In the following lemma we extend this to negative values of $\eta^{2}$.
\begin{lemma}\label{lem:ext}
Let $\Delta_{z}\gtrsim t^{3/2}$ and $\eta_{0}=N^{\xi}\sqrt{\frac{t}{N}}$. Then we have
\begin{align}
    \left|\Tr{H^{n}_{z}(\eta_{1})}-\Tr{H^{n}_{z}(\eta_{0})}\right|&\prec\frac{\eta_{0}^{2}}{t^{3}}\Tr{H^{n}_{z}(\eta_{0})},\label{eq:ext1}\\
    \left|\Tr{H_{z}(\eta_{1})\wt{H}_{z}(\eta_{2})}-\Tr{H_{z}(\eta_{0})\wt{H}_{z}(\eta_{0})}\right|&\prec\frac{\eta_{0}^{2}}{t^{3}}\Tr{H_{z}(\eta_{0})\wt{H}_{z}(\eta_{0})},\label{eq:ext2}\\
    \left|\Tr{H_{z}(\eta_{1})X_{z}H_{z}(\eta_{2})}-\Tr{H_{z}(\eta_{0})X_{z}H_{z}(\eta_{0})}\right|&\prec\frac{\eta_{0}^{2}}{t^{11/2}},\label{eq:ext3}
\end{align}
uniformly in $\eta^{2}_{1},\eta^{2}_{2}\in[-\eta^{2}_{0},\eta^{2}_{0}]$.
\end{lemma}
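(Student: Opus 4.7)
The plan is to exploit the spectral gap to first control the operator norm of $H_z(\eta)$ uniformly for $\eta^2 \in [-\eta_0^2, \eta_0^2]$, and then use the resolvent identity perturbatively. The hypothesis $\Delta_z \gtrsim t^{3/2}$ combined with rigidity from Proposition \ref{prop:singleLL} gives $s_1(z) \gtrsim \Delta_z/2 \gtrsim t^{3/2}$ with very high probability, so that $s_1^2(z) + \eta^2 \gtrsim t^3$ uniformly in $\eta^2 \in [-\eta_0^2, \eta_0^2]$ (this uses $\eta_0^2 = N^{2\xi} t/N \ll t^3$, which follows from $t \geq N^{-2/5+\epsilon}$ provided $\xi$ is small). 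Consequently $\|H_z(\eta)\| \vee \|\wt{H}_z(\eta)\| \lesssim t^{-3}$. The resolvent identity $H_z(\eta) - H_z(\eta_0) = (\eta_0^2 - \eta^2)H_z(\eta_0)H_z(\eta)$ then yields $\|H_z(\eta) - H_z(\eta_0)\| \lesssim \eta_0^2/t^6$, i.e.\ a multiplicative error of $O(\eta_0^2/t^3)$ in operator norm.

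For \eqref{eq:ext1}, I write the telescoping identity
\begin{align*}
    \Tr{H_z^n(\eta_1)} - \Tr{H_z^n(\eta_0)} &= (\eta_0^2 - \eta_1^2)\sum_{k=0}^{n-1}\Tr{H_z^{k+1}(\eta_1) H_z^{n-k}(\eta_0)},
\end{align*}
and bound each term using $\|H_z(\eta_1)\|^{k} \lesssim t^{-3k}$ and (after absorbing the perturbative correction) hypothesis (B4). Since only $n \in \{1,2\}$ are required, the combinatorics is trivial and the conclusion follows.

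For \eqref{eq:ext2}, I expand the difference along both arguments:
\begin{align*}
    \Tr{H_z(\eta_1)\wt{H}_z(\eta_2)} - \Tr{H_z(\eta_0)\wt{H}_z(\eta_0)} &= (\eta_0^2-\eta_1^2)\Tr{H_z(\eta_0)H_z(\eta_1)\wt{H}_z(\eta_2)}\\
    &\quad + (\eta_0^2-\eta_2^2)\Tr{H_z(\eta_0)\wt{H}_z(\eta_0)\wt{H}_z(\eta_2)}.
\end{align*}
Using the positivity of $H_z$ and $\wt{H}_z$ together with $\Tr{AB} \leq \|A\|\Tr{B}$ for PSD $B$, each term is bounded by $\eta_0^2 t^{-3} \Tr{H_z(\eta_0)\wt{H}_z(\eta_0)}(1 + O(\eta_0^2/t^3))$; the remainder factor $1 + O(\eta_0^2/t^3)$ comes from approximating $\wt{H}_z(\eta_2)$ by $\wt{H}_z(\eta_0)$ in operator norm.

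The step I expect to be the most delicate is \eqref{eq:ext3}, since the $X_z$ in the middle blocks the direct reduction to a trace of $H_z$-products. After the same splitting into two pieces, I would bound each piece, e.g.
\begin{align*}
    |\Tr{H_z(\eta_0)H_z(\eta_1)X_zH_z(\eta_2)}|^2 &\leq \Tr{H_z(\eta_0)^2 H_z(\eta_1)^2}\cdot\Tr{H_z(\eta_2) X_z^* X_z H_z(\eta_2)},
\end{align*}
by Cauchy--Schwarz. The first factor is controlled by $\|H_z(\eta_1)\|^2 \Tr{H^2_z(\eta_0)} \lesssim t^{-6}\cdot t^{-4}$ from (B4), while the second is handled by the key identity $X_z^* X_z = H_z^{-1}(\eta_2) - \eta_2^2$, which reduces it to $\Tr{H_z(\eta_2) - \eta_2^2 H_z^2(\eta_2)} \lesssim \Tr{H_z(\eta_0)} \lesssim t^{-1}$ using (B3). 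This gives $t^{-11/2}$, and multiplication by $|\eta_0^2 - \eta_1^2| \leq 2\eta_0^2$ produces the claim. The second piece is treated symmetrically. The main conceptual obstacle is recognising that one must not try to control the middle trace via (B6) directly, since the arguments are not matched; the Cauchy--Schwarz combined with $X_z^* X_z = H_z^{-1}(\eta)-\eta^2$ is the crucial mechanism that allows us to peel off $X_z$ without paying an extra $t^{-3/2}$.
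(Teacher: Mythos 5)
Your treatment of \eqref{eq:ext1} and \eqref{eq:ext2} is essentially identical to the paper's: the resolvent identity $H_{z}(\eta)=(1+(\eta^{2}-\eta_{0}^{2})H_{z}(\eta_{0}))^{-1}H_{z}(\eta_{0})$ together with the operator norm bound $\|H_{z}\|\prec t^{-3}$ is exactly what the paper invokes. For \eqref{eq:ext3} you take a genuinely different route. The paper passes to the explicit spectral decomposition of $G_{z}(w)$, writes
\begin{align*}
\Tr{H_{z}(\eta_{1})X_{z}H_{z}(\eta_{2})}=\frac{1}{N}\sum_{n}\frac{s_{n}\,\mbf{u}_{n}^{*}\mbf{v}_{n}}{(s_{n}^{2}+\eta_{1}^{2})(s_{n}^{2}+\eta_{2}^{2})},
\end{align*}
and peels off the $X_{z}$ via the trivial bound $|\mbf{u}_{n}^{*}\mbf{v}_{n}|\leq1$, while you instead apply trace Cauchy--Schwarz to the resolvent-identity remainder and use the algebraic identity $X_{z}^{*}X_{z}=H_{z}^{-1}(\eta_{2})-\eta_{2}^{2}$ to absorb the $X_{z}$. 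Both mechanisms avoid the extra factor $t^{-3/2}$ that a naive $\|X_{z}\|\lesssim1$ bound would incur, and both land on $\eta_{0}^{2}t^{-11/2}$. Your version has the mild virtue of staying entirely within the $H_{z}$-resolvent calculus, without invoking singular vectors.

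One point of logical hygiene you should fix: you cite (B3) and (B4) as your source for $\Tr{H_{z}^{2}(\eta_{0})}\lesssim t^{-4}$ and $\Tr{H_{z}(\eta_{0})}\lesssim t^{-1}$. But (B3)--(B6) are precisely what Lemma \ref{lem:ext} is used, together with Proposition \ref{prop:twoLL}, to \emph{establish} in the proof of Lemma \ref{lem:edge}; invoking them here is circular. The correct inputs at this stage are the averaged single- and two-resolvent local laws of Propositions \ref{prop:singleLL} and \ref{prop:twoLL} applied at the positive spectral parameter $\eta_{0}=N^{\xi}\sqrt{t/N}$ (where $N\eta_{0}\rho_{z}(i\eta_{0})\gtrsim N^{\xi}$), combined with the deterministic asymptotics of $m_{z}$ inside the gap, which yield exactly $\Tr{H_{z}(\eta_{0})}\lesssim t^{-1}$ and $\Tr{H_{z}^{2}(\eta_{0})}\lesssim t^{-4}$. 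The rigidity input for $s_{1}(z)\gtrsim t^{3/2}$ (your (B2)) is established independently and may be used. Once the citations are corrected, the argument is sound.
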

\begin{proof}
The bounds in \eqref{eq:ext1} and \eqref{eq:ext2} follow directly from the fact that $\|H_{z}\|\prec t^{-3}$ and
\begin{align*}
    H_{z}(\eta)&=\left(1+(\eta^{2}-\eta_{0}^{2})H_{z}(\eta_{0})\right)^{-1}H_{z}(\eta_{0}).
\end{align*}

Now consider \eqref{eq:ext3}. Let $\mbf{u}_{n},\mbf{v}_{n}$ be the left and right singular vectors of $X_{z}$. We have the spectral decomposition 
\begin{align*}
    G_{z}(w)&=\sum_{n=1}^{N}\frac{1}{s_{n}^{2}(z)-w^{2}}\begin{pmatrix}w\mbf{u}_{n}\mbf{u}_{n}^{*}&s_{n}(z)\mbf{u}_{n}\mbf{v}_{n}^{*}\\s_{n}(z)\mbf{v}_{n}\mbf{u}_{n}^{*}&w\mbf{v}_{n}\mbf{v}_{n}^{*}\end{pmatrix},
\end{align*}
from which we obtain
\begin{align*}
    x(\eta_{1},\eta_{2})&:=\Tr{H_{z}(\eta_{1})X_{z}H_{z}(\eta_{2})}\\
    &=\frac{1}{i(\eta_{1}+\eta_{2})}\Tr{G_{z}(i\eta_{1})F^{*}G_{z}(i\eta_{2})}\\
    &=\frac{8}{N}\sum_{n=1}^{N}\frac{s_{n}(z)\mbf{u}_{n}^{*}\mbf{v}_{n}}{(s_{n}^{2}(z)+\eta^{2}_{1})(s_{n}^{2}(z)+\eta_{2}^{2})}.
\end{align*}
Comparing $x(\eta_{1},\eta_{2})$ and $x(\eta_{0},\eta_{0})$ using the facts that $0\leq\eta\leq\eta_{0}$ and $s_{1}(z)\geq ct^{3/2}>\eta_{0}$ with probability $1-N^{-D}$ we find
\begin{align*}
    \left|x(\eta_{1},\eta_{2})-x(\eta_{0},\eta_{0})\right|&\prec\frac{1}{N}\sum_{n=1}^{N}\frac{\eta_{0}^{2}|\mbf{u}_{n}^{*}\mbf{v}_{n}|}{s_{n}^{5}}+\frac{1}{N}\sum_{n=1}^{N}\frac{\eta_{0}^{4}|\mbf{u}_{n}^{*}\mbf{v}_{n}|}{s_{n}^{7}}.
\end{align*}
We use the bound $|\mbf{u}_{n}^{*}\mbf{v}_{n}|\leq1$ and \eqref{eq:ext1} to obtain
\begin{align*}
    \frac{1}{N}\sum_{n=1}^{N}\frac{\eta_{0}^{2}|\mbf{u}_{n}^{*}\mbf{v}_{n}|}{s_{n}^{5}}&\prec\frac{\eta_{0}^{2}}{t^{11/2}}.
\end{align*}
and
\begin{align*}
    \frac{1}{N}\sum_{n=1}^{N}\frac{\eta_{0}^{4}|\mbf{u}_{n}^{*}\mbf{v}_{n}|}{s^{7}_{n}}&\prec\frac{\eta_{0}^{4}}{t^{17/2}}.
\end{align*}
\end{proof}

\begin{proof}[Proof of Lemma \ref{lem:edge}]
We have to verify the conditions \eqref{B1} to \eqref{B6}. The norm bound $\|X\|\leq e^{\log^{2}N}$ with very high probability is much weaker than the standard result $\|X\|\lesssim1$ with very high probability (which follows by e.g. the moment method). By the rigidity in \eqref{eq:rigidity}, we have
\begin{align*}
    \left|s_{1}(z)-\frac{\Delta_{z}}{2}\right|&\prec\max\left\{\frac{1}{N^{3/4}},\frac{\Delta_{z}^{1/9}}{N^{2/3}}\right\}.
\end{align*}
When $\Delta_{z}\gtrsim t^{3/2}$ and $t\geq N^{-1/2+\epsilon}$, this becomes
\begin{align*}
    \left|s_{1}(z)-\frac{\Delta_{z}}{2}\right|&\prec\frac{\Delta_{z}^{1/9}}{N^{2/3}},
\end{align*}
which implies \eqref{B2}.

From the cubic equation for $m_{z}(w)$ we can obtain the asymptotic series
\begin{align*}
    \frac{m_{z}(w)}{w}&=\frac{1}{|z|^{2}-1}+\frac{|z|^{4}w^{2}}{(|z|^{2}-1)^{4}}+O\left(\frac{w^{4}}{(|z|^{2}-1)^{7}}\right),\quad\frac{w^{2}}{(|z|^{2}-1)^{3}}\leq1.
\end{align*}
Combining this with Proposition \ref{prop:twoLL} and Lemma \ref{lem:ext} we obtain \eqref{B3} to \eqref{B6}.
\end{proof}

\newpage
\section{Numerical Study of Singular Vector Overlap}\label{sec:numerics}
We generate the data $|\mbf{w}_{m}^{*}F\mbf{w}_{m}|$ and $|\lambda_{n}|$ from a single realisation of $X-1$, where $X\sim Gin_{2}(1000)$, and plot them in the figures below. We can see that the decay of the overlap becomes sharper as $|m|$ increases.
\begin{figure}[htbp]
\centering
\captionsetup{width=0.8\columnwidth}
\begin{subfigure}{0.4\columnwidth}
    \includegraphics[width=\columnwidth,keepaspectratio=true]{{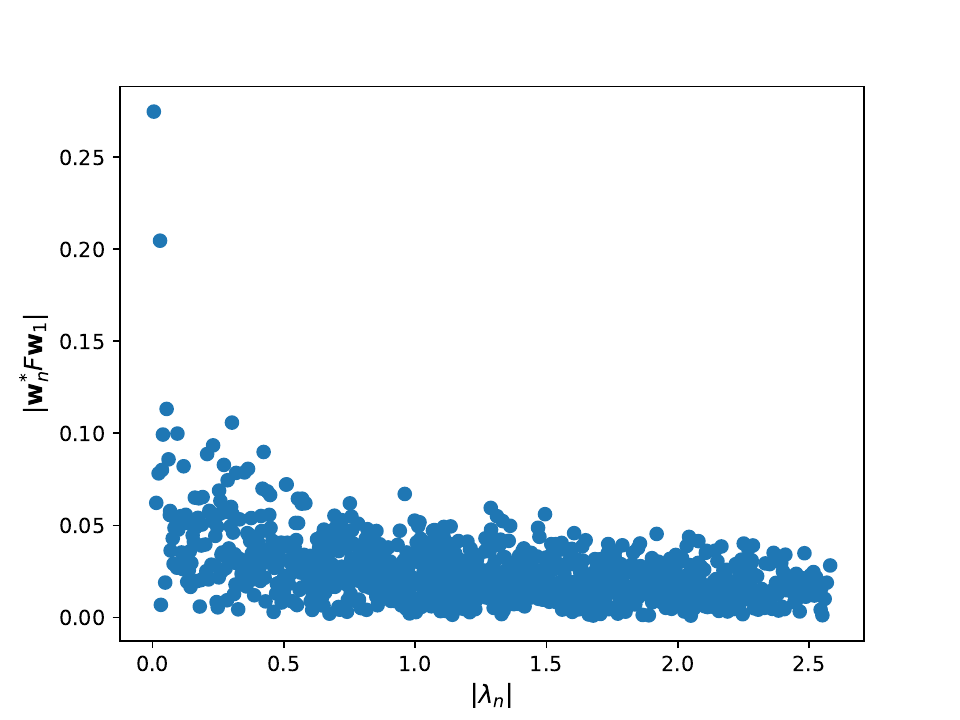}}
\end{subfigure}%
\begin{subfigure}{0.4\columnwidth}
    \includegraphics[width=\columnwidth,keepaspectratio=true]{{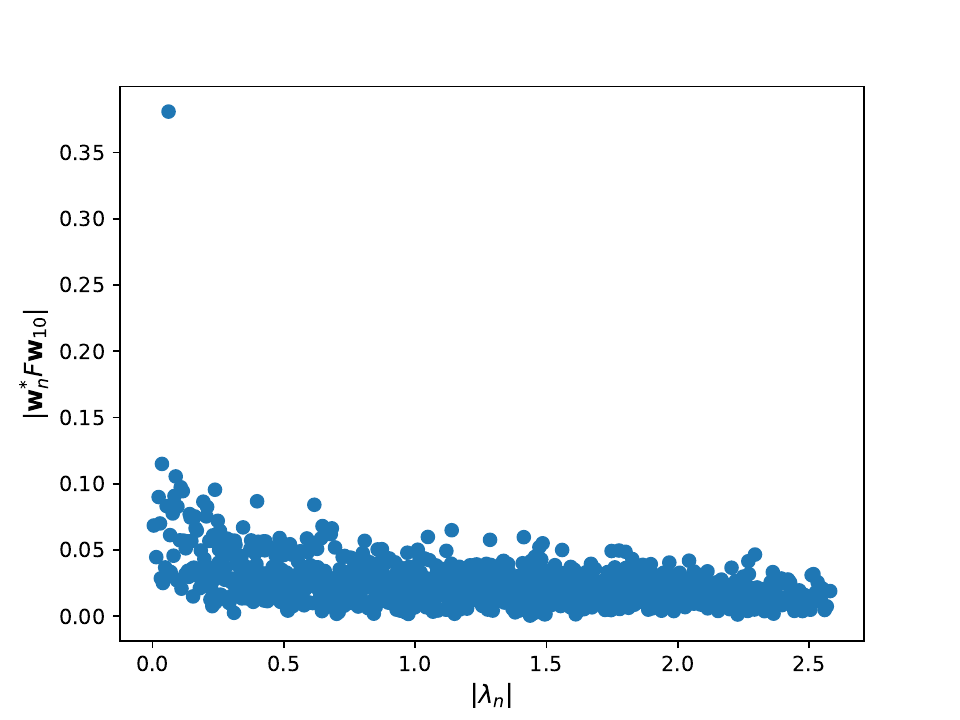}}
\end{subfigure}\\
\begin{subfigure}{0.4\columnwidth}
    \includegraphics[width=\columnwidth,keepaspectratio=true]{{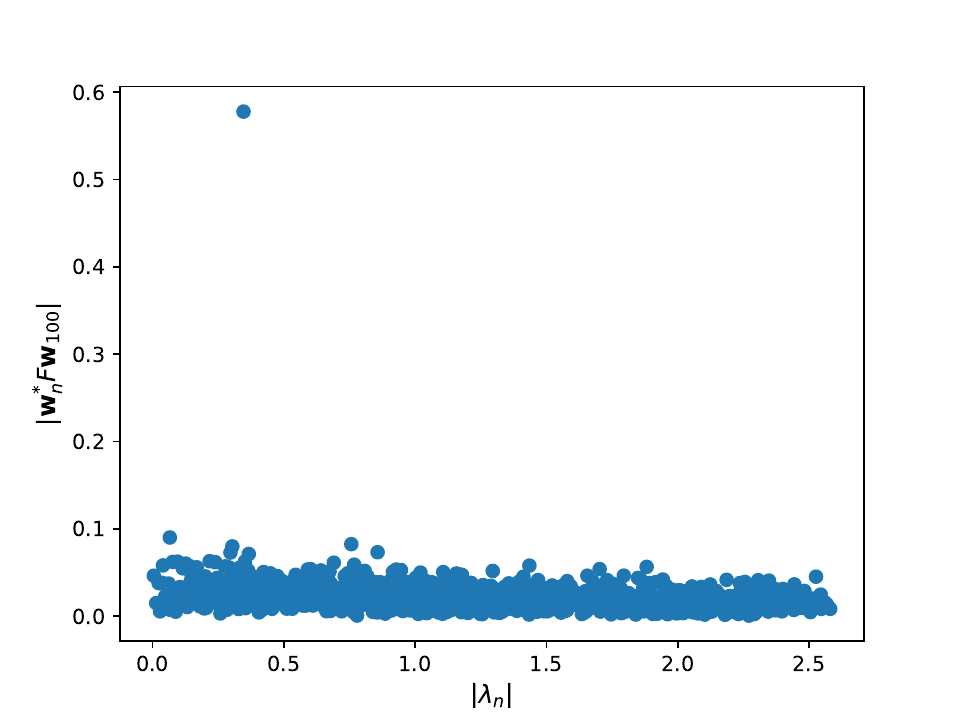}}
\end{subfigure}%
\begin{subfigure}{0.4\columnwidth}
    \includegraphics[width=\columnwidth,keepaspectratio=true]{{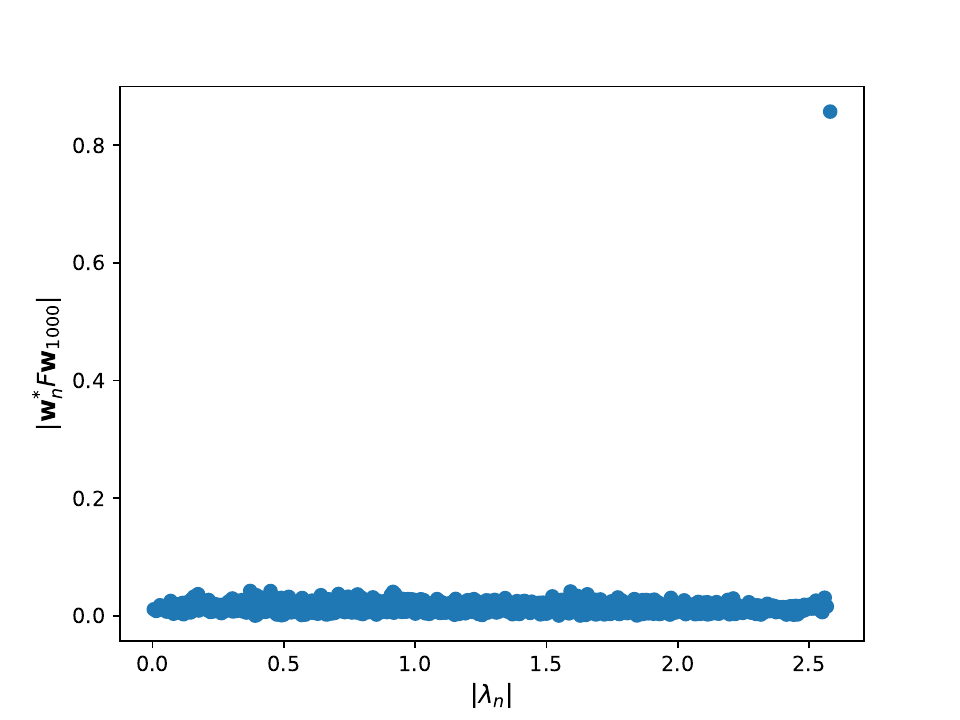}}
\end{subfigure}
\begin{caption}{\small Clockwise from the top left: the overlap $|\mbf{w}_{n}^{*}F\mbf{w}_{m}|$ against the singular value $|\lambda_{n}|$ for different values of $m=1,10,100,1000$. The data is obtained from diagonalising $X-1$ where $X\sim Gin_{2}(1000)$.}
\end{caption}
\end{figure}

\end{document}